\documentclass[10pt, reqno, a4paper]{amsart}

\usepackage{amssymb}
\usepackage{amsthm}
\usepackage{indentfirst}
\usepackage{mathtools} % improves amsmath
\usepackage[colorlinks=true]{hyperref}  % Links in the pdf. Backref option: each bibliographical entry denotes where it was cited
\usepackage{enumitem} % special itemize with custom tags and labels that work 
\usepackage[font=small]{caption}
\usepackage{tikz} % language to make figures 
\usepackage{graphicx,overpic}
\usepackage[dvips]{psfrag}
\usepackage{xcolor} % With XCOLOR, printed quality is good, (but with COLOR it's bad) 

%%%%%%%%%%%%%%%% Modifying the headers for subsections and subsubsections
%%%%%%%%%%%%%%%% Source: http://tex.stackexchange.com/questions/168165/how-to-change-the-appearance-of-subsections-subsubsections-etc-in-an-ams-arti
\usepackage{etoolbox}
\patchcmd{\subsection}{-.5em}{.5em}{}{}
\patchcmd{\subsubsection}{\itshape}{\bfseries}{}{}

%%%%%%%%%%%%%%%%%%%%%%%%% DEFINITION OF A NEW BIBITEM - WITH LINKS FOR MATHSCINET
%%%% Source: http://tex.stackexchange.com/questions/39177/web-links-in-the-bibliographical-labels/40423
\makeatletter

% branch between the items with and without optional argument
\def\myMRbibitem{\@ifnextchar[\my@lbibitem\my@bibitem}

% format the bibitem number incl. the url hyperlink
\def\mybiblabel#1#2{\@biblabel{{\hyperref{http://www.ams.org/mathscinet-getitem?mr=#1}{}{}{#2}}}}

% create a 'label' for referencing from citation in the text to the biblography
\def\myhyperanchor#1{\Hy@raisedlink{\hyper@anchorstart{cite.#1}\hyper@anchorend}}

% bibitem with optional argument
\def\my@lbibitem[#1]#2#3#4\par{%
    \item[\mybiblabel{#2}{#1}\myhyperanchor{#3}\hfill]#4%
    \@ifundefined{ifbackrefparscan}{}{\BR@backref{#3}}%
    \if@filesw{\let\protect\noexpand\immediate% write to aux-file
       \write\@auxout{\string\bibcite{#3}{#1}}}\fi\ignorespaces%
}

% bibitem without optional argument
\def\my@bibitem#1#2#3\par{%
    \refstepcounter\@listctr% standard tex item counter for the generic item number
    \item[\mybiblabel{#1}{\the\value\@listctr}\myhyperanchor{#2}\hfill]#3%
    \@ifundefined{ifbackrefparscan}{}{\BR@backref{#2}}%
    \if@filesw\immediate\write\@auxout% write to aux-file
        {\string\bibcite{#2}{\the\value\@listctr}}\fi\ignorespaces%
}

\makeatother
%%%%%%%%%%%%%%%%%%%%%%%%%%%%%%%%%%%%%%%%%%%%%%%%%%%%%%%%%%%%%%%%%%%%%

% % changing backref options (each bibliographical entry denotes where it was cited)
% \renewcommand*{\backref}[1]{}
% \renewcommand*{\backrefalt}[4]{\quad \tiny
%     \ifcase #1 (Not cited.)%
%     \or        (Cited on page~#2.)%
%     \else      (Cited on pages~#2.)%
%     \fi}

%%%%%%%%%%%%%%%%%%%%%%%%%%%%%%%%%%%%%%%%%%%%%%%%%%%%%%%%%%%%%%%%%%%%%

\theoremstyle{plain}
	\newtheorem{theo}{Theorem}
	\newtheorem{corom}{Corollary}
	\newtheorem{theor}{Theorem}[section]
	\newtheorem{lemm}{Lemma}[section]
	\newtheorem{claim}[lemm]{Claim}
	
	\newtheorem{coro}[lemm]{Corollary}
	\newtheorem{prop}[lemm]{Proposition}
	\newtheorem{ques}{Question}
	\newtheorem{conj}{Conjecture}
	
\theoremstyle{definition}
	\newtheorem*{clai}{Claim}

	\newtheorem{rema}[lemm]{Remark}

	\newtheorem{defi}[lemm]{Definition}
	\newtheorem{cave}[lemm]{Caveat}
	\newtheorem{scho}[lemm]{Scholium}

%\textwidth=17. true cm \textheight=24. true cm \voffset=-3. true cm
%\hoffset = -2.5 true cm

%%%%%%%%%%%% Global choices
\setcounter{tocdepth}{1}       % table of contents
\setcounter{secnumdepth}{3}
\hypersetup{bookmarksdepth = 2} % Depth of sections/subs... to have bookmark links in the pdf;
                                % requires the package hyperref.
                                % If not specified, the value tocdepth is used.
\numberwithin{equation}{section}         %makes labeled equations easier to find.

% global choices for lists (enumitem package)
\setlist[enumerate,1]{label={\upshape\alph*)},ref=\alph*}
\setlist[enumerate,2]{label={\upshape\arabic*)},ref=\arabic*}

%%%%%%%%%%% Character options
\renewcommand{\setminus}{\smallsetminus}
\renewcommand{\emptyset}{\varnothing}
\renewcommand{\epsilon}{\varepsilon}
\renewcommand{\rho}{\varrho}
\renewcommand{\phi}{\varphi}

%%%%%%%%%%%% Abbreviations
\def\Si{\Sigma}\def\La{\Lambda}\def\la{\lambda}
\def\De{\Delta}\def\de{\delta}\def\Om{\Omega}
\def\Ga{\Gamma}

   \def\DD{{\mathbb D}}

 \def\NN{{\mathbb N}}  
 \def\RR{{\mathbb R}}  
   
 \def\ZZ{{\mathbb Z}}

\def\fD{{\mathfrak{D}}}

\def\fK{{\mathfrak{K}}}
\def\fF{{\mathfrak{F}}}

\newcommand{\cC}{\mathcal{C}}
\newcommand{\cD}{\mathcal{D}}\newcommand{\cF}{\mathcal{F}}
\newcommand{\cG}{\mathcal{G}}\newcommand{\cH}{\mathcal{H}}

\newcommand{\cO}{\mathcal{O}}
\newcommand{\cP}{\mathcal{P}}\newcommand{\cQ}{\mathcal{Q}}\newcommand{\cR}{\mathcal{R}}
\newcommand{\cU}{\mathcal{U}}
\newcommand{\cV}{\mathcal{V}}\newcommand{\cW}{\mathcal{W}}
\newcommand{\cZ}{\mathcal{Z}}

\def\Diff{\mathrm{Diff}}
\def\dim{\operatorname{dim}}

\def\diam{\operatorname{diam}}

\newcommand{\eqdef}{\coloneqq} %{\stackrel{\scriptscriptstyle\rm def}{=}}
  
% dominated sum

%%%%%%%%%%%% Other definitions

\newcommand{\arxiv}[1]{\href{http://arxiv.org/abs/#1}{\tt arXiv:{#1}}}

% See http://tex.stackexchange.com/questions/18191/defining-custom-labels/160035#160035
\makeatletter
\newcommand{\customlabel}[2]{\protected@write\@auxout{}{\string\newlabel{#1}{{#2}{\thepage}{#2}{#1}{}}}\hypertarget{#1}{#2}}
\makeatother

%%%%%%%%%%%%%%%%%%%%%%%%%%%%%%%%%%%%%%%%%%%%%%%%%%%%%%%%%%%%%%%%%%%%%%%%%%%

\begin{document}

\title[Robust existence of nonhyperbolic
ergodic measures]{Robust criterion for the existence of nonhyperbolic
ergodic measures}

\date{\today}

\author[J.~Bochi]{Jairo Bochi}
\address{Facultad de Matem\'aticas, Pontificia Universidad Cat\'olica de Chile}
\urladdr{\href{http://www.mat.uc.cl/~jairo.bochi}{http://www.mat.uc.cl/$\sim$jairo.bochi}}
\email{jairo.bochi@mat.puc.cl}

\author[C.~Bonatti]{Christian Bonatti}
\address{Institut de Math\'ematiques de Bourgogne}
\urladdr{\href{http://bonatti.perso.math.cnrs.fr}{http://bonatti.perso.math.cnrs.fr}}
\email{bonatti@u-bourgogne.fr}

\author[L.J.~D\'\i az]{Lorenzo J.~D\'\i az}
\address{Departamento de Matem\'atica, Pontif\'{\i}cia Universidade Cat\'olica do Rio de Janeiro} 
\urladdr{\href{http://www.mat.puc-rio.br/~lodiaz}{http://www.mat.puc-rio.br/$\sim$lodiaz}}
\email{lodiaz@mat.puc-rio.br}

\thanks{The authors received support from CNPq, FAPERJ, PRONEX, and Ci\^encia Sem Fronteiras CAPES (Brazil), 
Balzan--Palis Project, ANR (France), FONDECYT (Chile), and EU Marie-Curie IRSES Brazilian-European partnership in Dynamical Systems (FP7-PEOPLE-2012-IRSES 318999 BREUDS). The authors acknowledge the kind hospitality of
Institut de Math\'ematiques de Bourgogne and Departamento de Matem\'atica PUC-Rio while preparing this paper}

\keywords{
blender,
chain recurrence class,
ergodic measure,
Lyapunov exponent,
nonhyperbolic measure,
partial hyperbolicity, 
robust cycle.
}

\subjclass[2010]{37D30, 37C40, 37D25, 37A35}

\begin{abstract}
We give explicit $C^1$-open conditions that ensure that a diffeomorphism
possesses a nonhyperbolic ergodic measure with positive entropy.
Actually, our criterion provides the existence of a partially hyperbolic 
compact set with one-dimensional center and positive topological entropy
on which the center Lyapunov exponent vanishes uniformly.

The conditions of the criterion are met 
on a $C^1$-dense and open subset of the set of a diffeomorphisms having a robust 
cycle.
As a corollary,
there exists a $C^1$-open and dense subset of
the set of non-Anosov robustly transitive diffeomorphisms consisting  of systems with
nonhyperbolic ergodic  measures with positive entropy.

The criterion is based on a notion of a blender defined dynamically in terms of strict invariance of a family of discs.
\end{abstract}

\maketitle

%!TEX root = final_MAIN.tex

%%%%%%%%%%%%%%%%%%%%%%%%%%%%%%%%%%%%%%%%%%%%%%%%%%%%%%%%%%%%%%%%%%%%%%%
\section{Introduction}
%%%%%%%%%%%%%%%%%%%%%%%%%%%%%%%%%%%%%%%%%%%%%%%%%%%%%%%%%%%%%%%%%%%%%%%

\subsection{General context}
Since the end of the sixties it is well known that there exist 
dynamical systems (diffeomorphisms and flows) 
that are $C^r$-robustly nonhyperbolic:
every perturbation of the system fails to be hyperbolic. 
Nevertheless, hyperbolic-like techniques and concepts 
which are essentially weakened forms of  hyperbolicity 
are still among the main tools for  studying dynamical
systems, even beyond uniform hyperbolicity. 

\emph{Nonuniform hyperbolicity} is an ergodic-theoretical version of 
hyperbolicity and has its origin in Oseledets' theorem about the existence of Lyapunov exponents. 
An invariant ergodic  probability measure is called {\emph{hyperbolic}}  
if all its Lyapunov exponents are different from zero, and \emph{nonhyperbolic} otherwise.
For  $C^{1+\alpha}$-systems, Pesin's theory recovers essential parts of 
hyperbolicity  for generic points of hyperbolic measures. 
One speaks of nonuniform hyperbolicity  
when studying 
systems endowed with some reference measure. For instance, in the conservative setting, 
a natural question is to know in what generality the volume is a hyperbolic measure. 
(For instance, see \cite{SW,BB} for perturbative methods for removing zero Lyapunov exponents
in the conservative setting.)

Another point of view, which is the one adopted in this paper, 
consists in looking at all ergodic invariant measures carried by the system.
A diffeomorphism is called \emph{completely nonuniformly hyperbolic} if all its ergodic 
invariant measures are hyperbolic. 
Let us observe that there are examples of nonhyperbolic systems
that are completely nonuniformly hyperbolic and even have Lyapunov exponents far away from zero: see \cite{BBS,CLR,LOR}. 
However, all of these examples are very fragile. 
Therefore one can naively ask whether 
every diffeomorphism can be approximated by completely nonuniformly hyperbolic ones,
thus reclaiming in a weaker sense Smale's dream of denseness of hyperbolicity.
Unfortunately, the answer to this question is negative,
as was shown by Kleptsyn and Nalsky \cite{KN}.

Another question, in the opposite direction, is the following:
Can every diffeomorphism be approximated either by  uniformly hyperbolic ones
or by diffeomorphisms with nonhyperbolic ergodic measures?
The answer is positive: 
it was shown by Ma\~n\'e \cite{Ma} 
that every $C^1$-robustly nonhyperbolic diffeomorphism can be $C^1$-approximated
by  diffeomorphisms that have nonhyperbolic periodic points\footnote{Note that 
such a property of diffeomorphisms does not extend to vector fields, 
since there exist robustly nonhyperbolic Lorenz-like attractors whose periodic orbits are all robustly hyperbolic.} 
and in particular nonhyperbolic ergodic invariant measures.
In particular, the existence of zero 
Lyapunov exponents is  $C^1$-dense in the complement of
the closure of the set of hyperbolic diffeomorphisms. 
However, this dense subset is too ``thin'' (actually meager),
since  periodic orbits of  $C^r$-generic diffeomorphisms (for any $r\geq 1$) are hyperbolic,
by Kupka--Smale's theorem.
Now if we consider not only measures supported on periodic orbits
but arbitrary ergodic probability measures, we expect to have a much ``thicker'' dense set.

In order to be more precise, let us introduce some notation.
Given a compact manifold $M$ without boundary, 
define the following two disjoint $C^r$-open sets:
\begin{itemize}
 \item 
 $\cH^r(M)$ is the set of 
 hyperbolic $C^r$-diffeomorphisms, that is, those satisfying Axiom~A with no cycles
 or, equivalently,  with a hyperbolic chain recurrent set. 
 \item $\cZ^r(M)$ is the $C^r$-interior of the set of $C^r$-diffeomorphisms with a nonhyperbolic ergodic measure.
\end{itemize}
We pose the following question:

\begin{ques}[Hyperbolicity vs.\ robust zero exponents]\label{q.1}
Given $r\ge1$, is the (open) set $\cH^r(M) \cup \cZ^r(M)$ dense in $\Diff^r(M)$?
\end{ques}
A positive answer to this question would mean that the
existence of nonhyperbolic ergodic measures basically characterizes nonhyperbolic $C^r$-dynamics. 

Let us remark that the first set $\cH^r(M)$ is always nonempty,
and the second set $\cZ^r(M)$ is also nonempty provided $\dim M\ge 3$,
as it is proved in the aforementioned paper \cite{KN} 
(for $r=1$ and therefore for any $r\geq 1$) as a continuation of the results from \cite{GIKN}.

In this paper, still assuming $\dim M\ge 3$, we prove that the 
open set $\cZ^1(M)$ is not only nonempty but actually very large,
thus providing evidence that Question~\ref{q.1} should have a positive answer.
An answer in complete generality, even for $r=1$, 
seems to be a hard problem.
Before stating our results let us observe that Question~\ref{q.1} is  motivated and closely related to 
the following conjecture of Palis~\cite{Pa} about topological characterizations of nonhyperbolicity: 

\begin{conj}[Hyperbolicity vs.\ cycles] \label{conj.Palis}
Every diffeomorphism $f\in \Diff^r(M)$, ${r\geq 1}$, can be $C^r$-approximated 
by diffeomorphisms that are either hyperbolic or display homoclinic bifurcations 
(homoclinic tangencies or heterodimensional cycles).
\end{conj}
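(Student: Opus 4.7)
The plan is to argue by contraposition: fix $f \in \Diff^r(M) \setminus \overline{\cH^r(M)}$ and try to produce an arbitrarily $C^r$-small perturbation $g$ of $f$ exhibiting a homoclinic tangency or a heterodimensional cycle. The starting point is to analyze, for a $C^1$-generic representative, each chain recurrence class of $f$ and to locate the specific obstruction to uniform hyperbolicity there. By the Mañé ergodic closing lemma together with the Bonatti--Crovisier connecting lemma, one may assume that the periodic orbits are dense in every chain recurrence class and that their indices are controlled.

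From there, I would split the analysis into two cases.
\begin{enumerate}
\item There exists a chain recurrence class $C$ containing two hyperbolic periodic orbits of different indices. In this case a direct application of the $C^1$ connecting lemma, plus a perturbation to break transversality in the intersecting invariant manifolds of the two orbits, yields a heterodimensional cycle after an arbitrarily small perturbation.
\item Every chain recurrence class is index-constant but some class $C$ still fails to be hyperbolic. Then the obstruction has to live on an $f$-invariant dominated splitting $E^s \oplus E^c \oplus E^u$ over $C$ with $\dim E^c \ge 1$, and some invariant measure has a vanishing center Lyapunov exponent.
\end{enumerate}

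In Case (b) one attempts to perturb in order to create a homoclinic tangency. When $\dim E^c = 1$, the mechanism is to exhibit periodic orbits with arbitrarily weak central eigenvalues — here the nonhyperbolic ergodic measures produced by the main criterion of the present paper can be exploited: by approximating the measure by periodic orbits à la Gelfert--Kwietniak one extracts saddles whose central multiplier is as close to $1$ as desired, and Gourmelon's $C^1$-linearization then allows one to bend the stable and unstable manifolds into a tangency by a perturbation localized near the orbit.

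The principal obstacle, and the reason Palis's conjecture remains open in full generality, is two-fold. First, when $\dim E^c \ge 2$, the creation of a single weak eigenvalue no longer suffices to force either a tangency or a heterodimensional cycle, and one needs a much finer analysis of the decomposition of the central bundle into dominated sub-bundles — a difficulty that the blender-based construction in this paper circumvents for producing nonhyperbolic measures but not for producing homoclinic bifurcations. Second, and more seriously, for $r \ge 2$ there is no known analogue of the $C^1$ connecting or closing lemmas, so even the strategy outlined above collapses and a proof would require genuinely new $C^r$ perturbation machinery. For these reasons my proposal is realistic only for $r = 1$ and under additional assumptions on the dimension of the central bundle; a full solution remains out of reach.
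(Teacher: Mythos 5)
This statement is Conjecture~\ref{conj.Palis}, a well-known open conjecture of Palis; the paper does not prove it, and neither does any published work in full generality. You correctly recognize this: your proposal is not a proof but an informed survey of the obstructions, and you explicitly concede at the end that ``a full solution remains out of reach.'' That assessment is accurate and matches the paper's own framing, which simply records the conjecture and cites the known partial results, namely the $C^1$ surface case due to Pujals--Sambarino and the higher-dimensional progress of Crovisier--Pujals.

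On the substance of your outline: the dichotomy between classes with periodic points of different indices (leading to heterodimensional cycles via the $C^1$ connecting lemma) and index-constant nonhyperbolic classes (where one hunts for tangencies via weak central eigenvalues) is indeed the standard strategy in the $C^1$ literature, and your identification of the two principal obstacles --- higher-dimensional center bundles and the absence of $C^r$ closing/connecting lemmas for $r\ge 2$ --- is correct. Two small cautions. First, in Case~(a), to pass from a chain recurrence class containing saddles of different indices to an actual heterodimensional cycle one generally needs the Bonatti--Crovisier result identifying chain recurrence classes with homoclinic classes $C^1$-generically; the connecting lemma alone is not quite enough, so this step should be stated more carefully. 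Second, the nonhyperbolic ergodic measures produced by the present paper live on robustly partially hyperbolic sets with uniformly zero center exponent, so they do not directly feed into a tangency-creation mechanism; the route from a nonhyperbolic measure to a weak periodic saddle and then to a tangency relies on separate perturbative results (Ma\~n\'e's ergodic closing lemma, Gourmelon's Franks-type lemma with invariant manifolds) rather than on the machinery developed here, which is aimed at the orthogonal goal of making the nonhyperbolicity $C^1$-robust rather than producing bifurcations.
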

This conjecture is true for $C^1$-diffeomorphisms of surfaces, see
\cite{PuSa}. See also \cite{CP} for important progress in higher dimensions.

Note that the  homoclinic bifurcations in this conjecture are 
associated to periodic points. 
By Kupka--Smale's genericity theorem, most systems (i.e.,  generic ones)
do not display homoclinic bifurcations.
Bearing this fact in mind, 
a stronger version of the conjecture above was proposed in \cite{BD-cycles} and \cite[Conjecture~7]{B}.
It order to state it, let us recall that two transitive hyperbolic basic sets 
$\Lambda$ and $\Theta$ of a diffeomorphism $f$ have 
a {\emph{robust (heterodimensional) cycle}} if: 
\begin{itemize}
 \item $\Lambda$ and $\Theta$ have different \emph{$\mathrm{u}$-indices} 
 (i.e., their unstable bundles have different dimensions) and
 \item 
 there is a $C^1$-neighborhood $\cU$ of $f$ so that for every $g\in\cU$ one has 
 $$
 W^\mathrm{s}(\Lambda_g)\cap W^\mathrm{u}(\Theta_g)\neq \emptyset 
 \quad \mbox{and} \quad W^\mathrm{u}(\Lambda_g)\cap W^\mathrm{s}(\Theta_g)\neq \emptyset,
 $$
 where $\Lambda_g$ and $\Theta_g$ 
 are the hyperbolic continuations of $\Lambda$ and $\Theta$ for $g$. 
\end{itemize}
Observe that robust cycles can only occur in dimension $3$ or larger.
Let us denote by 
$\cR\cC^1(M)$ the $C^1$-open subset of  $\Diff^1(M)$
of diffeomorphisms with robust cycles.   
We can now state the following:

\begin{conj}[Hyperbolicity vs.\ robust cycles]\label{conj.robust} 
The union of the disjoint open sets $\cH^1(M)$ and $\cR\cC^1(M)$ is dense in $\Diff^1(M)$.
In other words, every diffeomorphism in $\Diff^1(M)$ can be $C^1$-approximated by diffeomorphisms that are either
hyperbolic or have robust cycles.\footnote{This conjecture involves robust cycles but does not involve homoclinic tangencies. 
The rationale behind this is the fact that 
most  heterodimensional cycles can be made robust by small perturbations: see \cite{BDK}. In contrast,
the only known examples of $C^1$-robust homoclinic tangencies occur in dimension at least $3$ 
and are associated to $C^1$-robust cycles: see \cite{BD-tang}.
Furthermore, $C^1$-surface diffeomorphisms do not have robust 
homoclinic tangencies: see \cite{gugu}.}
\end{conj}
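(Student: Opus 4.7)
The plan is to reduce to Palis's Conjecture~\ref{conj.Palis} and then to upgrade the homoclinic bifurcations it provides into $C^1$-robust cycles. Since the statement is itself a conjecture, what follows is only a strategy conditional on progress towards Palis's conjecture in the $C^1$ topology.

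Suppose $f \in \Diff^1(M) \setminus \cH^1(M)$; the task is to $C^1$-approximate $f$ by a diffeomorphism in $\cR\cC^1(M)$. If $\dim M = 2$, then $\cR\cC^1(M) = \emptyset$ (robust cycles require dimension at least three) and the statement reduces to the $C^1$-denseness of hyperbolicity on surfaces, which is exactly Pujals--Sambarino~\cite{PuSa}. From now on assume $\dim M \geq 3$.

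Invoking Conjecture~\ref{conj.Palis}, we $C^1$-approximate $f$ by a diffeomorphism $g$ exhibiting either a heterodimensional cycle or a homoclinic tangency. In the heterodimensional case, associated to saddles $p$ and $q$ of different $\mathrm{u}$-indices, the main result of \cite{BDK} says that a further arbitrarily small $C^1$-perturbation replaces their orbits by hyperbolic basic sets $\La$, $\Th$ forming a robust cycle; the underlying mechanism is a blender constructed at one of the two saddles, which $C^1$-stabilizes one of the two heterodimensional intersections, while the other can be made transverse. In the tangency case associated to a saddle $p$, one unfolds the tangency in a generic one-parameter family to create nearby hyperbolic periodic points of $\mathrm{u}$-index differing from that of $p$, together with a heterodimensional cycle, thus reducing to the previous case; if along the way a $C^1$-robust tangency is produced, then \cite{BD-tang} already places us in $\cR\cC^1(M)$. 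Either way, we land in $\cR\cC^1(M)$.

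The main obstacle is, of course, Palis's conjecture itself in dimension $\geq 3$, which remains open in the $C^1$ topology for general diffeomorphisms. The state of the art, due to Crovisier--Pujals~\cite{CP}, handles the tame setting using chain-recurrence classes, dominated splittings, and Ma\~n\'e-like ergodic closing lemmas; extending these arguments to wild dynamics is the genuinely deep problem. By contrast, the robustification step in the previous paragraph is well understood via the blender technology of \cite{BDK,BD-tang}, so the $\cR\cC^1(M)$-strengthening is essentially cosmetic on top of Conjecture~\ref{conj.Palis}: the whole statement stands or falls with Palis's original $C^1$ conjecture.
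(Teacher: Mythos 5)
The statement you are addressing is a conjecture that the paper records only as motivation; it is not proved there, so there is no argument in the paper to compare against. Your proposal is, appropriately, a conditional reduction sketch, and the outline --- invoke Conjecture~\ref{conj.Palis} and then upgrade the resulting homoclinic bifurcation to a $C^1$-robust cycle --- is the natural one. The surface case via~\cite{PuSa} is handled correctly.

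However, both halves of the ``robustification'' step are treated as settled when they are not, and this matters precisely because the paper introduces Conjecture~\ref{conj.robust} as a \emph{strictly stronger} statement than Conjecture~\ref{conj.Palis}, not as a cosmetic rephrasing of it. For the heterodimensional case, \cite{BDK} does not stabilize every heterodimensional cycle: it applies to coindex-one cycles away from certain fragile configurations, which is exactly why the paper's own footnote is careful to say that ``most'' heterodimensional cycles can be made robust. Upgrading ``most'' to ``after a further arbitrarily small $C^1$-perturbation, all'' requires a reduction to coindex one (nontrivial in general) and an argument that the fragile cases do not form a persistent obstruction; neither is addressed in your sketch. For the tangency case, the claim that unfolding a $C^1$ tangency in dimension $\geq 3$ produces a heterodimensional cycle is asserted without justification: the standard unfolding produces periodic points whose indices depend on the eigenvalue data at the tangency, and the cited \cite{BD-tang} runs in the opposite direction (from a preexisting robust cycle to robust tangencies), so it does not supply what you need. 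The implication ``$C^1$ tangency $\Rightarrow$ heterodimensional cycle after $C^1$-perturbation'' is itself an open problem of comparable depth. Consequently your closing assessment --- that Conjecture~\ref{conj.robust} ``stands or falls with Palis's original $C^1$ conjecture'' --- overstates the case: Conjecture~\ref{conj.robust} clearly implies Conjecture~\ref{conj.Palis} (robust cycles are in particular cycles), but the converse implication is exactly the content your sketch leaves open.
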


A consequence of the results in \cite{DG} is  that $C^1$-generic
 diffeomorphisms with robust cycles have ergodic nonhyperbolic measures.
 Note that by  Kupka--Smale's theorem 
 the nonhyperbolic measures in 
 \cite{DG} cannot  be supported on
 periodic orbits. In the partially hyperbolic setting with one-dimensional central direction, 
\cite{BDG}  improves the results in \cite{DG} by showing that
these nonhyperbolic measures can be chosen
 with full support in the appropriate homoclinic class.

In this paper we strengthen the $C^1$-generic conclusion of the result of \cite{DG} 
and prove that $C^1$-robust cycles yield $C^1$-robust existence of 
nonhyperbolic ergodic measures. 
We also will see that these measures can be chosen with positive entropy.

\begin{theo}\label{t.dichotomy} 
Let $M$ be a compact manifold without boundary and of dimension \mbox{$d \ge 3$}. 
Then the subset of $\Diff^1(M)$ consisting of diffeomorphisms  
with  a nonhyperbolic ergodic  measure with positive entropy
contains an open and dense subset of the $C^1$-open set $\cR\cC^1(M)$
of diffeomorphisms with robust cycles.   
\end{theo}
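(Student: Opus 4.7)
The plan is to derive Theorem~\ref{t.dichotomy} from two separate pieces: a $C^1$-open \emph{criterion} ensuring the existence of a nonhyperbolic ergodic measure with positive entropy, and a \emph{density result} showing the criterion is satisfied on a $C^1$-dense subset of $\cR\cC^1(M)$. Together these produce the required open-and-dense subset of $\cR\cC^1(M)$.

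For the criterion, I would isolate the following three $C^1$-open conditions on $f$. First, there is a compact invariant set $\Gamma$ carrying a partially hyperbolic splitting with a one-dimensional center bundle $E^c$. Second, $\Gamma$ contains a \emph{blender}---a hyperbolic basic set equipped with a family of $C^1$-discs tangent to the center-unstable direction that is \emph{strictly invariant} under $f$---whose periodic points have uniformly positive center Lyapunov exponent and whose discs robustly cover a region transverse to $E^c$. Third, $\Gamma$ also contains a transitive hyperbolic basic set $\Theta$ of the other $\mathrm{u}$-index whose periodic points have uniformly negative center Lyapunov exponent, together with heteroclinic connections between the blender and $\Theta$ that are compatible with the blender's covering property. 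Under these hypotheses I would construct long finite itineraries that alternate segments in the blender with segments in $\Theta$, with proportions tuned so that the cumulative center expansion and contraction nearly cancel. The strict invariance of the family of discs produces, for each admissible itinerary, a genuine orbit shadowing it, and in fact yields exponentially many such orbits parametrized by a subshift of finite type. Taking a limit as itineraries lengthen and the cancellation sharpens extracts a compact invariant subset $K\subset\Gamma$ with one-dimensional center, uniformly vanishing center Lyapunov exponent, and positive topological entropy; the variational principle then delivers an ergodic measure on $K$ which is nonhyperbolic and has positive metric entropy.

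For the density part, I would fix $f\in\cR\cC^1(M)$ having a robust cycle between hyperbolic basic sets $\Lambda$ and $\Theta$ of $\mathrm{u}$-indices differing by one (the general case reduces to this by restricting to a suitable filtrating neighborhood). Perturbative techniques in the spirit of the Bonatti--D\'\i az constructions of blenders from robust heterodimensional cycles allow one to modify $f$ by an arbitrarily small $C^1$-perturbation so that $\Lambda$ is replaced by a blender-horseshoe while the robust heteroclinic intersections with $\Theta$ persist. A further small perturbation, exploiting transitivity of the hyperbolic continuations together with the density of invariant manifolds in the corresponding homoclinic classes, realizes the additional heteroclinic connections required in the third condition above. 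This yields, $C^1$-densely in $\cR\cC^1(M)$, a diffeomorphism meeting the criterion.

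The main obstacle is producing \emph{positive entropy}, rather than merely ergodicity with zero center exponent as in the earlier GIKN-type results mentioned in the introduction. Constructions of that kind build the ergodic measure as a weak-$*$ limit of equidistributions on periodic orbits and typically yield zero metric entropy. The blender's strict invariance of a family of discs is precisely the device that upgrades this: at each scale of the construction, the subshift of finite type encoding admissible itineraries has exponential growth, and each coding is realized by an actual orbit. Controlling simultaneously the exponential growth of codings, the vanishing of the center exponent along each coded orbit, and the uniformity of this vanishing on the limit set, is the technically demanding core of the argument, and the place where a purely $C^1$-perturbative viewpoint must be combined with the combinatorial power supplied by the blender.
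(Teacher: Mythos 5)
Your proposal follows essentially the same route as the paper: a $C^1$-open criterion (the paper's \emph{split flip-flop configuration}, i.e.\ a \emph{dynamical blender} defined by strict invariance of a family of strong-unstable discs, linked heteroclinically to a saddle of the opposite center sign), shown to hold on a $C^1$-dense subset of $\cR\cC^1(M)$ by Bonatti--D\'\i az--style perturbations of a robust cycle, followed by an itinerary construction inside the criterion that produces a compact partially hyperbolic set with one-dimensional center, uniformly vanishing center exponent, and positive entropy, after which the variational principle finishes. This matches the paper's chain Theorem~\ref{t.practical} $\Rightarrow$ Theorem~\ref{t.main} $\Rightarrow$ Theorem~\ref{t.dichotomy}, with spawners doing the work of your density step.

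One point you leave implicit is worth making explicit, because it is where the difficulty actually lives. Since $f$ is only $C^1$, the logarithm of the center Jacobian is merely \emph{continuous}, not H\"older; so the classical bounded-distortion estimate that would give uniformly bounded Birkhoff \emph{sums} along shadowed itineraries is unavailable, and one can only control Birkhoff \emph{averages} up to a distortion that decays slowly in the itinerary length. Consequently, tuning a sequence of ever-longer near-cancelling itineraries and passing to a limit does not, by itself, yield a set on which the center exponent vanishes uniformly: the limit orbit must be \emph{controlled at every time scale simultaneously}, with scales nested recursively and control constants improving as the scale grows. This is exactly what the paper's notion of a flip-flop family and its Theorem~\ref{t.FlipFlop_weak} encode, and it is why the abstract criterion is stated for a merely continuous function $\phi$. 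You gesture at this (``at each scale of the construction'', ``uniformity of this vanishing on the limit set''), so the gap is one of articulation rather than conception, but a complete write-up would need to confront the low regularity head-on. Two minor precisions: the blender's discs are tangent to the \emph{strong unstable} cone field (dimension equal to the $\mathrm{uu}$-index), not to the center-unstable direction; and the criterion only requires a single periodic saddle $q$ of the complementary $\mathrm{u}$-index, not a transitive hyperbolic basic set $\Theta$.
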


In the notations above, Theorem~\ref{t.dichotomy} implies that 
$\cZ^1(M)\cap\cR\cC^1(M)$ is dense in  $\cR\cC^1(M)$.
Therefore if Conjecture~\ref{conj.robust} is true then 
Question~\ref{q.1} has a positive answer when $r=1$.

Theorem~\ref{t.main} below is a more general (and technical) version of Theorem~\ref{t.dichotomy}. 
The proof of that theorem is based on Theorem~\ref{t.practical},
which gives an explicit robust criterion for zero center Lyapunov exponents,
which in turn relies on Theorem~\ref{t.FlipFlop_weak}, which is an
abstract ergodic-theoretical criterion for the existence of zero Birkhoff averages.
The strategy used in this paper for the construction
of nonhyperbolic measures is very different from the previous methods,
which are reviewed in Subsection~\ref{ss.discussion}.

\medskip

Let us present some consequences of Theorem~\ref{t.dichotomy}.
A diffeomorphism is called {\emph{$C^1$-robustly transitive}}
if every $C^1$-diffeomorphism nearby  is
 {\emph{transitive}}, that is, has 
a dense orbit.  In dimension two, $C^1$-robustly transitive diffeomorphisms are  
Anosov diffeomorphisms of the torus $\mathbb{T}^2$ (see \cite{Ma}), but there are non-Anosov robustly
transitive diffeomorphisms on manifolds of dimension three or more 
(see~\cite{Sh,Mda,BD-robtran}).
Combining the results in \cite{Ha,BC,BD-cycles}, one obtains that there is a $C^1$-open and dense subset of the set of
nonhyperbolic robustly transitive diffeomorphisms consisting of  diffeomorphisms with robust cycles. In other words, 
 $\cH^1(M)\cup\cR\cC^1(M)$ contains an open and dense subset of  
the set of $C^1$-robustly transitive diffeomorphisms.
Thus the
following is a corollary of Theorem~\ref{t.dichotomy}: 

\begin{corom}
\label{c.informal}
The union of the set of Anosov diffeomorphisms
and the set
$\cZ^1(M)$  is  open and dense in the set of $C^1$-robustly transitive diffeomorphisms.
\end{corom}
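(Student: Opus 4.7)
The plan is straightforward: combine Theorem~\ref{t.dichotomy} with the dichotomy result attributed to \cite{Ha,BC,BD-cycles} that was quoted in the paragraph immediately preceding the statement. Let $\cT^1(M)$ denote the $C^1$-open set of $C^1$-robustly transitive diffeomorphisms, and let $\cA^1(M)$ denote the (open) set of Anosov diffeomorphisms. I want to show that $\cA^1(M) \cup \cZ^1(M)$ is open and dense in $\cT^1(M)$.

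Openness is essentially by definition: $\cA^1(M)$ is $C^1$-open in $\Diff^1(M)$, and $\cZ^1(M)$ is open by definition, so their union is open in $\Diff^1(M)$ and a fortiori in $\cT^1(M)$. For density, I would fix $f\in\cT^1(M)$ and an arbitrary $C^1$-neighborhood $\cU$ of $f$ contained in $\cT^1(M)$. By the paragraph preceding the corollary, the set $\cH^1(M)\cup \cR\cC^1(M)$ contains an open and dense subset of $\cT^1(M)$, so I can pick some $g\in \cU \cap (\cH^1(M)\cup\cR\cC^1(M))$ and then split into two cases according to which piece $g$ belongs to.

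In the first case, $g\in \cH^1(M)\cap\cT^1(M)$. Since $g$ is robustly transitive, in particular transitive, its nonwandering set is all of $M$; combined with Axiom~A this forces $g\in\cA^1(M)$, and we are done in this case. In the second case, $g\in\cR\cC^1(M)\cap\cT^1(M)$. Here I invoke Theorem~\ref{t.dichotomy}: the set of diffeomorphisms in $\cR\cC^1(M)$ that carry a nonhyperbolic ergodic measure with positive entropy contains an open and dense subset of $\cR\cC^1(M)$, and this set is contained in $\cZ^1(M)$. Since $\cT^1(M)$ is open around $g$, I can find $h\in \cU\cap \cT^1(M)\cap \cR\cC^1(M)\cap \cZ^1(M)$, which in particular lies in $\cZ^1(M)$. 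In both cases I have produced an element of $\cU\cap(\cA^1(M)\cup \cZ^1(M))$, proving density.

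There is no real obstacle here; essentially everything is packaged in Theorem~\ref{t.dichotomy} and in the cited dichotomy between hyperbolicity and robust cycles for robustly transitive diffeomorphisms. The only verification worth writing down carefully is the implication ``hyperbolic $+$ transitive $\Rightarrow$ Anosov'', used in the first case, and the remark that both $\cT^1(M)$ and $\cZ^1(M)$ being $C^1$-open lets the approximation in the second case be absorbed into a single neighborhood.
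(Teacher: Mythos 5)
Your proof is correct and is precisely the argument the paper has in mind when it states the corollary follows from Theorem~\ref{t.dichotomy} together with the cited fact that $\cH^1(M)\cup\cR\cC^1(M)$ contains an open dense subset of the robustly transitive diffeomorphisms; the case split and the observation that ``Axiom~A plus transitive implies Anosov'' are exactly the routine verifications left to the reader.
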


There are several \emph{mutatis mutandis} versions of this corollary for robustly 
transitive sets and homoclinic or chain recurrence classes 
robustly  containing saddles of different indices.  
One of these is the following. 
A diffeomorphism  $f$ is called {\emph{$C^1$-tame}}
if each of its chain recurrence classes is $C^1$-robustly isolated. In this case, 
the number of chain recurrence classes is finite and constant in a $C^1$-neighborhood
of $f$. The following corollary asserts that 
Question~\ref{q.1} with $r=1$ has a positive answer if restricted to tame diffeomorphisms. 

\begin{corom} \label{c.tame}
There is a $C^1$-open and dense subset of 
the set of $C^1$-tame diffeomorphisms 
consisting of diffeomorphisms whose 
chain recurrence classes are either hyperbolic 
or support a nonhyperbolic ergodic measure with positive entropy.
\end{corom}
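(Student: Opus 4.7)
The plan is to derive Corollary \ref{c.tame} from Theorem \ref{t.dichotomy} by arguing chain recurrence class by chain recurrence class, exploiting the localization afforded by tameness. Let $\cT^1(M)\subset\Diff^1(M)$ denote the $C^1$-open set of tame diffeomorphisms. For $f\in\cT^1(M)$ with chain recurrence classes $C_1(f),\dots,C_k(f)$, tameness supplies pairwise disjoint open sets $U_1,\dots,U_k$ and a $C^1$-neighborhood $\cU$ of $f$ such that for every $g\in\cU$ the class $C_i(g)=\bigcap_{n\in\ZZ}g^n(\overline{U_i})$ is the continuation of $C_i(f)$, and the number of classes stays equal to $k$. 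Thus both properties targeted by the corollary --- being hyperbolic and supporting a nonhyperbolic ergodic measure with positive entropy --- are tied to specific classes through their isolating sets.

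The first step I would carry out is a local analog of the dichotomy appearing before Corollary \ref{c.informal}: find a $C^1$-open and dense subset $\cT_0\subset\cT^1(M)$ on which, for every $g\in\cT_0$ and every index $i$, the class $C_i(g)$ is either hyperbolic or contains a robust heterodimensional cycle whose robustness is witnessed inside $U_i$. The hyperbolic case is $C^1$-open by structural stability. For a non-hyperbolic class, $C^1$-generic results of \cite{BC} identify the class with a homoclinic class, the index obstruction to hyperbolicity forces the coexistence of saddles of different u-indices inside $C_i$, Hayashi's connecting lemma \cite{Ha} yields a heterodimensional cycle inside $C_i$ by a perturbation supported in $U_i$, and \cite{BD-cycles} makes that cycle $C^1$-robust, again by a perturbation supported in $U_i$. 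Handling the finitely many non-hyperbolic classes one after the other, and using the disjointness of the $U_i$'s, yields $\cT_0$.

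The second step applies Theorem \ref{t.dichotomy} inside each non-hyperbolic $U_i$. For $f\in\cT_0$ and each non-hyperbolic class $C_i(f)$, the robust cycle sitting inside $U_i$ places $f$ in $\cR\cC^1(M)$; Theorem \ref{t.dichotomy}, combined with the more precise construction of Theorem \ref{t.practical}, then provides, after a further $C^1$-perturbation supported in $U_i$, a nonhyperbolic ergodic measure with positive entropy carried by a partially hyperbolic compact set built near the robust cycle and hence contained in $C_i(g)$. Iterating this refinement over $i=1,\dots,k$, using perturbations localized in the pairwise disjoint $U_i$'s so that successive refinements do not interfere, delivers the desired $C^1$-open and dense subset of $\cT^1(M)$.

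The main obstacle is the localization in the last step: one must check that the partially hyperbolic set with uniformly zero center exponent supplied by Theorem \ref{t.dichotomy} lives inside the single target class $C_i$ rather than in some larger invariant object. Robust isolation makes this tractable, as any invariant set produced by a perturbation supported in $U_i$ is forced into $C_i(g)$, but it does require a careful reading of the proof of Theorem \ref{t.dichotomy} to verify that the construction is genuinely local to a neighborhood of the robust cycle. A secondary technical point is that some of the identifications used in the first step (chain class equals homoclinic class, etc.) hold only on a $C^1$-residual subset of $\cT^1(M)$; this is upgraded to open-and-density because the two properties actually used in the end --- existence of a robust cycle inside $U_i$ and existence of a nonhyperbolic ergodic measure of positive entropy supported in $C_i$ --- are both $C^1$-open.
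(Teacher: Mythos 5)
Your approach matches the paper's implicit proof: the paper only says Corollary~\ref{c.tame} is a \emph{mutatis mutandis} version of Corollary~\ref{c.informal}, i.e.\ run the Ha--BC--BD-cycles argument class by class, using tameness to localize, and then apply the main theorem to each non-hyperbolic class. Your two-step structure (first pass to an open-dense set on which every non-hyperbolic class carries a robust cycle; then apply the main theorem) is exactly that.

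One refinement: the ``main obstacle'' you flag, namely that the nonhyperbolic invariant set might escape the target class $C_i$, does not require a careful re-reading of the proof of Theorem~\ref{t.dichotomy}. It is already resolved in the \emph{statements} of Theorem~\ref{t.main} (which produces $K_{f,i}\subset C(p_f,f)$) and Theorem~\ref{t.practical} (which produces $K\subset C(q,f)$). So the correct move in your second step is to invoke Theorem~\ref{t.main} with $\cU$ restricted to the isolating neighborhood $U_i$ rather than to massage Theorem~\ref{t.dichotomy}. A second, mild caveat: your phrase ``the index obstruction to hyperbolicity forces the coexistence of saddles of different u-indices'' is doing real work; for isolated (tame) classes this dichotomy -- that a non-hyperbolic isolated class generically must contain saddles of different indices, rather than, say, fail hyperbolicity with a single index -- is a nontrivial generic result (essentially \cite{ABCDW}-type), not a consequence of \cite{BC} alone. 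The paper finesses this with the same citation list, so you are consistent with the source, but a fully self-contained write-up should cite that dichotomy explicitly.
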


\subsection{Sharper results}

%\subsection{Precise statements of the results}
%\margem{O teorema ja enunciado eh bastante preciso...}

%We begin with some definitions.
A finite sequence of points $(x_i)_{i=0}^n$ is an
{\emph{$\epsilon$-pseudo-orbit}} of 
a diffeomorphism $f\colon M\to M$
 if
$\mbox{dist}(f(x_i),x_{i+1})<\epsilon$ for all $i=0,\dots,n-1$. A
point $x$ is {\emph{chain recurrent}} for $f$ if
 for every $\epsilon>0$ there is an $\epsilon$-pseudo-orbit
 $(x_i)_{i=0}^n$ 
starting and ending at $x$ (i.e., with $x=x_0=x_n$). 
The {\emph{chain recurrent set}} is composed by all chain recurrent points of $f$
and is denoted by $\cR(f)$. 
This set splits into pairwise disjoint {\emph{chain recurrence classes}}: the
class $C(x,f)$ of $x\in \cR(f)$ is the set of points $y$ such that
for every $\epsilon>0$ there are $\epsilon$-pseudo-orbits joining
$x$ to $y$ and $y$ to $x$.
%For instance, if two hyperbolic basic sets $\La$ and $\Theta$ form a robust cycle, they belong 
%to the same chain recurrence class. 

Given an $f$-invariant set $\Lambda$ and a continuous $Df$-invariant line field 
$E=(E_x)_{x\in \Lambda}$ over $\Lambda$, 
the {\emph{Lyapunov exponent}} of a point $x$ along the direction $E$ is defined as
$$
\chi_E(x)\eqdef 
\lim_{|n| \to \infty} \frac{\log \|Df^n_x (v)\|}{n}, \quad v\in E_x\setminus\{ 0\},
$$
where $\|\mathord{\cdot}\|$ stands for the Riemannian norm,
whenever this limit exists.

\begin{theo}\label{t.main}
Let $M$ be a compact manifold without boundary and of dimension $d \ge 3$. 
Let $\cU \subset \Diff^1(M)$ be an open set of diffeomorphisms  
such that for every $f \in \cU$ there are hyperbolic periodic points
$p_f$ and $q_f$, depending continuously on $f$, 
in the same chain recurrence class $C(p_f,f)$
and having respective $\mathrm{u}$-indices $i_p > i_q$.
Then there exists a $C^1$-open and dense subset $\cV$ of $\cU$ 
with the following properties.
For any $f \in \cV$ and any integer $i$ with $i_q < i \le i_p$
there exists a compact $f$-invariant set $K_{f,i} \subset C(p_f,f)$
with a partially hyperbolic splitting
$$
T_{K_{f,i}} M = E^\mathrm{uu} \oplus E^\mathrm{c} \oplus E^\mathrm{ss}
$$
such that:
\begin{itemize}
	\item 
	$E^\mathrm{uu}$ is uniformly expanding and has dimension $i-1 > 0$,
	$E^{\mathrm{c}}$ has dimension~$1$, and
	$E^\mathrm{ss}$ is uniformly contracting and has dimension $d-i > 0$;
	\item 
      the Lyapunov exponent along the $E^\mathrm{c}$ direction of
      any point in $K_{f,i}$ is zero; 
	\item the topological entropy of the restriction of $f$ to $K_{f,i}$ is positive.
\end{itemize}
\end{theo}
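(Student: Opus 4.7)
The plan is to reduce the hypothesis of Theorem~\ref{t.main} to a configuration that feeds directly into the robust criterion of Theorem~\ref{t.practical}, and then to combine it with Theorem~\ref{t.FlipFlop_weak}. The key geometric object to be produced is a blender together with ``flip-flop'' partner discs: a strictly invariant family of center-unstable discs of center-unstable dimension exactly $i$, whose unstable part has the required dimension $i-1$, and which robustly intersects the stable manifold of a saddle of $\mathrm{u}$-index $i-1$.

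First, I would pass from the pair $(p_f,q_f)$ of saddles of $\mathrm{u}$-indices $i_p>i_q$ in the same chain recurrence class to a much more rigid configuration by means of $C^1$-perturbations. Using Crovisier's connecting lemma together with the Bonatti--D\'\i az technique for creating heterodimensional cycles between saddles of consecutive indices, one obtains, on a $C^1$-dense open subset $\cV\subset\cU$ and for every intermediate $i$ with $i_q<i\le i_p$, a pair of hyperbolic transitive sets of $\mathrm{u}$-indices $i-1$ and $i$ inside $C(p_f,f)$ connected by a $C^1$-robust heterodimensional cycle. Applying the blender-making mechanisms of Bonatti--D\'\i az (and the refinements ensuring robustness, \emph{e.g.}\ those of Bonatti--D\'\i az--Kiriki), one can further assume that one of the saddles is part of a $cu$-blender of index $i$, and that the blender's invariant family of discs meets the stable manifold of the saddle of the other index robustly. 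This is the precise geometric input required by Theorem~\ref{t.practical}: a blender in the sense of strict invariance of a family of discs, together with its flip-flop partner.

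Once this robust configuration is in place, Theorem~\ref{t.practical} produces, for each admissible $i$, a compact invariant set $K_{f,i}\subset C(p_f,f)$ carrying a partially hyperbolic splitting $E^{\mathrm{uu}}\oplus E^{\mathrm{c}}\oplus E^{\mathrm{ss}}$ of the prescribed dimensions $i-1$, $1$ and $d-i$. The vanishing of the center Lyapunov exponent at \emph{every} point of $K_{f,i}$ is obtained by applying Theorem~\ref{t.FlipFlop_weak} to the function $\log\|Df|_{E^{\mathrm{c}}}\|$ along the orbits coded by the blender: the flip-flop mechanism produces, for each admissible itinerary, a matching ``up'' itinerary passing through the blender and a ``down'' itinerary passing through the partner saddle, whose center Birkhoff averages cancel uniformly. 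Positive topological entropy comes at no extra cost, because the coding built from a blender admits at least two different symbols; the Variational Principle then immediately yields the ergodic measure with positive entropy promised by Theorem~\ref{t.dichotomy}.

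The main obstacle I anticipate lies not in the perturbative step (which is by now standard in this context) but in insisting on a \emph{uniform} vanishing of the center exponent on the entire set $K_{f,i}$ while simultaneously keeping its topological entropy positive. Classical constructions \emph{\`a la} Gorodetski--Ilyashenko--Kleptsyn--Nalsky or D\'\i az--Gorodetski produce a single nonhyperbolic ergodic measure as a weak-$*$ limit, with no control of the exponent outside a residual subset of generic points, and no lower bound on the entropy. The strength of the present strategy is that the strict-invariance formulation of the blender is robust enough to control \emph{all} orbits of the coded set at once, which is precisely what makes Theorem~\ref{t.FlipFlop_weak} applicable uniformly; carefully matching the lengths of the ``up'' and ``down'' excursions so that the cancellation is uniform, without destroying the symbolic richness that yields positive entropy, is where the real work lives.
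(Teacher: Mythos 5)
Your proposal follows the paper's route: reduce to robust cycles between saddles of consecutive $\mathrm{u}$-indices inside $C(p_f,f)$, build a dynamical blender paired with a saddle of the lower index in a (split) flip-flop configuration, and invoke Theorem~\ref{t.practical}, which in turn rests on the abstract zero-averages criterion of Theorem~\ref{t.FlipFlop_weak} applied to the logarithm of the central Jacobian. The one technical step you gloss over --- verifying, after perturbation, the strict invariance of the disc family and the split partial hyperbolicity required by Definition~\ref{d.splitflipflop} --- is precisely what the paper supplies via its intermediate ``spawner'' construction (Propositions~\ref{p.caixapreta} and~\ref{p.spawn}), but the overall architecture you propose coincides with the paper's.
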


Additionally, the Lyapunov exponent along the central direction $E^\mathrm{c}$ is \emph{uniformly} zero, 
in the sense that the limits that by definition are the Lyapunov exponent
exist everywhere on $K_{f,i}$ and are uniform.
Although this uniformity follows abstractly from the facts that $K_{f,i}$ is compact and $E^\mathrm{c}$ is one-dimensional,
we will obtain it directly from the construction.

\medskip

Let us now see how Theorem~\ref{t.main} implies Theorem~\ref{t.dichotomy}.
Given a pair of hyperbolic sets forming a robust cycle, 
their union is contained in the same chain recurrence class
and contains hyperbolic periodic points of different indices,
permitting us to apply Theorem~\ref{t.main}.
This allows to conclude that diffeomorphisms $f$ in an open and dense subset of $\cR\cC^1(M)$
possess compact invariant partially hyperbolic sets $K_f$ with positive topological entropy
and with uniformly zero Lyapunov exponents along the center direction.
By the variational principle for entropy (see e.g.\ \cite{Walters}), 
each such $K_f$ supports an ergodic measure
of positive metric entropy, and so we obtain the statement of Theorem~\ref{t.dichotomy}.

% 
% \begin{rema}\label{r.abcdw}
% % Assume that the saddles $p_f$ and $q_f$ in
% % Theorem~\ref{t.main} are such that
% % $\mbox{$\mathrm{s}$-index}(q_f)>\mbox{$\mathrm{s}$-index}(p_f)$.
% Under the hypotheses of Theorem~\ref{t.main},
% it follows from \cite{ABCDW} \margem{DUVIDA!}
% that there is an open and dense subset $\cW$ of $\cU$ such that
% for every $f\in\cW$  and for every  
% $i\in\{ \mbox{$\mathrm{s}$-index}(p_f),\ldots, \mbox{$\mathrm{s}$-index}(q_f)-1\}$,
% there is a hyperbolic periodic point $r_{f,i}$ 
% of index $i$ in the class $C(p_f,f)$ and depending continuously on $f$.
% Thus, applying  Theorem~\ref{t.main} to $r_{f,i}$ and $q_f$,
%  one obtains a partially
% hyperbolic invariant compact set $K_{f,i}$ with 
% uniformly contracting bundle of dimension $i$ and
% one-dimensional central direction so that the Lyapunov 
% exponent corresponding to this central direction
% of any point of $K_{f,i}$
%  is zero.   
% \end{rema}
% 
% 
% 
% \begin{rema}\label{r.coindex1}
% By the result in \cite{ABCDW} cited in Remark~\ref{r.abcdw}, to prove 
% Theorem ~\ref{t.main} 
% it suffices to consider the case where $i_q - i_p=1$.
% %$\mbox{$\mathrm{s}$-index}(q_f)-\mbox{$\mathrm{s}$-index}(p_f)=1$.
% \end{rema}
% 

In the converse direction, 
if $\cU$ is an open set of diffeomorphisms satisfying the hypotheses of Theorem~\ref{t.main},
that is, such that every $f \in \cU$ has a chain recurrence class
with periodic points $p_f$, $q_f$ of respective $\mathrm{u}$-indices $i_p > i_q$
then by results of \cite{BC,ABCDW,BD-cycles,BDK} 
there exists an open and dense subset $\cW$ of $\cU$ 
such that if $f \in \cW$ then the chain recurrence class $C(p_f,f)$
contains periodic points of every intermediate $\mathrm{u}$-index.
Moreover, any two of these periodic points 
having consecutive $\mathrm{u}$-indices
belong to a pair of hyperbolic basic sets forming a robust cycle\footnote{According to \cite{BC}, 
for $C^1$-generic diffeomorphisms $f$ in $\cU$ the  homoclinic class and the chain recurrent class of $p_f$ coincide, and the same occurs for $q_f$. 
Therefore the homoclinic classes $H(p_f)$ and $H(q_f)$ of $p_f$ and $q_f$ coincide. 
Now \cite[Corollary 2.4]{BCDG} claims that for every generic $f$ for which $H(p_f)$ and $H(q_f)$ coincide, the class $H(p_f)$ contains
hyperbolic sets $K_i$ of $u$-index $i$ for every $i_q\leq i\leq i_q$  so that $K_i$ and $K_{i+1}$, $i_q\leq i< i_p$, have a robust cycle.
}.

So if in Theorem~\ref{t.main} we replace the hypothesis  
``\emph{$p_f$ and $q_f$ are in the same chain recurrence class for every $f\in \cU$}'' 
by the stronger hypothesis ``\emph{$p_f$ and $q_f$ have consecutive indices and 
are related by a robust cycle for every $f\in \cU$}'',
we obtain a result which is not much weaker than its ancestor: 
the two theorems are equivalent modulo a nowhere dense closed subset of $\Diff^1(M)$.
To prove Theorem~\ref{t.main},
we will actually work with these robust cycles
associated to periodic points of consecutive indices.

\bigskip

%\subsubsection{Nonhyperbolic measures}

Given an ergodic measure $\mu$, 
by Oseledets' Theorem we can define its {\emph{Lyapunov exponents}}
$\chi_1(\mu) \ge \chi_2(\mu) \ge \cdots \ge \chi_d(\mu)$, where $d =\dim M$.
While we will not recall here the full statement of that theorem,
let us remark that for $\mu$-a.e.\ point $x$ it is possible 
to choose linearly independent vectors $v_1$, \dots, $v_d$ in $T_x M$ such that 
$$
\lim_{n \to \pm \infty} \frac{\log \|Df^n_x (v_i)\|}{n} = \chi_i(\mu) \quad 
\mbox{for each $i$}.
$$
So Theorem~\ref{t.main} also has the following corollary:

\begin{corom}\label{c.i} 
Under the hypotheses of Theorem~\ref{t.main},
there exists a $C^1$-open and dense subset $\cV \subset \cU$ 
such that  every diffeomorphism $f\in \cV$ has ergodic measures 
$\mu_{i_p}, \dots ,\mu_{i_q-1}$ 
such that each $\mu_i$ has
positive entropy and its $i$-th Lyapunov exponent $\chi_i(\mu_i)$ is zero.
\end{corom}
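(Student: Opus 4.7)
The plan is to derive Corollary~\ref{c.i} as a direct bookkeeping consequence of Theorem~\ref{t.main} combined with the variational principle. Let $\cV\subset\cU$ be the $C^1$-open and dense subset supplied by Theorem~\ref{t.main}. Fix $f\in \cV$ and an integer $i$ with $i_q<i\le i_p$. By that theorem, $f$ admits a compact invariant set $K_{f,i}\subset C(p_f,f)$ carrying a partially hyperbolic splitting
$$
T_{K_{f,i}}M \;=\; E^{\mathrm{uu}} \oplus E^{\mathrm{c}} \oplus E^{\mathrm{ss}},\qquad \dim E^{\mathrm{uu}} = i-1,\ \dim E^{\mathrm{c}} = 1,\ \dim E^{\mathrm{ss}} = d-i,
$$
with $E^{\mathrm{uu}}$ uniformly expanding, $E^{\mathrm{ss}}$ uniformly contracting, central Lyapunov exponent identically zero on $K_{f,i}$, and positive topological entropy.

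First I would produce an ergodic $f$-invariant measure $\mu_i$ supported on $K_{f,i}$ with $h_{\mu_i}(f)>0$. This step is in fact already performed in the excerpt, right after Theorem~\ref{t.main}: the variational principle for topological entropy (see e.g.\ \cite{Walters}) provides an invariant Borel probability on $K_{f,i}$ of positive metric entropy, and the affinity of metric entropy under the ergodic decomposition then yields an ergodic component $\mu_i$ with $h_{\mu_i}(f)>0$, necessarily supported on the closed invariant set $K_{f,i}$.

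It remains to locate the central (zero) exponent inside the decreasing Oseledets ordering $\chi_1(\mu_i)\ge\cdots\ge\chi_d(\mu_i)$. Uniform expansion on the continuous $Df$-invariant bundle $E^{\mathrm{uu}}$ forces every Oseledets exponent associated to it to be strictly positive, and symmetrically every exponent coming from $E^{\mathrm{ss}}$ is strictly negative; by a standard domination argument, at $\mu_i$-a.e.\ point the Oseledets splitting is subordinate to $E^{\mathrm{uu}}\oplus E^{\mathrm{c}}\oplus E^{\mathrm{ss}}$. Consequently the top $i-1$ exponents of $\mu_i$ come from $E^{\mathrm{uu}}$ and are positive, the bottom $d-i$ exponents come from $E^{\mathrm{ss}}$ and are negative, and the unique exponent in the $i$-th slot is the central one, namely zero. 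This yields $\chi_i(\mu_i)=0$, as asserted.

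I anticipate essentially no obstacle: once Theorem~\ref{t.main} and the standard fact that a dominated splitting with exponent-separated factors is refined by the Oseledets splitting are taken for granted, the corollary is purely a matter of matching dimensions with positions in the Lyapunov spectrum.
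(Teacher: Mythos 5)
Your proposal is correct and matches what the paper intends: the paper states Corollary~\ref{c.i} as an immediate consequence of Theorem~\ref{t.main} without spelling out a proof, and the argument you supply (variational principle plus ergodic decomposition for positive entropy, then matching the dominated splitting $E^{\mathrm{uu}}\oplus E^{\mathrm{c}}\oplus E^{\mathrm{ss}}$ with the ordered Oseledets spectrum to place the zero exponent in the $i$-th slot) is exactly the natural fill-in. No gaps.
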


As a matter of fact, 
our methods permit us to obtain not only a central Lyapunov exponent equal to zero, 
but a whole interval of central Lyapunov exponents.
If $p$ is a periodic point then we denote by $\chi_i(p)$ the $i$-th Lyapunov exponent
of the ergodic measure supported on the orbit of $p$.

\begin{theo}\label{t.interval}
Let $M$ be a compact manifold without boundary and of dimension $d \ge 3$. 
Let $\cU \subset \Diff^1(M)$ be an open set of diffeomorphisms  
such that for every $f \in \cU$ there are hyperbolic periodic points
$p_f$ and $q_f$, depending continuously on $f$, 
in the same chain recurrence class $C(p_f,f)$
and having respective $\mathrm{u}$-indices $i$ and $i-1$.

Then there exists a $C^1$-open and dense subset $\cV$ of $\cU$ such that for
every $f \in \cV$  and every $\chi\in \big(\chi_i (q_f),\chi_i(p_f)\big)$, 
there exists a compact $f$-invariant set $K_{f,\chi} \subset C(p_f,f)$
with a partially hyperbolic splitting 
$$
T_{K_{f,\chi}} M  = E^\mathrm{uu} \oplus E^\mathrm{c} \oplus E^\mathrm{ss}
$$
such that:
\begin{itemize}
	\item 
	$E^\mathrm{uu}$ is uniformly expanding and has dimension $i-1>0$,
	$E^\mathrm{c}$ has dimension~$1$, and
	$E^\mathrm{ss}$ is uniformly contracting and has dimension $d-i>0$;
	\item 
      the Lyapunov exponent along the $E^\mathrm{c}$ direction of any point in $K_{f,\chi}$ equals~$\chi$;
	\item the topological entropy of the restriction of $f$ to $K_{f,\chi}$ is positive.
\end{itemize}
In particular, the $i$-th Lyapunov exponent of any measure supported on $K_{f,\chi}$ equals $\chi$
and there is an ergodic measure supported on  $K_{f,\chi}$ with positive entropy.
\end{theo}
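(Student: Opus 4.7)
The plan is to adapt the proof of Theorem~\ref{t.main} by shifting the target value of the center Birkhoff average from $0$ to $\chi$. Write $\phi^\mathrm{c}(x) \eqdef \log \|Df|_{E^\mathrm{c}_x}\|$ along whatever one-dimensional central bundle is in play. For Theorem~\ref{t.main} the criteria of Theorems~\ref{t.practical} and~\ref{t.FlipFlop_weak} produce a compact invariant set on which the Birkhoff averages of $\phi^\mathrm{c}$ converge \emph{uniformly} to $0$. The point is that these abstract criteria are insensitive to the choice of reference value: applying them to the twisted observable $\phi^\mathrm{c} - \chi$ would produce a compact invariant set on which the Birkhoff averages of $\phi^\mathrm{c}$ converge uniformly to $\chi$, i.e.\ on which the center Lyapunov exponent is uniformly equal to~$\chi$.

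To put this strategy to work, I would first check that the hypothesis $\chi \in \big(\chi_i(q_f), \chi_i(p_f)\big)$ provides exactly the sign asymmetry needed by the blender/flip-flop mechanism. Near $p_f$, since $\chi_i(p_f) > \chi$, long sojourns push the twisted Birkhoff sums of $\phi^\mathrm{c} - \chi$ upwards; near $q_f$, since $\chi_i(q_f) < \chi$, long sojourns push these sums downwards. This is precisely the dichotomy that Theorem~\ref{t.practical} exploits in the case $\chi = 0$, so the same orbit-pieces ``going to $p_f$'' and ``going to $q_f$''  obtained from the robust cycle (available on a $C^1$-open and dense subset $\cV \subset \cU$ by the results of \cite{BC,ABCDW,BD-cycles,BDK} recalled in the excerpt) can be concatenated in a flip-flop family, but now with a bias: the proportion of time spent near $p_f$ versus $q_f$ is tuned so that the long-term twisted average vanishes, instead of being $1/2$--$1/2$ as for the zero-exponent case.

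I would then run the full machinery of Theorems~\ref{t.practical} and~\ref{t.FlipFlop_weak} with the observable $\phi^\mathrm{c} - \chi$ in place of $\phi^\mathrm{c}$. This yields a compact $f$-invariant set $K_{f,\chi}$, contained in the chain recurrence class $C(p_f,f)$, carrying a partially hyperbolic splitting $E^\mathrm{uu} \oplus E^\mathrm{c} \oplus E^\mathrm{ss}$ with the prescribed dimensions, and on which $\tfrac{1}{n}\sum_{k=0}^{n-1} (\phi^\mathrm{c}\circ f^k - \chi) \to 0$ uniformly; equivalently, the center Lyapunov exponent equals $\chi$ uniformly on $K_{f,\chi}$. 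Positive topological entropy comes from the same mechanism as in Theorem~\ref{t.main}: the flip-flop family produces a shift-like factor with at least two ``letters'' of each type, and the count of admissible codings is independent of $\chi$. The variational principle then gives an ergodic measure of positive metric entropy supported on $K_{f,\chi}$, exactly as in the derivation of Theorem~\ref{t.dichotomy}.

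The main obstacle is bookkeeping: I would need to verify that all quantitative estimates behind Theorems~\ref{t.practical} and~\ref{t.FlipFlop_weak}---the strict invariance of the disc family, the expansion/contraction rates away from the central direction, the size of the symbolic extension producing entropy---remain robust when $\phi^\mathrm{c}$ is replaced by the shifted cocycle $\phi^\mathrm{c} - \chi$. Since $\chi$ lies in a compact sub-interval of $\big(\chi_i(q_f), \chi_i(p_f)\big)$ and the constants involved depend continuously on the orbits of $p_f$ and $q_f$ (which vary continuously on $\cU$), this is essentially a matter of tracking how the choices of orbit-segments and their proportions depend on $\chi$; there is no new dynamical obstruction beyond what the $\chi=0$ case already overcame.
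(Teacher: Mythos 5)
Your proposal correctly identifies the strategy for the \emph{small} values of $\chi$: apply the flip-flop machinery to the shifted observable $\phi-\chi$ and observe that Theorems~\ref{t.FlipFlop_weak} and \ref{t.practical} are indifferent to the constant shift, as long as the shifted function is still positive on the blender and negative on the periodic saddle. This is exactly what the paper does. But there is a genuine gap: you assert that this single mechanism covers the whole interval $\big(\chi_i(q_f),\chi_i(p_f)\big)$, and it does not. The flip-flop configuration produced from the spawner pairs a dynamical blender $\Gamma$ and an auxiliary saddle $r$ \emph{both of which live near the robust cycle and therefore have center Jacobian close to $1$}. The constant $\alpha_f$ for which $\log J^{\mathrm{c}}>\alpha_f$ on $\Gamma$ and $\log J^{\mathrm{c}}<-\alpha_f$ on $\cO(r)$ is small; shifting $\phi$ by $\chi$ keeps the sign dichotomy only while $|\chi|<\alpha_f$. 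Once $\chi$ is comparable to $\chi_i(p_f)$ (say), the function $\phi-\chi$ is \emph{negative} on the blender, so condition~\eqref{i.FF1} of the flip-flop family fails, and the orbit-segment concatenation breaks down. In other words: your heuristic speaks of ``sojourns near $p_f$'' and ``sojourns near $q_f$,'' but the flip-flop family in the paper consists of discs near $\Gamma$ and near $r$, not near $p_f$ and $q_f$; the center exponents that matter for the mechanism are those of $\Gamma$ and $r$, and both are close to zero.

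The paper handles $\chi$ bounded away from zero by a different, essentially hyperbolic argument following \cite{ABCDW}. A small perturbation unfolds the heterodimensional cycle to create a saddle $\bar p$ homoclinically related to $p_f$ with $\chi_i(\bar p)\in(0,\alpha_f/2)$, and similarly a saddle $\bar q$ homoclinically related to $q_f$ with $\chi_i(\bar q)\in(-\alpha_f/2,0)$. One then builds a hyperbolic basic set $\Lambda$ containing $p_f$ and $\bar p$ whose unstable bundle carries a dominated splitting $E^{\mathrm{uu}}\oplus E^{\mathrm{cu}}$ with $E^{\mathrm{cu}}$ one-dimensional, and uses a Markov partition of $\Lambda$ to produce, for each $\chi\in\big(\chi_i(\bar p),\chi_i(p_f)\big)$, a compact invariant subset of positive entropy whose center exponent equals $\chi$; the analogous construction near $q_f$ handles $\chi\in\big(\chi_i(q_f),\chi_i(\bar q)\big)$. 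The three ranges are arranged to overlap, which requires the careful choice of $\bar p$ and $\bar q$. None of this appears in your write-up, so the proposal as it stands proves the theorem only on a small sub-interval of the claimed range.
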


The proof of this theorem has two parts. The construction of the sets $K_{f,\chi}$ for $\chi$ bounded 
away from zero follows from the arguments in \cite{ABCDW} using Markov partitions and has a hyperbolic flavor. 
The new and more difficult part here is the construction of the sets  $K_{f,\chi}$ for $\chi$ close to zero.

\subsection{An abstract criterion for the existence of zero averages}

Along the proof of Theorem~\ref{t.main},
we develop some non-perturbative criteria for the existence of
zero center Lyapunov exponents, or more generally, zero limit Birkhoff averages.
This first criterion holds on a purely topological setting,
and relies in the following concept (see Figure~\ref{fig.ff}):

\begin{defi}[Flip-flop family]\label{d.flipflopfamily}
Let $(X,d)$ be a metric space, 
$f\colon X\to X$ be a continuous map, %\margem{mais forte: $f$ nao precisa ser homeo}
$K$ be a compact subset of $X$, 
and $\phi\colon K\to \RR$ be a continuous function. 

A \emph{flip-flop family} is a family $\fF$ %\margem{Obs: mudei notacao pois ja havia D's demais. Gotico esta sendo usado consistentemente para familias de conjuntos.}
of compact subsets of $K$ 
with uniformly bounded diameters that splits as  $\fF = \fF^+\cup \fF^-$ 
into two disjoint families satisfying the following properties:
\begin{enumerate}[label=FF\arabic*),ref=FF\arabic*,leftmargin=*,widest=FF3]
\item\label{i.FF1} 
There is a constant $\alpha>0$ such that for all members $D^+\in\fF^+$, $D^-\in\fF^-$ 
and all points $x^+\in D^+$,  $x^-\in D^-$ we have
$$
\phi(x^-)<-\alpha<0<\alpha<\phi(x^+).
$$
\item\label{i.FF2}
For every $D\in\fF$ there are compact subsets  $D^+$, $D^-$ of $D$ such that 
$f(D^+)\in\fF^+$ and $f(D^-)\in\fF^-$.
\item\label{i.FF3} 
There is a constant $\lambda>1$ such that 
if $E$ is contained in a member of $\cF$
and $f(E)$ is a member of $\cF$ then 
%the restricted map $f|E$ expand distances by a factor at least $\lambda$, that is, 
$$
d(f(x) , f(y)) \ge \lambda \, d(x,y) \quad \text{for every $x$, $y \in E$.}
$$
\end{enumerate}
\end{defi}

\begin{figure}[hb]
\begin{minipage}[c]{\linewidth}
\centering
\vspace{0.5cm}
\begin{overpic}[scale=.30, %grid,tics=10
  ]{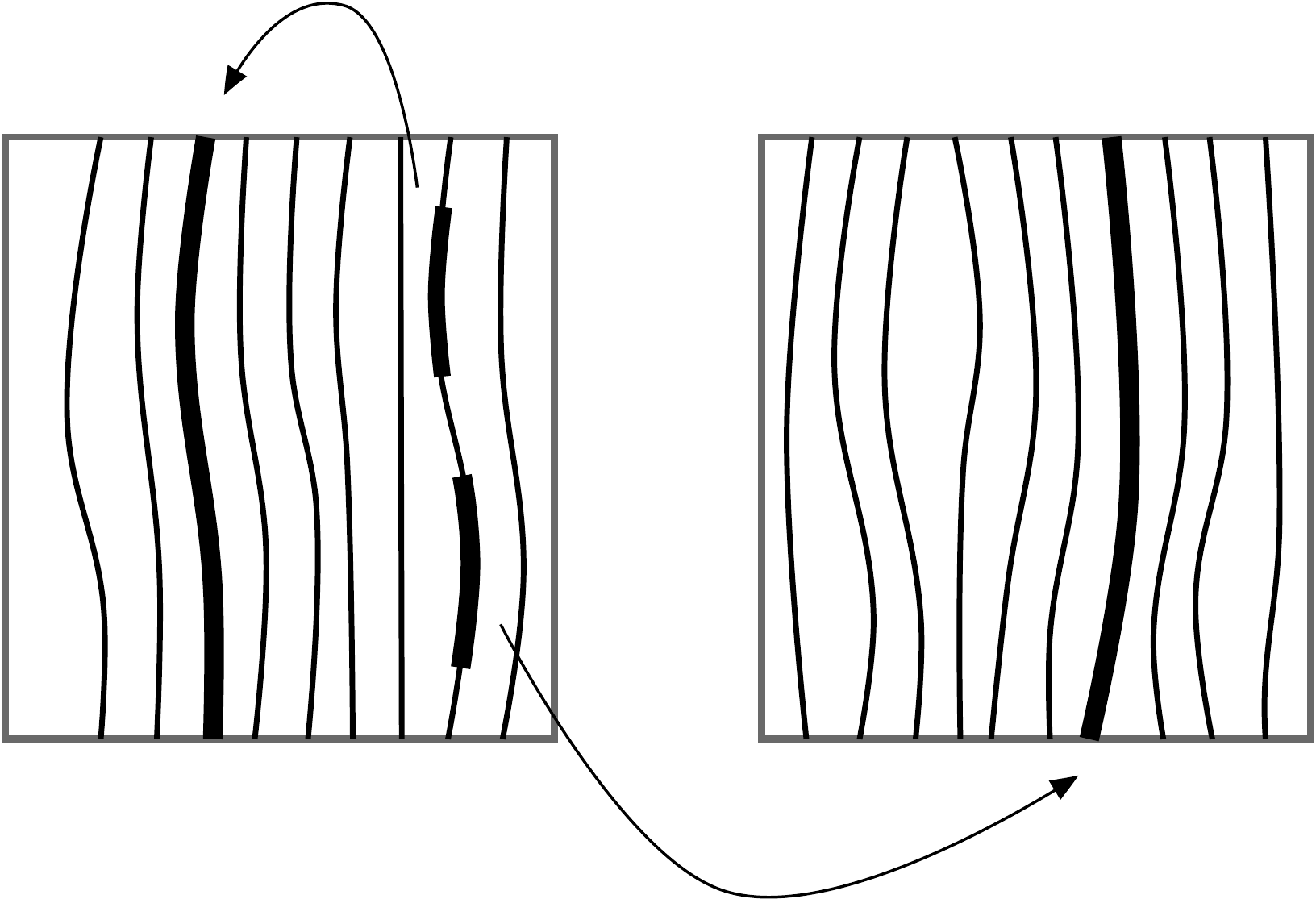}
       \put(-12,45){\small $\fF^+$}
       \put(-20,25){\small $\varphi>\alpha$}
        \put(105,25){\small $\varphi<-\alpha$}
        \put(110,45){\small  $\fF^-$}
        \put(25,70){\small  $f$}
         \put(58,4){\small  $f$}
           \put(34,45){\small  $D^+$}
           \put(37,24){\small  $D^-$}
           \put(32,6){\small  $D$}

  \end{overpic}
\caption{Flip-flop family}
\label{fig.ff}
\end{minipage}
\end{figure}

The motivation for the definition above is the following result:

\begin{theo}[Abstract criterion for zero limit Birkhoff averages] \label{t.FlipFlop_weak} 
Consider a continuous map $f \colon X \to X$ on a 
metric space $X$ having a flip-flop family 
$\fF$ associated to a continuous function $\phi\colon K\to \RR$
defined on a compact subset $K$ of $X$.

Then there exists a compact forward invariant subset $\Omega \subset K$ 
such that the Birkhoff averages of $\phi$ converge to zero uniformly on $\Omega$.
Moreover, the restriction of $f$ to $\Omega$ has positive topological entropy.

In particular, there exist ergodic $f$-invariant measures $\mu$
of positive entropy such that $\int \phi \, d\mu = 0$.
\end{theo}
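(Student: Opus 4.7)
The plan is to construct $\Omega$ as a continuous image of a positive-entropy subshift on $\{+,-\}$, coded by the flip-flop family. Fix $D_0 \in \fF$. Property \ref{i.FF2}, applied inductively, yields for every finite word $\omega = \epsilon_1\cdots\epsilon_n$ a nested compact cylinder $D_\omega \subset D_0$ such that $f^k(D_{\omega|_k}) \in \fF^{\epsilon_k}$ (as a member) for $k = 1,\ldots,n$. To control diameters I use \ref{i.FF3} as follows: for each $j \ge 1$, $f$ is $\lambda$-expanding on $E := f^{k-j}(D_{\omega|_{k-j+1}})$ because $E$ lies in the member $f^{k-j}(D_{\omega|_{k-j}}) \in \fF$ and $f(E) = f^{k-j+1}(D_{\omega|_{k-j+1}}) \in \fF$, hence also on every subset, in particular on $f^{k-j}(D_{\omega|_k}) \subset E$. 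A reverse induction on $j$ yields $\diam D_\omega \le C\lambda^{-n}$ with $C := \sup_{D\in\fF}\diam(D)$. Therefore $\bigcap_n D_{\omega|_n} = \{x_\omega\}$ for every $\omega \in \{+,-\}^\NN$, and the coding $\omega \mapsto x_\omega$ is continuous and injective (the latter because $D_{\omega+} \cap D_{\omega-} = \emptyset$, using that $\bigcup \fF^+$ and $\bigcup \fF^-$ are disjoint by \ref{i.FF1}).

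Next I select a subshift on which Birkhoff averages vanish. Put $M := \sup_K|\phi|$ and $S_n := \sum_{k=1}^n \phi\circ f^k$. From the geometric shrinkage $\diam f^k(D_{\omega|_n}) \le C\lambda^{k-n}$ together with uniform continuity of $\phi$, the oscillation of $S_n$ on $D_{\omega|_n}$ is bounded by $C'_n := \sum_{j=0}^{n-1} \omega_\phi(C\lambda^{-j})$, which is $o(n)$ for arbitrary continuous $\phi$ and uniformly bounded by some $C'$ when $\phi$ is H\"older. Fix $L > M + C'/2$ (replacing $L$ by $L_n = o(n)$ if the oscillation is unbounded), and declare $\omega$ of length $n$ \emph{admissible} when $S_n(D_\omega) \subset [-L,L]$. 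Property \ref{i.FF1} yields the transition rules $S_{n+1}(D_{\omega+}) \subset S_n(D_\omega) + [\alpha,M]$ and $S_{n+1}(D_{\omega-}) \subset S_n(D_\omega) + [-M,-\alpha]$. Since the interval $S_n(D_\omega)$ has length at most $C' < 2(L-M)$, the greedy sign opposite to the position of $S_n$ always produces an admissible extension, and \emph{both} extensions $\omega^\pm$ are admissible whenever $S_n(D_\omega) \subset [-(L-M),L-M]$. In the forced regime the extreme of $S_n$ strictly decreases by at least $\alpha$ per step, so the forced regime lasts at most $N := \lceil M/\alpha \rceil$ consecutive steps; every window of $N+1$ positions therefore contains at least one free position. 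Writing $W_n$ for the admissible words of length $n$, this yields $|W_n| \ge 2^{n/(N+1)}$, and the admissibility subshift $T := \{\omega : \omega|_n \in W_n \text{ for all } n\}$ satisfies $h_{\mathrm{top}}(\sigma|_T) \ge (\log 2)/(N+1) > 0$.

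Finally, I set $\Omega := \overline{\bigcup_{k \ge 0} f^k(\Omega_0)}$, where $\Omega_0 := \{x_\omega : \omega \in T\}$. By construction $\Omega$ is compact and $f(\Omega) \subset \Omega$; for $y \in \Omega_0$ and $k \ge 0$, one has $S_N(f^k y) = S_{N+k}(y) - S_k(y)$, both terms bounded in absolute value by $L$ by admissibility, so $|S_N| \le 2L$ throughout $\bigcup_k f^k(\Omega_0)$ and, by continuity of $\phi$, also on $\Omega$. Consequently $S_N/N \to 0$ uniformly on $\Omega$, and $h_{\mathrm{top}}(f|_\Omega) \ge h_{\mathrm{top}}(f|_{\Omega_0}) \ge h_{\mathrm{top}}(\sigma|_T) > 0$ via the continuous injection $T \hookrightarrow \Omega_0$. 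The variational principle produces an $f$-invariant measure on $\Omega$ of positive metric entropy; its ergodic decomposition, combined with the uniform limit $S_N/N \to 0$ and Birkhoff's theorem, yields an ergodic measure $\mu$ of positive entropy with $\int \phi \, d\mu = 0$. The main delicate point I expect is the greedy combinatorics at the free/forced boundary, which simultaneously produces non-emptiness of $T$, bounded Birkhoff sums on $\Omega_0$, and the positive entropy bound; the diameter estimate, the continuity and injectivity of the coding, and the forward-invariance of $\Omega$ are then a matter of bookkeeping.
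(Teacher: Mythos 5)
Your plan controls the Birkhoff \emph{sums} on a subshift of admissible words, which is exactly the approach that the paper's Remark~\ref{r.Holder} singles out as working cleanly only when $\phi$ is H\"older. The obstruction is the term you call $C'_n=\sum_{j<n}\omega_\phi(C\lambda^{-j})$: for a merely continuous $\phi$ this is $o(n)$ but \emph{unbounded}, and your entire combinatorial engine — the interval $[-L,L]$, the free region $[-(L-M),L-M]$, the bound $N=\lceil M/\alpha\rceil$ on the length of a forced run — uses a \emph{fixed} $L>M+C'/2$. The parenthetical ``replace $L$ by $L_n=o(n)$'' does not repair this: once $L_n$ and the cylinder-interval width $C'_n$ both grow, you must show simultaneously that the free window $[-(L_{n+1}-M),L_{n+1}-M]$ can always contain the width-$\le C'_n$ interval $S_n(D_\omega)$, that the minimum of $S_n(D_\omega)$ does not escape $-L_n$ during a forced run of $-$'s, and that forced runs remain uniformly bounded so that the density of free positions is positive. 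None of this is immediate, and you would also have to handle the initial block where $C'_n$ is comparable to $nM$; your final display ``$|S_N|\le 2L$'' reads as if $L$ were still a constant. So as written the argument establishes the theorem only in the H\"older case.

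The paper's proof is genuinely different precisely to avoid this: Theorem~\ref{t.FlipFlop} and Lemma~\ref{l.induction} never try to bound the Birkhoff sums. Instead they run a recursive multiscale construction of concatenated $\fF$-segments, with the \emph{average} on each scale-$k$ block pinned into a shrinking window $[\alpha_k,\beta_k]$ (resp.\ $[-\beta_k,-\alpha_k]$), using Lemma~\ref{l.distortion} only to bound the \emph{average} oscillation $\frac1T|\phi_T(x)-\phi_T(y)|$ along a long segment — which is $o(1)$ for continuous $\phi$ without any H\"older hypothesis. The forced/free dichotomy you invoke is replaced by the ``hitting the target'' step in Lemma~\ref{l.induction}, which works at the level of averages and yields uniform per-scale time bounds $t_k$; the positive-entropy noise is injected separately via the $\tau$-patterns of Lemma~\ref{l.tau}. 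A secondary, repairable issue in your sketch: $\Omega_0$ is not $f$-invariant, so the inequality $h_{\mathrm{top}}(f|_{\Omega_0})\ge h_{\mathrm{top}}(\sigma|_T)$ is not meaningful as stated; what one actually needs is a continuous surjective semiconjugacy from $(\Omega,f)$ (or a power) onto a positive-entropy subshift, which is how the paper phrases it in the last step of the proof of Theorem~\ref{t.FlipFlop}.
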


Let us emphasize that the function $\phi$ is only continuous.
If $\phi$ were assumed to be more regular (say, H\"older)
then the corresponding theorem would be considerably simpler to prove (see Remark~\ref{r.Holder}
in this regard), but it would not be sufficient to obtain $C^1$-openness in 
Theorems~\ref{t.dichotomy} and \ref{t.main} above or Theorem~\ref{t.practical} below.

\subsection{A criterion for the existence of nonhyperbolic measures}

Coming back to diffeomorphisms, let us explain our next result,
which gives explicit $C^1$-open sufficient conditions for the existence of 
partially hyperbolic sets with zero center exponents.

%A \emph{blender} is a type of transitive invariant set introduced in \cite{BD-robtran}
%that can spark off many interesting dynamical phenomena.
%A \emph{blender-horseshoe} is a special type of blender introduced in \cite{BD-tang}.
%In this paper we define a certain \emph{split flip-flop configuration} 
%associated to a blender-horseshoe and a saddle:
%basically we ask that these two objects fit together in a partially
%hyperbolic configuration
%(see  Section~\ref{s.blenderhorseshoe} for the precise definition).
%Such split flip-flop configurations are automatically $C^1$-robust.

We assume the existence of a \emph{dynamical blender}. This basically means a partially hyperbolic set together with a family of embedded discs tangent to a strong unstable cone such that the image of any perturbation of a disc in the family contains a disc in the family. See Section~\ref{s.blenders} for the precise definition. Dynamical blenders are a more flexible version of previous notions of blenders, see Section~\ref{ss.intro_blenders}.

In addition, we assume that there exists a saddle that forms together with the dynamical blender a certain
 \emph{split flip-flop configuration}. Roughly this means that the dynamics associated to the blender
 and the saddle is partially hyperbolic with one-dimensional center; see Definition~\ref{d.splitflipflop} for the precise definition.

We prove the following:
\begin{theo}[Criterion for zero center Lyapunov exponents]\label{t.practical}
	Let $M$ be a compact manifold without boundary and of dimension $d \ge 3$. 
	Assume that $f\in \Diff^1(M)$ has a periodic saddle $q$ and a dynamical blender $\Gamma$
	in a split flip-flop configuration.
	Let $i-1$ be the $\mathrm{u}$-index of $q$. 

	Then there exists a compact $f$-invariant set $K$ contained in the chain recurrence class $C(q,f)$
	and admitting a partially hyperbolic splitting 
	$$
	T_K M  = E^\mathrm{uu} \oplus E^\mathrm{c} \oplus E^\mathrm{ss}
	$$
	such that:
	\begin{itemize}
		\item 
		$E^\mathrm{uu}$ is uniformly expanding and has dimension $i-1 > 0$, 
		$E^\mathrm{c}$ has dimension~$1$, and
		$E^\mathrm{ss}$ is uniformly contracting and has dimension $d-i>0$;
		\item the Lyapunov exponent along the $E^\mathrm{c}$ direction of any point in $K$ is zero;
		\item the topological entropy of the restriction of $f$ to $K$ is positive.
	\end{itemize}
\end{theo}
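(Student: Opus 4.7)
The strategy is to apply the abstract criterion Theorem~\ref{t.FlipFlop_weak} to a flip-flop family of discs extracted from the split flip-flop configuration formed by the blender $\Gamma$ and the saddle $q$. The assumption that $\Gamma$ and $q$ are in a split flip-flop configuration supplies an open neighborhood $U$ of $\Gamma \cup \orb(q)$ carrying invariant strong unstable and strong stable cone fields of dimensions $i-1$ and $d-i$, respectively, and hence a continuous dominated splitting $E^{\mathrm{uu}} \oplus E^{\mathrm{c}} \oplus E^{\mathrm{ss}}$ on any compact invariant subset. The natural continuous function for the abstract criterion is then
$$
\phi(x) \eqdef \log \|Df|_{E^{\mathrm{c}}_x}\|.
$$

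The flip-flop family $\fF = \fF^+ \cup \fF^-$ will consist of small compact discs tangent to the strong unstable cone field. Place in $\fF^+$ those discs sitting in a neighborhood of $\Gamma$ where the blender-induced expansion of $E^{\mathrm{c}}$ forces $\phi \ge \alpha$, and in $\fF^-$ those sitting near $\orb(q)$ where the $\mathrm{u}$-index $i-1$ of $q$ forces $\phi \le -\alpha$. This gives property \ref{i.FF1}. Property \ref{i.FF3} follows from the tangency of the discs to the strong unstable cone, which forces uniform expansion of $Df$ at some rate $\lambda > 1$. Property \ref{i.FF2} is the crucial content: from any $D \in \fF$ one must extract subsets mapped by $f$ into $\fF^+$ and into $\fF^-$. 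The passage into $\fF^+$ is provided by the dynamical blender: any disc suitably tangent to the strong unstable cone contains, after a bounded number of $f$-iterates, a sub-disc whose image lies in the blender's defining disc family and hence in $\fF^+$. The passage into $\fF^-$ is provided by the heteroclinic connections intrinsic to the split flip-flop configuration: a sub-disc of $D$ can be forward-iterated a bounded number of times until it lies near $\orb(q)$, hence in $\fF^-$. To reduce the bounded iterates used here to the single iterate of $f$ demanded by Definition~\ref{d.flipflopfamily}, one enlarges the discs of $\fF$ to absorb the requisite pre-iterates while maintaining a uniform diameter bound.

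With $\fF$ in place, Theorem~\ref{t.FlipFlop_weak} yields a compact forward-invariant set $\Omega \subset U$ on which the Birkhoff averages of $\phi$ converge to zero uniformly and such that $f|_\Omega$ has positive topological entropy. The desired set is then the maximal $f$-invariant subset $K \eqdef \bigcap_{n \ge 0} f^n(\Omega)$, which is compact and $f$-invariant; since every $f$-invariant probability measure supported on $\Omega$ is actually supported on $K$, the variational principle (see \cite{Walters}) transfers the positive topological entropy from $\Omega$ to $K$, and uniform zero Birkhoff averages of $\phi$ on $\Omega$ restrict to uniform zero Birkhoff averages on $K$. The partially hyperbolic splitting on $K$ is inherited from the ambient splitting on $U$; uniform vanishing of the Birkhoff averages of $\phi$ is equivalent, via the one-dimensionality of $E^{\mathrm{c}}$, to uniform vanishing of the Lyapunov exponent along $E^{\mathrm{c}}$; and $K \subset C(q,f)$ because the heteroclinic cycle built into the split flip-flop configuration places $\Gamma$ and $q$ in a common chain recurrence class while the orbits in $K$ visit neighborhoods of both.

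The main obstacle is the verification of property \ref{i.FF2}, which is where the non-trivial content of the notion of dynamical blender (as defined in Section~\ref{s.blenders}) enters. The strict invariance of the blender's disc family under perturbations of \emph{individual discs}, rather than only under perturbations of the map, is what allows one to iterate a sub-disc of any $D \in \fF^-$ forward until it stretches across the blender's defining region and there to extract a sub-disc landing in $\fF^+$. Tracking uniform diameter bounds throughout this process and packaging the bounded numbers of pre-iterates needed in the different branches of the argument into a single iterate of $f$ (as demanded by the abstract criterion) form the technical heart of the proof.
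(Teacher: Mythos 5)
Your high-level strategy matches the paper: take $\phi = \log \|Df|_{E^{\mathrm{c}}}\|$ on a partially hyperbolic neighborhood of the split flip-flop configuration, build a flip-flop family of discs tangent to the strong unstable cone using the blender's strictly invariant family and the heteroclinic connections to $\orb(q)$, and apply Theorem~\ref{t.FlipFlop_weak}. The selection of the ambient partially hyperbolic set, the chain recurrence argument, and the passage from zero Birkhoff averages of $\phi$ to zero center exponent are all in line with the paper's proof (in particular the paper, in Proposition~\ref{p.l.flipflopcycle}, does verify the robust cycle you invoke).

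However, there is a genuine gap at the step you identify as the technical heart, namely the reduction of property \eqref{i.FF2} to a single iterate of $f$. You propose to ``enlarge the discs of $\fF$ to absorb the requisite pre-iterates while maintaining a uniform diameter bound,'' but this cannot work. The transitions between the neighborhood of $\Gamma$ and the neighborhood of $\orb(q)$ require a bounded but large number $N_\Gamma$, $N_q$ of iterates of $f$; no amount of enlarging a compact disc $D$ (whose diameter must remain uniformly small) allows $f(D)$ to contain both a sub-disc near $\Gamma$ and a sub-disc near $\orb(q)$ after one iterate, because $\Gamma$ and $\orb(q)$ are disjoint compact sets and one iterate moves points only a bounded distance. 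Worse, any intermediate discs one might try to include in the family would cross the region where $\phi$ changes sign, destroying property \eqref{i.FF1}. The correct resolution, which is what the paper does in Proposition~\ref{p.flipflopconf}, is to build a flip-flop family not for $(f,\phi)$ but for the power $f^N$ and the $N$-th Birkhoff sum $\phi_N = \sum_{j=0}^{N-1} \phi \circ f^j$, with $N \ge \max(N_q,N_\Gamma) + m$ chosen large enough that the accumulated sum $\phi_N$ has a definite sign on the entrance discs, dominating the contribution of the transition iterates. One then applies Theorem~\ref{t.FlipFlop_weak} to $(f^N,\phi_N)$ to obtain an $f^N$-invariant set $\Upsilon$ and finally takes $\Omega = \bigcup_{j=0}^{N-1} f^j(\Upsilon)$. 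Without this change of time scale, your construction of the flip-flop family does not close.
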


We will deduce Theorem~\ref{t.practical} from Theorem~\ref{t.FlipFlop_weak}:
starting from split flip-flop configurations,
we will construct flip-flop families $\fF$ composed of discs 
contained in the strong unstable manifolds of a partially hyperbolic set 
with one dimensional center bundle;
the function $\phi$ will be essentially
the logarithm of the center Jacobian.

To prove Theorem~\ref{t.main}, we basically show that in the presence of a robust cycle
the hypothesis of Theorem~\ref{t.practical} are satisfied after a suitable $C^1$-perturbation.

\subsection{Discussion on the methods}\label{ss.discussion}

%\margem{Mencionar \cite{Ha_new} em alguma parte??}

The literature contains a number of results on the existence of nonhyperbolic measures.
Let us briefly compare these results with those of the present paper.

The paper \cite{GIKN} deals with certain partially hyperbolic skew-products.
It constructs nonhyperbolic ergodic measures with nondiscrete support
as limits of sequences of measures supported on periodic orbits. 
Each periodic orbit in the sequence shadows the previous one for a 
large proportion of time -- this is the key property to obtain ergodicity.  
Each periodic orbit has a proportionally small tail far from the previous orbits,
which is chosen in order to make the center Lyapunov exponent smaller.
That \emph{method of periodic approximations}
was also used in subsequent papers \cite{KN,DG,BDG,BoBoDi} 
to find nonhyperbolic measures for partially hyperbolic dynamics.
In \cite{BoBoDi}, the method was extended to higher center dimension (in the skew-product setting) 
so to yield multiple zero exponents.
On the other hand, the nonhyperbolic ergodic measures constructed by the method of periodic approximations are highly ``repetitive'' and are likely to have zero entropy.

\medskip

The strategy developed in this paper is completely different from the periodic approximations one.
Using a recursive construction we find a point $x$
whose central expansion is controlled at all time scales.
Then its omega-limit set $\omega(x)$ is an invariant compact set which is 
\emph{completely nonhyperbolic}, meaning that every invariant measure supported on this set is nonhyperbolic. 
Moreover, the orbit of $x$ can be chosen ``noisy'' enough so that
the restriction of $f$ to $\omega(x)$ has positive topological entropy.

The strategy of recursive control at all time scales does not need much regularity:
the diffeomorphism $f$ in Theorem~\ref{t.practical} is only $C^1$,
and the function $\phi$ in Theorem~\ref{t.FlipFlop_weak} is only continuous.
Under stronger regularity assumptions it is possible to use a simpler strategy and obtain sharper conclusions:
assuming that $\phi$ is H\"older, for example, it is possible to obtain a set $\Omega$
in Theorem~\ref{t.FlipFlop_weak} over which the Birkhoff \emph{sums} are uniformly bounded
(see Remark~\ref{r.Holder}).
Actually that simpler strategy already appears in the proof of another result from the paper \cite{BoBoDi},
which constructs completely nonhyperbolic compact sets with positive topological entropy
in the context of skew-products.

\medskip

With the method of periodic approximations, 
it is relatively easy to obtain nonhyperbolic measures with ``large'' or sometimes full support.
On the other hand, the supports of the nonhyperbolic measures constructed in this paper
are completely nonhyperbolic and therefore should be relatively ``small''.
Nevertheless, 
it is seems reasonable to conjecture that our methods could be sharpened (by dropping uniformity)
so to yield nonhyperbolic measures with bigger support. 

Another natural question concerns the abundance of ergodic measures
that have a zero Lyapunov exponent with multiplicity as high as possible.

\subsection{Dynamical blenders}\label{ss.intro_blenders}

The paper \cite{BD-robtran} introduced a dynamical mechanism called \emph{blender},
which provides the existence of a hyperbolic set whose stable set behaves as its dimension were greater than the dimension
 of its stable bundle: the stable set intersects all elements of an open family of embedded discs of low dimension. 
 Important applications of blenders are the construction of $C^1$robust cycles, see \cite{BD-cycles}, 
 and $C^1$-robust transitive sets, see \cite{BD-robtran}.
Afterwards, the paper \cite{BD-tang} introduced a variation of this concept called \emph{blender-horseshoe} to obtain
robust tangencies. 

These definitions of blender
require quite specific mechanisms and are not flexible enough for the purposes of this paper.
 So we introduce here a new concept, what we call \emph{dynamical blender},
that only requires an invariance property of a family of disks, see Definition~\ref{d.dynamicalblender}. This property  
can be checked more easily and also
has the  clear advantage of being intrinsically robust.
Also, it easily implies the aforementioned property of the stable set (though we do not use this property directly).

\subsection{Organization of the paper}\label{ss.organization}

In Section~\ref{s.flipflop} we 
prove the abstract zero averages criterion (Theorem~\ref{t.FlipFlop_weak}).
This is done by finding special points whose orbits 
have controlled (i.e., small) Birkhoff averages at every time scale,
and are noisy enough so to produce positive entropy.

In Section~\ref{s.blenders}, we introduce \emph{dynamical blenders} 
and establish some properties for later use. 
Then in Section~\ref{s.flipflopyield} we introduce \emph{split flip-flop configurations}, show how they permit us to find 
flip-flop families associated to a certain function $\phi$ related to the central Jacobian, and complete the proof of Theorem~\ref{t.practical}.

In Section~\ref{s.spawners} we prove Theorem~\ref{t.main}:
Starting from a robust cycle associated to saddles of consecutive indices, we show that with an additional perturbation a flip-flop configuration appears,
and therefore the theorem follows from the previous results.

Finally, in Section~\ref{s.interval} we explain how to obtain Theorem~\ref{t.interval}.

%!TEX root = final_MAIN.tex

%%%%%%%%%%%%%%%%%%%%%%%%%%%%%%%%%%%%%%%%%%%%%%%%%%%%%%%%%%%%%%%%%%%%%%%
\section{Flip-flop families and Birkhoff averages: proof of Theorem~\ref{t.FlipFlop_weak}}\label{s.flipflop}
%%%%%%%%%%%%%%%%%%%%%%%%%%%%%%%%%%%%%%%%%%%%%%%%%%%%%%%%%%%%%%%%%%%%%%%

The aim of this section is to prove Theorem~\ref{t.FlipFlop_weak},
the zero averages result for flip-flop families.
In fact, we will prove a slightly finer version of this result (Theorem~\ref{t.FlipFlop}).

In all of this section,
let $(X,d)$ be a metric space, 
$f\colon X\to X$ be a continuous map, 
$K$ be a compact subset of $X$, 
$\phi\colon K\to \RR$ be a continuous function,
and $\fF  = \fF^+ \cup \fF^-$ be a flip-flop family
as in  Definition~\ref{d.flipflopfamily}. 

Let us fix some notation:
Denote $\bigcup \fF \eqdef \bigcup_{D\in \fF} D$
and analogously for $\bigcup \fF^+$, $\bigcup \fF^-$. 
The Birkhoff sums of $\phi$ will be denoted as
$$
\phi_n(x)\eqdef 
\sum_{i=0}^{n-1} \phi(f^i(x)) 
\quad \text{if } n \in \NN , \ x \in \bigcap_{i=0}^{n-1} f^{-i}(K) \, .
$$

\subsection{Control of Birkhoff averages at all scales}\label{ss.control}

We begin with some definitions that will be central to our constructions.

\begin{defi}[Control] \label{d.anyscale} 
Given $\beta>0$, $t \in \NN^*$, and $T\in \NN^* \cup \{\infty\}$,
we say that a point  $x \in K$ is 
\emph{$(\beta,t,T)$-controlled}  % w.r.t.\ $\phi$ 
if $f^i(x)\in K$ for $0 \le i < T$ and
there exists a subset $\cP \subset \NN$ of \emph{control times}
such that
\begin{itemize}
	\item $0 \in \cP$,
	\item $T \in \cP$ if $T < \infty$, and $\cP$ is infinite if $T = \infty$, and
	\item if $k<\ell$ are two consecutive control times in $\cP$ then
	$$\ell - k \le t \quad \text{and} \quad \frac{1}{\ell-k} \left| \phi_{\ell-k} (f^k(x)) \right| \le \beta. $$
\end{itemize}
The point $x$ is \emph{controlled at all scales}
(with respect to $\phi$)
if there exist monotone sequences $(t_i)_i$ of natural numbers and $(\beta_i)_i$
of positive numbers,
 $t_i \to \infty$ and $\beta_i \to 0^+$, such that  $x$ is
 $(\beta_i,t_i,T)$-controlled %(w.r.t.\ $\phi$) 
 for every $i$.
\end{defi}

One can easy prove the following:

%\margem{Sumi com um lema preliminar por que ficaria enrolado explicar que os $\liminf$ e $\limsup$ sao uniformes.}

% \begin{lemm}\label{l.controlled}  
% Consider $x\in K$  
% that  is $(\beta,t,\infty)$-controlled %w.r.t.\ the function $\phi$
% for some $\beta>0$ and $t\in \mathbb{N}^*$.
% Then every point $y\in\omega(x)$ satisfies
% $$
% -\beta\leq \liminf_{n\to\pm\infty} \frac1n \phi_n (y) \leq\limsup_{n\to\pm\infty} \frac1n \phi_n(y)\leq\beta\,.
% $$
% \end{lemm}
% 
% 
% As a direct consequence this lemma
% %~\ref{l.controlled}
%  one gets:
\begin{lemm} \label{l.omega} 
If $x \in K$ is controlled at all scales
then every point $y\in\omega(x)$ satisfies  
$$
\lim_{n\to \infty}\frac1n \phi_n(y)=0\,.
$$
Moreover, the limit is uniform over the $\omega$-limit set $\omega(x)$.
\end{lemm}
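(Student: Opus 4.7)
The plan is to deduce the statement from a uniform estimate on the Birkhoff averages along the full forward orbit of $x$ itself, and then pass to $\omega(x)$ by continuity.

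First I would record some preliminary observations. Since $x$ is controlled at all scales, it is in particular $(\beta_1, t_1, \infty)$-controlled, so $f^k(x) \in K$ for every $k \ge 0$. By compactness of $K$ we have $\omega(x) \subset K$, and since $f$ is continuous $\omega(x)$ is $f$-invariant, so every iterate of every $y \in \omega(x)$ lies in $K$ and $\phi_n(y)$ makes sense. Also, $\phi$ is bounded on $K$ by compactness: set $M \eqdef \sup_K |\phi|$.

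Second, I would prove the following key uniform estimate: if $x$ is $(\beta, t, \infty)$-controlled, then for every $m \ge 0$ and every $n \ge 1$,
$$
\frac{1}{n} \left| \phi_n(f^m(x)) \right| \le \beta \left(1 + \frac{2t}{n}\right) + \frac{2Mt}{n}.
$$
To see this, let $\cP = \{0 = k_0 < k_1 < \cdots\}$ be the set of control times for $x$. Choose the largest $k_a \in \cP$ with $k_a \le m$ and the smallest $k_b \in \cP$ with $k_b \ge m+n$; both exist since $\cP$ is infinite and consecutive gaps are at most $t$, and $k_b - k_a \le n + 2t$. Decomposing $\phi_{k_b - k_a}(f^{k_a}(x))$ into the sums over consecutive control-time intervals, the triangle inequality and property (FF, control) give $|\phi_{k_b - k_a}(f^{k_a}(x))| \le \beta (k_b - k_a)$. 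Then
$$
\phi_n(f^m(x)) = \phi_{k_b-k_a}(f^{k_a}(x)) - \phi_{m - k_a}(f^{k_a}(x)) - \phi_{k_b - m - n}(f^{m+n}(x)),
$$
and the last two terms are bounded in absolute value by $Mt$ each, yielding the claim.

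Third, I would use the assumption at all scales to turn this into uniform convergence along the orbit of $x$. Given $\epsilon > 0$, pick $i$ with $\beta_i < \epsilon/4$, set $\beta \eqdef \beta_i$, $t \eqdef t_i$, and choose $n_0$ so large that $2\beta t/n_0 + 2Mt/n_0 < \epsilon/4$. Then for every $m \ge 0$ and every $n \ge n_0$, the estimate above yields $\frac{1}{n}|\phi_n(f^m(x))| < \epsilon/2$.

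Finally, I would transfer to $\omega(x)$. Fix $y \in \omega(x)$ and $n \ge n_0$, and choose $m_j \to \infty$ with $f^{m_j}(x) \to y$. By continuity of $f^k$ for $0 \le k < n$ and continuity of $\phi$ on $K$, $\phi_n(f^{m_j}(x)) \to \phi_n(y)$. The uniform bound gives $\frac{1}{n}|\phi_n(y)| \le \epsilon/2 < \epsilon$. Since the bound did not depend on $y \in \omega(x)$, this is uniform convergence of $\phi_n/n$ to zero on $\omega(x)$. The one point to be a bit careful about is that the estimate in step two must be uniform in the starting time $m$; but this is exactly what the control-times machinery delivers, so I do not expect any real obstacle.
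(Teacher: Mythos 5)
Your proof is correct. The paper states this lemma with the remark ``one can easily prove the following'' and gives no proof, so there is no written argument to compare against; the proof you supply — the control-time decomposition $\phi_n(f^m(x)) = \phi_{k_b - k_a}(f^{k_a}(x)) - \phi_{m-k_a}(f^{k_a}(x)) - \phi_{k_b-m-n}(f^{m+n}(x))$ giving a bound $\beta + O(t/n)$ uniform in $m$, followed by passage to $\omega(x)$ via continuity of $\phi_n$ — is exactly the argument the authors are implicitly leaving to the reader, and it is complete and correct as written.
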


Although we will not need it, let us remark that the converse holds:
if the Birkhoff averages converge uniformly to zero
over an $f$-invariant set $\Omega$, then every point in $\Omega$
is controlled at all scales.

In this section, we will prove the following result:

\begin{theor} \label{t.FlipFlop} 
If $\fF$ is a flip-flop family associated to the map~$f$ and the function~$\phi$ 
then every member $D\in\fF$ contains a point $x$ that is controlled at all scales 
with respect to $\phi$
and such that the restriction of $f$ to the $\omega$-limit set $\omega(x)$ has positive topological entropy. 
\end{theor}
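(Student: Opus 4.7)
The plan is a hierarchical, multi-scale symbolic construction. Starting from the given $D\in\fF$, iterating FF2 provides, for each finite word $w=\sigma_1\cdots\sigma_n\in\{+,-\}^n$, a nonempty compact subset $D(w)\subset D$ with $f^i(D(w))$ contained in a specific element of $\fF^{\sigma_i}$; by FF3 these sets have $\diam(D(w))\le\lambda^{-n}d_0$, where $d_0$ uniformly bounds the diameters of members of $\fF$. Consequently every infinite word $\omega\in\{+,-\}^{\NN}$ codes a unique point $x_\omega\in D$, the map $\omega\mapsto x_\omega$ is a continuous injection, and by FF1 every Birkhoff increment $\phi(f^i(x_\omega))$ has magnitude in $[\alpha,M]$ (with $M=\sup_K|\phi|$) and sign $\omega_i$. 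The problem thus becomes combinatorial: produce $\omega$ for which the partial sums $\phi_n(x_\omega)$ are controlled at all scales, while retaining exponential freedom in the choice of $\omega$.

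I proceed by induction on a scale index $k$, producing scales $t_k\uparrow\infty$, tolerances $\beta_k\downarrow 0^+$, and block families $\cW_k\subset\{+,-\}^{t_k}$ with the following two properties: (i) for every $w\in\cW_k$ and every longer admissible word $w'$ extending $w$, the Birkhoff average of $\phi$ over every window of length $t_k$ beginning at a level-$k$ block boundary is at most $\beta_k$ on the compact set $D(w')$; (ii) $\#\cW_k\ge 2^{h_0 t_k}$ for some fixed $h_0>0$. Level-$(k{+}1)$ blocks are built by concatenating $N_k$ level-$k$ blocks, setting $t_{k+1}=N_k t_k$, while applying a coarse opposite-sign balancing rule at level-$k$ granularity to offset the drift (of order at most $M t_k$) accumulated over each level-$k$ block. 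Taking $N_k$ large enough lets this drift average out to $\beta_{k+1} t_{k+1}$ with $\beta_{k+1}\ll\beta_k$, while only a positive fraction of the coarse choices is constrained by balancing, preserving the exponential branching in (ii). The parameter $N_k$ is also chosen large enough that the long suffix provided by level-$(k{+}1)$ blocks dominates the modulus of continuity of $\phi$ along the first $t_k$ iterates of $D(w)$, making the level-$k$ averages uniform over the relevant compact subsets.

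A diagonal extraction furnishes $\omega^*$ whose prefix of length $t_k$ lies in $\cW_k$ for every $k$; then $x^*\eqdef x_{\omega^*}$ is $(\beta_k,t_k,\infty)$-controlled (control times $=$ level-$k$ block boundaries) for every $k$, hence controlled at all scales as in Definition~\ref{d.anyscale}. The branching (ii), combined with the observation that $\bigcup\fF^+$ and $\bigcup\fF^-$ are separated by some $\epsilon_0>0$ (by FF1 and compactness of $K$), shows that the collection $\{x_w:w\in\cW_k\}$ is $(t_k,\epsilon_0)$-separated with at least $2^{h_0 t_k}$ elements, so the compact $f$-invariant set $\Omega$ obtained as the closure of the forward orbit of $x^*$ — after strengthening the diagonal choice so that the shift orbit of $\omega^*$ is dense in the symbolic system generated by the hierarchy $(\cW_k)_k$ — has topological entropy at least $h_0\log 2>0$; hence $\omega(x^*)=\Omega$ has the required positive topological entropy.

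The main obstacle will be juggling three competing requirements simultaneously: the scales $t_k$ must grow fast enough both to average out the previous-level imbalance and to dominate the variation of $\phi_n$ on $D(w)$; the branching rate must remain uniformly positive; and the constructed point $x^*$ must be transitive in a positive-entropy subsystem, not merely live inside it. The mere continuity of $\phi$ is the root difficulty, since it rules out the summable-variation bounds that are available for H\"older $\phi$ (where FF3 alone absorbs the modulus of continuity into a geometric series); this is precisely why a multi-scale hierarchy is needed rather than a single-scale adaptive argument, as flagged by Remark~\ref{r.Holder}.
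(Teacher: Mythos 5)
Your overall strategy mirrors the paper's: build, scale by scale, a symbolic hierarchy of admissible blocks with controlled Birkhoff averages over windows of length $t_k$, glue them by concatenation, and extract the desired point by a diagonal argument while keeping enough branching to force positive entropy. Your use of FF3 to get expansivity/distortion control and your appeal to a distortion estimate (in the spirit of Lemma~\ref{l.distortion}) are also in line with the paper. However, there is a genuine gap in the inductive step that would prevent the construction from closing.

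Your property (i) only imposes a one-sided bound on the level-$k$ block averages: they should be at most $\beta_k$ in magnitude. What the induction actually needs is two families of level-$k$ blocks, one ``positive'' with averages confined to $[\alpha_k,\beta_k]$ and one ``negative'' with averages confined to $[-\beta_k,-\alpha_k]$, with $\alpha_k>0$. The lower bound $\alpha_k>0$ is not cosmetic: when you build a level-$(k+1)$ block by first stacking $m$ positive blocks and then switching to negative blocks, you need the cumulative average to \emph{strictly decrease} with each negative block by a definite amount (of order $\alpha_k t_k/S$ where $S$ is the current length), so that the running average is guaranteed to eventually cross the target window $[\alpha_{k+1}+\eta,\beta_{k+1}-\eta]$; and separately you need an a priori bound on the hitting time $\ell$, which again uses $\alpha_k$ (see the paper's proof of Lemma~\ref{l.induction}, items \eqref{e.+}, \eqref{e.-}, and the ``Upper bound for the hitting time'' step). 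If negative blocks are only known to have average $\ge -\beta_k$, one could be stuck for arbitrarily long with near-zero contributions, so $t_{k+1}$ cannot be fixed in advance and the induction fails. Moreover, with your definition the blocks in $\cW_k$ do not carry a usable sign label at all, so the ``opposite-sign balancing rule'' is not actually implementable from the information you keep.

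A secondary weakness concerns entropy. The stopping rule for balancing heavily constrains the sign sequence at the block level, so the claim that ``only a positive fraction of the coarse choices is constrained'' needs a concrete mechanism. The paper supplies one via $\tau$-patterns (Lemma~\ref{l.tau}): a fixed period $\tau$ is chosen large enough that one free symbol per length-$\tau$ window cannot flip the sign of the window average, so the symbol at every $\tau$-th position can be prescribed arbitrarily while the balancing is carried out by the remaining positions. This gives freedom at density $1/\tau$ and entropy $\ge\tau^{-1}\log 2$. You would need to interleave some such device with your balancing rule; as written, the exponential lower bound $\#\cW_k\ge 2^{h_0 t_k}$ is asserted but not justified. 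Finally, note that the theorem asks for positive entropy of $f|\omega(x)$, not of the forward orbit closure; the paper handles this by showing the itinerary map from $\omega(x)$ to $\{+,-\}^\NN$ is surjective, using density of the shift-orbit of the prescribed pattern, rather than asking $x$ itself to be recurrent.
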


Note that Theorem~\ref{t.FlipFlop_weak} is a direct consequence
of  Theorem~\ref{t.FlipFlop} together with Lemma~\ref{l.omega}
and the variational principle for entropy.

\medskip

The idea of the proof of Theorem~\ref{t.FlipFlop} is roughly as follows:
For any member $D$ of $\fF$, it is possible to find points $x \in D$ 
that have a large number of iterates $n_1$ in the positive region $\bigcup \fF^+$,
and then a large number of iterates $n_2$ in the negative region $\bigcup \fF^-$ 
in such a way that the Birkhoff average $\phi_{n_1+n_2}(x)/(n_1+n_2)$ is positive and small,
but not exceedingly small.
So we obtain some control at the first time scale: see Figure~\ref{f.scale1}.
Analogously we can obtain small (but not exceedingly small) negative Birkhoff averages 
at this same scale.
We then pass to a second time scale where much smaller (say, positive) Birkhoff averages
are obtained by concatenating several controlled segments of the first scale,
the initial ones being of positive type, and the later ones being of negative type:
see Figure~\ref{f.scale2}.
The construction then proceeds recursively in order to control longer and longer time scales.
Moreover, we can incorporate some periodic noise in the construction and obtain positive entropy.

\begin{figure}[htp]
	\begin{center}
	\begin{tikzpicture}[scale=.8]
		% Uncertainty cloud
		\newcommand{\gap}{.06}
		\fill[color=gray!30](0,0)
			--++(1,1-\gap)--++(1,0.5-\gap)--++(1,0.7-\gap)--++(1,0.8-\gap)--++(1,0.5-\gap)--++(1,-0.9-\gap)--++(1,-0.6-\gap)--++(1,-0.9-\gap)		
			--++(0,16*\gap)
			--++(-1,0.9-\gap)--++(-1,0.6-\gap)--++(-1,0.9-\gap)--++(-1,-0.5-\gap)--++(-1,-0.8-\gap)--++(-1,-0.7-\gap)--++(-1,-0.5-\gap)--++(-1,-1-\gap);		
		% Axes
		\draw[thick,-latex] (0,0)--(8.5,0) node[below] {\footnotesize $n$};
		\draw[thick,-latex] (0,-.3)--(0,3.1) node[left] {\footnotesize $\phi_n(x)$}; 
		% Main graph
		\draw (0,0)
			--++(1,1.0)--++(1,0.5)--++(1,0.7)--++(1,0.8)--++(1,0.5)--++(1,-0.9)--++(1,-0.6)--++(1,-0.9); % soma 1.1
		% Result on the first scale
		\draw[dashed] (8,1.1)--(0,0);
	\end{tikzpicture}
	\end{center}
	\caption{Control of Birkhoff averages at the first time scale.}\label{f.scale1}
\end{figure}
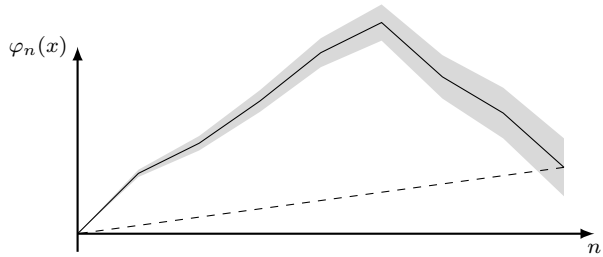	

\begin{figure}[htp]
	\begin{center}
	\begin{tikzpicture}[scale=.18]
		% Uncertainty cloud
		\fill[color=gray!30](0,0)
			--++(1,0.98)--++(1,0.48)--++(1,0.68)--++(1,0.78)--++(1,0.48)--++(1,-0.92)--++(1,-0.62)--++(1,-0.92)
			--++(1,0.98)--++(1,0.68)--++(1,0.78)--++(1,0.98)--++(1,0.48)--++(1,-0.72)--++(1,-0.82)--++(1,-0.82)
			--++(1,0.78)--++(1,0.78)--++(1,0.58)--++(1,1.08)--++(1,1.08)--++(1,-0.82)--++(1,-0.82)--++(1,-0.62)
			--++(1,0.68)--++(1,0.28)--++(1,0.38)--++(1,0.68)--++(1,0.58)--++(1,-0.32)--++(1,-1.02)--++(1,-1.02)
			--++(1,0.28)--++(1,0.48)--++(1,0.68)--++(1,0.78)--++(1,0.78)--++(1,-0.82)--++(1,-0.32)--++(1,-0.32)
			--++(1,-0.52)--++(1,-1.02)--++(1,-0.32)--++(1,-0.92)--++(1,-1.12)--++(1,0.98)--++(1,0.28)--++(1,0.68)
			--++(1,-0.62)--++(1,-0.52)--++(1,-0.32)--++(1,-0.32)--++(1,-0.92)--++(1,0.38)--++(1,0.58)--++(1,0.78)
			--++(1,-1.12)--++(1,-0.72)--++(1,-1.02)--++(1,-0.32)--++(1,-0.32)--++(1,0.78)--++(1,0.48)--++(1,0.38)
			--++(0,2.56)
			--++(-1,-0.42)--++(-1,-0.52)--++(-1,-0.82)--++(-1,0.28)--++(-1,0.28)--++(-1,0.98)--++(-1,0.68)--++(-1,1.08)
			--++(-1,-0.82)--++(-1,-0.62)--++(-1,-0.42)--++(-1,0.88)--++(-1,0.28)--++(-1,0.28)--++(-1,0.48)--++(-1,0.58)
			--++(-1,-0.72)--++(-1,-0.32)--++(-1,-1.02)--++(-1,1.08)--++(-1,0.88)--++(-1,0.28)--++(-1,0.98)--++(-1,0.48)
			--++(-1,0.28)--++(-1,0.28)--++(-1,0.78)--++(-1,-0.82)--++(-1,-0.82)--++(-1,-0.72)--++(-1,-0.52)--++(-1,-0.32)
			--++(-1,0.98)--++(-1,0.98)--++(-1,0.28)--++(-1,-0.62)--++(-1,-0.72)--++(-1,-0.42)--++(-1,-0.32)--++(-1,-0.72)
			--++(-1,0.58)--++(-1,0.78)--++(-1,0.78)--++(-1,-1.12)--++(-1,-1.12)--++(-1,-0.62)--++(-1,-0.82)--++(-1,-0.82)
			--++(-1,0.78)--++(-1,0.78)--++(-1,0.68)--++(-1,-0.52)--++(-1,-1.02)--++(-1,-0.82)--++(-1,-0.72)--++(-1,-1.02)
			--++(-1,0.88)--++(-1,0.58)--++(-1,0.88)--++(-1,-0.52)--++(-1,-0.82)--++(-1,-0.72)--++(-1,-0.52)--++(-1,-1.02);
		% Axes
		\draw[thick,-latex] (0,0)--(64+2,0) node[below] {\footnotesize $n$};
		\draw[thick,-latex] (0,-3)--(0,10) node[left] {\footnotesize $\phi_n(x)$};		
		% Main graph 
		\draw (0,0)
			--++(1,1.0)--++(1,0.5)--++(1,0.7)--++(1,0.8)--++(1,0.5)--++(1,-0.9)--++(1,-0.6)--++(1,-0.9) % soma 1.1
			--++(1,1)--++(1,0.7)--++(1,0.8)--++(1,1)--++(1,0.5)--++(1,-0.7)--++(1,-0.8)--++(1,-0.8) % soma 1.7
			--++(1,0.8)--++(1,0.8)--++(1,0.6)--++(1,1.1)--++(1,1.1)--++(1,-0.8)--++(1,-0.8)--++(1,-0.6) % soma 2.2
			--++(1,0.7)--++(1,0.3)--++(1,0.4)--++(1,0.7)--++(1,0.6)--++(1,-0.3)--++(1,-1)--++(1,-1) % soma 0.4
			--++(1,0.3)--++(1,0.5)--++(1,0.7)--++(1,0.8)--++(1,0.8)--++(1,-0.8)--++(1,-0.3)--++(1,-0.3) % soma 1.7
			--++(1,-0.5)--++(1,-1)--++(1,-0.3)--++(1,-0.9)--++(1,-1.1)--++(1,1)--++(1,0.3)--++(1,0.7) % soma -1.8
			--++(1,-0.6)--++(1,-0.5)--++(1,-0.3)--++(1,-0.3)--++(1,-0.9)--++(1,0.4)--++(1,0.6)--++(1,0.8) % soma -0.8
			--++(1,-1.1)--++(1,-0.7)--++(1,-1)--++(1,-0.3)--++(1,-0.3)--++(1,0.8)--++(1,0.5)--++(1,0.4); % soma -1.7		
		% Result on the first scale
		\draw[dashed](0,0)
			--++(8,1.1)--++(8,1.7)--++(8,2.2)--++(8,.4)--++(8,1.7)
			--++(8,-1.8)--++(8,-.8)--++(8,-1.7);
		% Result on the second scale
		\draw[loosely dashed](64,2.8)--(0,0);
	\end{tikzpicture}
	\end{center}
	\caption{Control  of Birkhoff averages  at the second time scale.}\label{f.scale2}
\end{figure}
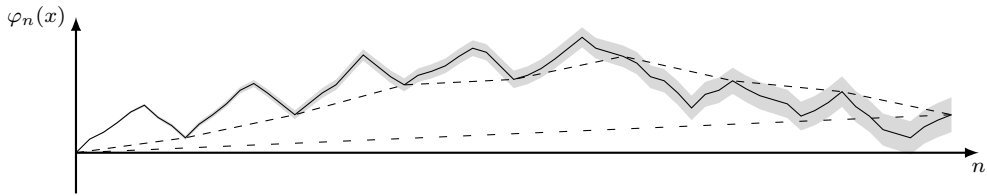

The sketch above is imprecise in many ways.
We need to make sense of what concatenation and noise mean.
Since the desired point $x$ is not known a priori,
at each time scale we need to control simultaneously
not only a single point but many of them.
Thus some ``uncertainty clouds'' appear; they are shown in gray in Figures~\ref{f.scale1} and \ref{f.scale2}.
Uncertainty increases with time due to the expanding character of the dynamics imposed by condition~(\ref{i.FF3}). 
Nevertheless it does not have a big effect on Birkhoff averages over long time scales.

We now proceed with precise proofs.
We need a few preparatory lemmas and definitions before 
proving Theorem~\ref{t.FlipFlop}.

% 
% In order to prove Theorem~\ref{t.FlipFlop},
% fixed a set $D$ of the family $\fF$,
%  we  choose a sequence of signs $\varsigma=\{s_k\}_{k\in\NN}\in\{+,-\}^{\NN}$
%  (an itinerary)
% and a decreasing sequence $(D_k)_k$ of subsets of $D$ such 
% that $f^k(D_k)\in \fF^{s_k}$ for all $k$.  The itinerary $\varsigma$ is chosen in order to
% make the limit average with respect to $\phi$ equal to $0$.  
% This is possible because the size of $D_k$ tends to $0$,  so that 
% the limit average does not depend on the 
% point in the intersection of the sets $D_k$ (Lemma~\ref{l.distortion}).  
% The point $x$ will be the intersection $\{x\}=\cap_k D_k$.  The positive entropy
% is obtained 
% using the flexibility for the choices of the sequence of itineraries. 
% 
% We now go to the proof of Theorem~\ref{t.FlipFlop}. We start in Section~\ref{ss.controlBirkhoff} 
% by controlling the Birkhoff averages
% in the  sets $D_k$. 
% For that we will need some distortion estimates.

%%%%%%%%%%%%%%%%%%%%%%%%%%%%%%%%%%%%%%%%%%%%%%%%%%%%%%%%%%%%%%%%%%%%
\subsection{Segments and concatenations}

Recall that $\fF$ is a fixed flip-flop family with respect to a dynamics $f \colon X \to X$
and a function $\phi \colon K \to \RR$.

\begin{defi}[Segments]
Let $T \in \NN^*$. 
A \emph{$\fF$-segment of length $T$} 
is a sequence
$\mathbf{D} = \{D_i\}_{0\leq i\leq T}$ such that 
$f(D_i) = D_{i+1}$,
each $D_i$ is contained in a member of $\fF$,
and $D_T$ is a member of $\fF$.
The sets $D_0$ and $D_T$ are respectively called the \emph{entrance} and the \emph{exit} of $\fF$.
\end{defi}

\begin{lemm}[Birkhoff averages over long segments have small distortion]\label{l.distortion}
Let $\fF$ %$(\fF,\alpha,\delta)$ 
be a flip-flop family.
For every $\eta>0$ there exists $N = N(\eta) \in \NN$ 
such that if $\mathbf{D}=\{D_i\}_{0\leq i\leq T}$ is a segment of length $T \ge N$
then for every pair of points $x$, $y$ in the entrance $D_0$ we have
$$
\frac{1}{T} \left| \phi_T(x) - \phi_T(y) \right| < \eta \, .
$$
\end{lemm}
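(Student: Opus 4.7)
My plan is to combine backward contraction along the segment (forced by condition~(\ref{i.FF3})) with uniform continuity of $\phi$ on the compact set $K$, so that two orbits starting at points of $D_0$ generate Birkhoff sums that differ only by a quantity growing sublinearly in $T$.

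Let $\Delta > 0$ be a uniform upper bound on the diameters of members of $\fF$ (guaranteed by the definition of flip-flop family), and set $M \eqdef \max_{z \in K} |\phi(z)|$, which is finite since $K$ is compact and $\phi$ is continuous. The first step is to iterate (\ref{i.FF3}) along the segment $\mathbf{D} = \{D_i\}_{0 \le i \le T}$: since $D_T \in \fF$, each $D_i$ is contained in a member of $\fF$, and $f(D_i) = D_{i+1}$, a backward induction on $i$ yields
$$
\diam(D_i) \le \Delta \, \lambda^{-(T-i)}, \qquad 0 \le i \le T,
$$
and hence $d(f^i(x), f^i(y)) \le \Delta\lambda^{-(T-i)}$ for all $x, y \in D_0$. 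Thus two orbits starting in $D_0$ stay exponentially close for all but a bounded tail of iterates near the exit.

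Next, uniform continuity of $\phi$ on $K$ provides, for any $\eta > 0$, a modulus $\delta > 0$ such that $d(u, v) < \delta$ implies $|\phi(u) - \phi(v)| < \eta/2$. Pick $J \in \NN$ with $\Delta \lambda^{-J} < \delta$. Then for every index $i$ with $T - i \ge J$ we have $|\phi(f^i(x)) - \phi(f^i(y))| < \eta/2$, while the remaining $J$ indices contribute at most $2M$ apiece. Splitting the telescoping sum accordingly,
$$
|\phi_T(x) - \phi_T(y)| \le (T - J)\frac{\eta}{2} + 2JM,
$$
and dividing by $T$ yields $\frac{1}{T}|\phi_T(x) - \phi_T(y)| \le \frac{\eta}{2} + \frac{2JM}{T}$. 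Taking $N(\eta) \eqdef \lceil 4JM/\eta \rceil$ makes the second summand at most $\eta/2$ whenever $T \ge N$, giving the required bound $\eta$.

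The delicate point is the backward contraction step, since (\ref{i.FF3}) is formulated as a one-step expansion conditional on the image being a member of $\fF$. The iteration is legitimate because every truncated subsegment $\{D_i, D_{i+1}, \ldots, D_T\}$ still terminates at the member $D_T$, so (\ref{i.FF3}) applies successively as one sweeps backward from $D_T$ toward $D_0$, and all remaining ingredients (uniform continuity, compactness of $K$) are soft and immediate.
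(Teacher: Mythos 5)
Your proof is correct and uses the same strategy as the paper's: iterate condition~(\ref{i.FF3}) backward from the exit $D_T$ to obtain the exponential bound $d(f^i(x),f^i(y))\le \Delta\,\lambda^{-(T-i)}$, invoke uniform continuity of $\phi$ on the compact set $K$ to control the first $T-J$ terms by $\eta/2$ each, bound the remaining short tail near the exit crudely by $2M$ per term, and then divide by $T$ and take $N$ large enough that the tail contribution is dominated. The only differences are notational ($J,\Delta,M,\delta$ in place of the paper's $n_0,d_0,\beta_1,\rho$) and a harmless overcount of the tail length.
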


\begin{proof}
Let $\beta_1 \eqdef \sup_K |\phi|$.
Given $\eta>0$, let $\rho>0$ be such that 
$$
|\phi(z) - \phi(w)| < \eta/2, \quad
\mbox{for every $z$, $w \in K$
with $d(z,w) < \rho$.}
$$
Let $d_0$ be an upper bound for the diameters of the members of $\fF$,
and let $\lambda>1$ be the expansivity constant given by condition \eqref{i.FF3}.
Take $n_0 \in \NN$ such that $\lambda^{-n_0} d_0 < \rho$.
Fix an integer $N > n_0$ such that
$$
\frac{\beta_1\, n_0}{N} < \frac{\eta}{4} \, .
$$
Now suppose that $\mathbf{D} = \{D_i\}_{0\leq i\leq T}$ is a segment of length $T \ge N$,
and that $x$ and $y$ are points in the entrance $D_0$.
%Let $z = f^T(x)$ and $w = f^T(y) \in D_T$ be the corresponding exit points.
By condition~(\ref{i.FF3}) in definition of flip-flop family, 
for each $i  = 0, \dots, T-1$ we have
$$
d_0 \ge d(f^T x, f^T y) \ge \lambda^{T-i} d(f^i x, f^i y).
$$
In particular, if $i < T - n_0$ then $d(f^i x, f^i y) < \rho$.
Therefore we estimate:
$$
|\phi_T(x) - \phi_T(y)| 
\le
\sum_{i=0}^{T-n_0-1} \underbrace{|\phi(f^i x) - \phi(f^i y)}_{\le \eta/2}  +
\sum_{i=T-{n_0}}^{T-1} \underbrace{|\phi(f^i x) - \phi(f^i y)|}_{\le 2 \beta_1}
< T \eta ,
$$
which proves the lemma.
\end{proof}

\begin{rema} \label{r.Holder} 
Notice that in Lemma~\ref{l.distortion} we are comparing Birkhoff \emph{averages},
and not Birkhoff \emph{sums}.
If the function $\phi$ were not only continuous but, say, H\"older,
then by the classical (and easy) bounded distortion argument,
a much stronger result would hold:
there would exist an uniform (not depending on the length $T$)
upper bound for $\left| \phi_T(x) - \phi_T(y) \right|$.
With such estimates the proof of  Theorem~\ref{t.FlipFlop} would be considerably easier.
Actually, as mentioned in Subsection~\ref{ss.discussion},
one could strengthen the conclusion 
and obtain a set $\Omega$ which the Birkhoff sums are uniformly bounded.
\end{rema}

\begin{defi} [$(\beta,t)$-controlled segments] \label{d.atcontrolledsegment}
Given $\beta>0$ and $t \le T \in \NN^*$, 
a $\fF$-segment $\mathbf{D}=\{D_i\}_{0\le i \le T}$ 
 is said to be 
\emph{$(\beta,t)$-controlled} if 
there exists a \emph{set of control times}
$\cP \subset \{0,\dots,T\}$
such that
\begin{itemize}
	\item $0$, $T \in \cP$ and
	\item if $k<\ell$ are two consecutive control times in $\cP$ then
	$$\ell - k \le t \quad \text{and} \quad \frac{1}{\ell-k} \left| \phi_{\ell-k} (f^k(x)) \right| \le \beta \text{ for all } x \in D_0 \, . $$
\end{itemize}
That is, every point in the entrance $D_0$ is $(\beta,t,T)$-controlled and moreover we can take 
an uniform set of control times.
\end{defi}

\begin{defi}[Concatenations]
Consider $\fF$-segments 
$\mathbf{D}=\{D_i\}_{0\leq i\leq T}$ and $\mathbf{E}=\{E_j\}_{0\leq j\leq L}$ of lengths $T$ and $L$ such that
the exit of the first contains the entrance of the second, that is, $D_T \supset E_0$.  
Then the \emph{concatenation of  $\mathbf{D}$ and $\mathbf{E}$} is 
$\fF$-segment $\mathbf{D}*\mathbf{E} = \{F_k\}_{0\leq k \leq T+L}$ of length $T+L$
defined by
$$
F_k \eqdef
\begin{cases}
(f^{T-k}|D_k)^{-1}(E_0) &\quad\text{if } 0 \le k < T , \\ 
E_{k-T} &\quad\text{if } T \le k \le T+L . 
\end{cases}
$$ 
\end{defi}

The concatenation of $\fF$-segments is an associative operation on $\fF$-segments. This  allows us to define  multiple concatenations. 

The proof of the next lemma is straightforward and thus omitted.

\begin{lemm}[Concatenation preserves control]
\label{l.concatenation2} 
Consider $\beta>0$  and integers $t>0$, $T>t$, and $L>t$. 
Assume that 
$\mathbf{D}=\{D_i\}_{0\le i \le T}$  and $\mathbf{E}=\{E_j\}_{0\le  j\le L}$ are $(\beta,t)$-controlled $\fF$-segments with
$\cP$ and $\cQ$ as respective sets of control times.
Suppose that $D_T \supset E_0$.
Then  the concatenation 
$\mathbf{D}*\mathbf{E}$  is $(\beta,t)$-controlled with $\cP \cup (T+\cQ)$ as a set of control times.
\end{lemm}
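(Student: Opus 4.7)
The plan is to verify directly from Definition~\ref{d.atcontrolledsegment} that the concatenated segment $\mathbf{F} \eqdef \mathbf{D}*\mathbf{E} = \{F_k\}_{0\le k\le T+L}$ is $(\beta,t)$-controlled with the set $\cR \eqdef \cP \cup (T+\cQ)$ of control times. The argument is purely combinatorial bookkeeping, which I would split into three small steps.

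First I would check that $\cR$ contains the endpoints $0$ and $T+L$, and, crucially, the gluing point $T$. Since $\cP$ is a control set for $\mathbf{D}$, both $0$ and $T$ lie in $\cP$; since $\cQ$ is a control set for $\mathbf{E}$, both $0$ and $L$ lie in $\cQ$, hence $T$ and $T+L$ lie in $T+\cQ$. In particular $T\in\cP\cap(T+\cQ)$, which will prevent any pair of consecutive times in $\cR$ from straddling the gluing point.

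Second, I would fix two consecutive times $k<\ell$ in $\cR$. By the observation just made, either $k<\ell\le T$ and both lie in $\cP$, or $T\le k<\ell$ and both lie in $T+\cQ$. In the first case $k,\ell$ are consecutive in $\cP$, and in the second case $k-T,\ell-T$ are consecutive in $\cQ$, so the gap bound $\ell-k\le t$ is immediate from the hypothesis that $\mathbf{D}$ and $\mathbf{E}$ are each $(\beta,t)$-controlled.

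Third, I would verify the Birkhoff-average bound for every $x\in F_0$. The concatenation formula gives $F_0 = (f^T|D_0)^{-1}(E_0) \subset D_0$ and $f^T(F_0)\subset E_0$, so in the first case ($x\in F_0\subset D_0$) the inequality $\frac{1}{\ell-k}|\phi_{\ell-k}(f^k(x))|\le\beta$ is just the control inequality for $\mathbf{D}$ applied at the point $x\in D_0$. In the second case I would write $k=T+k'$, $\ell=T+\ell'$ with $k'<\ell'$ consecutive in $\cQ$, and use the cocycle identity $\phi_{\ell-k}(f^k(x)) = \phi_{\ell'-k'}(f^{k'}(f^T(x)))$ together with $f^T(x)\in E_0$ to reduce to the control inequality for $\mathbf{E}$ at the point $f^T(x)\in E_0$. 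There is no real obstacle; the only subtlety is that the universal quantifier over the entrance set in Definition~\ref{d.atcontrolledsegment} must be pushed forward through the two inclusions $F_0\subset D_0$ and $f^T(F_0)\subset E_0$, after which the two inherited bounds cover every consecutive pair in $\cR$.
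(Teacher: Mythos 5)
Your argument is correct and is exactly the ``straightforward'' verification the paper has in mind when it omits the proof: you check that $\cR = \cP \cup (T+\cQ)$ contains $0$, $T$, and $T+L$, note that consecutive times in $\cR$ cannot straddle $T$ (so they come consecutively from $\cP$ or from $T+\cQ$), and then transfer the two control inequalities to $F_0$ via the inclusions $F_0 \subset D_0$ and $f^T(F_0) \subset E_0$, using the cocycle identity $\phi_{\ell-k}(f^k(x)) = \phi_{\ell'-k'}\bigl(f^{k'}(f^T(x))\bigr)$ in the second case. No gaps.
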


%%%%%%%%%%%%%%
\subsection{Patterns}\label{ss.pattern}

\begin{defi}[Pattern of an orbit]
Given a point $x$ in $\bigcup\fF$, 
a sequence of signs 
$\mathbf{s} = (s_n) \in \{+,-\}^\NN$, and $T\in \NN^* \cup \{\infty\}$,
we say that \emph{$x$ follows the $\tau$-pattern $\mathbf{s}$ up to time $T$} if
$$
\left. 
\begin{array}{l}
0 \le n < T \\ 
n \equiv 0 \pmod{\tau} 
\end{array}
\right\} \ \Rightarrow \ f^{n+1}(x) \in {\textstyle \bigcup} \fF^{s_n} \, .
%D \mbox{ where $D$ is a member of } \fF^{s_n} \, .
$$
\end{defi}

Consider a $\fF$-segment
$\mathbf{D}=\{D_i\}_{0\leq i\leq T}$ of length $T$. 
Note that if a point $ D_0$  follows the $\tau$-pattern $\mathbf{s} \in\{+,-\}^\NN$ up 
to time $T$ then every point in $D_0$ 
follows the same $\tau$-pattern. 
If in addition $\tau$ divides $T$ then we say that the $\fF$-segment 
$\mathbf{D}$ of length  $T$ 
\emph{follows the $\tau$-pattern $\mathbf{s}$}.

Let $\sigma\colon\{+,-\}^\NN\to\{+,-\}^\NN$ denote the one-sided shift to the left.
The proof of the next lemma is straightforward.

\begin{lemm}[Concatenation and patterns]
\label{l.concatenation}
Let $\tau\in \NN^*$ and $\mathbf{s}\in \{+,-\}^\NN$.
Suppose $\mathbf{D}$ and $\bf E$ are $\fF$-segments of respective lengths $T$ and $L$ such that:
\begin{itemize}
 \item 
 $\mathbf{D}$ follows the $\tau$-pattern $\mathbf{s}$,
 \item 
 $\bf E$ follows the $\tau$-pattern $\sigma^{T/\tau}(\mathbf{s})$, and 
 \item 
 $E_0\subset D_T$.
\end{itemize}
Then the concatenation $\mathbf{D}*\mathbf{E}$ follows the $\tau$-pattern $\mathbf{s}$.
\end{lemm}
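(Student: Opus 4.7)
The plan is to verify the concatenation condition directly from the definitions; this lemma is a bookkeeping statement and I expect no real obstacles. First I note that since $\mathbf{D}$ follows the $\tau$-pattern $\mathbf{s}$ and $\mathbf{E}$ follows the $\tau$-pattern $\sigma^{T/\tau}(\mathbf{s})$, both lengths $T$ and $L$ are divisible by $\tau$; hence $\tau$ divides $T+L$, so it is legitimate to ask whether the concatenation $\mathbf{D}*\mathbf{E}$ follows a $\tau$-pattern at all. Let $F_0$ denote the entrance of $\mathbf{D}*\mathbf{E}$. By the definition of concatenation, $F_0=(f^{T}|D_0)^{-1}(E_0)$, so $F_0\subset D_0$ and every $y\in F_0$ satisfies $f^T(y)\in E_0$.

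Next I check the required containment $f^{n+1}(y)\in\bigcup\fF^{s_n}$ for all admissible $n$ and all $y\in F_0$, splitting the range of $n$ at $T$. For $0\le n<T$ with $\tau\mid n$, the inclusion follows from $y\in D_0$ together with the hypothesis that $\mathbf{D}$ follows the $\tau$-pattern $\mathbf{s}$. For $T\le n<T+L$ with $\tau\mid n$, write $n=T+m$; then $\tau\mid m$ and $0\le m<L$. I compute
$$
f^{n+1}(y)=f^{m+1}\bigl(f^T(y)\bigr),
$$
and since $f^T(y)\in E_0$ and $\mathbf{E}$ follows the $\tau$-pattern $\sigma^{T/\tau}(\mathbf{s})$, this iterate lies in $\bigcup\fF^{\sigma^{T/\tau}(\mathbf{s})_m}$.

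It remains to match the sign: under the indexing convention of Subsection~\ref{ss.pattern}, the entry of $\sigma^{T/\tau}(\mathbf{s})$ associated to block $m$ is precisely the entry of $\mathbf{s}$ associated to block $n=T+m$, namely $s_n$. Assembling the two cases yields that $\mathbf{D}*\mathbf{E}$ follows the $\tau$-pattern $\mathbf{s}$, as claimed. The only subtle point in the whole argument is keeping the shift-index of $\sigma^{T/\tau}(\mathbf{s})$ consistent with the block enumeration, which is just the reason the exponent $T/\tau$ (rather than $T$) appears in the statement.
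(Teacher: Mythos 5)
Your proof is correct and is the direct verification the paper has in mind (the paper states this lemma without proof, calling it straightforward): use $F_0=(f^T|D_0)^{-1}(E_0)\subset D_0$ with $f^T(F_0)\subset E_0$, split the block indices $n$ at $T$, and handle the tail via $f^{n+1}(y)=f^{m+1}(f^T y)$ together with the pattern hypothesis on $\mathbf{E}$.

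One small remark on the sign-matching step you flag as the subtle point: the displayed definition of ``follows the $\tau$-pattern $\mathbf{s}$'' in Subsection~\ref{ss.pattern} literally reads $f^{n+1}(x)\in\bigcup\fF^{s_n}$ for $n\equiv 0\ (\mathrm{mod}\ \tau)$, which would force a shift by $\sigma^T$ in this lemma, not $\sigma^{T/\tau}$. The exponent $T/\tau$ in the statement (and the use in Lemma~\ref{l.induction} with shifts $\sigma^{S_{i-1}/\tau}$) only squares with the indexing $s_{n/\tau}$, i.e.\ one entry of $\mathbf{s}$ per $\tau$-block. Your argument implicitly adopts this reading, which is surely what the authors intend; with that convention the equality you assert, $\bigl(\sigma^{T/\tau}(\mathbf{s})\bigr)_{m/\tau}=s_{(T+m)/\tau}=s_{n/\tau}$, is exactly right, and the proof is complete.
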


The purpose of the next lemma is to allow us to introduce periodic ``noise''
in the patterns followed by our orbits, while still allowing us to control Birkhoff averages:

% The next lemma  allows us  to control 
% %simultaneously
% the Birkhoff average of $\phi$ along an orbit whose itinerary is arbitrarily  
% prescribed at periodic times. This argument will be the key 
% to obtain positive entropy for the $\omega$-limit set of such an orbit. 

\begin{lemm}\label{l.tau}
Fix $\alpha_1$ with $0 < \alpha_1 < \alpha$,
where $\alpha$ is as in condition (\ref{i.FF1}).
Then there exists an integer $\tau = \tau(\alpha_1) > 1$ with the following properties. 
Given any member $D$ of $\fF$, there exist four $\fF$-segments
$\mathbf{D}^{+,+}$, $\mathbf{D}^{-,+}$, $\mathbf{D}^{+,-}$, $\mathbf{D}^{-,-}$ such that:
\begin{enumerate}
\item the entrances $D^{+,+}$, $D^{-,+}$, $D^{+,-}$, $D^{-,-}$ of the segments are contained in~$D$;
\item the segments have length $\tau$;
\item the segments follow the respective $1$-patterns:
$$
(+,+,\ldots,+), \quad (-,+,+,\dots,+), \quad (+,-,-,\dots,-), \quad (-,-,\dots,-) \, ;
$$ 
\item for all $x \in D^{+,+} \cup D^{-,+}$ and $y \in D^{+,-} \cup D^{-,-}$, we have
$$
\frac{1}{\tau} \phi_{\tau}(x) \ge \alpha_1 >0 
\quad \text{and} \quad
\frac{1}{\tau} \phi_{\tau}(y) \le -\alpha_1 < 0 \, . 
$$
\end{enumerate}
\end{lemm}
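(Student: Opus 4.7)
The strategy is twofold: first, choose $\tau$ large enough that the Birkhoff-average estimate follows automatically from~\eqref{i.FF1}, and then construct each of the four segments by iterating~\eqref{i.FF2}, using~\eqref{i.FF3} to pull the resulting pieces back into $D$.

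For the choice of $\tau$, set $\beta_1 \eqdef \sup_K |\phi| < \infty$. Consider the pattern $(-,+,+,\dots,+)$ as a representative case: for an entrance point $x$ one has $f^i(x) \in \bigcup \fF^+$ for $2 \le i \le \tau$, so by~\eqref{i.FF1} each of the $\tau-2$ terms $\phi(f^i(x))$ with $2 \le i \le \tau-1$ is at least $\alpha$, whereas the two ``bad'' terms $\phi(x)$ and $\phi(f(x))$ are bounded below only by $-\beta_1$. Consequently,
$$
\frac{1}{\tau}\phi_\tau(x) \;\ge\; \alpha \;-\; \frac{2(\alpha+\beta_1)}{\tau}.
$$
Each of the other three patterns contributes at most two bad terms as well (with signs flipped for the $-$ patterns), so taking $\tau \eqdef \lceil 2(\alpha+\beta_1)/(\alpha-\alpha_1) \rceil + 1$ enforces the required bound.

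For the segments, I would prove by induction on $\ell$ the following claim: for every $D \in \fF$ and every string $(s_0,\dots,s_{\ell-1}) \in \{+,-\}^\ell$ there exists a $\fF$-segment $\{D_i\}_{0 \le i \le \ell}$ of length $\ell$ with $D_0 \subset D$ that follows this string. The base case $\ell=1$ is precisely~\eqref{i.FF2}. For the inductive step, first apply~\eqref{i.FF2} to $D$ to produce $D^{s_0} \subset D$ with $M_1 \eqdef f(D^{s_0}) \in \fF^{s_0}$. Next, apply the inductive hypothesis to the member $M_1$ and the shorter string $(s_1,\dots,s_{\ell-1})$ to obtain a segment $\{E_j\}_{0 \le j \le \ell-1}$ with $E_0 \subset M_1$. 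Finally, since $D^{s_0} \subset D \in \fF$ and $f(D^{s_0}) = M_1 \in \fF$, condition~\eqref{i.FF3} yields that $f|_{D^{s_0}}$ is $\lambda$-expanding and therefore injective, so the preimage $D_0 \eqdef (f|_{D^{s_0}})^{-1}(E_0)$ is a well-defined compact subset of $D$; setting $D_i \eqdef E_{i-1}$ for $1 \le i \le \ell$ completes the segment. Applying this claim with $\ell = \tau$ to each of the four patterns gives the four required segments, with entrances contained in $D$ by construction.

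The only substantive point is the pullback in the inductive step: one needs $f$ to be injective on the chosen initial piece so that the later part of the segment can be ``trimmed'' back into $D$ without losing compactness. This is exactly the role of~\eqref{i.FF3}, and it applies cleanly because~\eqref{i.FF2} guarantees that the $f$-image of $D^{s_0}$ is itself a member of $\fF$, not merely contained in one. Everything else is routine bookkeeping.
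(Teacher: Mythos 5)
Your proof is correct and takes essentially the same route as the paper: choose $\tau$ by a counting estimate and build each segment by iterating the invariance property of the flip-flop family and pulling back. In fact your choice of $\tau$ is slightly more careful than the one stated in the paper's proof, whose condition $\alpha_1 < \frac{-\beta_1+\alpha(\tau-1)}{\tau}$ only budgets for the single uncontrolled term $\phi(x)$, whereas the mixed patterns $\mathbf{D}^{-,+}$ and $\mathbf{D}^{+,-}$ carry a second uncontrolled term at time~$1$, which your bound $\tau \ge 2(\alpha+\beta_1)/(\alpha-\alpha_1)$ correctly absorbs.
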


% \begin{lemm}\label{l.tau}
% Consider a flip-flop family $(\fF, \beta, \delta)$ and $0<\beta'<\beta$.
% 
% There is a constant $\tau \in \NN^*$  such that
% for every set $D\in \fF$ and every 
% $\mathord{*} \in \{+,-\}$,
% there are subsets $D^{\mathord{*},+}$ and $D^{\mathord{*},-}$ of $D$ satisfying the following:
% \begin{enumerate}
% \item
% $f(D^{\mathord{*},+})\subset D_1^{*,+}$ and 
% $f(D^{\mathord{*},-})\subset D_1^{*,-}$, where 
% $D_1^{*,+}$ and $D_1^{*,-}$  are members of $\fF^{\mathord{*}}$,
% \item 
% for every $1<i<\tau$, $f^i(D^{\mathord{*},+})$ is contained in an member $D_i^{*,+}$ of $\fF^+$
% and $f^i(D^{\mathord{*},-})$ is contained in an member $D_i^{*,-}$ of $\fF^+$,
% \item
% $f^\tau(D^{\mathord{*},+})$  is an member $D_\tau^{*,+}$ of
% $\fF^+$ and $f^\tau(D^{\mathord{*},-})$ is an member $D_\tau^{*,-}$ of $\fF^-$, and
% \item
% for all $x \in D^{\mathord{*},+}$ and 
% $y \in D^{\mathord{*},-}$ it holds 
% $$
% \frac{1}{\tau} \phi_{\tau}(x) \ge \beta'>0 
% \quad \text{and} \quad
% \frac{1}{\tau} \phi_{\tau}(y) \le -\beta' <0\, . 
% $$
% \end{enumerate}
% \end{lemm}

\begin{proof}
Note that  $\alpha \le \beta_1 \eqdef \sup_K |\phi|$.
Choose  an integer $\tau > 1$ such that
$$
\alpha_1 < \frac{-\beta_1 + \alpha(\tau-1)}{\tau} \, . 
$$
Take $D \in \fF$.
Fix $s_1$, $s_2 \in \{+,-\}$.
%Let us explain how to construct $D^{s,s'}$ and therefore the segment $\mathrm{D}^{s,s'}$.
By condition~(\ref{i.FF2}) in the definition of flip-flop family, 
the image $f(D)$ contains a member $D_1$ of $\fF^{s_1}$.
By induction, assume that a member $D_i$ of $\fF$ (where $0<i<\tau$) has already been defined.
Again by definition of flip-flop family, the image $f(D_i)$ contains an element $D_{i+1}$ of $\fF^{s_2}$.
Continuing in this way we eventually define a member $D_\tau$ of $\fF^{s_2}$.
Let $\mathbf{D}^{s_1,s_2}$ be the segment of length $\tau$ that has $D_\tau$ as exit.
Then this segment has the required properties.
\end{proof}

%%%%%%%%%%%%%%
\subsection{Constructing controlled sets} \label{ss.controlled}

Fix sequences $(\beta_k)$ and $(\alpha_k)$ of positive numbers converging to zero of the form
$$
\beta_1  > \alpha_1 > \beta_2 > \alpha_2 > \cdots 
$$
and such that $\beta_1 \eqdef \sup_K |\phi|$ and $\alpha_1$ is less that the constant $\alpha$
given by condition \eqref{i.FF1}.
Let $\tau = \tau(\alpha_1)$ be given by Lemma~\ref{l.tau}.

The core of the proof of Theorem~\ref{t.FlipFlop} is the following lemma.

\begin{lemm}\label{l.induction}
There exists a sequence of integers $t_0 < t_1 < t_2 < \cdots$, 
where $t_0 = 1$, $t_1 = \tau$ and  
each element of the sequence is a multiple of its predecessor,
such that the following properties hold:

For every integer $k \ge 1$, every member $D$ of $\fF$,
and every  pattern  $\mathbf{s} \in \{+,-\}^\NN$,
there exist numbers  $T_+$, $T_- \in \NN^*$ and 
$\fF$-segments 
$\mathbf{D}^+$ %=\{D_i^+\}_{0\le i \le T_+}$ 
and 
$\mathbf{D}^-$ %=\{D_j^-\}_{0\le j\le T_-}$ 
of respective lengths 
$T_+$ and $T_-$ such that:
\begin{enumerate}
\item \label{i.La}
the entrances of $\mathbf{D}^+$ and $\mathbf{D}^-$ are contained in $D$;
\item \label{i.Lb}
the lengths $T_\pm$ are multiples of $\tau$ and satisfy
$t_{k-1} < T_\pm \le t_k$;

\item \label{i.Lc}
the segments $\mathbf{D}^+$  and  $\mathbf{D}^-$ are $(\beta_i,t_i)$-controlled for $i=1,2,\dots,k-1$;
\item \label{i.Ld}
for all $x$ in the entrance of $\mathbf{D}^+$ and all $y$ in the entrance of $\mathbf{D}^-$, we have
\begin{alignat}{2}
\alpha_k   &\le \frac{1}{T_+} \, \phi_{T_+}(x) &&\le  \beta_k,  \label{e.+}  \\
-\beta_k &\le \frac{1}{T_-} \, \phi_{T_-}(y) &&\le -\alpha_k;   \label{e.-}
\end{alignat}
in particular, the segment $\mathbf{D}^\pm$ is also $(\beta_k,t_k)$-controlled with 
$\cP^{\pm}=\{0, T_\pm\}$ as a set of control times;

\item \label{i.Le} 
the segments $\mathbf{D}^+$  and  $\mathbf{D}^-$ follow the $\tau$-pattern $\mathbf{s}$.
\end{enumerate}
\end{lemm}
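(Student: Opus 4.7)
The plan is to proceed by induction on $k$, defining the integer sequence $(t_k)$ along the way so that each $t_k$ is a sufficiently large multiple of $t_{k-1}$. For the base case $k=1$, given $D\in\fF$ and $\mathbf{s}\in\{+,-\}^\NN$ with first symbol $s_0$, apply Lemma~\ref{l.tau} and set $\mathbf{D}^+\eqdef\mathbf{D}^{s_0,+}$, $\mathbf{D}^-\eqdef\mathbf{D}^{s_0,-}$, both of length $\tau=:t_1$. Then (a), (b), (e) are immediate (a segment of length $\tau$ follows a $\tau$-pattern iff its first symbol equals $s_0$), (c) is empty, and (d) follows from part~4 of Lemma~\ref{l.tau} for the lower bound and from $\beta_1=\sup|\phi|$ for the upper bound.

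For the inductive step ($k\ge 2$), fix a target $\bar{A}_k\eqdef(\alpha_k+\beta_k)/2$ and a distortion parameter $\eta_k<(\beta_k-\alpha_k)/2$. I would build $\mathbf{D}^+$ as the concatenation of $M\eqdef t_k/t_{k-1}$ level-$(k-1)$ pieces, each of length $t_{k-1}$, chosen greedily one by one: at stage $i+1$, from the current exit $D_i\in\fF$ and the shifted pattern $\sigma^{it_{k-1}/\tau}(\mathbf{s})$, the inductive hypothesis furnishes both a positive and a negative level-$(k-1)$ piece with entrance in $D_i$ and following that pattern, and I would pick the positive one if the Birkhoff average over the current entrance lies below $\bar{A}_k$ and the negative one otherwise. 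Build $\mathbf{D}^-$ analogously with target $-\bar{A}_k$. Properties (a), (b), (e) then follow by construction together with Lemma~\ref{l.concatenation}; for (c), each piece is $(\beta_i,t_i)$-controlled for $i\le k-2$ by induction, so Lemma~\ref{l.concatenation2} extends this to the concatenation, while at the new scale $i=k-1$ the inter-piece boundaries serve as $(\beta_{k-1},t_{k-1})$-control times because each piece has length $t_{k-1}$ and average of absolute value at most $\beta_{k-1}$.

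The delicate point is (d). Provided $t_{k-1}\ge N(\eta_k)$, Lemma~\ref{l.distortion} applied to each level-$(k-1)$ piece guarantees that the Birkhoff averages $\phi_{L_i}(x)/L_i$ over the current entrance differ by at most $\eta_k$ among all $x$, so the greedy comparison to $\bar{A}_k$ is meaningful. An elementary telescoping argument based on the identity $L_{i+1}e_{i+1}=L_ie_i+(a_i-\bar{A}_k)t_{k-1}$, where $e_i$ denotes the deviation from $\bar{A}_k$ of the running Birkhoff average after $i$ pieces and $a_i\in[\alpha_{k-1},\beta_{k-1}]\cup[-\beta_{k-1},-\alpha_{k-1}]$ is the typical contribution of the next piece, shows that the greedy rule forces $a_i-\bar{A}_k$ to have sign opposite to that of $e_i$ (since $\alpha_{k-1}>\beta_k>\bar{A}_k>0$) and thereby keeps $|L_ie_i|\le 2\beta_{k-1}t_{k-1}$, so $|e_M|\le 2\beta_{k-1}t_{k-1}/t_k$. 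Combining with the $\eta_k$-distortion spread over the entrance yields $|\phi_{t_k}(x)/t_k-\bar{A}_k|\le\eta_k+2\beta_{k-1}t_{k-1}/t_k$ for every $x$ in the final entrance; choosing $t_k$ large enough makes this at most $(\beta_k-\alpha_k)/2$ and places $\phi_{t_k}(x)/t_k$ uniformly in $[\alpha_k,\beta_k]$.

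The main obstacle is precisely this last balance. Since $\phi$ is only continuous, Lemma~\ref{l.distortion} controls differences of Birkhoff \emph{averages} rather than of Birkhoff \emph{sums}, so the distortion error $\eta_k$ must be fixed in advance and cannot be absorbed by taking $t_k$ large alone. The sequence $(t_k)$ therefore has to be constructed recursively so that $t_{k-1}$ is simultaneously a multiple of $t_{k-2}$, large enough to exceed the threshold $N(\eta_k)$ demanded by the next level, and small compared to $t_k$---requirements that are achievable level by level and that organize the induction.
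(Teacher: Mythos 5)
Your overall strategy is genuinely different from the paper's. Where the paper builds $\mathbf{D}^+$ by first concatenating a fixed number $m$ of positive level-$(k-1)$ pieces and then appending negative pieces one at a time until the running average first hits a target window $[\alpha_k+\eta,\beta_k-\eta]$ (with $m$ chosen large enough both to control distortion and to ensure that no single step can jump over the window), you instead make a greedy choice at every stage and run for a predetermined number of steps, controlling the excursion by a telescoping inequality. Both mechanisms are viable, and the greedy route has the appeal of a cleaner combinatorial invariant in place of a ``hitting-time'' argument.

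That said, as written the proposal contains a genuine gap: you assume that every level-$(k-1)$ piece has length exactly $t_{k-1}$, but the inductive hypothesis (item~(b) of the lemma) only provides lengths $T_\pm$ with $t_{k-2}<T_\pm\le t_{k-1}$, and they need not be equal. This fixed-length assumption is used in three essential places: the pattern shift $\sigma^{it_{k-1}/\tau}(\mathbf{s})$, which should instead be $\sigma^{L_i/\tau}(\mathbf{s})$ with $L_i$ the (variable) total length so far; the telescoping identity $L_{i+1}e_{i+1}=L_ie_i+(a_i-\bar A_k)t_{k-1}$, which should read $L_{i+1}e_{i+1}=L_ie_i+(a_i-\bar A_k)\ell_{i+1}$ with $\ell_{i+1}\le t_{k-1}$; and the final normalization $|e_M|\le 2\beta_{k-1}t_{k-1}/t_k$, which presumes $L_M=t_k$. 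The telescoping bound itself survives (one gets $|L_ie_i|\le 2\beta_{k-1}t_{k-1}$ since each increment is bounded by $2\beta_{k-1}t_{k-1}$), but one must then separately argue that $L_M>t_{k-1}$, which requires comparing the guaranteed lower bound $L_M>M\,t_{k-2}$ to $t_{k-1}$; none of this is addressed. A secondary issue is the phrase ``the greedy comparison to $\bar A_k$ is meaningful'': the running average is a function of the point in the (shrinking) entrance, not a single number, and the greedy decision made from one representative need not have the right sign for the eventual limit point. Tracking this carefully, the invariant that actually propagates is $|L_ie_i|\le L_i\eta_k+2\beta_{k-1}t_{k-1}$ rather than the cleaner $|L_ie_i|\le 2\beta_{k-1}t_{k-1}$ you state; your final inequality happens to be of the right form, but the intermediate claim and its justification are incorrect as stated. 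The paper avoids both difficulties by not fixing the number of steps in advance and by framing the analysis directly in terms of the quantities $S_{m+j}$, obtaining $t_k$ only as an a~posteriori upper bound.
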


\begin{proof} 
The sequence $(t_k)$ is constructed by induction.
Since $t_1 = \tau$, the conclusion of the lemma for $k=1$ follows from Lemma~\ref{l.tau},
taking $T_{\pm} = \tau$.

Let $k \ge 2$ and assume that $t_1 < t_2 < \cdots < t_{k-1}$
are already defined and that the conclusions of the lemma are met up to this point.
Fix an element $D$ of $\fF$.
We will explain how to construct the announced 
segment $\mathbf{D}^+$. The construction of $\mathbf{D}^-$ is analogous and hence omitted.

The segment will be obtained as a concatenation of $m+\ell$ $\fF$-segments $\mathbf{D}_i^+$, $i=1,\dots, m$, and 
$\mathbf{D}_{m+j}^-$, $j=1,\dots,\ell$, which are taken with the following properties:
\begin{enumerate}[label=I\arabic*),ref=I\arabic*,leftmargin=*,widest=I3]
 \item\label{i.I1}
 For each $i\in\{1,\dots,m\}$,   
 $\mathbf{D}_i^+$ %$=\{f^{-T_i^+}(D_i),\dots, D_i\}$ 
 is a $\fF$-segment given by the induction hypothesis associated to $k-1$, 
 so its length $T_i^+ \le t_{k-1}$ is a multiple of $\tau$ and 
 all points in the entrance of the segment
 satisfies the inequality~\eqref{e.+} with $k-1$ in the place of $k$. 
 \item\label{i.I2}
 For each $j\in\{1,\dots,\ell\}$,   
 $\mathbf{D}_{m+j}^-$ %$=\{f^{-T_{m+j}^-}(D_{m+j}),\dots, D_{m+j}\}$ 
 is a $\fF$-segment given by the induction hypothesis associated to $k-1$,
 so its length $T_{m+j}^- \le t_{k-1}$ is a multiple of $\tau$ and 
 all points in the entrance of the segment
 satisfies the inequality~\eqref{e.-} with $k-1$ in the place of $k$. 
 \item\label{i.I3}
 The following concatenation conditions hold:
 \begin{enumerate}
 \item 
 $D$ contains the entrance of $\mathbf{D}_1^+$;
 \item 
 for each $i\in \{1,\dots, m-1\}$, the exit of $\mathbf{D}_i^+$ contains the entrance of $\mathbf{D}_{i+1}^+$;
 \item
 the exit of $\mathbf{D}_m^+$ contains the entrance of $\mathbf{D}_{m+1}^-$;
 \item 
 for each $j\in \{1, \dots, \ell-1\}$, the exit of $\mathbf{D}_{m+j}^-$ contains the entrance of $\mathbf{D}_{m+j+1}^-$; 
 \end{enumerate}
 \item\label{i.I4}
 Each segment $\mathbf{D}_i^\pm$ follows the $\tau$-pattern $\sigma^{S_{i-1}/\tau}(\mathbf{s})$, where $S_0\eqdef0$ and
 $$
 S_i \eqdef\sum_{j=1}^i T_j^\pm \, ,
 \text{ the $j$-th sign $\pm$ being $+$ if $j \le m$ and $-$ otherwise.}
 $$
\end{enumerate}
It follows from the induction hypothesis on $k$
that for every $m>0$ and $\ell>0$
there are families of $\fF$-segments satisfying properties (\ref{i.I1})--(\ref{i.I4}) above;
just reason by induction on $m$ and $\ell$.

% \begin{clai} 
% For every $\ell>0$ and $m>0$ there are families of  
% $\fF$-segments satisfying  hypotheses (I1)--(I4) above.
% \end{clai}
% \begin{proof} 
% Just apply the induction hypothesis ($t_1,\dots,t_{k-1}$ are built) and make 
% a simple induction 
% argument on $m$ and $\ell$.
% \end{proof}

Given $m$ and $\ell$ and  families  
$\mathbf{D}_i^+ $, $i=1,\dots m$, and  $\mathbf{D}_{m+j}^-$, $j=1,\dots \ell$
as above, consider their concatenation 
$$
\mathbf{D}=\mathbf{D}_1^+*\cdots *\mathbf{D}_m^+*\mathbf{D}_{m+1}^-*\cdots*\mathbf{D}_{m+\ell}^-.
$$
Note that for every choice of $m$ and $\ell$ 
and of the families $\mathbf{D}_i^+ $ and  $\mathbf{D}_{m+j}^-$,
the concatenated   $\fF$-segment   $\mathbf{D}$ satisfies conditions (\ref{i.La}), 
(\ref{i.Lb}) (taking $t_k=(m+\ell)t_{k-1}$, say), 
(\ref{i.Lc}), and (\ref{i.Le}) 
in Lemma~\ref{l.induction}. 
The difficult part is to show that we can also obtain item (\ref{i.Ld}) 
with an uniform bound on $m+\ell$.  
This involves careful choices of these numbers and of the families. 

Before going into the details of this construction let us give a heuristic explanation.
The average of $\phi$ along the first $m$ segments is between $\alpha_{k-1}$ and $\beta_{k-1}$, 
in particular larger than the desired average (between $\alpha_k$ and $\beta_k$).  
The average along the next $\ell$ segments is negative, between $-\beta_{k-1}$ and $-\alpha_{k-1}$,
and so the total average decreases. 
The idea is to stop when this average is between $\alpha_k$ and $\beta_k$. 
% There are several difficulties for making rigorous this rough idea.
% We expose these difficulties and explain how to solve them
% in the next subsections. 
We implement this idea in precise terms in the next paragraphs.

\subsubsection{Control of the distortion of the Birkhoff averages}
The first issue to be dealt with is that the Birkhoff averages depend on the point 
in the entrance %$f^{-S_{m+\ell}}(D_{m+\ell})$
of the concatenated segment $\mathbf{D}$.
Lemma~\ref{l.distortion} provides the solution. 

Choose a number $\eta$ with
$$
0< \eta < \frac{\beta_k-\alpha_k}3
$$
and
let $N=N(\eta)$ be given by Lemma~\ref{l.distortion}.
Then, for all integers $m  > N$ and  $j>0$, 
the variation of the Birkhoff averages of $\phi$ 
over the points in the entrance of a segment obtained by concatenation of $m+j$ segments is less than $\eta$. 

Suppose we find $\ell$ such that the concatenation of the $m+\ell$ segments has a point in its entrance
whose Birkhoff average 
$\phi_{S_{m+\ell}}/ S_{m+\ell}$ hits the interval $[\alpha_k+\eta,\beta_k-\eta]$,
which we call the \emph{target}.
In that case, it follows from the definition of $\eta$ that
the Birkhoff averages of $\phi$ at \emph{all} points in the entrance of the segment 
belong to the interval $(\alpha_k,\beta_k)$, as desired.

\subsubsection{Hitting the target}
Note that as the Birkhoff sums corresponding to the segments $\mathbf{D}_{m+j}^-$
are less than $-T_{m+j}^- \,\alpha_{k-1} < -\alpha_{k-1}$, 
if $\ell$ is large enough then the Birkhoff average $\phi_{S_{m+\ell}}/ S_{m+\ell}$
will be less than $\alpha_k$.  
%A second issue is  that we only control the averages 
%at the extremities of the segments: we cannot apply the intermediate value theorem.  
We do not want that these averages go from above $\beta_k$ to below
$\alpha_k$ without hitting the target $[\alpha_k+\eta,\beta_k-\eta]$.  

To deal with this issue, consider a segment $\mathbf{D}_{m+j}^-$
%$$
%\mathbf{D}_{m+j}^- = \big\{ f^{-T_{m+j}^-}(D_{m+j}),\quad  \dots \quad, D_{m+j} \big\}
%$$
and recall that by property~(\ref{i.I2})
the length of the segment is $T_{m+j}^- \le t_{k-1}$
and the Birkhoff sums $\phi_{T_{m+i}^-}$ of points in the entrance of this segment
belong to the interval
$[-T_{m+j}^-\,\beta_{k-1}, -T_{m+j}^-\,\alpha_{k-1}]$. 

An average $\phi_{S_{m+j}} / S_{m+j}$ belongs to the target interval $[\alpha_k+\eta,\beta_k-\eta]$ 
if, and only if, the Birkhoff sum  $\phi_{S_{m+j}}$ belongs to 
interval $[S_{m+j}\, (\alpha_k+\eta),S_{m+j}\, (\beta_k-\eta)]$,
whose length is 
$S_{m+j}\,  (\beta_k-\alpha_k-2\eta)$.
This is the size of gap that we want to be 
large enough so that it cannot be jumped over in just one step.

By the definition of $\eta$ we have
$$
\beta_k-\alpha_k-2\eta \geq \frac{\beta_k-\alpha_k}{3} \, ,
$$
while a crude lower bound for $S_{m+j}$ is $m$,
so the gap is larger than 
$$
\frac{m\,(\beta_k-\alpha_k)}3 \, .
$$ 
On the other hand, any step $|\phi_{S_{m+j}} - \phi_{S_{m+j+1}}|$
is at most $T_{m+j}^- \beta_{k-1} \le t_{k-1}\beta_{k-1}$.
Thus choosing  
$$
m > \frac{3 \, t_{k-1}\, \beta_{k-1}}{\beta_k-\alpha_k},
$$
the averages 
$\phi_{S_{m+j}/S_{m+j}}$  cannot go from a value 
bigger than
$\beta_k$ to a value less than $\alpha_k$ without hitting the target.
We now choose and fix such a number $m$ that additionally satisfies
$m > \max(N(\eta), t_{k-1})$, so that the previous reasoning applies.
% We fix now an integer 
% $$
% m_0> \max \left( N(\eta),\frac{3\,t_{k-1}\,\beta_{k-1} }{\beta_k-\alpha_k}, t_{k-1} \right).
% $$
It follows that there exists an integer $\ell>0$ such that 
the target is hit and so the desired \eqref{e.+} estimate holds
for all points in the the entrance of the segment 
$\mathbf{D}=\mathbf{D}_1^+*\cdots *\mathbf{D}_m^+*\mathbf{D}_{m+1}^-*\cdots*\mathbf{D}_{m+\ell}^-$.
We take the least such number $\ell$.

\subsubsection{Upper bound for the hitting time}
The final step is to bound $\ell$ (independently of $D$ etc) and thus be able to define $t_k$.
Fix an integer 
$$
\ell_0 > \frac{m \, t_{k-1}\, \beta_{k-1}}{\alpha_{k-1}}.
$$
We claim that for any point in the entrance of the 
concatenated segment $\mathbf{D}_1^+*\cdots *\mathbf{D}_m^+*\mathbf{D}_{m+1}^-*\cdots*\mathbf{D}_{m+\ell_0}^-$,
its Birkhoff sum $\phi_{S_{m+\ell_0}}$ is negative.
Indeed, this follows by breaking the sum into two parts, 
the first being at most $S_m \beta_k \le m t_{k-1} \beta_{k-1}$,
and the second being at most $(S_{m+\ell_0}-S_m)(-\alpha_{k-1}) \le - \ell_0 \alpha_{k-1}$.

It follows that $\ell < \ell_0$
and therefore $t_k \eqdef (m + \ell_0)t_{k-1}$ 
is an upper bound for the length $S_{m+\ell}$ of the segment $\mathbf{D}$.
Notice that $t_k$ does not depend on the member $D$ nor the pattern $\mathbf{s}$.

This completes the inductive construction, and so Lemma~\ref{l.induction} is proved.
\end{proof}

%%%%%%%%%%%%%%%%%%%%%%%%%%%%%%%%%%%%%%%%%%%%%%%%%%%%%%%%%%%%%%%%%%%%%%%%%%%%%%%%%%%%%%%%%%%%%%%

\subsection{End of the proof}

In this subsection we use the previous lemmas to prove Theorem~\ref{t.FlipFlop}.
For simplicity we break the proof into two parts:
in the first part we show that every $D \in \fF$ contains a point $x$ that is controlled at all scales,
and in the second part we explain how to find such a point with the additional 
property that $f|\omega(x)$ has positive entropy.

\subsubsection{Finding a point that is controlled at all scales}
Fix any member $D$ of the flip-flop family $\fF$.
By Lemma~\ref{l.induction}, we can find 
a monotone sequence $(\beta_i)$ of positive numbers converging to zero,
a monotone sequence $(t_i)$ of positive integers %(all multiples of a fixed integer $\tau>1$)
converging to infinity,
and a sequence of $\fF$-segments $(\mathbf{D}_k^+)$ such that:
\begin{itemize}
\item 
the entrance of $\mathbf{D}_k^+$ is contained in $D$;
\item 
the length $T_k^+$ of the segment $\mathbf{D}_k^+$ goes to infinity with $k$; 
\item 
every point in the entrance of $\mathbf{D}_k^+$ is $(\beta_i,t_i,T_k^+)$-controlled, for every $i$ with
$1\leq i\leq k$.
\end{itemize}

Choose a point $x_k$ in the entrance of  $\mathbf{D}_k^+$.
Let $x \in D$ be any accumulation point of the sequence $(x_k)$.
Since $x_k \in \bigcap_{i=0}^{T_k^+} f^{-i}(K)$ 
and $T_k^+ \to \infty$,
we conclude that the orbit of $x$ does not leave $K$.
We will show that $x$ is $(\beta_i, t_i, \infty)$-controlled for every $i$.

Suppose $x$ is the limit of a subsequence $(x_{k_j})$.
Let $i$ be fixed.
By construction, for every sufficiently large $j$,
the point $x_{k_j}$ is $(\beta_i, t_i, T_{k_j}^+)$-controlled.
Let  $\cP_j \subset \{0,\dots,T_{k_j}^+\}$ be the corresponding set of control times.
The $n$-th element of $\cP_j$ is bounded by $nt_i$, 
so that there are finitely many possibilities for this value. 
By a standard diagonal argument,
there exists a (strictly increasing) subsequence $\{j_\ell\}_{\ell\in \NN}$  
and an infinite set $\cP \subset \NN$
such that the first $\ell$ elements of $\cP_{j_\ell}$ and $\cP$ coincide.
 
By continuity, the average of $\phi$ along the orbit of $x$ 
between two successive times in $\cP$  
is the limit as $\ell \to \infty$ of the averages of $\phi$ along the orbit of
$x_{k_\ell}$ between the same times. 
This implies that all these averages belong to $[-\beta_i,\beta_i]$, 
proving that $x$ is $(\beta_i,t_i,\infty)$-controlled with set of control times $\cP$. 

We have proved the existence of a point $x \in D$ that is controlled at all scales,
thus proving Theorem~\ref{t.FlipFlop} up to the part of positive entropy,

\subsubsection{Positive entropy}
Let $\mathbf{s} = (s_0,s_1,\dots)$ be a point in the symbolic space $\{+,-\}^{\NN}$
whose orbit under the shift $\sigma$ is dense.

By construction, the lengths $T_k^+$ of the $\fF$-segments $\mathbf{D}_k^+$ considered above
are all multiples of a fixed integer $\tau$.
Moreover, by property~(\ref{i.Le}) in Lemma~\ref{l.induction},
we can choose each segment $\mathbf{D}_k^+$ following the $\tau$-pattern $\left(s_0,s_1,\dots,s_{T_k^+}\right)$.

Let $\alpha>0$ be the constant given by condition (\ref{i.FF1}),
and define two disjoint compact subsets of $K$:
$$
K^+\eqdef \{x\in K ; \; \phi(x)\geq \alpha\} 
\quad \mbox{and} \quad
K^-\eqdef \{x\in K ; \; \phi(x)\leq -\alpha\}.
$$
By definition, every member of $\fF$ is contained either in $K^+$ or in $K^-$.
As a consequence, every point in the orbit of 
$x$ belongs either to $K^+$ or to $K^-$. Thus  
$$
\omega(x)\subset \bigcap_{n\in\NN}f^{-n}( K^+\cup K^-) \,.
$$ 
Define a map $\Pi_\tau \colon \omega(x) \to \{+,-\}^\NN$ by 
$$
\Pi_\tau(y) \eqdef (\pi_i(y))_{i\in\NN}
\quad
\mbox{where}
\quad
\pi_i(y)= 
\begin{cases}
+ \text{ if } f^{\tau i}(y)\in K^+ \, ,\\
- \text{ if } f^{\tau i}(y)\in K^- \, .
\end{cases}
$$
In other words, $\Pi_\tau(y)$ is the itinerary of $y$ under iterations of $f^\tau$
with respect to the partition $\{\omega(x)\cap K^+, \omega(x)\cap K^-\}$
of the set $\omega(x)$.
The map $\Pi_\tau$ is continuous and satisfies $\Pi_\tau \circ f^\tau = \sigma^\tau \circ \Pi_\tau$.

Let us show that $\Pi_\tau$ is also onto.
It is sufficient to show that its image intersects every cylinder in $ \{-,+\}^{\NN}$. 
Consider a finite word
$\epsilon = (\epsilon_0,\dots,\epsilon_n)$ over the alphabet $\{+,-\}$. 
As the pattern $\mathbf{s} = (s_i)_{i\in\NN}$ has a dense orbit under the shift,
there is a sequence $k_i\to +\infty$ so that $(s_{k_i}, \dots, s_{k_i+n})=\epsilon$.
Let $y$ be any accumulation point of the sequence $(f^{\tau k_i}(x))_i$, 
so that in particular $y \in \omega(x)$.
Then $(\pi_0(y),\dots,\pi_n(y))=\epsilon$.
This shows that the image of $\Pi_\tau$ intersects the cylinder corresponding to $\epsilon$,
and we conclude that $\Pi_\tau$ is onto.

Hence we have proved that the restriction of $f^\tau$ to $\omega(x)$ is topologically semiconjugate
to the one-sided full-shift on $2$ symbols, and in particular has topological entropy 
$h_\text{top} (f^\tau|\omega(x)) \ge \log 2$.
In particular, $h_\text{top}(f|\omega(x)) \ge \tau^{-1}\log 2$ is positive,
as we wanted to show.

The proof of Theorem~\ref{t.FlipFlop} is complete.
As explained before, Theorem~\ref{t.FlipFlop_weak} follows.

%!TEX root = final_MAIN.tex

\section{Blenders}\label{s.blenders}

In this section we introduce the notion of a {\emph{dynamical blender.}} This definition
involves three main ingredients: a space of discs (Section~\ref{ss.spaceofdiscs}),
invariant families of  discs (Section~\ref{ss.invariantfamilies}), and
invariant cone fields (Section~\ref{ss.invariantconefields}).
Thereafter with these ingredients
on hand we will define dynamical blenders in 
Section~\ref{ss.manyblenders}.

\subsection{The space of discs}
\label{ss.spaceofdiscs}
Let $M$ be a compact Riemannian manifold of dimension $d$.
For each $i\in\{1,\dots,d-1\}$ we denote by $\cD^i(M)$  the set of 
$i$-dimensional (closed) discs $C^1$-embedded in $M$.
We endow the space $\cD^i(M)$ with the following $C^1$-topology:
given a disc $D\in \cD^i(M)$ that is the image of an embedding 
$\phi \colon \DD^i \to M$ (here $\DD^i$ is the closed unit disc in $\RR^i$)
a basis of neighborhoods of it consists of the
sets
$\{\psi(\DD^i)\colon \psi \in \cV\}$, where $\cV$ runs over the neighborhoods of $\phi$
in the space of embeddings.
Alternatively, we can 
use
the sets of the form $\{f(D) \colon  f \in \cW\}$, where $\cW$ runs over the neighborhoods of the identity in $\Diff^1(M)$.
%The aim of this subsection is to describe some elementary properties of this topology.

% defined as follows:
% a basis of neighborhoods of a disc $D = \varphi(\DD^i)$ consists on the sets
% of the form $\{\psi(\DD^i) ; \; \psi \in \cV\}$,
% where $\cV$ runs over the $C^1$-neighborhoods of $\varphi$,
%The aim of this section is to describe some elementary properties of this topology.

% The definition above of the $C^1$-topology is very natural but not very practical because it uses a parametrization $\varphi$ of the disc.
% There is a  way for defining this topology without the use of a parametrisation: a basis of
% neighborhoods of the disc $D = \varphi(\DD^i)$ consists in all the discs
% $D'=g(D)$ where $g$ is a $C^1$-diffeomorphism of $M$ in a $C^1$-neighborhood of the identity map.

We now show that the topological space $\cD^i(M)$ can be metrized with a
 distance that behaves nicely with respect to  the composition of diffeomorphisms.

\begin{prop}\label{p.distance} 
There is a distance $\delta(\mathord{\cdot},\mathord{\cdot})$ inducing the 
$C^1$-topology in $\cD^i(M)$ that satisfies the following property.
For every $\varepsilon>0$ and $K>0$ there exists $\eta>0$ such that
for every pair of diffeomorphisms  $f, g \in \Diff^1(M)$ 
whose derivatives  and their inverses  are bounded by $K$ it holds
%\margem{onde usamos as inversas??}
$$
d_{C^1}(f,g)<\eta \Longrightarrow \delta(f(D),g(D))<\varepsilon
$$
 for every disc 
$D\in \cD^i(M)$. 
\end{prop}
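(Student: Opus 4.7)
The plan is to metrize $\cD^i(M)$ by lifting each disc to a compact subset of the Grassmannian bundle of $i$-planes and using the Hausdorff distance there. Let $\mathrm{Gr}_i(TM) \to M$ denote the bundle of $i$-dimensional linear subspaces of $TM$, which is a smooth fiber bundle with compact fibers; fix an auxiliary Riemannian metric on its total space and let $d_1$ be the induced distance. For each $D \in \cD^i(M)$, consider the compact $1$-jet lift
$$
\hat D \eqdef \{(x, T_x D) \, : \, x \in D\} \subset \mathrm{Gr}_i(TM),
$$
and define $\delta(D_1, D_2)$ to be the Hausdorff distance between $\hat D_1$ and $\hat D_2$ measured with respect to $d_1$. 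Since a $C^1$-embedded submanifold is uniquely determined by its tangent lift, the map $D \mapsto \hat D$ is injective, and $\delta$ is a genuine metric on $\cD^i(M)$.

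Next I would verify that $\delta$ induces the prescribed $C^1$-topology. In one direction: if $D$ is parameterized by an embedding $\phi \colon \DD^i \to M$ and $\phi_n \to \phi$ in $C^1$, then both $\phi_n(u)$ and $D\phi_n(u)(\RR^i)$ converge uniformly in $u$ to $\phi(u)$ and $D\phi(u)(\RR^i)$, yielding $\delta(\phi_n(\DD^i), D) \to 0$. Conversely, if $\delta(D_n, D) \to 0$, then the tangent planes of $D_n$ are uniformly close to those of $D$; for $n$ large, working in a tubular neighborhood of $D$, the orthogonal projection onto $D$ restricts to a $C^1$-diffeomorphism from $D_n$ onto a slight reparameterization of $D$. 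Composing the inverse of this projection with any parameterization $\phi$ of $D$ produces parameterizations $\phi_n$ of $D_n$ with $\phi_n \to \phi$ in $C^1$.

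For the uniform continuity statement, I would exploit that any $f \in \Diff^1(M)$ lifts canonically to a homeomorphism
$$
\hat f \colon \mathrm{Gr}_i(TM) \to \mathrm{Gr}_i(TM), \qquad \hat f(x, E) \eqdef (f(x), Df_x E),
$$
satisfying $\hat f(\hat D) = \widehat{f(D)}$. The assignment $f \mapsto \hat f$, from the $C^1$-topology on $\Diff^1(M)$ to the $C^0$-topology on maps of $\mathrm{Gr}_i(TM)$, is uniformly continuous on each bounded set $\{f : \|Df^{\pm 1}\| \le K\}$, with modulus depending only on $K$. Indeed, for any $(x,E) \in \mathrm{Gr}_i(TM)$, the base displacement $d(f(x), g(x))$ is at most $d_{C^1}(f,g)$, while the Grassmannian displacement between $Df_xE$ and $Dg_xE$ is at most $C(K)\|Df_x - Dg_x\|$, since the action of invertible linear maps on Grassmannians is locally Lipschitz with a constant controlled by the operator norms of the map and its inverse. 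Choosing $\eta = \eta(\varepsilon, K)$ small enough, $d_{C^1}(f,g) < \eta$ forces $d_1(\hat f(y), \hat g(y)) < \varepsilon$ for every $y \in \mathrm{Gr}_i(TM)$; applying this pointwise to $y \in \hat D$ gives $\delta(f(D), g(D)) < \varepsilon$ uniformly in $D$, as desired.

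The main technical obstacle is the converse half of the topology identification, where one must produce $C^1$-convergent parameterizations from mere Hausdorff-graph convergence. The difficulty concentrates at the boundary $\partial D$, since the orthogonal projection $D_n \to D$ need not be surjective, so a small adjustment of the parameter domain is required. Once the topological equivalence is established, the uniform continuity statement reduces to a pointwise estimate that follows from the chain rule and the compactness of the Grassmannian fibers.
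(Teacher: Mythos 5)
Your construction of the candidate metric is close to the paper's but omits a crucial ingredient. The paper's distance is
$$
\delta(D_1,D_2)=d_\mathrm{Haus}(TD_1,TD_2)+d_\mathrm{Haus}(T\partial D_1,T\partial D_2),
$$
where the \emph{second} term measures the Hausdorff distance, inside the $(i-1)$-Grassmannian bundle, between the tangent lifts of the boundary spheres. You define $\delta$ using only the first term. This is not a cosmetic simplification: without the boundary term, your $\delta$ does not induce the $C^1$-topology, so the proposition fails for your metric. Here is a concrete obstruction with $i=2$: take $D$ the flat unit disc in a chart, and let $D_n$ be $D$ with a thin radial wedge of angular width $1/n$ carved out and the resulting corners smoothed. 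Each $D_n$ is still a $C^1$-embedded disc, all its tangent planes coincide with the single tangent plane of $D$, and since the wedge is thin, $d_\mathrm{Haus}(\hat D_n,\hat D)\to 0$ for your $\delta$. Yet $D_n$ does not converge to $D$ in the $C^1$-topology: no diffeomorphism $C^1$-close to the identity can produce a deep, thin indentation in the boundary (equivalently, the boundary curves $\partial D_n$ develop unbounded curvature, so $T\partial D_n$ stays Hausdorff-far from $T\partial D$). The boundary Grassmannian term rules out exactly this phenomenon, and the paper's proof of the topology identification (Lemma~\ref{l.tubular}) leans on it to confine $\partial D_1$ to a prescribed tubular neighborhood of $\partial D_0$, transverse to its fibers.

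You do flag that ``the difficulty concentrates at the boundary $\partial D$,'' but you frame it as a surjectivity issue fixable by ``a small adjustment of the parameter domain.'' It is not: the problem is that your metric is too weak and must itself be strengthened. There is also a second point your sketch does not address and that the paper handles explicitly: once the projection $\partial D_1\to\partial D_0$ is known to be a covering map, one must argue it has degree one. For $i>2$ this is automatic by simple connectedness of $\partial D_0$; for $i=2$ the paper uses a winding-number argument for immersed curves in $\DD^2$. This step is needed even after you add the boundary term. Your treatment of the uniform-continuity half (lifting $f$ to $\hat f$ on the Grassmannian bundle and using local Lipschitzness of the $\mathrm{GL}$-action, with constants controlled by $K$) matches the paper's Lemma~\ref{l.deltacompatibility} and is fine; note that the same estimate, applied in the $(i-1)$-Grassmannian, covers the boundary term once it is included.
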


The rest of this subsection is dedicated to the proof of this proposition.
%The proof of this proposition takes up the rest of this subsection.

\subsubsection{The distance $\delta$}
\label{sss.distancedelta}
%We endow the compact manifold $M$ with a smooth Riemann metric.
For each $j\in \{1,\dots, d-1\}$ let $\cG_j(M) \to M$ be the fiber bundle over $M$ whose fiber over a point $x\in M$  is the $j^{th}$-Grassmannian 
manifold of the tangent space $T_xM$.  In other words, a point $P\in \cG_j(M)$ whose projection in $M$ is the point $x$ is a  
subspace of dimension $j$ of the vector space $T_xM$. Recall that $\cG_j(M)$ is a compact manifold naturally endowed with a 
metric associated to the metric on $M$.

Given a disc $D\in \cD^i(M)$, we use the following notations:
\begin{itemize}
 \item $TD\subset \cG_i(M)$ denotes the compact subset $TD
 \eqdef \{(x,T_xD)\}_{x\in D}${\,};
 \item 
 $T\partial D\subset \cG_{i-1}(M)$ denotes the compact subset $T\partial D
 \eqdef \{(x,T_x\partial D)\}_{x\in\partial D}$.  
\end{itemize}

The distance $\delta$ is defined as follows, 
given a pair of discs $D_1$, $D_2\in \cD^i(M)$ we let
$$ 
\delta(D_1,D_2)\eqdef  d_\mathrm{Haus}(TD_1,TD_2)+d_\mathrm{Haus}(T\partial D_1,T\partial D_2),
$$
where $d_\mathrm{Haus}$ denotes the Hausdorff distance between compact subsets of a  metric space. 

\begin{rema}
\label{r.distancedelta}
$\,$
 \begin{enumerate}
  \item 
  $\delta$ defines a distance  in $\cD^i(M)$.
  \item 
  The distance $\delta$ is continuous with respect to the $C^1$-topology, 
  that is, the map $(D_1,D_2)\mapsto \delta(D_1,D_2)$ 
  is continuous in 
  $\cD^i(M)^2$.  
  This follows noting that  the maps $D\mapsto TD$ and $D\mapsto T\partial D$ are both continuous from $\cD^i(M)$ 
  (endowed with the $C^1$-topology) to the spaces $\cG_i(M)$ and $\cG_{i-1}(M)$, respectively, 
  (endowed with the Hausdorff topologies).
\end{enumerate}
\end{rema}

We now see that the distance 
$\delta$ satisfies the ``continuity"  property in Proposition~\ref{p.distance}.

\begin{lemm}\label{l.deltacompatibility} 
For every $\varepsilon>0$ and $K>0$ there exists $\eta>0$ such that
for every pair of diffeomorphisms $f, g \in \Diff^1(M)$ 
whose derivatives and their inverses 
are bounded by $K$ it holds
$$
d_{C^1}(f,g)<\eta \Longrightarrow \delta(f(D),g(D))<\varepsilon
$$
for every disc $D\in \cD^i(M)$.
\end{lemm}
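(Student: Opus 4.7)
The plan is to reduce the estimate on $\delta(f(D),g(D))$ to a pointwise estimate in the Grassmannian bundles that does not involve $D$ at all. Every element of $Tf(D)$ has the form $(f(x),Df_x(T_xD))$ for some $x\in D$, and the pair $(g(x),Dg_x(T_xD))$ then lies in $Tg(D)$; consequently, in order to bound $d_\mathrm{Haus}(Tf(D),Tg(D))$ by $\varepsilon/2$ it suffices to prove that
\[
d_{\cG_i}\bigl((f(x),Df_x V),(g(x),Dg_x V)\bigr) < \varepsilon/2
\]
for every $x\in M$ and every $i$-dimensional subspace $V\subset T_x M$. The analogous bound in $\cG_{i-1}$, applied at points of $\partial D$ with $V=T_x\partial D$, controls $d_\mathrm{Haus}(T\partial f(D),T\partial g(D))$, and summing yields the desired estimate on $\delta$. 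Since these pointwise inequalities do not see $D$, the uniformity in $D$ claimed in the lemma is automatic.

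To establish the pointwise estimate I would work in a finite smooth atlas of $M$ with Lebesgue number $\rho>0$. Choosing $\eta<\rho$ ensures that $f(x)$ and $g(x)$ lie in a common chart, in which $\cG_j(M)$ is bi-Lipschitz identified with an open subset of $M\times \mathrm{Gr}_j(\RR^d)$ endowed with the product of the ambient metric on $M$ and a fixed metric on $\mathrm{Gr}_j(\RR^d)$. Under this identification one has
\[
d_{\cG_j}\bigl((f(x),Df_x V),(g(x),Dg_x V)\bigr) \leq C\bigl(d(f(x),g(x))+ d_{\mathrm{Gr}_j}(Df_x V,Dg_x V)\bigr)
\]
with a constant $C$ independent of the chart, $x$ and $V$. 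The first summand is at most $\eta$ by the $C^0$-part of $d_{C^1}(f,g)<\eta$. For the second summand I invoke the uniform continuity of the evaluation map $(A,V)\mapsto A(V)$ on the product of $\{A\in GL(\RR^d):\|A\|,\|A^{-1}\|\le K\}$ with $\mathrm{Gr}_j(\RR^d)$; the hypothesis on $K$ is precisely what keeps $A$ uniformly bounded away from the singular locus on which this map would fail to be continuous. Since $\|Df_x-Dg_x\|<\eta$ and both derivatives lie in the prescribed compact family, shrinking $\eta$ forces the Grassmannian term to be as small as we please.

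The main potential obstacle is not analytical but bookkeeping: the bi-Lipschitz constants of the chart identifications of $\cG_j(M)$, the equivalence of $d_{\cG_j}$ with the product metric in each chart, and the modulus of uniform continuity of the evaluation map must all be controlled by constants depending only on $K$ and on the fixed atlas, and not on $D$, $x$ or $V$. This is guaranteed by the compactness of $M$ and of the Grassmannian bundles $\cG_i(M)$ and $\cG_{i-1}(M)$, combined with the uniform bound $K$ on $Df$, $Dg$ and their inverses. Once these uniform constants are in place, choosing $\eta$ small enough that each of the contributions above is at most $\varepsilon/4$ yields $\delta(f(D),g(D))<\varepsilon$ for every $D\in\cD^i(M)$.
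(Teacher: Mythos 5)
Your argument reduces $\delta(f(D),g(D))$ to a uniform pointwise estimate on the Grassmannian bundle $\cG_j(M)$, which is exactly the reduction the paper makes; the paper simply asserts that pointwise estimate, whereas you supply the chart-based argument that justifies it. The proposal is correct and follows essentially the same route as the paper's proof.
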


\begin{proof}
It is enough to note that given any $K$ and $\varepsilon>0$ 
there is $\eta>0$ such that given any 
$f,g\in  \Diff^1(M)$ whose derivatives and their inverses of 
are bounded by $K$ and are $\eta$-close one has that 
for every $j\in \{1,\dots, d-1\}$ and every point 
$(x,P)\in \cG_j(M)$ 
%(that is, $P$ is a subspace of dimension $j$ in $T_xM$) 
it holds 
\[
d(Df(x)(P),Dg(x)(P))<\varepsilon. \qedhere
\]
\end{proof}

\subsubsection{%End of the proof of Proposition~\ref{p.distance}: 
The distance $\delta$ defines the $C^1$-topology.}
%We already have seen that $\delta$ is continuous for the $C^1$-topology, that is, $\delta$ is small on small neighborhoods of a disc $D$. 
To conclude the  proof of Proposition~\ref{p.distance} we are left to prove the following: 

\begin{lemm}
\label{l.esquecido}
Given any disk $D_0\in \cD^i(M)$ and any $C^1$-neighborhood $\cU$ of $D_0$ there is $\varepsilon >0$ such that if
$\delta(D_1,D_0)<\varepsilon$ then  $D_1\in \cU$.
\end{lemm}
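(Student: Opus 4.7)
The plan is to realize any disc $D_1$ that is $\delta$-close to $D_0$ as a graph over $D_0$ inside a small tubular neighborhood, and then to read off an embedding parametrizing $D_1$ that is $C^1$-close to a fixed parametrization of $D_0$. Fix an embedding $\phi_0\colon \DD^i\to M$ with $\phi_0(\DD^i)=D_0$, so that the $C^1$-neighborhood $\cU$ corresponds to a neighborhood $\cV$ of $\phi_0$ in the space of embeddings. Using the exponential map of the ambient Riemannian metric on the normal bundle $N\to D_0$, construct a tubular neighborhood $\Phi\colon N_\rho\to V\subset M$ for some $\rho>0$, together with the associated projection $\pi\colon V\to D_0$.

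The Hausdorff closeness of $TD_1$ and $TD_0$ in $\cG_i(M)$ simultaneously controls the set-distance $d_\mathrm{Haus}(D_1,D_0)$ in $M$ (via the projection $\cG_i(M)\to M$) and the angles between corresponding tangent $i$-planes. Thus, for $\varepsilon$ small enough, $D_1\subset V$ and at every $x\in D_1$ the plane $T_xD_1$ is nearly parallel to $T_{\pi(x)}D_0$, hence transverse to the fiber of $\pi$ through $x$; so $\pi|_{D_1}$ is an immersion, and therefore a local diffeomorphism. In particular $D_1$ is locally the image under $\Phi$ of the graph of a small normal section.

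The analogous Hausdorff control on $T\partial D_1$ and $T\partial D_0$ in $\cG_{i-1}(M)$ places $\partial D_1$ inside a sub-tube of $\partial D_0$ with nearly parallel tangent $(i-1)$-planes. The same local-graph argument, applied inside this sub-tube, shows that $\pi|_{\partial D_1}\colon \partial D_1\to \partial D_0$ is a covering map; since $\partial D_0\cong S^{i-1}$ is connected (and simply connected when $i\geq 3$, while the cases $i=1,2$ are handled directly because the covering degree is forced to be one by proximity to the identity), this covering is in fact a diffeomorphism. Combining this with the immersion property on the interior, and using that $D_0\cong \DD^i$ is simply connected, a standard covering-space argument upgrades $\pi|_{D_1}\colon D_1\to D_0$ to a global diffeomorphism.

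Consequently $D_1$ is the graph of a well-defined section $s\colon D_0\to N_\rho$, and the $C^0$-smallness of $s$ follows from the smallness of $d_\mathrm{Haus}(D_1,D_0)$, while the tangential term in $\delta$ forces $s$ to also be small in $C^1$. The embedding
$$
\phi_1(t)\eqdef \Phi\bigl(\phi_0(t),\,s(\phi_0(t))\bigr),\qquad t\in\DD^i,
$$
then parametrizes $D_1$ and its $C^1$-distance to $\phi_0$ is controlled by $\varepsilon$. Taking $\varepsilon$ sufficiently small yields $\phi_1\in\cV$, and therefore $D_1\in\cU$. The main obstacle is the globalization step: a priori $D_1$ might fold inside the tubular neighborhood so that $\pi|_{D_1}$ is not injective, and it is precisely the boundary term $d_\mathrm{Haus}(T\partial D_1,T\partial D_0)$ in the definition of $\delta$ that supplies the boundary information needed by the covering-space argument to exclude such folding.
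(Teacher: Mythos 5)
Your overall approach mirrors the paper's: place $D_1$ in a tubular neighborhood of $D_0$, show that the fiberwise projection $\pi$ restricted to $D_1$ is a diffeomorphism onto $D_0$, and then read off $C^1$-closeness from the resulting graph description. The gap lies in the treatment of the boundary covering degree when $i=2$, which you dismiss with ``the covering degree is forced to be one by proximity to the identity.'' This is not an argument, and taken at face value the claim is false: the distance $\delta$ is a Hausdorff distance of tangent sets, not a parametric $C^1$-distance, so for instance a thin $(2,1)$-torus knot around $\partial D_0\cong S^1$ is $\delta$-close to $\partial D_0$ as a curve with nearly parallel tangent lines, and yet projects to $\partial D_0$ with degree $2$. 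What actually excludes $k\ge 2$ is not boundary proximity alone but the fact that $\partial D_1$ bounds the embedded disc $D_1$ whose tangent planes are nearly those of $D_0$. That is precisely the paper's turning-number argument: the projection of $\partial D_1$ into the plane of $D_0$ is an immersed closed curve whose turning number equals $k$ (because its tangent winds once per revolution around $\partial D_0$), while the boundary of an immersed planar disc has turning number $\pm 1$, forcing $k=1$.

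One could instead sidestep the boundary-degree question entirely by applying the covering-space argument first to the restriction of $\pi$ to the interior of $D_1$, which is a proper local diffeomorphism onto the simply connected interior of $D_0$ and hence bijective, and then deducing that the boundary covering has degree one. You gesture at this with the subsequent global covering argument, but as you have written it the global step relies on having already established that $\pi|_{\partial D_1}$ is a diffeomorphism, so the $i=2$ case is left without a valid justification.
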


\begin{proof} 
Choose a smooth embedding $\DD^i\times \DD^{d-i}\to M$ such that $D_0$ 
is contained in the (interior of)  image of the graph $\Ga$ of a map from $\DD^i$ to $\DD^{d-i}$.  
For $\varepsilon>0$ small enough every disc $D_1$ with $\delta(D_1,D_0)<\varepsilon$ is contained in $\DD^i\times \DD^{d-i}$ and is transverse
to the fibers $\{x\}\times \DD^{d-i}$.  
We will prove the following:

\begin{lemm}\label{l.tubular} 
With the notation above, 
for every $\varepsilon>0$ small enough, the projection of $D_1$ in the graph $\Ga$ is a diffeomorphism of $D_1$ into
a disc contained in $\Ga$ whose boundary is $C^1$-close to $\partial D_0$. 
\end{lemm}

Let us observe that this lemma implies Lemma~\ref{l.esquecido}.
For that just note that 
the disc $D_1$ is the image by a diffeomorphisms $C^1$-close to the identity 
of a disc in $\Ga$
which is $C^1$-close to $D_0$, this implies the proposition. 
We are left to prove Lemma~\ref{l.tubular}.
\end{proof}

\begin{proof}[Proof of Lemma~\ref{l.tubular}] 
Consider a small tubular neighborhood $\De\subset \DD^i\times \DD^{d-i}$ 
of $\partial D_0$ whose projection $\De\to\partial D_0$ 
commutes with the projection on $\Gamma$.
%so that the fibers of the tubular neighborhood are \emph{vertical}, 
%that is tangent to the fibers $\{x\}\times \DD^{d-i}$. 

%For $\varepsilon$ small enough, 
As, by hypothesis,  $T\partial D_1$ is $\delta$-close to $T\partial D_0$ the boundary $\partial D_1$ is 
contained in the 
tubular neighborhood $\Delta$  and is transverse to the fibers of $\Delta$. As a consequence, the projection $\pi$ from $\partial D_1$ to $\partial D_0$ along 
the fibers of $\Delta$ is a $C^1$-covering map.  

\begin{claim}
The projection  $\pi\colon
\partial D_1\to \partial D_0$ is a $C^1$-diffeomorphism. 
\end{claim}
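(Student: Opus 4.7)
The plan is to argue that the already-established $C^1$-covering $\pi|_{\partial D_1}\colon\partial D_1\to\partial D_0$ has degree one, thereby upgrading it to a $C^1$-diffeomorphism via the local diffeomorphism property. For $i\ge 3$, the base $\partial D_0$ is diffeomorphic to the simply connected sphere $S^{i-1}$, so every connected covering is trivial, and we conclude since $\partial D_1\cong S^{i-1}$ is connected. For $i=1$, both $\partial D_0$ and $\partial D_1$ are two-point sets, and the Hausdorff closeness implicit in $\delta(D_1,D_0)<\varepsilon$ forces a bijection as soon as $\varepsilon$ is smaller than half the distance between the two points of $\partial D_0$.

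The main obstacle is the case $i=2$, where $\partial D_0\cong S^1$ admits nontrivial connected coverings: one can easily write down embedded simple closed curves in a tubular neighborhood of $S^1\subset\RR^3$ that are arbitrarily $\delta$-close to $S^1$ yet wind twice around it, so the conclusion cannot be extracted from $\delta$-closeness of the boundaries alone. To rule out higher-degree coverings I would invoke the extra input that $\partial D_1$ bounds the embedded disc $D_1$, which is itself $\delta$-close to $D_0$.

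Specifically, the closeness of $TD_1$ to $TD_0\subset T\Gamma$ ensures that the ambient projection $\rho\colon\DD^i\times\DD^{d-i}\to\DD^i\equiv\Gamma$ restricts on $D_1$ to a local $C^1$-diffeomorphism, and the Hausdorff smallness $\delta(D_1,D_0)<\varepsilon$ combined with the compactness of $D_1$ promotes this to global injectivity for $\varepsilon$ small enough. Hence $\rho|_{D_1}$ is a $C^1$-diffeomorphism of $D_1$ onto an embedded sub-disc $D_1'\subset\Gamma$ whose boundary $\partial D_1'$ is $C^1$-close to $\partial D_0$ inside $\Gamma$. Because the tubular projection $\Delta\to\partial D_0$ commutes with $\rho$ by construction, it is enough to show that the tubular projection $\pi_\Gamma$ inside $\Gamma$ restricts to a diffeomorphism from $\partial D_1'$ onto $\partial D_0$. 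Since $\Gamma$ is a $2$-disc in the case $i=2$ and $\partial D_1'$ is a simple closed curve in $\Gamma$ close to $\partial D_0$, the Jordan curve theorem inside $\Gamma$ forces $\partial D_1'$ to be isotopic to $\partial D_0$ through simple closed curves contained in the tubular neighborhood, yielding the degree-one conclusion.

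The principal technical difficulty lies in establishing the global injectivity of $\rho|_{D_1}$ in the case $i=2$, since it cannot be extracted from the tangent information alone and requires combining compactness with Hausdorff smallness and the disc structure of $D_1$. Once this injectivity is in place, the planar Jordan-curve argument inside $\Gamma$ together with the commuting diagram closes the proof of the claim.
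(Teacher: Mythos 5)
Your reduction to the case $i=2$ (with $i\ge 3$ handled by simple connectivity of $S^{i-1}$ and $i=1$ being a bijection of two-point sets) matches the paper, but for the crucial case $i=2$ your proposal has a genuine gap, and you correctly flag where it is: the global injectivity of the projection $\rho|_{D_1}\colon D_1\to \DD^2$.

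The problem is that this injectivity is not a technical lemma you can expect to ``promote from compactness plus Hausdorff smallness'' --- it is equivalent in difficulty to the claim you are trying to prove. If $\rho|_{\partial D_1}$ is injective, the covering $\pi$ has degree $1$ and you are done; conversely, if $\pi$ has degree $k>1$ then $\rho|_{\partial D_1}$ is $k$-to-$1$ near $\rho(\partial D_0)$, so not injective. And closeness of tangent planes only guarantees that $\rho|_{D_1}$ is an \emph{immersion}; immersed discs in the plane (Milnor's spiral disc, Blank cells) can be multi-sheeted over parts of their image while remaining $C^1$-small perturbations of a graph in a product $\DD^2\times\DD^{d-2}$ with $d-2\ge 2$, because the extra fibers give room to embed a non-injective projection. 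So there is no soft argument that gives injectivity directly, and the Jordan-curve step never gets off the ground without it.

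The paper avoids this issue entirely. It observes that $\rho|_{D_1}$ is an immersion of a $2$-disc into $\DD^2$, hence $\rho(\partial D_1)$ is a closed immersed curve whose rotation index (turning number) is, on one hand, equal to the covering degree $k$ (each sheet over $\partial D_0$ contributes one full turn of the tangent direction), and on the other hand equal to $1$, because the turning number of the boundary of any immersed disc in the plane is $\pm 1$ (Gauss--Bonnet for the flat pullback metric: $\int_{\partial}\kappa_g = 2\pi\chi(\DD^2)=2\pi$, or Hopf's Umlaufsatz extended to immersions). This equates two topological invariants without ever asserting injectivity of $\rho|_{D_1}$, which is exactly the step your plan cannot supply.
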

\begin{proof} 
If $i>2$ then $\partial D_0$ is simply connected and  then the projection is a diffeomorphisms. Hence 
the unique case where the claim is not trivial is when $i=2$. 
Thus assume $i=2$ and that the projection is a covering with $k$ sheets. 
We need to see that $k=1$.

Consider the projection of  $\partial D_1$ in $\DD^2$ along the 
vertical fibers $\{x\}\times \DD^{d-2}$.  On the one hand, this projection is a closed 
immersed curved of 
$\DD^2$ whose index is precisely $k$, where the index is the number of turns made by the tangent direction of the curve when one goes around the curve. 
On the other hand, the index of the boundary of an immersed disc of $\DD^2$ is 
$1$.  Thus $k=1$,  
ending the proof of the claim.
\end{proof}

Using the claim we get that the projection of $\partial D_1$ in $\Ga$ is an embedded $i-1$ sphere that bounds 
an $i$-disc $\widetilde D$ in $\Ga$. The disc $D_1$ is a graph of a map defined on
$\Gamma$ close to the identity. This implies that $D_1$ is in a small neighborhood
of $D_0$, ending the proof the lemma.
\end{proof}

The proof of Proposition~\ref{p.distance} is now complete.

\subsection{Strictly invariant families of  discs and robustness}
\label{ss.invariantfamilies}

In this section we introduce strictly invariant families of discs and see that this property persists after small  perturbations of the diffeomorphisms. 

Given a family of discs $\fD\subset \cD^i(M)$ and $\eta>0$ we denote by $\cV^\delta_\eta(\fD)$ the open
$\eta$-neighborhood of $\fD$ with respect to the distance $\delta$, that is,
$$
\cV^\delta_\eta(\fD)\eqdef
\{D\in \cD^i(M)\colon \; \delta(D,\fD)<\eta\}.
$$

\begin{defi}[Strictly $f$-invariant families of discs]
Let $f$ be a diffeomorphism. A family of discs $\fD\subset \cD^i(M)$  is \emph{strictly $f$-invariant} if there is $\varepsilon>0$ such that for every disc
$D_0\in\cV^\delta_\varepsilon(\fD)$ there is a disc $D_1\in \fD$ with 
$D_1\subset f(D)$.  The number $\varepsilon$ is \emph{the strength} of the strict invariance.
\end{defi}

To be strictly invariant is a 
$C^1$-robust property  of a family of discs:

\begin{lemm} 
\label{l.strictdiscs}
Let $f$ be a diffeomorphism and  $\fD\subset \cD^i(M)$ 
a strictly $f$-invariant family of discs of strength $\varepsilon>0$. 
Then for every $0<\mu<\varepsilon$ there exists $\eta>0$ such that 
the family $\fD_\mu= \cV^\delta_\mu(\fD)$ 
is strictly $g$-invariant with strength $\varepsilon-\mu$
for every  $g\in \Diff^1(M)$ which is $\eta$-$C^1$-close to $f$. 
\end{lemm}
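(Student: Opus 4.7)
The plan is to unwind the definitions and then reduce the statement to an application of Proposition~\ref{p.distance} (via Lemma~\ref{l.deltacompatibility}).

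First, I fix $0<\mu<\varepsilon$ and examine what needs to be proved. By definition, strict $g$-invariance of $\fD_\mu=\cV^\delta_\mu(\fD)$ with strength $\varepsilon-\mu$ amounts to: for every disc $D_0$ with $\delta(D_0,\fD_\mu)<\varepsilon-\mu$, there should exist a disc $D_1'\in\fD_\mu$ with $D_1'\subset g(D_0)$. The triangle inequality for $\delta$ gives the basic inclusion $\cV^\delta_{\varepsilon-\mu}(\fD_\mu)\subset \cV^\delta_\varepsilon(\fD)$, so such a $D_0$ automatically lies in $\cV^\delta_\varepsilon(\fD)$, and the strict $f$-invariance of $\fD$ provides a disc $D_1\in\fD$ with $D_1\subset f(D_0)$.

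The construction of $D_1'$ is then natural: set $\widetilde D \eqdef f^{-1}(D_1)\subset D_0$ and define
$$
D_1'\eqdef g(\widetilde D)\subset g(D_0).
$$
Since $D_1=f(\widetilde D)$ and $D_1'=g(\widetilde D)$ are the images of the \emph{same} embedded disc $\widetilde D\in\cD^i(M)$ by the two maps $f$ and $g$, we are exactly in a position to apply Proposition~\ref{p.distance}. The output of that proposition will be an estimate of the form $\delta(D_1,D_1')<\mu$, which immediately implies $D_1'\in\cV^\delta_\mu(\fD)=\fD_\mu$, finishing the proof.

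To choose $\eta$, I first fix $K>0$ that dominates the norms of $Df$ and $Df^{-1}$ with some slack, so that any $g$ which is $C^1$-close to $f$ (say within some preliminary bound $\eta_0$) also has $Dg$ and $Dg^{-1}$ bounded by $K$. Then Proposition~\ref{p.distance} applied with the parameters $(\mu,K)$ yields some $\eta_1>0$ with the uniform implication $d_{C^1}(f,g)<\eta_1\Rightarrow \delta(f(D),g(D))<\mu$ for all $D\in\cD^i(M)$; taking $\eta\eqdef\min(\eta_0,\eta_1)$ completes the argument.

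The only mildly delicate point is the uniformity: one must check that the constant $\eta$ can be chosen independently of the disc $D_0$ and of the particular $D_1$ extracted from the strict $f$-invariance. This uniformity is already built into Proposition~\ref{p.distance}, so no additional work is needed beyond the preliminary choice of the bound $K$.
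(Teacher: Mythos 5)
Your proof is correct and follows essentially the same route as the paper: reduce to $\cV^\delta_{\varepsilon-\mu}(\fD_\mu)\subset\cV^\delta_\varepsilon(\fD)$, use strict $f$-invariance to extract $D_1\subset f(D_0)$, pull back by $f$, push forward by $g$, and invoke Lemma~\ref{l.deltacompatibility} to conclude $\delta(g(\widetilde D),D_1)<\mu$. The only difference is cosmetic (you make the choice of the derivative bound $K$ explicit, which the paper leaves implicit).
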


\begin{proof} Let $\eta$ (associated to $\mu$) be given by Lemma~\ref{l.deltacompatibility},
 then for every diffeomorphism $g$ which is $\eta$-$C^1$-close to
 $f$  and every disc in $D\in\cD^i(M)$ it holds
$\delta(f(D),g(D))<\mu$.

Take any disc
$D\in \cV^\delta_{\varepsilon-\mu}(\fD_\mu)$ and note that
$D\in \cV^\delta_{\varepsilon}(\fD)$.
As the family $\fD$ is strictly $f$-invariant with strength $\varepsilon$,
the set $f(D)$ contains a disc in $D_1 \in \fD$. 
Consider the disc $D_0=f^{-1}(D_1)\subset D$.  Now, by the choice of $\eta$,
$\delta(g(D_0),f(D_0))
= \delta (g(D_0),D_1)
<\mu$. Thus $g(D_0)\in \fD_\mu$ and hence $g(D)$ contains a disc
of $\fD_\mu$,
 concluding the proof of the lemma.
\end{proof}

\subsection{Strictly invariant cone fields}
\label{ss.invariantconefields}

%Let $M$ be a compact manifold.

A subset $C$ of a vector space $E$ is a \emph{cone of index $i$} 
if there are a splitting   $E = E_1 \oplus E_2$ with $\dim E_1 = i$
and a norm $\| \mathord{\cdot} \|$ on $E$ such that
$$
C = \{v_1 + v_2  \colon v_i \in E_i, \ \|v_2\| \le \|v_1\|\}.
$$
A cone $C'$ is \emph{strictly contained} in the cone $C$  above if there exists $\alpha>1$
such that 
$$
C' \subset C_\alpha=\{v_1 + v_2 \colon 
v_i \in E_i, \ \|v_2\| \le \alpha^{-1} \|v_1\|\} \subset C.
$$

A \emph{cone field of index $i$} defined on a subset $V$ of a compact 
manifold $M$ 
is a continuous assignment $x\mapsto \cC(x)\subset T_x M$  of
a cone of index $i$ for each $x \in V$.
Given $f\in \Diff^1(M)$, we say that this cone field is \emph{strictly $Df$-invariant} 
if $Df(x)(\cC(x))$ is strictly contained in $\cC(f(x))$
for every $x \in V \cap f^{-1}(V)$.

\begin{lemm}\label{l.cones} 
Let  $f$ be a diffeomorphism and $\cC$ a strictly $Df$-invariant cone field 
defined on a compact set $V\subset M$. Then there is a $C^1$-neighborhood
$\cU$ of $f$ so that $\cC$ is strictly $Dg$-invariant for every $g\in \cU$. 
\end{lemm}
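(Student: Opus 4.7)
The plan is a sequential-compactness argument by contradiction, which neatly sidesteps the technical subtlety that $V\cap g^{-1}(V)$ need not be contained in $V\cap f^{-1}(V)$. Suppose the conclusion fails. Then there exist a sequence $g_n\to f$ in the $C^1$-topology and points $x_n\in V\cap g_n^{-1}(V)$ for which $Dg_n(x_n)(\cC(x_n))$ fails to be strictly contained in $\cC(g_n(x_n))$.

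The first step is to locate a limit point. By compactness of $V$, I can pass to a subsequence with $x_n\to x\in V$. Since $g_n\to f$ uniformly on $M$ and $x_n\to x$, we have $g_n(x_n)\to f(x)$. Because $V$ is closed and each $g_n(x_n)$ lies in $V$, the limit $f(x)$ also lies in $V$, so $x\in V\cap f^{-1}(V)$. Strict $Df$-invariance at $x$ then supplies $\alpha>1$ with $Df(x)(\cC(x))\subset\cC(f(x))_{\alpha}$.

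To produce a contradiction, I would use that $\cC$ is continuous on $V$: since $x_n\to x$ and $g_n(x_n)\to f(x)$ with all these points in $V$, we get $\cC(x_n)\to\cC(x)$ and $\cC(g_n(x_n))\to\cC(f(x))$ in the space of cones. Meanwhile $Dg_n(x_n)\to Df(x)$, because $Dg_n\to Df$ uniformly on $M$ and $x_n\to x$. Consequently the image cones $Dg_n(x_n)(\cC(x_n))$ converge to $Df(x)(\cC(x))$, which sits strictly inside $\cC(f(x))_{\alpha}$. Since strict inclusion of cones is an open condition on the pair (image cone, enclosing cone), for $n$ large enough $Dg_n(x_n)(\cC(x_n))\subset\cC(g_n(x_n))$ strictly, contradicting the choice of $(g_n,x_n)$. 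The only place where care is really needed, and which I would expect to be the main technical obstacle, is formalising ``convergence of cones'' and the openness of strict inclusion; this is routine once one works in a local trivialisation of the Grassmann bundle of index $i$, in which a cone is described by a splitting $E_1\oplus E_2$ and a norm depending continuously on the base point.
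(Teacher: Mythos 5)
Your proof is correct and is essentially the same argument as the paper's, which simply notes that $V\cap g^{-1}(V)$ depends upper semicontinuously on $g$ (so for $g$ near $f$ it lies in a small neighborhood of $V\cap f^{-1}(V)$) and leaves the rest implicit; your sequential-compactness contradiction is the standard way to make that one-line observation precise, with the limit $x\in V\cap f^{-1}(V)$ playing exactly the role of the upper-semicontinuity statement.
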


\begin{proof} 
It suffices to note that, since $V$ is compact, the set $g^{-1}(V)\cap V$ depends upper semi-continuously on $g$, that is, for $g$ $C^0$-close to $f$, that set is contained in a small neighborhood of $V\cap f^{-1}(V)$. 
\end{proof}

\subsection{Geometric and dynamically defined blenders}
\label{ss.manyblenders}

We next introduce the notion of a {\emph{dynamical blender}} and state some of its properties
which are pertinent in our context. 
As a motivation let us first recall the definition of a {\emph{geometric blender}} as defined in 
\cite[Definitions 6.9 and 6.11]{BDV_book}.

\begin{defi}[Geometric blender] Let $f$ be a diffeomorphism and 
$\La$ a compact $f$-invariant set.  
We say that $\La$ is a \emph{geometric $\mathrm{cu}$-blender of $\mathrm{uu}$-index $i$} if
it is (uniformly) hyperbolic with $\mathrm{u}$-index strictly larger than $i$
and there exist
\begin{itemize}
 \item an open family $\fD\subset \cD^i(M)$ of discs and
 \item a $C^1$-neighborhood $\cU$ of $f$ 
\end{itemize}
such that
$D\cap W^s(\La_g)\neq \emptyset$
for every $g \in \cU$ and every $D\in\fD$, 
here $\La_g$ is the continuation of $\La$ for $g$.

The family $\fD$ is called the \emph{superposition region} and the $C^1$-neighborhood $\cU$ is the \emph{validity domain} of the blender. 
\end{defi}

The definition of a geometric blender is well suited for the study of the generation of robust cycles \cite{BD-cycles}.
A geometric blender is also 
an important mechanism to obtain
robust transitivity \cite{BD-robtran}. Indeed, these constructions were the main
motivations for its definition.
However, 
the fact that robustness forms part of its definition makes difficult 
in general to check if a given hyperbolic set is a geometric blender, restraining further applications. 
On the other hand, all known mechanisms producing 
geometric blenders involve the dynamical property of the existence of a strictly invariant family of 
discs\footnote{Besides the references above, see for instance the \emph{criterion of the recurrent compact set} given by \cite[Proposition 7.3]{ACW}, which is based on the previous works \cite{GZ,GY}.}.  
This motivates the following definition:

\begin{defi}[Dynamical blender] 
\label{d.dynamicalblender}
Let $f$ be a diffeomorphism. A compact $f$-invariant set $\Lambda$  
is a \emph{dynamically defined $\mathrm{cu}$-blender} (or simply \emph{dynamical blender}) 
\emph{of $\mathrm{uu}$-index $i$} if the following holds:
\begin{enumerate}
 \item  
 there is an open neighborhood $U$ of $\La$ such that 
 $\La$ is the maximal invariant set of $f$ in the closure of $U$,
 $$
 \La=\bigcap_{n\in\ZZ} f^n(\overline U);
 $$
 \item 
 the set $\La$ is (uniformly) hyperbolic with $\mathrm{u}$-index strictly larger than $i$;
 \item 
 the set $\La$ is transitive (thus it is a hyperbolic basic set);
 \item 
 there is a strictly $Df$-invariant cone field $\cC^{\mathrm{uu}}$ of index $i$ defined on $\overline U$;
 and
 \item 
 there is a strictly $f$-invariant family of discs  $\fD\subset \cD^i(M)$ with strength $\varepsilon>0$ such  
 that every disc in
 $\cV^\delta_\varepsilon(\fD)$ is contained in $U$ and tangent to $\cC^{\mathrm{uu}}$.
\end{enumerate}
We say that $U$ is the \emph{domain of the blender}, $\cC^{\mathrm{uu}}$ is its \emph{strong unstable cone field}, 
and $\fD$ is its 
\emph{strictly invariant family of discs} with strength $\varepsilon$. 
To emphasize the role of these objects in the definition of a geometrical blender we write
$(\La, U,\cC^{\mathrm{uu}}, \fD)$. To emphasize the strength $\varepsilon$ of the family of discs we write
$(\La, U,\cC^{\mathrm{uu}}, \fD, \varepsilon)$.
\end{defi}

\begin{rema}\label{r.partially}
Note that the hyperbolic splitting of a dynamical blender $\La$ as above 
can be refined in order to  get a partially hyperbolic splitting of the form 
$T_\La M= E^\mathrm{uu} \oplus E^\mathrm{c} \oplus E^\mathrm{ss}$,
where $E^\mathrm{uu}$ has dimension $i$
and $E^\mathrm{c}$ has positive dimension and is expanding.
Actually, all dynamical blenders we  consider in this paper have one-dimensional
central (unstable) bundle $E^\mathrm{c}$.
\end{rema}

We have that dynamical blenders are geometric ones and are robust:

\begin{lemm}[Blenders are robust]\label{l.robust} 
Assume that $(\La,U,\cC^{\mathrm{uu}},\fD, \varepsilon)$ is a dynamical 
blender of a diffeomorphism $f$. 
Let
$\fD_{\varepsilon/2}=\cV^\delta_{\varepsilon/2}(\fD)$  and for $g$ $C^1$-close to $f$ let
$\La_g$ be the hyperbolic continuation of $\La$ for $g$.
Then 
there is a $C^1$-neighborhood $\cU$ of $f$ such  that 
 for every diffeomorphism $g\in\cU$ the following holds:
\begin{itemize}
\item
the $4$-tuple $(\La_g,U,\cC^{\mathrm{uu}},\fD_{\varepsilon/2} )$ is a dynamical blender,
\item
the hyperbolic set $\La_g$ is
a geometric blender with superposition region 
$\fD_{\varepsilon/2}=\cV^\delta_{\varepsilon/2}(\fD)$ and 
validity domain $\cU$.
\end{itemize}
\end{lemm}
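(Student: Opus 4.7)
The plan is to verify in turn the five clauses of Definition~\ref{d.dynamicalblender} for the tuple $(\Lambda_g, U, \cC^{\mathrm{uu}}, \fD_{\varepsilon/2})$, assembling the robustness results proved earlier in this section, and then to deduce the geometric blender assertion via a standard nested-preimage argument.

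First I would fix a sufficiently small $C^1$-neighborhood $\cU$ of $f$ such that the following hold for every $g \in \cU$: the hyperbolic continuation $\Lambda_g$ exists, is transitive, and coincides with the maximal $g$-invariant subset of $\overline{U}$ (classical hyperbolic continuation theory together with isolation of $\Lambda$ in $\overline{U}$); the cone field $\cC^{\mathrm{uu}}$ is strictly $Dg$-invariant on $\overline{U}$ (Lemma~\ref{l.cones}); and the family $\fD_{\varepsilon/2} \eqdef \cV^\delta_{\varepsilon/2}(\fD)$ is strictly $g$-invariant with strength $\varepsilon/2$ (Lemma~\ref{l.strictdiscs} with $\mu=\varepsilon/2$). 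It remains to check that every disc in the enlarged neighborhood $\cV^\delta_{\varepsilon/2}(\fD_{\varepsilon/2})$ lies in $U$ and is tangent to $\cC^{\mathrm{uu}}$; but the triangle inequality for $\delta$ gives
$$
\cV^\delta_{\varepsilon/2}(\fD_{\varepsilon/2}) \subset \cV^\delta_{\varepsilon}(\fD),
$$
and the original hypothesis on $\fD$ applies directly. This establishes that $(\Lambda_g, U, \cC^{\mathrm{uu}}, \fD_{\varepsilon/2})$ is a dynamical blender.

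For the geometric blender property, I would show that every disc $D\in \fD_{\varepsilon/2}$ intersects $W^\mathrm{s}(\Lambda_g)$. Setting $D_0 \eqdef D$ and iteratively using the strict $g$-invariance of $\fD_{\varepsilon/2}$, choose $D_{n+1} \in \fD_{\varepsilon/2}$ with $D_{n+1}\subset g(D_n)$ for all $n\ge 0$. The compact subsets
$$
E_n \eqdef \{\, x \in D_0 \colon g^k(x) \in D_k \text{ for all } 0 \le k \le n\, \}
$$
of $D_0$ are decreasing and nonempty (obtained by pulling $D_n$ back successively through $g$), hence by compactness $\bigcap_n E_n \neq \emptyset$. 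Any point $x$ in this intersection has forward $g$-orbit contained in $\bigcup_{n \ge 0} D_n \subset U$, so its $\omega$-limit set is contained in the maximal $g$-invariant subset of $\overline{U}$, which is $\Lambda_g$; therefore $x \in W^\mathrm{s}(\Lambda_g) \cap D$, as desired.

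The argument is essentially bookkeeping of the radii $\varepsilon$ and $\varepsilon/2$ combined with the robustness lemmas already at hand. The only mildly delicate point is the nested-preimage step for the geometric blender property: there one must recognize that controlling a single disc at every positive iterate is enough to force forward orbits of some of its points to remain in $\overline{U}$, and hence to accumulate on the maximal invariant set $\Lambda_g$. No step appears to present a serious obstacle.
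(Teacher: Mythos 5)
Your proof is correct and follows essentially the same route as the paper: the same pair of robustness lemmas (Lemma~\ref{l.cones} for the cone field and Lemma~\ref{l.strictdiscs} for the family of discs), the same triangle-inequality bookkeeping of $\varepsilon$ versus $\varepsilon/2$, and the identical nested-preimage compactness argument for the geometric blender conclusion. Your explicit check that $\cV^\delta_{\varepsilon/2}(\fD_{\varepsilon/2}) \subset \cV^\delta_\varepsilon(\fD)$ stays inside $U$ and tangent to $\cC^{\mathrm{uu}}$ is a detail the paper leaves implicit, but it is the same underlying argument.
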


By analogy with the terminology for geometric blenders, the neighborhood $\cU$ is called a {\emph{validity domain}} of the dynamical
blender.

\begin{proof}[Proof of Lemma~\ref{l.robust}]
We choose a small neighborhood $\cU$ of $f$ such that
for every $g$ in $\cU$ the hyperbolic continuation $\La_g$ of $\La$ is well defined and equal
to the maximal $g$-invariant set in $\overline U$. By shrinking $\cU$ if necessary, we can also assume that 
the cone field $\cC^{\mathrm{uu}}$ is still strictly 
$Dg$-invariant (Lemma~\ref{l.cones}) and that the family $\fD_{\varepsilon/2}=\cV^\delta_{\varepsilon/2}(\fD)$ is 
still strictly $g$-invariant with strength $\varepsilon/2$ (Lemma~\ref{l.strictdiscs}). 
Summarizing, 
the $4$-tuple $(\La_g,U,\cC^{\mathrm{uu}},\fD_{\varepsilon/2}, \varepsilon/2 )$ is a dynamical blender.

To prove the second part of the lemma, first note that
by definition $\fD_{\varepsilon/2}$ is an open family of discs. Take any $g\in \cU$
and consider any disc $D_0\in \fD_{\varepsilon/2}$. By the strict $g$-invariance of this family (first part of the lemma) 
we have that
$g(D_0)$ contains a disc $D_1\in \fD_{\varepsilon/2}$. 
Arguing inductively we construct 
a sequence of discs $(D_n)_{n\in\NN}$ in $\fD_{\varepsilon/2}$ so that $D_{n+1}\subset g(D_n)$. 

By construction, the set   $\bigcap g^{-n}(D_n)$ is nonempty as it is the intersection of a decreasing sequence 
of nonempty compact sets. Moreover,
 the forward orbit of any point  $x$ in such an intersection 
is contained in $U$. Hence, as $\La_g$ is the maximal invariant set in $U$,  $x\in W^s(\La_g)$. This shows that every disc in the family  
$\fD_{\varepsilon/2}$ intersects $W^s(\La_g)$. 
This proves that $\La_g$ is a geometrical blender with validity domain $\cU$ and 
superposition region $\fD_{\varepsilon/2}$.
\end{proof}

% \begin{rema}\label{r.robust}
% Let $(\La,U,\cC^{\mathrm{uu}},\fD, \varepsilon)$ be a dynamical blender  of a diffeomorphism $f$.
% \margem{nao entendo, eliminar? obsoleto?}
%  Then every $0<\eta<\varepsilon$ is also a strength of the family of discs of the
%  blender. Thus  Lemma~\ref{l.robust} implies that
% there is
% a $C^1$-neighborhood $\cU_\eta$ of $f$  such that
% $(\La_g,U,\cC^{\mathrm{uu}},\fD_{\varepsilon/2} )$ is a geometric blender for every $g\in\cU_\eta$.
% \end{rema}

\begin{scho}\label{s.fj} 
If $(\La,U,\cC^{\mathrm{uu}},\fD)$ is a dynamical blender then there exists a disc $D_\infty\in\cD^i(M)$ contained in the 
local strong unstable manifold of a point of $\La$
that is $C^1$-accumulated by discs in $\fD$.
\end{scho}

\begin{cave}
In the definition of a dynamical blender, there are two properties that we require
only for convenience in this paper and that  may be removed if necessary for further 
uses:
\begin{enumerate}
 \item 
 {\emph{Transitivity of the hyperbolic set $\La$.}} 
 We will use this condition in Proposition~\ref{p.l.flipflopcycle}  to show that a 
 flip-flop configuration contains a robust cycle. 
 \item 
 {\emph{The set
  $\La$ is maximal invariant in $\overline U$ and contained in $U$.}} 
  If this hypothesis is removed then 
 Lemma~\ref{l.robust} must  be slightly modified as follows: For every $g$ 
 $C^1$-close to $f$ the blender for $g$ is the maximal invariant set of $g$  in $\overline U$
 instead of the continuation of $\La$ for $g$. 
\end{enumerate}
\end{cave}

\subsection{Dynamical blenders and towers}
\label{ss.blendersandtowers}
In many situations, as in this paper, it is more natural and convenient to construct dynamical
blenders for some induced maps instead of blenders for the given diffeomorphism.
In this subsection, we show that a dynamical blender 
for an induced map of a diffeomorphism $f$
leads to a dynamical blender of the initial $f$ and see how these blenders are naturally 
and appropriately related.

\subsubsection{Induced maps}

Let $f$ be a diffeomorphism of a compact manifold $M$, 
$U, U_1,\dots,U_k \subset M$ be nonempty open sets, and $n_1,\dots, n_k$ be positive integers such
 that: 
\begin{itemize}
 \item the sets $\overline U_i$ are pairwise disjoint and are contained in $U$;
 \item $f^j(\overline U_i)\cap \overline U=\emptyset$ for all $0<j<n_i$; and
 \item $f^{n_i}(\overline U_i)\subset U$. 
\end{itemize}
The map $F\colon \bigcup_1^k \overline U_i\to U$ 
defined by $F(x) \eqdef f^{n_i}(x)$ if
$x\in \overline U_i$ is called an \emph{induced map} of $f$. 
We say that  $U_1, \dots, U_k$ are the {\emph{domains}} of $F$ and that
$n_1,\dots, n_k$ are the associated {\emph{return times.}} 
For such  an induced map we use the notation $F=[f, U, (U_i)_{i=1}^k, (n_i)_{i=1}^k]$.

A \emph{safety domain}  of the induced map $F=[f, U, (U_i)_{i=1}^k, (n_i)_{i=1}^k]$ is a set 
of the form 
$$
V=\bigcup_{i=1}^k  \Big( \bigcup_{j=0}^{n_i-1} V_{i,j}\Big),
$$
where the 
 sets $V_{i,j}$ are open and satisfy the following conditions:
 \begin{itemize}
  \item 
  the sets $\overline V_{i,j}$, $i\in\{1,\dots k\}$ and $j\in\{0,\dots, n_i-1\}$, are pairwise disjoint;
  \item 
  $\overline U_i\subset V_{i,0}\subset U$ for every $i\in\{1,\dots k\}$;
  \item 
  $f(\overline V_{i,j})\subset V_{i,j+1}$
for every $i\in\{1,\dots k\}$  and $j\in\{0,\dots, n_i-2\}$; and 
  \item
  $f(\overline V_{i,n_i-1})\subset U$  for every $i\in\{1,\dots k\}$.
 \end{itemize}

%One easily checks:
 
 \begin{lemm}[Existence of safety domains]\label{l.safety}
 %Every induced map has a safety domain.
 %More precisely, 
 Let
 $F=[f, U, (U_i)_{i=1}^k, (n_i)_{i=1}^k]$ be an induced map.
  Then for every family of neighborhoods $\widetilde U_i$ of $\overline U_i$, $i=1,\dots, k$, there is a safety domain 
  $$
  V=\bigcup_{i=1}^k  \Big( \bigcup_{j=0}^{n_i-1} V_{i,j} \Big)
  $$ 
  such that 
 $\overline V_{i,j}\subset f^j(\widetilde U_i)$ for every $i\in\{1,\dots, k\}$ and $j\in\{0,\dots, n_i-1\}$.
 \end{lemm}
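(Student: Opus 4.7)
The plan is to construct each $V_{i,j}$ as a suitably small open metric neighborhood of the compact set $f^j(\overline U_i)$. The argument naturally splits into a disjointness claim followed by a compactness-and-continuity construction.

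First I would check that the compact sets $f^j(\overline U_i)$, indexed by the pairs $(i,j)$ with $1 \le i \le k$ and $0 \le j < n_i$, are pairwise disjoint. For two distinct such pairs with $j \le j'$, applying $f^{-j}$ reduces the question to showing $\overline U_i \cap f^{j'-j}(\overline U_{i'}) = \emptyset$. Since $0 \le j'-j < n_{i'}$, the hypotheses on the induced map cover both sub-cases: if $j'-j = 0$ then $i \ne i'$ and the $\overline U_i$ are pairwise disjoint by assumption, while if $j'-j > 0$ then $f^{j'-j}(\overline U_{i'})$ is disjoint from $\overline U$, which contains $\overline U_i$.

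Using this disjointness, I would fix a metric on $M$ and choose $\delta > 0$ small enough that the closed $\delta$-neighborhoods of the sets $f^j(\overline U_i)$ are pairwise disjoint, each is contained in the corresponding open set $f^j(\widetilde U_i)$, and the $\delta$-neighborhood of each $\overline U_i$ is additionally contained in $U$. For each fixed $i$, I would then select radii $\epsilon_{i,j} \in (0,\delta]$ by backwards induction on $j$: pick $\epsilon_{i,n_i-1}$ small enough that the $f$-image of the closed $\epsilon_{i,n_i-1}$-neighborhood of $f^{n_i-1}(\overline U_i)$ lies in $U$ (possible since $f^{n_i}(\overline U_i) \subset U$ and $f$ is continuous); then for $j$ descending from $n_i-2$ to $0$, invoke uniform continuity of $f$ to pick $\epsilon_{i,j}$ small enough that $f$ maps the closed $\epsilon_{i,j}$-neighborhood of $f^j(\overline U_i)$ into the open $\epsilon_{i,j+1}$-neighborhood of $f^{j+1}(\overline U_i)$. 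Defining $V_{i,j}$ to be the open $\epsilon_{i,j}$-neighborhood of $f^j(\overline U_i)$, all four defining conditions of a safety domain together with the extra condition $\overline V_{i,j}\subset f^j(\widetilde U_i)$ hold by construction.

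The main obstacle, though a mild one, is coordinating the forward-invariance condition $f(\overline V_{i,j}) \subset V_{i,j+1}$ simultaneously with the pairwise disjointness of the closures and the individual containments in $f^j(\widetilde U_i)$ and in $U$. The backwards induction deals with the forward-invariance one step at a time, while the uniform upper bound $\delta$ on all the radii handles the disjointness and the containments in a single stroke.
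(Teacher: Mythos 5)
Your proof is correct.  It uses a construction that is related to but genuinely distinct from the paper's: you set $V_{i,j}$ to be a metric $\epsilon_{i,j}$-neighborhood of the iterate $f^j(\overline U_i)$, choosing the radii by \emph{backward} induction in $j$ so that forward invariance $f(\overline V_{i,j}) \subset V_{i,j+1}$ holds one step at a time, while a single small upper bound $\delta$ on all radii enforces the disjointness and containment constraints.  The paper instead fixes a decreasing nested basis $V^n_i$ of open neighborhoods of $\overline U_i$ (chosen so that the relevant disjointness/containment properties persist), and defines $V_{i,j}$ as the $f^j$-pushforward of a neighborhood of $\overline U_i$ whose index depends on $j$; forward invariance is then automatic from the nesting $\overline{V^{n+1}_i} \subset V^n_i$.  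The practical difference is that your argument makes the shrinking explicit via uniform continuity of $f$ and its iterates, whereas the paper packages that shrinking into the choice of the nested basis.  Your version is arguably cleaner: by centering the neighborhoods on $f^j(\overline U_i)$ directly and using backward induction, you avoid the bookkeeping of relating neighborhood indices to iteration depth, which in the paper requires the basis index to vary in a specific monotone way with $j$ in order for the containment $f(\overline V_{i,j}) \subset V_{i,j+1}$ to come out right.
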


 \begin{proof} 
 By definition, 
the compact set $\overline U_i$ is contained in the open set $U$ and the compact sets
 $f^j(\overline U_i)$, $j\in \{1,\dots ,n_i-1\}$,  are  pairwise disjoint and disjoint from $\overline U$. 
  Hence there is a decreasing  basis of open 
 neighborhoods $V^n_i$ of 
 $\overline U_i$ having the same properties 
 and such  that $\overline V^{n+1}_i$ is contained in $V^n_i$. 
 Now, for any $k$ the union of the family of opens $ V_{i,j}=f^j(V^{k+j}_i)$ is a safety domain. Moreover,
 if $k$ is large enough then $\overline V_{i,j}\subset f^j(\widetilde U_i)$ as announced.
\end{proof}

\subsubsection{Induced map and strictly invariant cone fields}
\label{sss.inducedcones}
 Consider an induced map 
 $F=[f, U, (U_i)_{i=1}^k, (n_i)_{i=1}^k]$.
 We say that 
\emph{a cone field $\cC$ defined on $U$ is strictly invariant for $F$} if for every 
$i\in\{1,\dots, k\}$ and  every $x\in\overline U_i$ the cone
$Df^{n_i}({\cC}(x))=
DF ({\cC}(x))$ is strictly contained in the cone $\cC(f^{n_i}(x))=\cC(F(x))$.

\begin{lemm}[Induced extended cone field]
\label{l.coneinduced}
Let
$F=[f, U, (U_i)_{i=1}^k, (n_i)_{i=1}^k]$ be an induced map
and $\cC$  a cone field defined on 
$U$ that is strictly invariant under $F$. Then
there is a safety domain
$V = \bigcup_{i=1}^k \Big( \bigcup_{j=0}^{n_i-1} V_{i,j}\Big)$  
of $F$
and a cone field $\widetilde{\cC}$ defined on 
$\overline{V}$  such that
\begin{itemize}
 \item 
 $\widetilde{\cC}$ coincides with $\cC$ in $\overline U$ and
 \item 
 the cone field $\widetilde{\cC}$ is strictly $Df$-invariant. 
\end{itemize}
\end{lemm}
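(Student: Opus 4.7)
The plan is to extend $\cC$ from $\overline U$ to $\overline V$ by pushing it forward along the $f$-orbits through the intermediate slices $V_{i,j}$, but with a small progressive thickening at each intermediate stage; at each step the thickening parameter grows, so $Df$ sends one intermediate cone strictly inside the next, while the total thickening budget over the $n_i$ steps is controlled by the strict $F$-invariance hypothesis at the last step.

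I would first apply Lemma~\ref{l.safety} to obtain the safety domain $V=\bigcup_{i,j}V_{i,j}$ with each $V_{i,j}\subset f^j(\widetilde U_i)$, choosing the $\widetilde U_i$ tight enough that additionally $\overline V_{i,j}\cap\overline U=\emptyset$ for every $0<j<n_i$, and hence $\overline V\cap\overline U=\bigcup_i\overline V_{i,0}$. Next I would construct a continuous one-parameter family $\{\cC_t\}_{t\in[0,\tau]}$ of cone fields defined on an open neighborhood of $\bigcup_i\overline{\widetilde U_i}\cup\overline U$, with $\cC_0=\cC$ and $\cC_t(x)$ strictly contained in $\cC_{t'}(x)$ whenever $\tau\ge t>t'\ge 0$; concretely, fixing a Riemannian structure, one may write $\cC(x)$ as the nonpositivity locus of a continuous quadratic form $Q_x$ on $T_xM$ of signature $(i,d-i)$ and set $\cC_t(x)\eqdef\{v\in T_xM:Q_x(v)\le -t\|v\|^2\}$. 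Since strict cone containment is an open condition and $\bigcup_i\overline{\widetilde U_i}$ is compact, the hypothesis that $\cC$ is strictly $F$-invariant persists under the thickenings for $\tau$ small and $\widetilde U_i$ tight enough: one also has that $Df^{n_i}(\cC_\tau(z))$ is strictly contained in $\cC(f^{n_i}(z))$ for every $z\in\overline{\widetilde U_i}$ and every $i\in\{1,\dots,k\}$.

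With this setup, fix numbers $0=t_0<t_1<\cdots<t_{n_i-1}<\tau$ and define $\widetilde{\cC}(y)\eqdef Df^j(\cC_{t_j}(f^{-j}(y)))$ for $y\in\overline V_{i,j}$. This is well defined because $\overline V_{i,j}\subset f^j(\widetilde U_i)$; since $t_0=0$, it reduces to $\cC$ on each $\overline V_{i,0}$, so $\widetilde{\cC}=\cC$ on $\overline U\cap\overline V$, and its continuity on $\overline V$ follows from continuity on each closed piece together with their pairwise disjointness. To check strict $Df$-invariance, let $y\in\overline V_{i,j}\cap f^{-1}(\overline V)$. If $j<n_i-1$, then $f(y)\in\overline V_{i,j+1}$, and since linear isomorphisms preserve strict cone containment and $t_j<t_{j+1}$, the image $Df(\widetilde{\cC}(y))=Df^{j+1}(\cC_{t_j}(f^{-j}(y)))$ is strictly contained in $Df^{j+1}(\cC_{t_{j+1}}(f^{-j}(y)))=\widetilde{\cC}(f(y))$. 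If $j=n_i-1$, then $f(y)\in\overline U\cap\overline V=\bigcup_l\overline V_{l,0}$ and $\widetilde{\cC}(f(y))=\cC(f(y))$; setting $z=f^{-(n_i-1)}(y)\in\widetilde U_i$ and using $\cC_{t_{n_i-1}}(z)\subset\cC_\tau(z)$, the uniform strictness just stated yields that $Df(\widetilde{\cC}(y))=Df^{n_i}(\cC_{t_{n_i-1}}(z))$ is strictly contained in $\cC(f(y))$. The main obstacle is constructing the family $\{\cC_t\}$ with the uniform strictness property; this reduces to a continuity-plus-compactness argument, exploiting that strict $F$-invariance of $\cC$ holds by hypothesis and is an open condition on the cone field in the $C^0$-topology, so it persists under the small thickenings $\cC_t$ for $t\in[0,\tau]$ when $\tau$ is small and the $\widetilde U_i$ are tight.
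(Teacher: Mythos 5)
Your overall strategy is the same as the paper's: push the cone field forward through the intermediate slices $V_{i,j}$ with a progressively enlarged cone at each step, arranging that the accumulated enlargement over the $n_i$ steps is still absorbed by the strict $F$-invariance at the return. The compactness/openness argument for uniform strictness, the careful choice of the safety domain so that $\overline V\cap\overline U=\bigcup_i\overline V_{i,0}$, and the observation that continuity of $\widetilde\cC$ reduces to continuity on the disjoint pieces are all sound and essentially mirror the paper's argument (which introduces a finite increasing chain of cone fields $\cC\subsetneq\cC_0\subsetneq\cC_1\subsetneq\cdots\subsetneq\cC_N$ on $\overline U$ and sets $\widetilde\cC=Df^j(\cC_j)$ on $\overline V_{i,j}$).

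However, your concrete parameterization runs in the wrong direction, and this breaks the verification. You declare that $\cC_t(x)$ is strictly contained in $\cC_{t'}(x)$ whenever $t>t'$, and your formula $\cC_t(x)=\{v:Q_x(v)\le -t\|v\|^2\}$ indeed produces a family that \emph{shrinks} as $t$ grows. But the two places where you use the family both need it to \emph{grow}: the intermediate step requires $\cC_{t_j}(z)\subsetneq\cC_{t_{j+1}}(z)$ (so that $Df^{j+1}(\cC_{t_j}(z))\subsetneq Df^{j+1}(\cC_{t_{j+1}}(z))=\widetilde\cC(f(y))$), and the final step uses $\cC_{t_{n_i-1}}(z)\subset\cC_\tau(z)$; under your stated monotonicity both inclusions are reversed. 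As written, the intermediate strict $Df$-invariance therefore fails: $Df(\widetilde\cC(y))$ strictly \emph{contains} $\widetilde\cC(f(y))$ instead of being contained in it. The fix is cosmetic — reverse the monotonicity, e.g.\ take $\cC_t(x)\eqdef\{v:Q_x(v)\le t\|v\|^2\}$ for $t\in[0,\tau]$, so that $\cC_0=\cC$ and $\cC_t$ strictly thickens as $t$ increases — after which every inclusion you wrote goes through in the intended direction and your argument coincides with the paper's.
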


\begin{proof}
Fix $N>\max_{i\in\{1,\dots k\}} n_i$.
% The cone field $f^{n_i}\cC^{\mathrm{uu}}(x)$ satisfies 
%$Df^{n_i}(\cC^{\mathrm{uu}}(x)$ is strictly contained in $\cC^{\mathrm{uu}}(f(x))$, for $x\in \overline U_i$. 
By %strict invariance of the cone field $f^{n_i}\cC^{\mathrm{uu}}$ under $F$,
compactness and uniform continuity, one may find a sequence of cone fields 
$\cC_j$, $j\in\{0,\dots, N\}$, defined on
$\overline U$ such  that 
the cones of $\cC$ are  strictly contained in cones of $\cC_0$, 
the cones of  $\cC_j$ are strictly contained in cones of $\cC_{j+1}$ for every $j\in\{0,\dots, N-1\}$, 
and the cones of  $Df^{n_i}(\cC_N(x))$ are  strictly contained in cones
of $\cC(F(x))$ for every $i\in\{1,\dots,k\}$ and every $x\in \overline U_i$.
Again by compactness, the cone $Df^{n_i}(\cC_N(x))$ is strictly contained in 
$\cC (F(x))$ for $x$ in a small  neighborhood on $\overline U_i$. 

Using Lemma~\ref{l.safety} we can choose a sufficiently small safety neighborhood $V$ of $F$ with elements
$V_{i,j}$ and  define the cone field $\widetilde{\cC}$ as $Df^j(\cC_j)$ on the set $\overline V_{i,j}$. 
By construction  the cone field $\widetilde C$ is strictly $Df$-invariant.
\end{proof}

\subsubsection{Induced maps and dynamical blenders}
We now  state a  proposition that relates the dynamical blender of an induced
map  of a diffeomorphism (see the definition below) and the dynamical blenders of the initial dynamics.
We begin with a remark about families of invariant discs.
For the following statements, 
recall that $\cV^\delta_\varepsilon(\mathord{\cdot})$ denotes the open $\varepsilon$-neighborhood for the distance~$\delta$
in the space of discs in Section~\ref{sss.distancedelta}.

\begin{rema}\label{r.discs} 
Let $\fD, \fD_1,\dots, \fD_k \subset \cD^i(M)$ be families of $i$-dimensional discs 
and $n_1,\dots, n_k$  positive integers
such that 
$$
\fD=\fD_1\cup\cdots\cup\fD_k
$$ 
and
 there is $\varepsilon >0$ such that for every
 $i\in \{1,\dots k\}$ the image  $f^{n_i}(D)$ of any disc $D\in \cV^\delta_{\varepsilon}(\fD_i)$ 
 contains a disc in $\fD$.
Then  the family of discs
$$
\bigcup_{i=1}^k \fD_i \, \cup \, 
\bigcup_{i=1}^{k} 
\Big( \bigcup_{j=1}^{n_i-1} f^j \big( \cV^{\delta}_{\frac{j\varepsilon}{n_i}}(\fD_i) \big) \Big)
$$
is strictly $f$-invariant. 
\end{rema}

\begin{prop}[Induced blenders induce blenders]
\label{p.l.blenderinduced}  
Consider an induced map
$F=[f, U, (U_i)_{i=1}^k, (n_i)_{i=1}^k]$ such that:
\begin{itemize}
\item 
The maximal invariant set  
$$
\La_F=\bigcap_{n\in\ZZ} F^n\left(\bigcup\nolimits_{i=1}^k \overline U_i\right) \subset \bigcup_{i=1}^k  U_i
$$
is hyperbolic (for $F$) with $\mathrm{u}$-index strictly larger than $i$.
 \item 
 There is a cone field
 $\cC^{\mathrm{uu}}$ of index $i$ defined on $\overline U$ that is strictly invariant under $F$.
 \item  
 There are families $\fD,\fD_1,\dots,\fD_k \subset \cD^i(M)$ of $i$-dimensional discs
 and $\varepsilon>0$ such that: 
 \begin{enumerate}[label={\rm\alph*)},ref=\alph*]
  \item\label{i.bi_a} the families $\cV^\delta_\varepsilon(\fD),\cV^\delta_\varepsilon(\fD_1),\dots,\cV^\delta_\varepsilon(\fD_k)$ are contained in 
  $U, U_1,\dots, U_k$, respectively,  and are tangent to the cone field $\cC^{\mathrm{uu}}$;
  \item\label{i.bi_b} 
  given any disc $D\in \fD$ there are $i\in\{1,\dots, k\}$
  and a disc $D_0\in \fD_i$ such that $D_0\subset D$;
  \item\label{i.bi_c} 
  for every $i\in\{1,\dots, k\}$ and every $D\in \cV^\delta_\varepsilon(\fD_i)$, 
  the set $f^{n_i}(D)$ contains a disc $D_0\in\fD$.
 \end{enumerate}
\end{itemize}
Then there is a safety domain $V$ of $F$  such that the
maximal invariant set $\La_f$ of $f$ in $\overline V$  hyperbolic and the $4$-tuple
$(\La_f,V, \widetilde{\cC}^\mathrm{uu},\widetilde{\fD})$ is a dynamical blender, where
\begin{itemize}
\item
$\widetilde{\cC}^\mathrm{uu}$ is
a
 strictly $Df$-invariant 
 cone field  
 defined over $\overline V$ that extends 
 $\cC^{\mathrm{uu}}$ and
 \item
 $\widetilde{\fD}$ is 
 a strictly $f$-invariant family of discs 
contained in $V$ such that $\{D\in\widetilde{\fD};\; D\subset \overline U\}=\bigcup_{i=1}^k\fD_i$. 
\end{itemize}
\end{prop}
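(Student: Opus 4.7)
The plan is to combine the extended cone field from Lemma~\ref{l.coneinduced} with an extended family of discs following the recipe of Remark~\ref{r.discs}, and then verify one-by-one the five conditions of Definition~\ref{d.dynamicalblender}.

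First, I would apply Lemma~\ref{l.coneinduced} (together with Lemma~\ref{l.safety} used with sufficiently small neighborhoods $\widetilde U_i$ of $\overline U_i$) to produce a safety domain
$V=\bigcup_{i=1}^k\bigcup_{j=0}^{n_i-1}V_{i,j}$ of $F$ and a strictly $Df$-invariant cone field $\widetilde{\cC}^{\mathrm{uu}}$ on $\overline V$ coinciding with $\cC^{\mathrm{uu}}$ on $\overline U$. By shrinking the $\widetilde U_i$, I ensure that the cells $V_{i,j}$ with $j\ge 1$ are disjoint from $\overline U$, that every iterate $f^j$ has derivative bounded on $\overline V_{i,j}$ by a universal constant $K$, and that every disc tangent to $\widetilde{\cC}^{\mathrm{uu}}$ based in $V_{i,0}$ remains contained in $V_{i,j}$ after $j$ applications of $f$ for $0\le j\le n_i-1$.

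Second, following Remark~\ref{r.discs}, I would define
$$
\widetilde{\fD}\eqdef\bigcup_{i=1}^k\fD_i\ \cup\ \bigcup_{i=1}^k\bigcup_{j=1}^{n_i-1}f^j\!\left(\cV^\delta_{j\varepsilon/n_i}(\fD_i)\right).
$$
Using Proposition~\ref{p.distance}, I would choose $\varepsilon'\in(0,\varepsilon/n_{\max})$ small enough (depending on $K$ and on the uniform $\delta$-to-$\delta$ moduli) so that the following invariance holds: for every disc $D$ in $\cV^\delta_{\varepsilon'}(\widetilde{\fD})$, the image $f(D)$ contains a disc of $\widetilde{\fD}$. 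There are three cases according to which layer $D$ is near. If $D$ is close to a disc of $\fD_i$ or to an intermediate layer $f^j(\cV^\delta_{j\varepsilon/n_i}(\fD_i))$ with $j<n_i-1$, then by Proposition~\ref{p.distance} the image $f(D)$ lies close to the next layer $f^{j+1}(\cV^\delta_{j\varepsilon/n_i}(\fD_i))$, which (since $j\varepsilon/n_i<(j+1)\varepsilon/n_i$) is contained in the next level of $\widetilde{\fD}$. If $D$ is close to the top layer $f^{n_i-1}(\cV^\delta_{(n_i-1)\varepsilon/n_i}(\fD_i))$, then $f(D)$ lies close to $F(\cV^\delta_\varepsilon(\fD_i))$, which by hypothesis~(\ref{i.bi_c}) contains a disc of $\fD$, and by~(\ref{i.bi_b}) a disc of some $\fD_{i'}$. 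This gives the strict $f$-invariance of $\widetilde{\fD}$ with strength $\varepsilon'$, and by construction every disc of $\cV^\delta_{\varepsilon'}(\widetilde{\fD})$ is contained in $V$ and tangent to $\widetilde{\cC}^{\mathrm{uu}}$.

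Third, I would verify the remaining items of Definition~\ref{d.dynamicalblender}. Let $\Lambda_f\eqdef\bigcap_{n\in\ZZ}f^n(\overline V)$. Since the cells $V_{i,j}$ with $j\ge 1$ are disjoint from $\overline U$ and $f(\overline V_{i,n_i-1})\subset U$, every $f$-orbit staying in $\overline V$ visits $\overline U$ precisely at the prescribed return times dictated by $F$; consequently $\Lambda_f\cap\overline U=\Lambda_F$ and $\Lambda_f$ is the $f$-orbit of $\Lambda_F$. Hyperbolicity and transitivity of $\Lambda_f$ for $f$ then follow, respectively, from the hyperbolicity of $\Lambda_F$ for $F$ (uniform expansion on the cone $\widetilde{\cC}^{\mathrm{uu}}$ propagates from $F$ to $f$ up to the bounded factor $K^{n_{\max}}$, which can be absorbed by passing to a higher iterate when defining the hyperbolic splitting) and from transitivity of $\Lambda_F$ as a basic set for $F$. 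The identity $\{D\in\widetilde{\fD}:D\subset\overline U\}=\bigcup_i\fD_i$ is immediate because the higher layers $f^j(\cV^\delta_{j\varepsilon/n_i}(\fD_i))$ with $j\ge 1$ are contained in $V_{i,j}$, which is disjoint from $\overline U$.

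The main technical obstacle is the quantitative invariance of $\widetilde{\fD}$: one has to make the safety cells $V_{i,j}$ small enough and the strength $\varepsilon'$ small enough that the $\delta$-neighborhoods, when advanced one step by $f$, still fit strictly inside the next layer and, at the return step, still satisfy condition~(\ref{i.bi_c}). All the required quantitative control is provided by Proposition~\ref{p.distance}, so this reduces to a careful bookkeeping of constants rather than a new idea.
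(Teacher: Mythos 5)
Your proof follows essentially the same route as the paper: apply Lemma~\ref{l.coneinduced} to produce the safety domain and the extended strictly $Df$-invariant cone field $\widetilde{\cC}^{\mathrm{uu}}$, then build $\widetilde{\fD}$ by the layered construction of Remark~\ref{r.discs}, using hypothesis~(\ref{i.bi_c}) at the return step and~(\ref{i.bi_b}) to close the cycle back into some $\fD_{i'}$. The paper simply cites Remark~\ref{r.discs} for the strict invariance and leaves the hyperbolicity and transitivity of $\Lambda_f$ (and the explicit $\varepsilon'$ bookkeeping) implicit, so your write-up is a correct, somewhat more detailed version of the same argument.
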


 With the notation above, we say that 
$(\La_F,U_1\cup\cdots\cup U_k, \cC^\mathrm{uu}, \fD_1\cup \cdots \cup \fD_k)$
is \emph{dynamical blender of the induced map $F$}.

\begin{proof}
Using Lemma~\ref{l.coneinduced} we find a safety domain 
$V = \bigcup_{i=1}^k \Big( \bigcup_{j=0}^{n_i-1} V_{i,j}\Big)$  of $F$
and extend the cone field  ${\cC}^{\mathrm{uu}}$
to a strictly $Df$-invariant.
cone field $\widetilde{\cC}^{\mathrm{uu}}$ defined on $\bar{V}$.
 By assumption, for every disc 
$D\in \cV^\delta_\varepsilon(\fD_i)$ the set $f^{n_i}(D)$ contains a disc $D_0\in\fD$. 
%For $\varepsilon$ small enough, 
As
$V_{i,0}$ is a neighborhood of $\overline U_i$, the discs in 
$\cV^\delta_\varepsilon(\fD_i)$ are contained $V_{i,0}$. Thus, by definition of the safety neighborhood, 
the discs in $f^j(\fD_i)$
are contained in $V_{i,j}$ and 
are tangent to the cone field $\widetilde{\cC}^{\mathrm{uu}}$ 
(note that the cone field is strictly invariant). 
According to Remark~\ref{r.discs}, 
$$
\bigcup_{i=1}^k \fD_i \cup  \bigcup_{i=1}^{k} \Big(
\bigcup_{j=1}^{n_i-1} f^j \big(  \cV^{\delta}_{\frac{j\varepsilon}{n_i}}(\fD_i) \big) \Big)
$$
is a strictly $f$-invariant family of discs contained in $V$ and tangent to $\cC^{\mathrm{uu}}$ 
that coincides with $\bigcup_{i=1}^k\fD_i$ in $U$.
This concludes the proof of the proposition.
\end{proof}

%!TEX root = final_MAIN.tex

%%%%%%%%%%%%%%%%%%%%%%%%%%%%%%%%%%%%%%%%%%%%%%%%%%%%%%%%%%%%%%
\section{From flip-flop configurations to flip-flop families:
proof of Theorem~\ref{t.practical}}
\label{s.flipflopyield}
%%%%%%%%%%%%%%%%%%%%%%%%%%%%%%%%%%%%%%%%%%%%%%%%%%%%%%%%%%%%%%%

We next introduce  {\emph{flip-flop configurations}} and prove that they are robust and generate robust cycles.

\subsection{Flip-flop configurations}\label{ss.flipflopconfiguration}

\begin{defi}[Flip-flop configuration]
Let 
$(\Ga, V, \cC^\mathrm{uu}, \fD)$ 
 be a dynamical blender of $\mathrm{uu}$-index $i$ of a diffeomorphism $f$.
% 
% 
% , $ V$ its domain, $\cC^\mathrm{uu}$ its strong unstable cone field, and 
%\margem{isto esta implicito, $\fD\subset \cD^i(M)$ the strictly invariant family of discs.}
Suppose $q$ is a periodic point of $\mathrm{u}$-index $i$.
We say that $(\Ga, V, \cC^\mathrm{uu}, \fD)$   and $q$ form a \emph{flip-flop configuration} if
there exist:
\begin{itemize}
\item a disc $\Delta^\mathrm{u}$  contained in the unstable manifold $W^\mathrm{u}(q)$;
\item a compact submanifold with boundary $\De^\mathrm{s} \subset V \cap W^\mathrm{s}(q)$
\end{itemize}
such that:
\begin{enumerate}[label=FFC\arabic*),ref=FFC\arabic*,leftmargin=*,widest=FFC2,start=0]
\item\label{i.FFC0} 
the disc $\Delta^\mathrm{u}$ belongs to the interior of the family $\fD$;
\item\label{i.FFC1}
$f^{-n}(\Delta^\mathrm{u}) \cap \overline V = \emptyset$ for all $n>0$;
\item\label{i.FFC2}
There is $N>0$ such that $f^{n}(\Delta^\mathrm{s}) \cap \overline V = \emptyset$ for all $n>N$. Moreover, if 
$x\in \De^\mathrm{s}$ and $j>0$ are such  that 
$f^j(x)\notin V$ then 
$f^i(x)\notin \overline V$ for every $i\ge j$;
\item\label{i.FFC3} 
$T_y W^\mathrm{s}(q) \cap \cC^\mathrm{uu}(y) = \{0\}$ for every $y \in \Delta^\mathrm{s}$; 
\item\label{i.FFC4} 
There exists a compact set $K$ contained in the relative interior of $\Delta^\mathrm{s}$ and  $\epsilon>0$ such that 
every element $D$ of $\fD$ intersects the set $K$ at a point $x$ whose distance to $\partial D$ is larger than $\epsilon$.
\end{enumerate}

The sets $\De^{\mathrm{u}}$ and $\De^{\mathrm{s}}$ are the \emph{ unstable and stable connecting sets} 
of the flip-flop configuration, respectively. The compact set $K$ is an 
{\emph{$\varepsilon$-safe stable connecting set}}. 
\end{defi}

\begin{figure}[htb]
\begin{minipage}[c]{\linewidth}
\centering
\vspace{0.5cm}
\begin{overpic}[scale=.34, %grid,tics=10
  ]{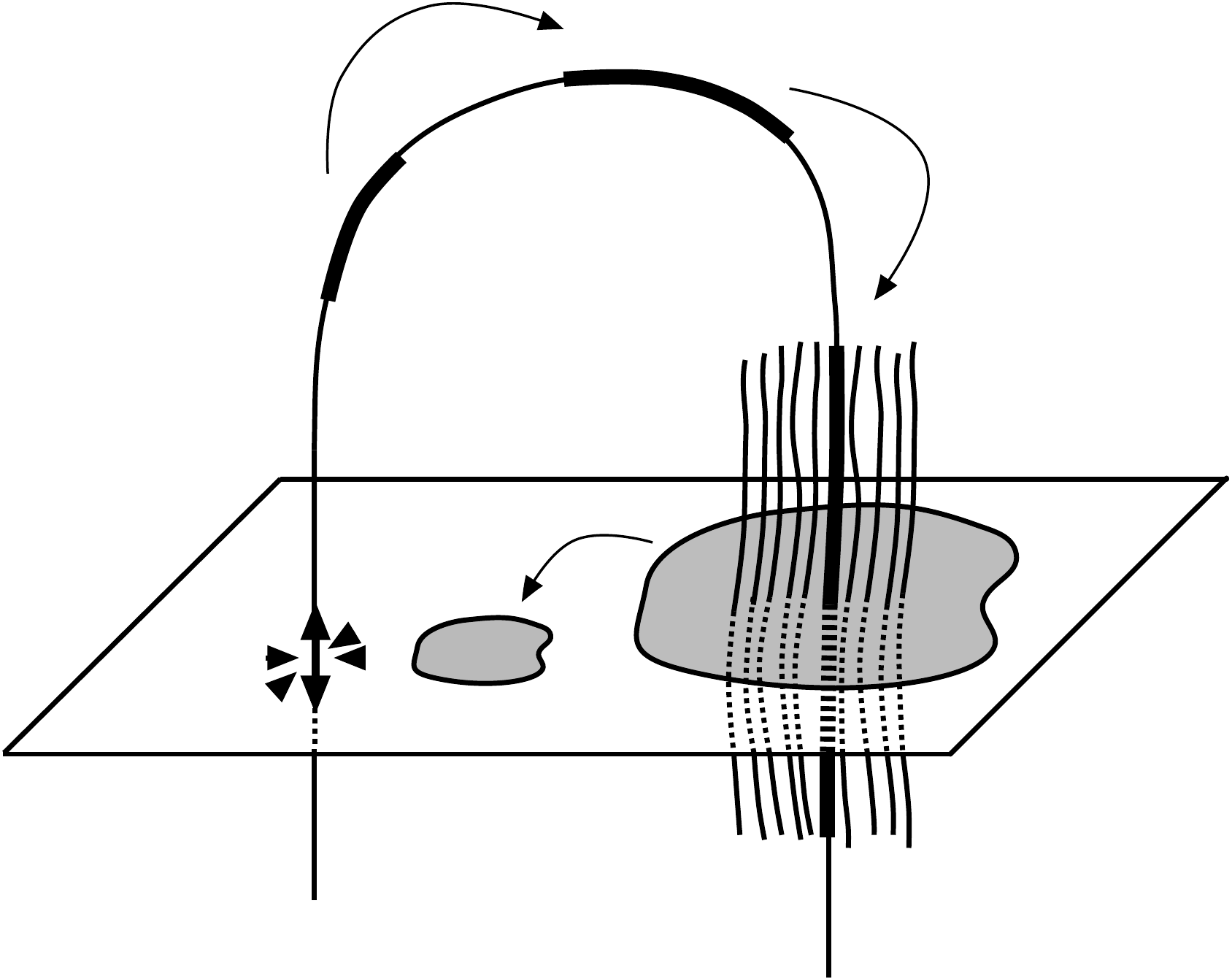}
      
        \put(30,80){\small  $f$}
           \put(77,65){\small  $f$}
         \put(45,38){\small  $f$}

           \put(20,30){\small  $q$}
            \put(60,53){\small  $\De^u$}
              \put(84,35){\small  $\De^s$}
  \put(77,48){\small  $\fD$}

  \end{overpic}
\caption{Flip-flop configuration}
\label{fig.cff}
\end{minipage}
\end{figure}

\subsubsection{Flip-flop configurations are robust}
The aim of this section is the following proposition.

\begin{prop}\label{p.ffrobust}
Consider  a dynamical blender  $(\Ga, V, \cC^\mathrm{uu}, \fD)$  
 and a saddle $q$ of a diffeomorphism $f$ in  a flip-flop configuration. 
 Consider  a validity domain $\cU$ of the blender  such that
the hyperbolic continuations $\Gamma_g$ of $\Ga$ and $q_g$ of $q$ are defined for every $g\in \cU$.
Then
there exists a neighborhood $\cV$ of $f$ contained in $\cU$ such that the dynamical blender 
$(\Ga_g, V, \cC^\mathrm{uu}, \fD)$ and the saddle $q_g$ are
in a flip-flop configuration for every 
$g\in \cV$.
\end{prop}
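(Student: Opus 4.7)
The plan is to combine the robustness of the blender (Lemma~\ref{l.robust}) with the $C^1$-continuity of the hyperbolic saddle $q$ and its invariant manifolds. Shrinking $\cU$ if necessary, Lemma~\ref{l.robust} supplies a dynamical blender $(\Gamma_g,V,\cC^\mathrm{uu},\fD^{(g)})$ for every $g\in\cU$, where $\fD^{(g)}$ is either $\fD$ itself or the slight enlargement $\cV^\delta_{\varepsilon/2}(\fD)$; in either case its interior contains the interior of $\fD$. By standard hyperbolic theory there is a continuous family $\psi_g\colon M\to M$ of $C^1$-diffeomorphisms with $\psi_f=\mathrm{id}$, $C^1$-close to the identity, sending the orbit of $q$ and its local invariant manifolds to those of $q_g$. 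Define $\Delta^\mathrm{u}_g\eqdef\psi_g(\Delta^\mathrm{u})$, $\Delta^\mathrm{s}_g\eqdef\psi_g(\Delta^\mathrm{s})$, and $K_g\eqdef\psi_g(K)$; it suffices to verify properties \eqref{i.FFC0}--\eqref{i.FFC4} for the perturbed data when $g$ is close enough to $f$.

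Three of the five properties follow from continuity. By Proposition~\ref{p.distance}, $\delta(\Delta^\mathrm{u}_g,\Delta^\mathrm{u})\to 0$ as $g\to f$; since $\Delta^\mathrm{u}$ lies in the interior of $\fD\subset\fD^{(g)}$, property \eqref{i.FFC0} follows. Transversality between the tangent spaces of $W^\mathrm{s}(q_g)$ along $\Delta^\mathrm{s}_g$ and the (fixed) continuous cone field $\cC^\mathrm{uu}$ persists because these tangent spaces are $C^0$-close to those for $f$, giving \eqref{i.FFC3}. For \eqref{i.FFC4}, each $D\in\fD^{(g)}$ is transverse to $K$ through the cone condition and meets $K$ at a point at distance at least $\epsilon/2$ from $\partial D$ (the original estimate $\epsilon$ for discs in $\fD$ absorbs the small perturbation to $\fD^{(g)}$); since $K_g$ is Hausdorff-close to $K$, each such transverse intersection persists as a nearby intersection point with $K_g$, still at distance at least $\epsilon/4$ from $\partial D$.

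Property \eqref{i.FFC1} and the first clause of \eqref{i.FFC2} reduce to a finite-time continuity argument. Since $f^{-n}(\Delta^\mathrm{u})\to\orb(q)$ as $n\to\infty$ and all preimages avoid $\overline V$, the orbit $\orb(q)$ itself lies at positive distance from $\overline V$. Fix a small open neighborhood $W$ of $\orb(q)$ with $\overline W\cap\overline V=\emptyset$ and backward-invariant for $f^{-1}$ restricted to $W^{\mathrm{u}}_{\mathrm{loc}}(q)$, and choose $n_0$ with $f^{-n}(\Delta^\mathrm{u})\subset W$ for all $n\ge n_0$. For $g$ close to $f$, continuity on the finite window $0<n\le n_0$ yields $g^{-n}(\Delta^\mathrm{u}_g)\cap\overline V=\emptyset$, while for $n\ge n_0$ the local hyperbolic structure at $q_g$ keeps the backward iterates inside the analogous neighborhood of $\orb(q_g)$. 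The same strategy, applied forward to $\Delta^\mathrm{s}$, gives the first clause of \eqref{i.FFC2}.

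The main obstacle is the second clause of \eqref{i.FFC2}: once the orbit of a point of $\Delta^\mathrm{s}$ exits $V$, it must not re-enter $\overline V$. A useful observation is that \eqref{i.FFC2} for $f$ forces exit orbits to ``jump over'' the boundary of $V$, since $f^j(x)\notin V$ already implies $f^j(x)\notin\overline V$; in particular the first exit times vary upper semicontinuously in $(x,f)$. Set $d\eqdef\mathrm{dist}(\orb(q),\overline V)>0$ and fix $N_1\ge N$ such that $f^n(\Delta^\mathrm{s})$ is contained in the open $d/4$-neighborhood of $\orb(q)$ for all $n\ge N_1$; then these iterates lie at distance at least $3d/4$ from $\overline V$, and the same remains true for $g^n(\Delta^\mathrm{s}_g)$ when $g$ is sufficiently close to $f$. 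On the complementary finite range $0\le n\le N_1$, the compact set
$$
E_f\eqdef\bigl\{f^n(x):\, x\in\Delta^\mathrm{s},\ 0\le n\le N_1,\ f^j(x)\notin V\text{ for some }0<j\le n\bigr\}
$$
is disjoint from $\overline V$ by \eqref{i.FFC2}, hence at positive distance $\rho$ from $\overline V$. Uniform continuity of the map $(g,y)\mapsto g^n(y)$ on $0\le n\le N_1$ shows that the analogous set $E_g$, built from $g$ and $\Delta^\mathrm{s}_g$, stays within Hausdorff distance $\rho/2$ of $E_f$ for $g$ close to $f$, and therefore also avoids $\overline V$. Combining the two ranges yields the no-re-entry condition for $g$ and completes the verification.
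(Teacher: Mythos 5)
Your proof is correct and follows essentially the same strategy as the paper's: verify properties \eqref{i.FFC0}--\eqref{i.FFC4} for $g$ near $f$ by continuity, the two delicate points being the no-re-entry clause of \eqref{i.FFC2} (handled in both treatments via compactness of the finite orbit segments together with their positive distance from $\partial V$) and \eqref{i.FFC4} (persistence of transverse intersections, isolated in the paper as Lemma~\ref{l.enlarge}). Your version of the \eqref{i.FFC2} argument, built around the auxiliary ``post-exit'' set $E_f$, is a more explicit rendering of the paper's two-sentence compactness argument; the one point to tighten is that the inclusion of $E_g$ in a small neighborhood of $E_f$ does not follow from uniform continuity of $(g,y)\mapsto g^n(y)$ alone, since the defining condition ``$g^j(x)\notin V$'' is not a priori stable under perturbation --- it is stable here precisely because of the ``jump-over-$\partial V$'' observation you record at the start of that paragraph (namely that $\bigcup_{j\le N_1}f^j(\Delta^\mathrm{s})$ stays at positive distance from $\partial V$, so ``$g^j(x_g)\notin V$'' forces ``$f^j(x_f)\notin V$'' for the corresponding $f$-point), and that link should be wired into the $E_g$-versus-$E_f$ comparison explicitly.
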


\begin{proof} 
Let $\De^{\mathrm{s}}$ and $\De^{\mathrm{u}}$ be the stable and unstable connecting set of the configuration and consider an
$\varepsilon$-safe stable connecting set $K$ ($\varepsilon>0$ will be fixed below).

Note that compact parts of the unstable manifold 
$W^\mathrm{u}(q_g,g)$ depend continuously on the diffeomorphism $g$.
Thus condition \eqref{i.FFC0} for $f$ implies that 
a local unstable manifold $W_\mathrm{loc}^\mathrm{u}(q_g,g)$  
of any 
 $g$ $C^1$-close enough to $f$
 contains a disc
 $\De^{\mathrm{u}}_g$ arbitrarily $C^1$-close to $\De^{\mathrm{u}}_f=\De^{\mathrm{u}}$ 
 (these discs depend continuously on $f$).
 Therefore the discs $\De^{\mathrm{u}}_g$ are in the interior of $\fD$, obtaining the robustness of condition \eqref{i.FFC0}.
 Furthermore, the negative iterates of $\De^{\mathrm{u}}_g$ by $g$ remain close to 
the ones of $\De^{\mathrm{u}}$ by $f$, hence they are  disjoint from $\overline V$, proving \eqref{i.FFC1}.

Let $N>0$ as in \eqref{i.FFC2} for $f$.  In the same way as above, for every $g$ close enough to $f$,
 we can choose a continuation $\Delta^{\mathrm{s}}_g$
 of $\De^{\mathrm{s}}$ that is simultaneously  
  contained in a local stable manifold of $q_g$ and in $V$ and whose positive iterates 
$g^j(\Delta^{\mathrm{s}}_g)$ are disjoint from $\partial V$ and
from $\overline V$ for every $j>N$. 
Since the set  $\bigcup_0^{N}g^j(\De^\mathrm{s}_g)$ is compact and disjoint of the compact set 
$\partial V$ we have that  for every $g$ close enough to $f$ the points of $g^j(\Delta^{\mathrm{s}}_g)$ 
cannot return to $V$ after a first exit. In this way we get \eqref{i.FFC2}.  

Note that condition \eqref{i.FFC3} is robust and therefore $\De^{\mathrm{s}}_g$ transverse to the 
cone field $\cC^\mathrm{uu}$. 

For $g$ close to $f$ consider a continuation
$K_g$ of the safety set $K$  contained in $\Delta^\mathrm{s}_g$.
We prove   property \eqref{i.FFC4} with $\varepsilon$ replaced by $\varepsilon/2$:
 the set $K_g$ intersects every 
disc $D\in\fD$ at a point whose distance to $\partial D$ is larger than $\epsilon/2$. 
That is precisely the content of the next lemma. 
\begin{lemm}\label{l.enlarge} 
Let $\cC$ be a cone field of index $i$ and 
$K_0$  a sub-manifold of dimension $n-i$ with boundary 
that is transverse to $\cC$. 
Consider $\varepsilon>0$ and $K$ a compact subset contained the interior of  $K_0$.  
Then there is a $C^1$-neighborhood $\cW$ of $K_0$ 
such that  
any disc $D$ of radius $\varepsilon$
tangent to $\cC$
centered at some point $x\in K$
intersects every sub-manifold in $\cW$.
\end{lemm}

%
%a \emph{disc $D$  of radius $\varepsilon$ center at $x$tangent to a cone field $\cC$} 
%
%means that the distance of any point of 
%$\partial D$ to $x$  for the induced metric  is $\varepsilon$. 
%

\begin{proof}[Sketch of proof]
It is enough to prove the lemma for some small $\varepsilon>0$.
Note that after shrinking $\varepsilon$, if necessary,
we can assume that
the distance from $K$ to $\partial K_0$ is larger than $\varepsilon>0$.
We can also assume
that the distance between $K_0$ and 
the boundary of any 
disc of radius $\varepsilon$ centered at some point of $K$ and tangent to $\cC$ is larger than 
some $\mu>0$.

Fix any disc $D$ of radius $\varepsilon$  as in the statement of the lemma.
Note that every sub-manifold that 
is sufficiently $C^1$-close to $K_0$ and is transverse to $\cC$ admits an isotopy to $K_0$ by sub-manifolds $K_t\in \cW$.
Thus, for small $t$, the intersection $K_t\cap D$ is just  a point $x_t$ that depends continuously on $t$. 
The unique way of this point ``vanishing'' is to cross either the 
boundary of $D$ or the boundary of $K_t$. We now see that these two possibilities are forbidden.

To discard the first possibility 
 note that the  choices of $\varepsilon$ and $\mu$ above imply that if
the neighborhood $\cW$ of $K_0$ is small enough then 
 $x_t\not\in\partial D$.

To eliminate the second possibility note that
 the choice of $\varepsilon>0$ implies that 
 if $\cW$ is small enough then the distance between
 $K$ to $\partial K_t$ is larger than $\mu/2$ for every $K_t$ and thus
 $x_t\not\in \partial K_t$. This concludes the proof of the lemma. 
\end{proof}

The proof of the robustness of flip-flop configurations is now complete.
\end{proof}

The following presentation of flip-flop configurations will be very useful for us.

\begin{prop}\label{p.fff}
Consider a dynamical blender $(\Ga, V, \cC^{\mathrm{uu}}, \fD)$ 
 of $\mathrm{uu}$-index $i$ of a diffeomorphism $f$.
 Suppose that there are  a periodic point $q$  of $f$ of $\mathrm{u}$-index $i$, 
 a  disc $\Delta^\mathrm{u}$  contained in the unstable manifold 
$W^\mathrm{u}(q)$, 
and a compact sub-manifold with boundary $\De^\mathrm{s} \subset V \cap W^\mathrm{s}(q)$ 
satisfying conditions \eqref{i.FFC1}, \eqref{i.FFC2},
\eqref{i.FFC3}, \eqref{i.FFC4}, and 
\begin{enumerate}[label=FFC\arabic*'),ref=FFC\arabic*',leftmargin=*,widest=FFC0',start=0,format=\textnormal]
\item\label{i.FFC0p}
 $\Delta^\mathrm{u}$ belongs to  the family $\fD$. 
\end{enumerate}
Then there is $\eta>0$ such that $(V,\Ga,\cC^\mathrm{uu}, \cV^\delta_\eta(\fD))$ is a dynamical blender 
in a flip-flop configuration with
$q$ such that   $\Delta^\mathrm{u}$ and $\Delta^\mathrm{s}$ are its unstable and stable connecting sets.
\end{prop}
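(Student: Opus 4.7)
The plan is to take $\fD' \eqdef \cV^\delta_\eta(\fD)$ for a sufficiently small $\eta>0$ and verify all required conditions. Since $\fD'$ is by construction an open neighborhood of $\fD$ in $\cD^i(M)$, the hypothesis $\Delta^\mathrm{u}\in\fD$ (condition \ref{i.FFC0p}) immediately places $\Delta^\mathrm{u}$ in the interior of $\fD'$, which is precisely condition \eqref{i.FFC0}. It therefore remains to check (i) that $(\Ga,V,\cC^\mathrm{uu},\fD')$ is still a dynamical blender, and (ii) that conditions \eqref{i.FFC1}--\eqref{i.FFC4} hold for the enlarged family.

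For (i), let $\varepsilon_0>0$ be the strength of strict invariance of $\fD$. Pick $\eta<\varepsilon_0/2$ small enough that $\cV^\delta_{\varepsilon_0}(\fD)$ is contained in $V$ and tangent to $\cC^\mathrm{uu}$ (these being part of the original blender hypothesis). By the triangle inequality for $\delta$ we have $\cV^\delta_{\varepsilon_0-\eta}(\fD')\subset \cV^\delta_{\varepsilon_0}(\fD)$, so for any $D\in \cV^\delta_{\varepsilon_0-\eta}(\fD')$ the image $f(D)$ contains a disc $D_1\in\fD\subset\fD'$. Thus $\fD'$ is strictly $f$-invariant with strength $\varepsilon_0-\eta>0$, is tangent to $\cC^\mathrm{uu}$, and is contained in $V$. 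The other conditions defining a dynamical blender --- hyperbolicity, transitivity, maximality of $\Ga$ in $\overline V$, and strict invariance of the cone field --- depend only on $(\Ga,V,\cC^\mathrm{uu})$ and are therefore unaffected by replacing $\fD$ by $\fD'$.

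For (ii), conditions \eqref{i.FFC1}, \eqref{i.FFC2}, \eqref{i.FFC3} are statements about $\Delta^\mathrm{u}$, $\Delta^\mathrm{s}$, and the cone field $\cC^\mathrm{uu}$; none of them involve $\fD$, so they remain satisfied. For \eqref{i.FFC4} we keep the original safe stable connecting set $K$ and will show that it works for $\fD'$ with constant $\varepsilon/2$, provided $\eta$ is small. Given $D'\in\fD'$, choose $D\in\fD$ with $\delta(D,D')<\eta$. By \eqref{i.FFC4} for $\fD$, there is $x\in D\cap K$ with $\mathrm{dist}(x,\partial D)>\varepsilon$. Since $\delta$ induces the $C^1$-topology (Proposition~\ref{p.distance}), $D$ and $D'$ are $C^1$-close; and because $\Delta^\mathrm{s}$ (and hence $K$) is transverse to $\cC^\mathrm{uu}$ by \eqref{i.FFC3}, the intersection $D'\cap K$ is transverse at a point $x'$ near $x$, with $\mathrm{dist}(x',\partial D')>\varepsilon-O(\eta)$, which is larger than $\varepsilon/2$ for $\eta$ small.

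The main obstacle is the uniformity of this last step: $\fD$ is not assumed compact, so a priori the threshold on $\eta$ could depend on the chosen $D\in\fD$. This is overcome by exploiting the fact that $K$ is compact and that the transversality between the cone field $\cC^\mathrm{uu}$ and $T_y\Delta^\mathrm{s}$ for $y\in K$ is therefore uniform, which yields a uniform local ``tubular'' structure around $K$ entirely analogous to the one implicit in Lemma~\ref{l.enlarge}. This provides a single $\eta>0$ that works for every disc $D\in\fD$ simultaneously, completing the verification of \eqref{i.FFC4} and hence of the proposition.
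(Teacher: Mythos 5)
Your proof is correct and follows essentially the same route as the paper: where the paper cites Lemma~\ref{l.robust} (itself relying on Lemma~\ref{l.strictdiscs}) to conclude that $\cV^\delta_\eta(\fD)$ is still a dynamical blender, you re-derive the strict invariance by the same triangle-inequality argument; and where the paper cites Lemma~\ref{l.enlarge} for condition \eqref{i.FFC4}, you re-run the same uniform transversality argument. One small imprecision worth noting: when you say that $D'\cap K$ is transverse at a point $x'$ near $x$, the nearby transverse intersection point $x'$ lies in $\Delta^\mathrm{s}$ but need not lie in the compact set $K$ itself; to verify \eqref{i.FFC4} for the enlarged family one should replace $K$ by a slightly larger compact $K_1\supset K$ still contained in the relative interior of $\Delta^\mathrm{s}$ --- a detail that is likewise implicit in the paper's appeal to Lemma~\ref{l.enlarge}.
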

\begin{proof} 
First, by Lemma~\ref{l.robust}, % and Remark~\ref{r.robust},
 $(\Ga,V,\cC^\mathrm{uu}, \cV^\delta_\eta(\fD))$  is a dynamical blender of $f$
 for every $\eta>0$ sufficiently small. We now see that this blender and $q$ are in a flip-flop configuration.
  
To check that property \eqref{i.FFC0} holds just note that
condition \eqref{i.FFC0p} implies that $\De^\mathrm{u}$ is a disc in the interior of $\cV^\delta_\eta(\fD)$.
%, thus 
%\ref{i.FFC0} holds  for  $(V,\Ga,\cC^\mathrm{uu}, \cV^\delta_\eta(\fD))$ and $\De^\mathrm{u}$. 

Note that conditions \eqref{i.FFC1}, \eqref{i.FFC2} and \eqref{i.FFC3} remain unchanged and thus there is  nothing to prove.  

Finally, the fact that property \eqref{i.FFC4} still holds for the new blender is a direct consequence of 
Lemma~\ref{l.enlarge}.
\end{proof}

\subsubsection{Flip-flop configurations generate robust cycles}

\begin{prop}\label{p.l.flipflopcycle} 
Consider a dynamical blender 
 $(\Ga,V,\cC^\mathrm{uu},\fD)$
 and a saddle $q$ of a diffeomorphism $f$  in 
 a flip-flop configuration. 
Then $\Ga$ and $q$ form a robust cycle and thus $\Ga$ is contained in the chain recurrence class of $q$.
\end{prop}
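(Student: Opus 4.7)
The plan is to verify the two heterodimensional intersection conditions in the definition of a robust cycle. The indices are already different since the blender $\Ga$ has $\mathrm{u}$-index strictly larger than $i$, while $q$ has $\mathrm{u}$-index equal to $i$.

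First I would establish the robust intersection $W^\mathrm{u}(q_g,g)\cap W^\mathrm{s}(\Ga_g,g)\neq\emptyset$ for every $g$ in a $C^1$-neighborhood of $f$. By condition~\ref{i.FFC0}, the disc $\De^\mathrm{u}\subset W^\mathrm{u}(q,f)$ lies in the interior of the family $\fD$. Its continuation $\De^\mathrm{u}_g\subset W^\mathrm{u}(q_g,g)$ is $C^1$-close to $\De^\mathrm{u}$ for $g$ close to $f$ (by continuous dependence of local unstable manifolds), and therefore lies in $\cV^\delta_{\varepsilon/2}(\fD)$. By Lemma~\ref{l.robust}, $\Ga_g$ is a geometric blender with superposition region $\cV^\delta_{\varepsilon/2}(\fD)$, so in particular $\De^\mathrm{u}_g\cap W^\mathrm{s}(\Ga_g,g)\neq\emptyset$.

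Next I would establish the reverse intersection $W^\mathrm{u}(\Ga_g,g)\cap W^\mathrm{s}(q_g,g)\neq\emptyset$. By Scholium~\ref{s.fj} applied to the blender of $f$, there exist a point $x\in\Ga$ and a disc $D_\infty\in\cD^i(M)$ contained in $W^{\mathrm{uu}}_\mathrm{loc}(x,f)$ that is a $C^1$-limit of some sequence of discs $D_n\in\fD$. By condition~\ref{i.FFC4}, each $D_n$ meets the safe stable set $K\subset\De^\mathrm{s}$ at a point $y_n$ whose distance to $\partial D_n$ is at least $\varepsilon$. Passing to a subsequence by compactness of $K$, I obtain $y_n\to y_\infty\in K$, and the $C^1$-convergence $D_n\to D_\infty$ together with the uniform boundary bound forces $y_\infty\in D_\infty$ at distance at least $\varepsilon$ from $\partial D_\infty$. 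Hence $D_\infty$ meets $\De^\mathrm{s}$, producing an intersection point in $W^{\mathrm{uu}}_\mathrm{loc}(x,f)\cap W^\mathrm{s}(q,f)\subset W^\mathrm{u}(\Ga,f)\cap W^\mathrm{s}(q,f)$. For robustness under perturbation, Proposition~\ref{p.ffrobust} ensures that the flip-flop configuration persists for every $g$ in some $C^1$-neighborhood of $f$, so applying Scholium~\ref{s.fj} to each such $g$ and reproducing the same limiting argument yields the required intersection for $g$. With the robust heterodimensional cycle in hand, the inclusion $\Ga\subset C(q,f)$ follows from standard pseudo-orbit constructions: a point of $W^\mathrm{u}(q)\cap W^\mathrm{s}(\Ga)$ produces $\epsilon$-pseudo-orbits from $q$ into any prescribed neighborhood of any prescribed point of $\Ga$ (using transitivity of $\Ga$, which is part of the definition of a dynamical blender), and a point of $W^\mathrm{u}(\Ga)\cap W^\mathrm{s}(q)$ closes these pseudo-orbits back to $q$.

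The main obstacle is the limiting argument in the second step: extracting an honest intersection point of $D_\infty$ with $\De^\mathrm{s}$ out of the mere $C^1$-accumulation given by Scholium~\ref{s.fj}. The interior bound of~\ref{i.FFC4} is essential here, for without it the intersection points $y_n$ might escape through $\partial D_n$ in the limit; the transversality condition~\ref{i.FFC3} then plays the role analogous to Lemma~\ref{l.enlarge} in the proof of Proposition~\ref{p.ffrobust}, ensuring that the intersection survives under $C^1$-perturbation of the diffeomorphism and so delivers robustness rather than mere existence.
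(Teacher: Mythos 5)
Your proof is correct and follows essentially the same route as the paper: use \ref{i.FFC0} to get $W^\mathrm{u}(q)\cap W^\mathrm{s}(\Ga)\neq\emptyset$ via the superposition region, then use Scholium~\ref{s.fj} together with the $\varepsilon$-safe stable connecting set of \ref{i.FFC4} to pass to the limit and obtain $W^\mathrm{u}(\Ga)\cap W^\mathrm{s}(q)\neq\emptyset$, and finally invoke Proposition~\ref{p.ffrobust} for robustness. You supply some details the paper elides (the subsequence extraction, the explicit chain-recurrence argument), but the underlying argument is the same.
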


\begin{proof}
We first see that $f$ has a cycle associated to $\Ga$ and $q$.
We will see that the invariant manifolds
of $q$ and $\La$ meet cyclically. As $\Ga$ is  transitive then $q$ and $\Ga$ form a cycle.

 Consider the  the connecting sets $\De^\mathrm{s}$ and $\De^\mathrm{u}$ 
 and the $\varepsilon$-safety stable set 
 of this configuration. 
  Note that   $\De^\mathrm{u}\subset W^{\mathrm{u}}(q)$ belongs to the interior of $\fD$ and thus 
 it intersects $W^{\mathrm{s}}(\Ga)$. Hence $W^{\mathrm{u}}(q)\cap W^{\mathrm{s}}(\Ga)\ne\emptyset$.
 
To see that $W^{\mathrm{s}}(q)\cap W^{\mathrm{u}}(\Ga)\ne\emptyset$ note that
 by Scholium~\ref{s.fj} there is a sequence of discs $D_n\in\fD$ converging to a disc $D$ 
 contained in the unstable manifold of some point in $\Ga$. 
Every disc $D_n$ contains a point in the compact set $K\subset \De^s$. 
Thus $\De^s\cap D\neq \emptyset$ and 
hence $W^{\mathrm{s}}(q)\cap W^{\mathrm{u}}(\Ga)\ne\emptyset$. 

 The robustness of the cycle now follows from the robustness of the 
flip-flop configuration (see Proposition~\ref{p.ffrobust}). 
\end{proof}

\subsection{Partially hyperbolic neighborhoods for flip-flop configurations}
\label{ss.partially}

The next lemma is a standard consequence of transversality properties.
Recall first Remark~\ref{r.partially} that claims that a dynamical blender $(\Ga,V,\cC^\mathrm{uu},\fD)$
has a partially hyperbolic splitting
$T_\Ga M= E^\mathrm{uu} \oplus E^\mathrm{c} \oplus E^\mathrm{ss}$,
where $E^\mathrm{uu}$ and $E^\mathrm{c}$ are expanding bundles and
$E^\mathrm{uu}$ is contained in $\cC^\mathrm{uu}$.

\begin{lemm}[Flip-flop configurations and partial hyperbolicity]\label{l.Laranjeiras}  
Consider a diffeomorphism $f$ having a  periodic point $q$  
and a dynamical blender  
$(\Ga,V,\cC^\mathrm{uu},\fD)$ in a flip-flop configuration with connecting sets 
$\De^\mathrm{u}\subset W^\mathrm{u}(q)$  and 
$\De^\mathrm{s}\subset W^\mathrm{s}(q)$.
 
Consider the closed set 
$$
\mathcal{O}(q) \, \cup \,
\overline{V}
\, \cup \,  
\bigcup_{k\ge 0} f^k(\Delta^\mathrm{s}) 
\, \cup \,
\bigcup_{k\le 0} f^k(\Delta^\mathrm{u})
$$
and for every small compact neighborhood $U$ of it
the maximal invariant set $\La(U)$ of $f$ in $U$.
If the neighborhood $U$ is sufficiently small then the set $\Lambda (U)$
 contains the blender $\Gamma$
and has a partially hyperbolic 
splitting of the form
$$
T_{\La(U)} M =
\widetilde E^\mathrm{uu} \oplus \widetilde E^\mathrm{cs},
$$
where  $\widetilde E^\mathrm{uu}$  is 
uniformly expanding and
 extends the sub-bundle $E^\mathrm{uu}$ defined over $\Gamma$ and
$\widetilde E^\mathrm{cs}$ is a dominated bundle that extends the bundle
$E^\mathrm{c} \oplus E^\mathrm{ss}$ defined over~$\Gamma$.

Moreover, there is a strictly $Df$-invariant cone field %$\cC^\mathrm{uu}$ 
over $U$ that extends the cone field $\cC^\mathrm{uu}$ of the bundle defined $\overline V$
and whose vectors are uniformly expanded by $Df$. 
\end{lemm}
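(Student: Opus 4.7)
The plan is to construct a closed cone field $\widetilde{\cC}^\mathrm{uu}$ of index $i$ defined on a small neighborhood $U$ of the closed set $F \eqdef \mathcal{O}(q) \cup \overline{V} \cup \bigcup_{k \ge 0} f^k(\Delta^\mathrm{s}) \cup \bigcup_{k \le 0} f^k(\Delta^\mathrm{u})$, verify that it is strictly $Df$-invariant at every $x \in U \cap f^{-1}(U)$, and show that its vectors are uniformly expanded. Once this cone field is in hand, the partially hyperbolic splitting $\widetilde{E}^\mathrm{uu} \oplus \widetilde{E}^\mathrm{cs}$ of the maximal invariant set $\Lambda(U)$ will be obtained by standard arguments (the bundle $\widetilde{E}^\mathrm{uu}$ is the intersection of forward images of the cone field, and the dominated bundle $\widetilde{E}^\mathrm{cs}$ is obtained dually from a strictly invariant complementary cone field in backward time). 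Containment of $\Gamma$ in $\Lambda(U)$ is immediate since $\Gamma \subset V \subset U$ is $f$-invariant.

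The cone field is built first on the set $F$ by specifying it on each of the four pieces. On $\overline{V}$ we use the given $\cC^\mathrm{uu}$. On a tubular neighborhood of $\mathcal{O}(q)$ we use a narrow cone around the unstable bundle $E^\mathrm{u}(q)$, which has dimension $i$ and is strictly $Df$-invariant with uniform expansion by hyperbolicity of~$q$. On the forward orbit of $\Delta^\mathrm{s}$, starting from the cone $\cC^\mathrm{uu}$ at each $y \in \Delta^\mathrm{s} \subset V$, we define the cone at $f^n(y)$ recursively as $Df^n(\cC^\mathrm{uu}(y))$ for $n \ge 1$. Because of the transversality condition \eqref{i.FFC3}, $\cC^\mathrm{uu}(y)$ is transverse to $T_yW^\mathrm{s}(q)$, and since $f^n(y) \to \mathcal{O}(q)$, the iterated cones converge to cones around $E^\mathrm{u}(q)$; after finitely many iterates (uniformly in $y$ thanks to compactness of $\Delta^\mathrm{s}$) these cones fit inside the narrow cone defined near $\mathcal{O}(q)$. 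On the backward orbit of $\Delta^\mathrm{u}$, we proceed symmetrically: starting from the cone around $E^\mathrm{u}(q)$ near $\mathcal{O}(q)$, we push forward $Df^n$ to obtain cones on $f^{-m}(\Delta^\mathrm{u})$ for all $m \geq 0$; since $\Delta^\mathrm{u} \in \fD$ is tangent to $\cC^\mathrm{uu}$, after finitely many iterates these cones match $\cC^\mathrm{uu}$ at points of $\Delta^\mathrm{u} \subset V$.

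Having defined the cone field on the compact set $F$, we extend it by continuity to a neighborhood $U$ of $F$. Strict $Df$-invariance and uniform expansion were verified on $F$: on $V$ they come from the blender structure; near $\mathcal{O}(q)$ from hyperbolicity of $q$; on the heteroclinic orbits, invariance is tautological (the cone is \emph{defined} by push-forward) while expansion holds after finitely many iterates (so on the finite ``transit'' piece we may lose some expansion, but there is a uniform bound $N$ of transit time, and the long blocks spent in $V$ or near $\mathcal{O}(q)$ recover and dominate this bounded loss). Since strict invariance and expansion are open conditions and $F$ is compact, for a sufficiently small compact neighborhood $U$ of $F$ the extended cone field remains strictly $Df$-invariant on $U \cap f^{-1}(U)$ and uniformly expanded. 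Condition \eqref{i.FFC2} (no return after exit) and \eqref{i.FFC1} (no return backward) guarantee that, for $U$ small enough, orbits in $\Lambda(U)$ either stay in $\overline V$ for all iterates or traverse the heteroclinic corridors with uniformly bounded transit time, so uniformity of expansion transfers to all of $\Lambda(U)$.

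The main obstacle is the matching of the two natural cone fields at the heteroclinic junctions: the one carried forward from $\cC^\mathrm{uu}$ along orbits of $\Delta^\mathrm{s}$, and the one carried backward from $E^\mathrm{u}(q)$ along orbits of $\Delta^\mathrm{u}$. The transversality assumption \eqref{i.FFC3} together with $\lambda$-lemma-type arguments ensures that the forward iterates of $\cC^\mathrm{uu}$-cones along $\Delta^\mathrm{s}$ collapse onto cones arbitrarily close to $E^\mathrm{u}(q)$, providing compatibility with the cone field prescribed near $\mathcal{O}(q)$; the tangency of discs in $\fD$ to $\cC^\mathrm{uu}$ provides the symmetric compatibility on the $\Delta^\mathrm{u}$ side. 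The only quantitative subtlety is to ensure that the bounded loss of expansion during the finite transit through the heteroclinic pieces is dominated by the expansion accumulated elsewhere; this is standard since any orbit that stays in $U$ infinitely long must spend a positive proportion of time (in fact, almost all of it) in $\overline V$ or in a hyperbolic neighborhood of $\mathcal{O}(q)$.
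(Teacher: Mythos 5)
Your proposal is correct and follows essentially the same approach as the paper, which simply asserts the splitting is a ``standard consequence'' of the transversality of $\De^\mathrm{s}$ to $\cC^\mathrm{uu}$ (condition FFC3) and the tangency of $\De^\mathrm{u}$ to $\cC^\mathrm{uu}$ (condition FFC0 together with the blender definition); you have merely spelled out the details of that standard cone-field patching argument. The one point worth emphasizing a bit more is that uniform single-step expansion of the cone field (rather than eventual expansion) requires passing to an adapted metric to absorb the bounded loss during the finite heteroclinic transit, which you gesture at but do not state explicitly.
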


For notational simplicity, we also denote the  extension of the cone field in the lemma by $\cC^\mathrm{uu}$. 
The set $U$ in the lemma 
is called a \emph{partially hyperbolic  neighborhood of the 
flip-flop configuration}.

\begin{proof}[Proof of Lemma~\ref{l.Laranjeiras}]
Since the blender $\Gamma$ is, by definition, the maximal invariant set of $f$ in the set 
$\overline V$, %$\bigcup_{D \in \overline{\fD_\Gamma}} D$,
it is contained in $\La(U)$.
The existence of the partially hyperbolic splitting over $\La (U)$
is a standard consequence of the transversality  between  $\De^\mathrm{s}$  
and the cone field $\cC^\mathrm{uu}$ defined on $\overline V$, 
and the fact that the disc 
$\Delta^\mathrm{u}$ is tangent to the strong unstable cone field $\cC^{\mathrm{uu}}$ of $\Ga$. 
\end{proof}

If in Lemma~\ref{l.Laranjeiras} we can write 
$\widetilde E^\mathrm{cs}=\widetilde E^\mathrm{c} \oplus \widetilde E^\mathrm{ss}$, where
 $\widetilde E^\mathrm{c}$ 
is one-dimensional and expanding and 
$\widetilde  E^\mathrm{ss}$  is contracting, then
the flip-flop configuration  is called {\emph{split.}} More precisely:

\begin{defi}[Split flip-flop configuration] \label{d.splitflipflop}
%\margem{notation $\fD_\Gamma$}
Consider a flip-flop configuration of a dynamical blender $(\Ga,V,\cC^\mathrm{uu},\fD)$ and a saddle $q$.
This configuration is {\emph{split}} if it has a partially hyperbolic neighborhood $U$ such that
$T_{\La( U)} M = \widetilde E^\mathrm{uu} \oplus \widetilde E^\mathrm{c} \oplus \widetilde E^\mathrm{ss}$
is a dominated splitting 
where $\widetilde E^\mathrm{c}$ is one-dimensional and uniformly expanding,
 $\widetilde E^\mathrm{uu}$ is uniformly expanding,
 $\widetilde E^\mathrm{ss}$ is uniformly contracting,
and
$\widetilde E^\mathrm{uu} \oplus \widetilde E^\mathrm{c}$
and $\widetilde E^\mathrm{ss}$ 
extend the unstable and stable hyperbolic bundles of $\Gamma$, respectively.

The neighborhood $U$ is called a \emph{strict partially hyperbolic 
neighborhood of the  split flip-flop configuration}. 
\end{defi}

\begin{rema}[Robustness of split flip-flop configurations]\label{r.splitflipflop}
It follows from Proposition~\ref{p.ffrobust} and the persistence of partially hyperbolic splittings that to have
a split flip-flop configuration is a robust property.
\end{rema}

\subsection{Proof of Theorem~\ref{t.practical}}
The aim of this section is to prove 
the criterion for zero center Lyapunov exponents in Theorem~\ref{t.practical}.
Basically, we show that split flip-flop configurations
``embody'' flip-flop families associated to the logarithm of the central Jacobian.
Thus Theorem~\ref{t.practical} follows from the abstract
criterion in Theorem~\ref{t.FlipFlop_weak}.
Actually, the split condition is only used to define the central Jacobian
and the bulk of the work consists in proving the following:

\begin{prop}[Flip-flop families associated to flip-flop configurations]\label{p.flipflopconf}  
Consider a diffeomorphism $f$ having a periodic point
$q$  and a dynamical blender  $\Ga$ in a flip-flop configuration.
Let $U$ be a partially hyperbolic
neighborhood of this flip-flop configuration,
$\La (U)$  the maximal invariant set of $f$ in $U$, and
$\cC^\mathrm{uu}$  the associated strong unstable cone field in $U$.

Suppose $\phi \colon U \to \RR$ 
is a continuous function 
that is positive on  the blender $\Gamma$ and negative on 
the periodic orbit $\cO(q)$ of $q$.

Then there exist an integer $N\ge 1$ and a flip-flop family $\fF$ 
with respect to $f^N$ and the function
$$
\phi_N := \sum_{j=0}^{N-1}\phi \circ f^j 
\quad \text{defined on the set} \quad 
\bigcap_{j=0}^{N-1} f^{-j}(U).
$$

Moreover, given any $\de>0$, we can choose the flip-flop family 
$\fF = \fF^+ \cup \fF^-$ such that 
$\bigcup \fF^+$ (resp.\ $\bigcup \fF^-$) is contained in the $\de$-neighborhood of $\Gamma$
(resp.\ $\cO(q)$).
\end{prop}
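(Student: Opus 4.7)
The plan is to construct $\fF^+$ as a family of $i$-dimensional discs tangent to $\cC^\mathrm{uu}$, drawn from a subfamily of the strictly invariant family $\fD$ of the blender and contained in a small compact neighborhood $V_\Gamma \subset U$ of $\Gamma$; symmetrically, $\fF^-$ will consist of small $i$-dimensional discs tangent to $\cC^\mathrm{uu}$ in a small compact neighborhood $V_q \subset U$ of $\cO(q)$, essentially pieces of $W^\mathrm{u}_\mathrm{loc}(q)$. The neighborhoods $V_\Gamma$ and $V_q$ will be chosen disjoint, inside the prescribed $\delta$-neighborhoods of $\Gamma$ and $\cO(q)$, and small enough that $\phi \ge 2\alpha$ on $\overline{V_\Gamma}$ and $\phi \le -2\alpha$ on $\overline{V_q}$ for some $\alpha > 0$. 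Only discs whose first $N$ forward iterates remain in $V_\Gamma$ (resp.\ $V_q$) will enter $\fF^+$ (resp.\ $\fF^-$). This immediately yields condition FF1 with $\phi_N \ge N\alpha$ on $\bigcup \fF^+$ and $\phi_N \le -N\alpha$ on $\bigcup \fF^-$. Condition FF3 will follow from the uniform expansion of $\widetilde E^\mathrm{uu}$: for $N$ large, $Df^N$ expands tangent vectors in $\cC^\mathrm{uu}$ by a factor exceeding some $\lambda > 1$, and tangency of the discs in $\fF$ to $\cC^\mathrm{uu}$ turns this into metric expansion.

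The principal content is verifying FF2; I treat four cases. First, for $D \in \fF^+$ (a disc in $\fD$ close to $\Gamma$) I produce $D^+$ by iterating the strict invariance of $\fD$ a total of $N$ times, which yields $D^+ \subset D$ with $f^N(D^+) \in \fD$; using Scholium~\ref{s.fj}, which says that $\fD$ accumulates on $W^\mathrm{uu}_\mathrm{loc}(\Gamma)$, I arrange $f^N(D^+)$ to lie arbitrarily close to $\Gamma$, hence in $\fF^+$. Second, for the same $D \in \fF^+$, I produce $D^-$ by invoking FFC4 to pick $x \in D \cap K$, where $K \subset \Delta^\mathrm{s} \subset W^\mathrm{s}(q)$ is the safe set; by FFC3 the intersection is transverse to $\cC^\mathrm{uu}$, so the inclination lemma applied around the saddle $q$ shows that for every sufficiently large $k$, a sufficiently small sub-disc $B_x \subset D$ around $x$ is mapped by $f^k$ to a disc tangent to $\cC^\mathrm{uu}$ contained in $V_q$. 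Setting $D^- = B_x$ with the choice $k = N$ places $f^N(D^-)$ in $\fF^-$.

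Third, for $D \in \fF^-$ (small near $\cO(q)$, close to $W^\mathrm{u}_\mathrm{loc}(q)$) I take $D^-$ to be any sufficiently small sub-disc of $D$, so that $f^N(D^-)$ is still small and stays in $V_q$, hence in $\fF^-$. Fourth, to produce $D^+$ I observe that forward iterates of $D$ grow along $W^\mathrm{u}_\mathrm{loc}(q)$ and, for some uniform $m$, $f^m(D)$ contains a sub-disc $C^1$-close to $\Delta^\mathrm{u}$; since $\Delta^\mathrm{u}$ lies in the interior of $\fD$ by FFC0, this sub-image is itself in $\fD$, and a few further iterations of the strict invariance push a sub-disc into $\fF^+$. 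Pulling back inside $D$ gives $D^+$ with $f^N(D^+) \in \fF^+$.

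The main obstacle I anticipate is the calibration of a single integer $N$ that uniformly accommodates all four cases of FF2 while preserving cone-field tangency, diameter bounds, and neighborhood constraints throughout the iteration. This is feasible because the relevant transit times -- from $\Delta^\mathrm{s}$ to $\cO(q)$ via the inclination lemma, from $\cO(q)$ via $\Delta^\mathrm{u}$ into $\fD$-discs near $\Gamma$, and for the internal iterations of the strict invariance of $\fD$ -- are all uniformly bounded on the respective families of discs, so $N$ may be taken as their common maximum (multiplied by the period of $q$ if necessary). A further subtle point is that in the third case one must verify that the small sub-disc remains tangent to $\cC^\mathrm{uu}$ under $N$ iterations, which follows from the strict $Df$-invariance of the extended cone field; the final claim that $\bigcup \fF^\pm$ sits in the prescribed $\delta$-neighborhood is then automatic from having taken $V_\Gamma, V_q$ small at the outset.
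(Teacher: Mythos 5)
There is a genuine gap, and it is decisive. Your members of $\fF^+$ are discs $D \subset V_\Gamma$ whose first $N$ forward iterates all stay in $V_\Gamma$, which gives \eqref{i.FF1} immediately with $\phi_N \geq N\alpha$. But this constraint is incompatible with producing the sub-disc $D^-$ required by \eqref{i.FF2}: you need $D^- \subset D$ with $f^N(D^-) \in \fF^-$, i.e.\ $f^N(D^-) \subset V_q$, while at the same time $f^{N-1}(D^-) \subset f^{N-1}(D) \subset V_\Gamma$. Since $\Gamma$ and $\cO(q)$ are disjoint invariant compacts, for $V_\Gamma$ and $V_q$ small we have $f(V_\Gamma) \cap V_q = \emptyset$, so the orbit cannot jump from $V_\Gamma$ directly into $V_q$ in one step. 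This is exactly why the paper does \emph{not} take whole discs as the members of $\fF$: a member of $\fF^+$ is a union $D^+_+ \cup D^+_-$ of two small sub-discs of a disc $D^+\in\fD_\Gamma^m$, where the two sub-discs have \emph{different} forward orbit behavior. On $D^+_+$ all $N$ steps stay near $\Gamma$ (giving $\phi_N > N\alpha_\Gamma$); on $D^+_-$ the first $N-N_\Gamma-m$ steps stay near $\Gamma$ while the last $N_\Gamma+m$ steps make the transition to $\fD_q$, and positivity of $\phi_N$ on $D^+_-$ is then a quantitative inequality, $(N-N_\Gamma-m)\alpha_\Gamma - (N_\Gamma+m)\sup|\phi| > 0$, which forces $N$ to be taken \emph{large} relative to the transit times $N_\Gamma$, $N_q$, $m$ -- not merely at least as large, as you suggest.

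A second, related gap: you treat ``discs of $\fD$ near $\Gamma$'' as if they form a usable subfamily, but the blender domain $V$ need not lie in the region where $\phi>0$, so a typical disc of $\fD$ may see $\phi$ change sign. Iterating the strict invariance $N$ times, as you propose for Case~1, only pins down a sub-disc $D^+ \subset \bigcap_{j=0}^N f^j(V)$; as $N\to\infty$ this set shrinks to the local unstable set of $\Gamma$, not to $\Gamma$ itself, so it is not contained in the $\delta$-neighborhood of $\Gamma$ and $\phi$ need not be positive there. The paper's remedy is the family $\fD_\Gamma^m$, built from \emph{two-sided} control ($m$ forward \emph{and} $m$ backward iterates within $\fD_\Gamma$), which by the maximality of $\Gamma$ in $\overline V$ forces the discs into the $\delta$-neighborhood of $\Gamma$ (Remark~\ref{r.containedDm}). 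Also, Scholium~\ref{s.fj} only asserts that \emph{some} disc in \emph{some} local strong unstable manifold is accumulated by $\fD$; it does not let you steer the $N$-fold strict-invariance iteration to land near $\Gamma$, and even landing near $\Gamma$ at time $N$ does not control the intermediate iterates $j=1,\dots,N-1$ that the Birkhoff sum $\phi_N$ actually sees.
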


We postpone the proof of Proposition~\ref{p.flipflopconf} to Section~\ref{ss.p.flipflopconf}. We now
derive Theorem~\ref{t.practical} from this proposition.

\begin{proof}[Proof of Theorem~\ref{t.practical}]
Suppose $q$ is a saddle of $\mathrm{u}$-index $i-1$
and $\Gamma$ is a dynamical blender of $\mathrm{u}$-index $i$
in a split flip-flop configuration.
Let $U$ be a strict partially hyperbolic neighborhood of this split  configuration
and 
$\widetilde E^\mathrm{uu} \oplus \widetilde E^\mathrm{c} \oplus \widetilde E^\mathrm{ss}$
the corresponding partially hyperbolic splitting defined over
the maximal invariant subset $\Lambda (U)$ of $f$ in $U$. This implies that
$\dim \widetilde E^\mathrm{uu} = i-1$ and $\dim \widetilde E^\mathrm{c} = 1$.

Since $\Gamma \subset \La (U)$ is a hyperbolic set 
whose unstable bundle $E^\mathrm{u}$
is the restriction of $\widetilde E^\mathrm{uu} \oplus \widetilde E^\mathrm{c}$, 
we can take an adapted metric  such that the vectors in $E^\mathrm{u}$
are uniformly expanded by $Df$ (with respect to such a metric).
%$\| Df^{-1} |_{E^\mathrm{u}} \|^{-1} > 1$
%on $\Gamma$.
In particular, the center Jacobian map
$J^\mathrm{c} := \| Df |_{E^\mathrm{c}} \|$, 
which is well defined on $\Lambda (U)$,
is uniformly bigger than $1$ on $\Gamma$.
Since the saddle $q$ has $\mathrm{u}$-index $i-1$,
after a new  change of metric that does not affect the previous one, we can assume that $J^\mathrm{c} < 1$
on the orbit $\cO(q)$.
Applying Tietze extension theorem, 
we continuously extend the function $\log J^\mathrm{c}$ on $\La (U)$
to a function $\phi$ defined on the set $U$.

Fix $\delta>0$ small enough such that
$\varphi$ is defined and positive (resp., negative) on
the $\delta$-neighborhood of $\Gamma$ (resp., of $\cO(q)$).
Applying Proposition~\ref{p.flipflopconf} to the map $\phi$,
we obtain a flip-flop family $\fF$
associated to a power $f^N$ of $f$ and the corresponding Birkhoff sum $\phi_N$.
Applying Theorem~\ref{t.FlipFlop_weak} 
we obtain an $f^N$-invariant compact set 
$\Upsilon \subset U \cap f^{-1}(U) \cap \cdots \cap f^{-N+1}(U)$
such that $h_\mathrm{top} \left(f^N|_\Upsilon\right) > 0$ and 
the Birkhoff averages of $\phi_N$ with respect to $f^N$
converge to zero uniformly on $\Upsilon$.
Then the Birkhoff averages of $\phi$ with respect to $f$
also converge to zero uniformly on the set $\Om \eqdef \bigcup_{j=0}^{N-1} f^j(\Upsilon)$,
which is contained in the maximal invariant set $\La (U)$. 
This means that the Lyapunov exponent  of $f$ along the $E^\mathrm{c}$ direction vanishes on $\Om$. 
Moreover, $h_\mathrm{top} \left(f|_\Om\right)$ is also positive.

To conclude the proof of Theorem~\ref{t.practical} it remains to see that we can choose the set $\Om$ contained in the 
chain recurrent class $C(q,f)$ of $q$. 
First note that we can assume that $\Om$ consists of chain recurrent points.
To see that $\Omega \subset C(q,f)$ fix any point $x\in \Om$.
If the number $\de >0$ in 
Proposition~\ref{p.flipflopconf} is small enough then the orbit of $x$ has infinitely many iterates close to $q$
(otherwise the $\phi$-average would be positive). Hence,  
as $\de$ is small, the strong unstable manifold of $x$ 
intersects the stable manifold of $q$. Similarly, the orbit of $x$ also has infinitely many iterates close to $\Ga$
(otherwise the $\phi$-average would be negative).
Hence, if $\de>0$ is small enough, the strong stable manifold of $x$ intersects the unstable manifold 
of a point in $\Ga$.  

By Proposition~\ref{p.l.flipflopcycle}, the saddle $q$ and the blender $\Ga$ form a robust cycle, 
therefore they are in the same chain 
recurrence class.  Properties
$W^{\mathrm{uu}}(x)\cap W^\mathrm{s}(q)\ne\emptyset$ and 
$W^{\mathrm{ss}}(x)\cap W^\mathrm{u}(\Ga)\ne\emptyset$
 immediately imply that
$x$ belongs to such a  class. 
We have shown $\Omega\subset C(q,f)$, ending the proof of 
Theorem~\ref{t.practical}.
\end{proof}

\subsection{Proof of Proposition~\ref{p.flipflopconf}}
\label{ss.p.flipflopconf}

We divide the proof of Proposition~\ref{p.flipflopconf} into several lemmas.
Consider a diffeomorphism $f$ with a dynamical blender $(\Ga,V,\cC^\mathrm{uu},\fD_\Ga)$ and a saddle $q$ 
in a flip-flop configuration with unstable and stable connecting sets
 $\Delta^\mathrm{u}$ and $\Delta^\mathrm{s}$.
Consider also  a partially hyperbolic neighborhood $U$ of 
this configuration endowed with the extended cone field, that for notational simplicity we denote by $\cC^\mathrm{uu}$.
Let $u$ be the $\mathrm{u}$-index of $q$, by definition the number $u$ is also the
$\mathrm{uu}$-index of $\Ga$ and the index of the cone field $\cC^\mathrm{uu}$.  

Let us fix some ingredients of our construction.
We take a Riemannian metric and a constant $\mu>1$ such  that  for every
$x\in U\cap f^{-1}(U)$ 
the vectors in $\cC^\mathrm{uu}(x)\setminus \{\bar 0\}$ are  
expanded by a ratio at least $\mu$
by $Df$. 

\begin{rema}[Choice of $\de$]\label{r.delta}
Let $\delta>0$ be small enough such  that
the closed $\delta$-neighborhood of $\Ga$ (resp.
 $\{\cO(q)\}$)
is contained in $U$ and the function $\phi$ is bigger than some constant $\alpha_\Gamma>0$
(resp. less than some constant $-\alpha_q<0$)  in such a neighborhood.
Reducing $\delta$, if necessary, we can assume that the local manifold
$W^\mathrm{s}_\delta(f^i(q))$ is contained in $U$ and tangent to $\cC^\mathrm{uu}$.
\end{rema}

In the next lemmas we will introduce auxiliary families of discs that will be used to define the sets of the flip-flop
family. The first step is to define a family $\fD_q$ of discs in the $\delta$-neighborhood of the orbit  $\cO(q)$. 
In this way, we have two preliminary families of discs, the discs  $\fD_\Ga$ in the blender and the discs $\fD_q$.
The second step is to define ``transitions" between these two families.

\begin{lemm}[The family $\fD_q$]\label{l.mfDq}
There is a family  $\fD_q\subset D\in\cD^u(M)$ of $C^1$-embedded discs
of dimension $u$ 
containing the discs $W^\mathrm{u}_{\delta/4}(f^i(q))$ in its interior and consisting of discs
$D \in \fD_q$ that  satisfy the following properties:
\begin{enumerate}
\item $D \subset U$;
\item $D$ is tangent to $\cC^\mathrm{uu}$;
\item\label{i.diam} $\diam D < \delta$;
\item $D$ transversely  intersects $W^\mathrm{s}_\delta(f^i(q))$ for some $i$;
\item $\|Df(v)\| \ge \mu \|v\|$ \
for every vector $v$ tangent to $D$;
\item\label{i.nest} $f(D)$ contains a disc in $\fD_q$.
\end{enumerate}
\end{lemm}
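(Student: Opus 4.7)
The plan is to define $\fD_q$ as a sufficiently small $C^1$-open neighborhood of the family of local unstable discs $W^\mathrm{u}_{\delta/4}(f^i(q))$ at the iterates of $q$. Properties (1)--(5) will follow from continuity and the $C^1$-openness of the required geometric conditions, while the strict invariance property (6) will exploit the uniform expansion $\mu>1$ of $Df$ on $\cC^\mathrm{uu}$, which provides a definite margin in the nesting of $f$-images.

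First, let $p$ denote the period of $q$ and, for $0\le i<p$, set $D_i^0\eqdef W^\mathrm{u}_{\delta/4}(f^i(q))$. Since $q$ has $\mathrm{u}$-index $u=\dim\widetilde E^\mathrm{uu}$ (the strong unstable bundle of the partially hyperbolic splitting provided by Lemma~\ref{l.Laranjeiras}), the unstable bundle $E^\mathrm{u}(f^i(q))$ coincides with $\widetilde E^\mathrm{uu}_{f^i(q)}$ and hence lies strictly inside $\cC^\mathrm{uu}(f^i(q))$. Consequently each $D_i^0$ is a $u$-dimensional embedded disc of diameter at most $\delta/2$, tangent to $\cC^\mathrm{uu}$, contained in $U$ (by Remark~\ref{r.delta}), and transversely intersecting $W^\mathrm{s}_\delta(f^i(q))$ at the sole point $f^i(q)$. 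Fixing a small $\eta>0$ to be chosen later, I would set
$$
\fD_q \eqdef \bigcup_{i=0}^{p-1}\{D\in\cD^u(M)\colon \delta(D,D_i^0)<\eta\},
$$
where $\delta(\cdot,\cdot)$ is the $C^1$-distance of Section~\ref{sss.distancedelta}. This family is $C^1$-open and contains each $D_i^0$ in its interior.

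Next, I would verify (1)--(5) for $\eta$ small. By continuity at $D_i^0$ of each of the geometric conditions involved, for $\eta$ small every $D\in\fD_q$ lies in $U$, is tangent to $\cC^\mathrm{uu}$ (since $D_i^0$ is tangent to a sub-cone strictly contained in $\cC^\mathrm{uu}$, namely the bundle $\widetilde E^\mathrm{uu}$), and has $\diam D<\delta$; this gives (1)--(3). Property (5) is automatic from the choice of adapted metric, in which $Df$ stretches $\cC^\mathrm{uu}$-vectors by at least $\mu$ on $U\cap f^{-1}(U)$. For (4), transversality is $C^1$-open and the intersection $D_i^0\cap W^\mathrm{s}_\delta(f^i(q))=\{f^i(q)\}$ is interior to both submanifolds, so for $\eta$ small every $D$ with $\delta(D,D_i^0)<\eta$ still transversely intersects $W^\mathrm{s}_\delta(f^i(q))$.

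The main obstacle is property (6). The key geometric fact is that because $Df$ expands $\cC^\mathrm{uu}$-vectors by at least $\mu>1$, the image $f(D_i^0)$ contains $W^\mathrm{u}_{\mu\delta/4}(f^{i+1}(q))$, and in particular strictly contains $D_{i+1}^0$ with a uniform boundary margin of order $(\mu-1)\delta/4$. By Lemma~\ref{l.deltacompatibility}, the map $D\mapsto f(D)$ is continuous for the $\delta$-distance, so for $\eta$ small enough $f(D)$ is arbitrarily $C^1$-close to $f(D_i^0)$ whenever $\delta(D,D_i^0)<\eta$. A standard parametrization argument, using the uniform margin provided by the strict expansion, then extracts from $f(D)$ a sub-disc $\widetilde D$ with $\delta(\widetilde D,D_{i+1}^0)<\eta$, hence $\widetilde D\in\fD_q$. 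The care required is to choose a single $\eta$ compatible simultaneously with the estimates in (1)--(5) and uniform in $i$; this is possible because the orbit $\cO(q)$ is finite, so all the continuity bounds are uniform on the compact orbit.
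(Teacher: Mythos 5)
Your proposal is correct and follows the approach the paper alludes to but does not write out: the paper dispenses with this lemma in one sentence, stating only that the family's existence "follows easily" from the strict invariance of $\cC^\mathrm{uu}$, the uniform expansion on $\cC^\mathrm{uu}$, and transversality to $W^\mathrm{s}_\mathrm{loc}(\cO(q))$. Your construction of $\fD_q$ as a small $C^1$-neighborhood of the local unstable discs of $\cO(q)$, with properties (1)--(5) by $C^1$-openness and property (6) by the definite nesting margin coming from $\mu>1$, is exactly the natural implementation of that remark.
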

The existence of the family $\fD_q$ follows easily from the strict $Df$-invariance of the cone field 
$\cC^\mathrm{uu}$,  the uniform expansion of  the vectors in $\cC^\mathrm{uu}$ by $Df$, and the fact 
that the discs of $\fD_q$ transversely intersect the local stable manifold of $\cO (q)$. 

\medskip

We now  study  the transitions between the families $\fD_q$ and $\fD_\Ga$. 
The first step is the following preliminary result:

\begin{lemm}\label{l.uniform}
Let $\fK \subset \cD^u(M)$ be a family of $C^1$-embedded compact discs of dimension $u$ that are 
contained in $U$ and tangent to 
$\cC^\mathrm{uu}$. 
Assume that there is  $\varepsilon>0$  such that every disc $D\in \fK$
contains a point $x\in W^\mathrm{s}_\delta(f^i(q))$, for some $i$,  whose distance to the boundary of $D$ is larger 
than $\varepsilon$.

Then there exists $n_0 =n_0 (\fK) \ge 0$ such that 
for every disc $D \in\fK$ and every $n\ge n_0$ the set $f^n(D)$ contains a disc 
$D_1 \in \fD_q$ such that $f^{-i}(D_1) \subset U$  for every $i\in\{0,\dots,n\}$. 
Moreover, if $\fK=\fD_q$ then we can take $n_0 = 0$.
\end{lemm}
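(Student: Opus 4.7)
The plan is to combine two standard ingredients: the point $x\in D\cap W^\mathrm{s}_\delta(f^i(q))$ furnished by the hypothesis has its forward orbit attracted exponentially to $\cO(q)$, while the uniform $\cC^\mathrm{uu}$-expansion of $Df$ along $D$ blows up a small sub-disc of $D$ around $x$ into a disc of definite size sitting right over the local unstable manifold of $f^{i+n}(q)$; by Lemma~\ref{l.mfDq} any such disc close enough to $W^\mathrm{u}_{\delta/4}(f^{i+n}(q))$ lies in the interior of $\fD_q$. This will yield the desired $D_1\subset f^n(D)$ inside $\fD_q$.

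More precisely, for each $D\in\fK$ I would fix the point $x=x(D)$ and integer $i=i(D)$ guaranteed by the hypothesis, together with the closed intrinsic $\varepsilon$-ball $B\subset D$ centred at $x$. Uniform hyperbolicity along $\cO(q)$ provides constants $C>0$ and $\lambda\in(0,1)$, depending only on $f$, with $d(f^m(x),f^{i+m}(q))\le C\lambda^m$, so the orbit of $x$ stays in the $\delta$-neighbourhood of $\cO(q)$, which lies in $U$ by Remark~\ref{r.delta}. I would then pick $n_0$ large enough that $C\lambda^{n_0}<\delta/8$ and that $\mu^{n_0}$ applied to an appropriately shrunken radius exceeds a fixed size threshold; a standard Hadamard/graph-transform argument in a tubular neighbourhood of the local stable manifold of $\cO(q)$ then furnishes, for $n\ge n_0$, a sub-ball $B_n\subset B$ whose iterates $f^k(B_n)$, $0\le k\le n$, are all tangent to $\cC^\mathrm{uu}$ and contained in $U$, and such that $f^n(B_n)$ has intrinsic radius at least $\delta/2$ around $f^n(x)$. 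The classical $\lambda$-lemma type convergence then implies that the tangent field of $f^n(B_n)$ at points close to $f^{i+n}(q)$ is $C^1$-close to the true unstable direction $E^\mathrm{u}$, uniformly in $D\in\fK$; hence $f^n(B_n)$ is $C^1$-close to $W^\mathrm{u}_{\delta/4}(f^{i+n}(q))$, and since that local unstable manifold lies in the interior of $\fD_q$, a suitable sub-disc $D_1\subset f^n(B_n)$ belongs to $\fD_q$. The preimages $f^{-j}(D_1)\subset f^{n-j}(B_n)\subset U$ by construction.

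The hard part is the ``stay in $U$'' clause for the expanding family $f^k(B_n)$: although the centres $f^k(x)$ remain near $\cO(q)$, a priori the transverse spread under forward iteration could escape $U$ before step $n$. Condition \eqref{i.FFC3} of the flip-flop configuration, asserting transversality of $T W^\mathrm{s}(q)$ with $\cC^\mathrm{uu}$, together with strict $Df$-invariance of $\cC^\mathrm{uu}$, allows one to confine the orbit of $B_n$ to a thin $\cC^\mathrm{uu}$-tangent tube whose transverse width grows only as $\mu^k$ times its initial (exponentially small) radius; balancing this growth against the exponential contraction of $f^k(x)$ to $\cO(q)$ yields a uniform choice of $n_0$, with uniformity in $\fK$ guaranteed by compactness of $\cO(q)$ and by the common lower bound $\varepsilon$ on $\mathrm{dist}(x(D),\partial D)$.

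The ``moreover'' assertion for $\fK=\fD_q$ is immediate from property~(\ref{i.nest}) of Lemma~\ref{l.mfDq}: setting $D^{(0)}\eqdef D$ and inductively picking $D^{(k+1)}\in\fD_q$ with $D^{(k+1)}\subset f(D^{(k)})$, one has $D_1\eqdef D^{(n)}\subset f^n(D)$ and $f^{-j}(D_1)\subset D^{(n-j)}\in\fD_q\subset U$ for every $j\in\{0,\dots,n\}$, so $n_0=0$ works.
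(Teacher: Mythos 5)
Your proposal is correct and uses the same key mechanism as the paper: treat the intrinsic $\varepsilon$-ball $B\subset D$ around the distinguished point $x\in W^\mathrm{s}_\delta(f^i(q))$ as a uniformly Lipschitz graph over the local unstable manifold (the Lipschitz constant only depends on $\cC^\mathrm{uu}$, not on $D$), apply the $\lambda$-lemma to get a uniform $n_0(\fK)$, and land in $\fD_q$; the moreover clause follows from property~(\ref{i.nest}) of $\fD_q$, exactly as you say. The one point where the paper's proof is structurally cleaner is that it does \emph{not} attempt the ``one-shot'' construction of a ball $B_n$ controlled for all $0\le k\le n$ that you describe in your second and third paragraphs: it applies the $\lambda$-lemma once, to reach a disc $D_0\in\fD_q$ at the fixed time $n_0$ with $f^{-j}(D_0)\subset U$ for $j\le n_0$, and then for $n>n_0$ invokes the already-established case $\fK=\fD_q$ to push $D_0$ the remaining $n-n_0$ steps inside $\fD_q$ (whose members are all in $U$ by definition). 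This two-stage decomposition makes the ``hard part'' you flag---confining the growing family $f^k(B_n)$ to $U$ for every $k\le n$---disappear entirely, since after $n_0$ steps one is inside the invariant family $\fD_q$ and the containment in $U$ is automatic. Your direct estimate can be made to work, but it requires balancing the cone expansion $\mu^k$ against the contraction toward $\cO(q)$ and uses a heavier quantitative argument than needed. Also, the transversality you need is that of the local stable manifolds $W^\mathrm{s}_\delta(f^i(q))$ to $\cC^\mathrm{uu}$ (guaranteed by the choice of $\delta$ in Remark~\ref{r.delta} and partial hyperbolicity), not condition~\eqref{i.FFC3}, which concerns the connecting set $\Delta^\mathrm{s}$ rather than the local stable manifold; this is a misattribution, though not a substantive gap.
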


\begin{proof}
In the case $\fK=\fD_q$, the assertion of the lemma with $n_0=0$
follows arguing recursively from property~(\ref{i.nest}) in the definition of the family $\fD_q$. 

Next consider an arbitrary family $\fK$ satisfying the hypotheses of the lemma.
Recall that the neighborhood $U$ contains all the  discs of the family $\fD_q$ and also contains the discs 
$W^\mathrm{s}_{\delta}(f^i(q))$.

Consider the family of discs of radius $\varepsilon$ centered at some point in $W^\mathrm{s}_\delta(f^i(q))$ and 
contained in some  disc of $\fK$. 
As these discs are tangent to the cone field $\cC^\mathrm{uu}$, they are  
Lipschitz graphs over the local unstable manifold of $f^i(q)$, where the Lipschitz constant 
depends only on $\cC^\mathrm{uu}$. 
Using the Lambda Lemma we  find an uniform $n_0 = n_0(\fK) \ge 0$ such that
the
image $f^{n_0}(D)$ of any disc $D \in\fK$
 contains a disc 
$D_0$ close enough to $W^\mathrm{u}_{\delta/4}(f^{i+n_0}(q))$ such that $D_0\in \fD_q$
and, moreover, $f^{-i}(D_0) \subset U$  for every $i\in\{0,\dots,n_0\}$.

Applying the first case $\fK=\fD_q$ to the disc $D_0$ we conclude that, for every $n\ge n_0$,
the image $f^{n-n_0}(D_0)$ contains a disc 
$D_1 \in \fD_q$ such that $f^{-i}(D_1) \subset U$  for every $i\in\{0,\dots,n-n_0\}$. 
Thus the disc $D_1$ satisfies the required properties.
\end{proof}

We now study the transition from the family $\fD_q$ to the family $\fD_\Gamma$.

\begin{lemm}[Going from $\fD_q$ to $\fD_\Gamma$]\label{l.N_q} 
There is $N_q>0$ such that 
for every disc $D\in\fD_q$ and every $n\ge N_q$,  the image $f^n(D)$ contains a disc 
$D_1 \in \fD_\Gamma$ such that $f^{-i}(D_1) \subset U$  for every $i\in\{0,\dots,n\}$. 
\end{lemm}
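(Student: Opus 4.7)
The plan is to use the Inclination (Lambda) Lemma to transport discs of $\fD_q$ forward so that they contain a subdisc close to $\Delta^\mathrm{u}$; since $\Delta^\mathrm{u}$ lies in the interior of $\fD_\Ga$ by property~\eqref{i.FFC0}, such a subdisc belongs to $\fD_\Ga$. More precisely, fix a $C^1$-open neighborhood $\cN$ of $\Delta^\mathrm{u}$ with $\cN \subset \fD_\Ga$. Each $D \in \fD_q$ transversely intersects $W^\mathrm{s}_\delta(f^i(q))$ for some $i$ (property~\ref{l.mfDq}(4)), and $\Delta^\mathrm{u} \subset W^\mathrm{u}(q)=\bigcup_{k\ge 0} f^k(W^\mathrm{u}_\mathrm{loc}(q))$, so the Inclination Lemma provides some time $n(D)$ after which $f^{n(D)}(D)$ contains a subdisc in $\cN$.

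To upgrade $n(D)$ to a uniform $N_q$, I would combine two ingredients. First, apply Lemma~\ref{l.uniform} to $\fK = \fD_q$ (where $n_0 = 0$) to obtain, for each $D \in \fD_q$ and every $n \ge 0$, a nested sequence of discs $D^{(k)} \in \fD_q$ with $D^{(k)} \subset f(D^{(k-1)})$, $D^{(0)}=D$, and $f^{-i}(D^{(k)}) \subset U$ for $0 \le i \le k$. Second, restrict attention to the compact sub-family $\fK_0 \subset \fD_q$ of discs $C^1$-close to some local unstable disc $W^\mathrm{u}_{\delta/4}(f^j(q))$; such discs exist in the interior of $\fD_q$ by Lemma~\ref{l.mfDq}. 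Compactness of $\fK_0$ together with the Inclination Lemma yields a uniform time $N_2$ such that $f^{N_2}(\tilde D)$ contains a subdisc in $\cN \subset \fD_\Ga$ for every $\tilde D \in \fK_0$. An analogous Inclination Lemma argument, using that each nested sequence $(D^{(k)})_{k\ge 0}$ accumulates on $W^\mathrm{u}(q)$ and that $W^\mathrm{u}_{\delta/4}(f^j(q))$ is in the interior of $\fD_q$, gives a uniform time $N_1$ after which some $D^{(N_1)}$ lies in $\fK_0$. Setting $N_q := N_1 + N_2$, we obtain a disc $D_1 \in \fD_\Ga$ inside $f^{N_q}(D)$ whose backward orbit (up to time $N_q$) traces the nested sequence $D^{(k)}$ and thus stays in $\fD_q \cup \fD_\Ga \subset U$. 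For $n > N_q$, the strict $f$-invariance of $\fD_\Ga$ (Lemma~\ref{l.strictdiscs}) propagates the disc forward inside $\fD_\Ga \subset V \subset U$, yielding the conclusion for all $n \ge N_q$.

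The main obstacle is the uniformity in $D \in \fD_q$: the Inclination Lemma produces finite-time convergence of iterates to $W^\mathrm{u}(q)$ for each individual disc, but a priori not at a uniform rate, since discs in $\fD_q$ may meet the stable manifold with arbitrarily small transversality or near its boundary. The remedy is to funnel arbitrary $D \in \fD_q$ first into the compact sub-family $\fK_0$ of ``good'' discs close to the local unstable manifolds of $\cO(q)$; on $\fK_0$, compactness and the continuous dependence of the Inclination Lemma conclusion on the initial disc furnish the uniform transition time $N_2$ to $\fD_\Ga$.
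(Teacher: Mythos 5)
Your approach mirrors the paper's own proof: funnel any $D \in \fD_q$ in a uniform number of steps into a small neighborhood of the local unstable manifolds of $\cO(q)$ (the paper's $\cV$, your $\fK_0$), then transition in a fixed number of steps to $\fD_\Ga$ using that $\De^\mathrm{u}$ lies in the interior of $\fD_\Ga$, and finally extend to all $n \ge N_q$ by the strict $f$-invariance of $\fD_\Ga$. Two small caveats: the transversality of $\fD_q$-discs to $W^\mathrm{s}_\delta(\cO(q))$ is automatically uniform (the discs are tangent to $\cC^\mathrm{uu}$ while the local stable manifolds are transverse to it by condition~\eqref{i.FFC3}), so your worry about ``arbitrarily small transversality'' is unfounded; and during the $N_2$-step transition the backward iterates $f^{-i}(D_1)$ do not lie in $\fD_q\cup\fD_\Ga$ -- one must instead check, as the paper does by shrinking $\epsilon$, that they stay close to the backward orbit of $\De^\mathrm{u}$, which is contained in the partially hyperbolic neighborhood $U$ by its very definition.
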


\begin{proof}
The first step is the following claim.

%\marginpar{Modifiquei a prova in particular por culpa do fato que q \'e periodico} We first prove 

\begin{clai}
 There is $N_q$ such that for every $D_0\in \fD_q$ there is $m\in\{1,\dots, N_q\}$ 
such  that the set $f^m(D_0)$ contains a disc 
$D_1 \in \fD_\Gamma$ with $f^{-i}(D_1) \subset U$  for every $i\in\{0,\dots,m\}$.
\end{clai}
\begin{proof}
Fix $n_0$ such that the connecting disc   $\Delta^\mathrm{u}$  is contained
in $f^{n_0}(W^\mathrm{u}_{\delta/4}(q))$ . There is a neighborhood  $\cV$ of $W^\mathrm{u}_{\delta/4}(q)$
contained  in $\fD_q$  consisting of discs whose image
$f^{n_0}(D)$ contains a disc $D_1$ 
$\epsilon$-close to $\Delta^\mathrm{u}$ in the $C^1$-distance for some small $\varepsilon$.
 By definition of a flip-flop configuration,  $\Delta^\mathrm{u}$ belongs to the interior of the 
family $\fD_\Gamma$, thus if $\epsilon>0$ is small enough the same holds for 
the disc $D_1$.
Moreover, since the negative iterates of $\Delta^\mathrm{u}$ are contained in $U$, by shrinking $\epsilon$ if necessary,
we get that $f^{-i}(D_1) \subset U$ for all $i=0,1,\dots, n_0$.

Applying the Lambda Lemma to the discs in $\fD_q$
and by the definition of $\fD_q$,
we obtain $n_1 > 0$ such that  
for every disc $D_0 \in \fD_q$ there is a sequence $D_{0,i}\in\fD_q$, $i=0,\dots, n_1$, 
such that $D_{0,0}=D_0$, $D_{0,i+1}\subset f(D_{0,i})$, 
 and $D_{0,n_1}\in \cV$.

By construction to prove the claim it is enough to take $N_q=n_0+n_1$.
\end{proof}

Take $N_q$ as in the claim.
Consider $D_0\in\fD_q$ and $n\geq N_q$.  
Associated to $D_0$ consider $m\le N_q$ and the disc $D_1\subset f^m(D_0)$ given by the claim.  
By the strict $f$-invariance of the family
$\fD_\Ga$, there is a sequence 
$D_i\subset f(D_{i-1})$, $i=2, \dots, n+1-m$, such that $D_i\in\fD_\Ga$.  By construction, $D_{n-m+1}$ is contained in $\fD_\Ga$
and
the negative iterates $f^{-j}(D_{n-m+1})$ for $j\in \{0,\dots, n-m\}$ are contained in 
$D_{n-m-j+1}\in \fD_\Ga$, hence contained in $U$. Further negative iterates 
$f^{-j}(D_{n-m+1})$, $j\in \{n-m,\dots, n\}$ are contained in the 
negative iterates $f^{-(j-(n-m))}(D_1)$. By the claim and the choice of $m$ and $D_1$, these backwards iterates of $D_1$
are contained in $U$. This ends the  proof of the lemma. 
\end{proof}

In the next lemma we  study the transition from the family $\fD_\Ga$ to the family $\fD_q$.
 
\begin{lemm}[Going from $\fD_\Gamma$ to $\fD_q$]\label{l.N_Ga} 
There is $N_\Ga>0$ such that for every disc $D\in\fD_\Gamma$   
and every $n\ge N_\Ga$, the set $f^n(D)$ contains a disc 
$D_1 \in \fD_q$ such that $f^{-i}(D_1) \subset U$  for every $i\in\{0,\dots,n\}$. 
% Moreover, if $D\in\fB_\Gamma$ then $D_1\in \fB_q$. 
\end{lemm}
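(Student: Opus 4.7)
The plan is to mirror the strategy of Lemma~\ref{l.N_q}, exploiting condition (\ref{i.FFC4}), which guarantees that every disc $D\in\fD_\Gamma$ crosses the compact $\varepsilon$-safe stable connecting set $K\subset\Delta^{\mathrm{s}}\cap W^{\mathrm{s}}(q)$ with a \emph{uniform} safety margin. This uniformity is what lets a Lambda-Lemma argument be applied simultaneously to the entire family $\fD_\Gamma$.

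The first step is to extract, from every $D\in\fD_\Gamma$, a sub-disc $D_0\subset D$ of a uniform radius $\varepsilon'>0$ centered at some point $x\in K\cap D$, tangent to $\cC^{\mathrm{uu}}$. Condition (\ref{i.FFC3}) says $T_yW^{\mathrm{s}}(q)\cap\cC^{\mathrm{uu}}(y)=\{0\}$ along $\Delta^{\mathrm{s}}$, so by compactness of $K$ the transversality of $D_0$ to $W^{\mathrm{s}}(q)$ at $x$ is \emph{uniform}. Since $K\subset W^{\mathrm{s}}(q)$ is compact, the orbit $(f^n(x))_{n\ge 0}$ converges to $\cO(q)$ uniformly in $x\in K$. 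Combining these two uniform ingredients with the Lambda Lemma yields an integer $n_0>0$, independent of $D\in\fD_\Gamma$, such that $f^{n_0}(D_0)$ contains a disc $\widetilde D$ that is $C^1$-close enough to $W^{\mathrm{u}}_{\delta/4}(f^{n_0+j}(q))$ (for some $j\ge 0$) so as to belong to $\fD_q$ via Lemma~\ref{l.mfDq}.

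Next, for $n\ge n_0$, apply property (\ref{i.nest}) of $\fD_q$ recursively starting from $\widetilde D_0\eqdef\widetilde D$ to produce a nested sequence $\widetilde D_0,\widetilde D_1,\dots,\widetilde D_{n-n_0}$ in $\fD_q$ with $\widetilde D_{k+1}\subset f(\widetilde D_k)$. Set $D_1\eqdef\widetilde D_{n-n_0}\subset f^n(D_0)\subset f^n(D)$. It remains to verify that $f^{-i}(D_1)\subset U$ for every $i\in\{0,\dots,n\}$. For $i\in\{0,\dots,n-n_0\}$ one has $f^{-i}(D_1)\subset\widetilde D_{n-n_0-i}\in\fD_q\subset U$ by Lemma~\ref{l.mfDq}. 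For $i\in\{n-n_0,\dots,n\}$, write $j=n-i\in\{0,\dots,n_0\}$; then $f^{-i}(D_1)\subset f^{j}(D_0)$. Since $x\in\Delta^{\mathrm{s}}$, the finite piece of orbit $\{f^j(x)\}_{j=0}^{n_0}$ lies in the compact set $\bigcup_{k=0}^{n_0}f^k(K)\subset\bigcup_{k\ge 0}f^k(\Delta^{\mathrm{s}})\subset U$, by the very definition of the partially hyperbolic neighborhood in Lemma~\ref{l.Laranjeiras}. By shrinking the uniform radius $\varepsilon'$ if needed (using uniform continuity of $f,\dots,f^{n_0}$ on a compact neighborhood of $\bigcup_{j=0}^{n_0}f^j(K)$), these finitely many forward iterates of $D_0$ remain inside $U$. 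Setting $N_\Gamma\eqdef n_0$ finishes the argument.

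The main delicate point is obtaining the Lambda-Lemma step \emph{uniformly} over the whole family $\fD_\Gamma$; everything hinges on the fact that (\ref{i.FFC4}) provides a uniform sub-disc size and (\ref{i.FFC3}) plus compactness of $K$ provide uniform transversality to $W^{\mathrm{s}}(q)$. Once that uniformity is in hand, the nesting property of $\fD_q$ handles the tail of the orbit and the verification that the backward iterates stay in $U$ is a direct bookkeeping exercise using the construction of the partially hyperbolic neighborhood.
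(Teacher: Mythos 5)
Your proposal is correct and follows essentially the same route as the paper's proof: you extract from each $D\in\fD_\Gamma$ a sub-disc of uniform size centered at a point of the $\varepsilon$-safe set $K$ (via condition (\ref{i.FFC4})), push it forward uniformly into $\fD_q$ by a Lambda-Lemma argument using compactness of $K$ and the uniform transversality of (\ref{i.FFC3}), then use the nesting property (\ref{i.nest}) of $\fD_q$, and finally check that backward iterates remain in $U$ by splitting the index range exactly as you do. The only presentational difference is that the paper first iterates the sub-discs $n_1$ times so that they land in the local stable manifold $W^\mathrm{s}_{\delta/2}(q)$, which lets it invoke the already-proved Lemma~\ref{l.uniform} directly on the resulting auxiliary family $\fK=\{f^{n_1}(D_\varepsilon)\}$, whereas you fold that Lambda-Lemma bookkeeping inline; the content is the same.
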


\begin{proof}
Recall that the stable connecting set  $\Delta^\mathrm{s}$ of the flip-flop configuration  given by \eqref{i.FFC2}  
is contained in $W^\mathrm{s}(q)$, there exists $n_1>0$ such that $f^{n_1}(\Delta^\mathrm{s}) \subset W_{\delta/2}^\mathrm{s}(q)$.
Moreover, since  $\Delta^\mathrm{s}$ is compact and 
by definition of a partially hyperbolic neighborhood of the configuration one has
$\bigcup_{j \ge 0} f^j(\Delta^\mathrm{s}) \subset U$,
there is  a neighborhood $B$ of $\De^\mathrm{s}$  such that 
$\bigcup_{j=0}^{n_1} f^j(B) \subset U$.

Recall also that there are $\varepsilon>0$ and an $\varepsilon$-safe stable connecting set of the configuration, that is a compact
subset $K$ contained in the interior of $\Delta^\mathrm{s}$
such  that every disc $D\in \fD_\Ga$ contains a disc $D_\varepsilon$ of radius $\varepsilon$ centered at some point of $K$. 
By shrinking $\varepsilon$ if necessary, we can assume that the discs $D_\varepsilon$ are contained in $B$.

Consider the family 
$$
\fK\eqdef\{f^{n_1}(D_\varepsilon) \colon D \in \fD_{\Gamma}\}.
$$
This family satisfies the hypotheses of Lemma~\ref{l.uniform}. Thus  let $n_0 = n_0(\fK)$ 
as in this  lemma and define $N_\Gamma\eqdef n_1+n_0$. We claim that this number satisfies the conclusions of the lemma.

Let $n \ge N_\Gamma$ and take any disc $D \in \fD_{\Gamma}$ and
consider $E \eqdef f^{n_1}(D_\varepsilon) \in \fK$.
By Lemma~\ref{l.uniform}, the image $f^{n-n_1}(E)$ contains a disc $E_1 \in \fD_q$
such that $f^{-i}(E_1) \subset U$ for all $i \in \{0,1,\dots,n-n_1\}$.
Since  $f^{-j}(E) \subset U$ for all $i \in \{0,1,\dots,n_1\}$,
the disc $D_1 \eqdef E_1$ satisfies all the required properties.  The proof of the lemma is now complete.
\end{proof}

Let us summarize our constructions up to this point. 
We have defined two families of discs $\fD_q$ and $\fD_\Gamma$ 
such  that one can go from each family to the other
in times $N_q$ or $N_\Ga$ (according to the case and in the sense of Lemmas~\ref{l.N_q} and \ref{l.N_Ga}),
and from each family to itself in time $1$
(property~(\ref{i.nest}) of $\fD_q$ 
and strict $f$-invariance property of $\fD_\Ga$).
Moreover, during these transitions the orbits of these discs remain in
a partially hyperbolic neighborhood  $U$ of the  flip-flop configuration.

Recall now that our goal is to construct a flip-flop family associated to
a Birkhoff sum of the function $\phi$.
This function is bounded away from zero in the discs of $\fD_q$,
but this is not necessarily true for the discs of $\fD_\Ga$.
Thus we need to shrink the discs of the family $\fD_\Gamma$
while keeping the transition properties
between the families above.  
This is the reason why we introduce the new family $\fD^m_\Ga$  below. Let us now go to the details of
this construction.

Recall the choice of $\delta$ in Remark~\ref{r.delta} and that $\phi>\alpha_\Ga>0$ in the $\de$-neighborhood
of the blender $\Ga$.
Since the blender $\Gamma$ is the maximal invariant set of $f$
%in $S := \bigcup_{D \in \overline{\fD_\Gamma}} D$
in $\overline V$,
there is an integer $m \ge 0$ such that 
the set $\bigcap_{i=-m}^m f^i (\overline V)$ is contained in the $\delta$-neighborhood
of $\Gamma$, thus in a region where $\phi>\alpha_\Gamma>0$.

\begin{defi}[The family $\fD_{\Gamma}^m$]\label{d.new_family}
Let $\fD_\Gamma^{m}$ be the family of discs $D^+$ 
such that there exist $E_{-m}$, $E_{-m+1}$, \dots, $E_m \in \fD_\Gamma$
satisfying the following conditions:
\begin{itemize}
\item $E_i \subset f(E_{i-1})$ for each $i\in\{-m+1, -m+2, \dots, m\}$;
\item $E_m = f^m(D^+)$.
\end{itemize}
In particular, $f^i(D^+) \subset E_i$ for each $i\in\{-m, -m+1, \dots, m\}$.
%such that $f^m(D^+)$ is a member of $\fD_\Gamma$ and 
%moreover for every $i \in \{-m, -m+1,\dots,m\}$, the image $f^i(D^+)$ is 
%contained in a member of  $\fD_\Gamma$.
\end{defi}

\begin{rema}\label{r.containedDm}
Every disc $D\in \fD_\Ga^m$ is contained in the $\delta$-neighborhood of $\Ga$, thus
$\phi(x)>\alpha_\Ga$ for all $x\in D$.
\end{rema}

Using recursively the strict $f$-invariance of the family $\fD_\Gamma$,
we get  that 
the image $f^m(D)$ of any disc $D \in \fD_\Gamma$ contains a disc of $\fD_\Gamma^{m}$.
In particular, the family $\fD_\Gamma^{m}$ is nonempty.

Let $C:=\sup |\phi|$ and recall that $\alpha_q, \alpha_\Ga>0$ (see Remark~\ref{r.delta}).
Fix an integer 
$$
N \ge \max(N_q,N_\Gamma) + m
$$
 such that the number
$$
\alpha \eqdef \min 
\left\{ (N - N_q - m ) \alpha_q  - (N_q + m) C , ( N- N_\Gamma - m ) \alpha_\Gamma -  (N_\Gamma + m) C \right\}
>0.
$$

In the next two lemmas we obtain  transitions 
between the families $\fD_q$ and $\fD^m_\Ga$: from each family it is possible to go to the other family and to itself.
In what follows, we denote by $D^-$ the discs in $\fD_q$ and by $D^+$ the ones in $\fD^m_\Ga$.

\begin{lemm}[Transitions of discs of $\fD_q$]\label{l.D-} 
Every disc $D^- \in \fD_q$ contains   
subdiscs $D^-_-$, $D^-_+$ such that:

\begin{enumerate}[label=$\mathrm{Tq}\arabic*)$,ref=Tq\arabic*,leftmargin=*,widest=Tq2]
\item\label{i.T1} 
	$f^N(D^-_-)\in \fD_q$ and $f^i(D^-_-) \subset U$ 
	for every $i\in\{0,1,\dots, N-1\}$. 
	Moreover,  
	$$
	\phi_N(x) < - N \alpha_q  \le  -\alpha, \quad \mbox{for every} \quad  x\in D^-_- \, .
	$$
	\item \label{i.T2}
	$f^N(D^-_+)\in\fD_\Gamma^m$ and $f^i(D^-_+) \subset U$ 
	for every $i\in\{0,1,\dots, N-1\}$. 
	Moreover,
	$$
	\phi_N(x) <  - ( N- N_q - m ) \alpha_q  + (N_q + m) C \le - \alpha,
	\quad \mbox{for every} \quad  x\in D^-_+ \, .
	$$
\end{enumerate}
\end{lemm}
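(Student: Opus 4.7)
The plan is to build both subdiscs $D^-_-$ and $D^-_+$ as nested preimages obtained by chaining the three transition facts that have already been established: the self-nesting property~(\ref{i.nest}) of $\fD_q$ from Lemma~\ref{l.mfDq}, the $\fD_q\to\fD_\Gamma$ transition provided by Lemma~\ref{l.N_q}, and the strict $f$-invariance of $\fD_\Gamma$. Conclusion~(\ref{i.T1}) is the easy half: I would apply property~(\ref{i.nest}) of $\fD_q$ recursively $N$ times to a starting disc $D^-=D_0\in\fD_q$, producing a sequence $D_1,\dots,D_N\in\fD_q$ with $D_{j+1}\subset f(D_j)$, and then define $D^-_-\subset D^-$ as the unique subdisc with $f^N(D^-_-)=D_N$. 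By construction $f^j(D^-_-)\subset D_j\in\fD_q$ for every $0\le j\le N$, so each iterate lies in the $\delta$-neighborhood of $\cO(q)$ where $\phi<-\alpha_q$ (per Remark~\ref{r.delta}), yielding $\phi_N(x)<-N\alpha_q\le -\alpha$ on $D^-_-$.

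For~(\ref{i.T2}), I would split the $N$ steps as $N=(N-N_q-m)+N_q+m$ and treat the three blocks separately. In the first block, use property~(\ref{i.nest}) of $\fD_q$ as above to obtain $D_{N-N_q-m}\in\fD_q$ inside $f^{N-N_q-m}(D^-)$ with intermediate iterates in $\fD_q$-discs. In the second block, apply Lemma~\ref{l.N_q} to $D_{N-N_q-m}$ with $n=N_q$: this yields a disc $\tilde D\in\fD_\Gamma$ contained in $f^{N_q}(D_{N-N_q-m})$ such that all its $N_q$ backward iterates remain in $U$. In the third block, apply strict $f$-invariance of $\fD_\Gamma$ a total of $2m$ times to extract a chain $\tilde D=\tilde D_0,\tilde D_1,\dots,\tilde D_{2m}\in\fD_\Gamma$ with $\tilde D_{i}\subset f(\tilde D_{i-1})$. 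Finally, define $D^-_+\subset D^-$ as the unique subdisc of $D^-$ with $f^{N+m}(D^-_+)=\tilde D_{2m}$. Setting $D^+:=f^N(D^-_+)$ and $E_i:=\tilde D_{m+i}$ for $-m\le i\le m$, one checks $E_m=f^m(D^+)$, $E_i\subset f(E_{i-1})$, and all $E_i\in\fD_\Gamma$, so $D^+\in\fD_\Gamma^m$ by Definition~\ref{d.new_family}.

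The bound on $\phi_N$ then comes from splitting the Birkhoff sum into three intervals: the first $N-N_q-m$ terms are $<-\alpha_q$ each (iterates in $\fD_q$-discs, hence in the $\delta$-neighborhood of $\cO(q)$), while the remaining $N_q+m$ terms are bounded by $C=\sup|\phi|$ (the $N_q$ transition iterates are in $U$ by Lemma~\ref{l.N_q}, and the last $m$ iterates lie in $\fD_\Gamma$-discs, hence in $V\subset U$). Summing gives $\phi_N(x)<-(N-N_q-m)\alpha_q+(N_q+m)C\le -\alpha$ by the definition of $\alpha$; the $U$-containment of all $N$ intermediate iterates is precisely what is needed to ensure that $\phi_N$ is well-defined on $D^-_+$.

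The main obstacle I anticipate is the careful bookkeeping required to verify the $\fD_\Gamma^m$-membership condition, which is the reason to iterate $2m$ extra times (rather than $m$): because membership in $\fD_\Gamma^m$ is a bilateral condition at the central iterate $N$, the chain $\tilde D_0,\dots,\tilde D_{2m}$ must extend both $m$ steps backward and $m$ steps forward from $\tilde D_m$, which forces the middle block to stop at iterate $N-m$ rather than at iterate $N$. Once this indexing is set up correctly, the Birkhoff estimate and the $U$-containment statement follow routinely from Lemmas~\ref{l.mfDq} and~\ref{l.N_q} and the definition of $N$ and $\alpha$.
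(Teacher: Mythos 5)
Your proposal is correct and takes essentially the same route as the paper's proof: for \eqref{i.T1} you nest $N$ times inside $\fD_q$ via property~(\ref{i.nest}) and take the $f^{-N}$-preimage; for \eqref{i.T2} you reuse the intermediate disc $D_{N-N_q-m}$, apply Lemma~\ref{l.N_q} with $n=N_q$, extend $2m$ steps by strict $f$-invariance of $\fD_\Gamma$, and set $D^-_+ = f^{-(N+m)}$ of the terminal disc, exactly as in the paper (your $\tilde D_i$ is the paper's $F_i$), with the same identification $E_i=\tilde D_{m+i}$ to verify $\fD_\Gamma^m$-membership and the same $(N-N_q-m)/(N_q+m)$ split of the Birkhoff sum.
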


\begin{proof}	
Take any disc $D^-$ of the family $\fD_q$.
Applying recursively property (\ref{i.nest}) of the family $\fD_q$
we find a sequence of discs $(D_i)_{i\ge 0}$ in $\fD_q$ 
such that  $D_0 = D^-$ and  $D_{i+1}\subset f(D_i)$ for each $i$.  
Let
$$
D^-_- \eqdef f^{-N}(D_N).
$$
By construction and the definitions of $\fD_q$ and $\alpha_q$, the disc $D^-_-$ satisfies the properties
in \eqref{i.T1}.

To prove  the second part of the lemma, recall that
by Lemma~\ref{l.N_q}, the set $f^{N_q}(D_{N-N_q-m})$ contains a disc 
$F \in \fD_\Gamma$ such that $f^{-j}(F) \subset U$ for every $j \in \{0,\dots,N_q\}$.
Applying recursively the strict $f$-invariance of the family $\fD_\Ga$,
we get a sequence of discs $(F_i)_{i\ge 0}$ in $\fD_\Gamma$ 
such that $F_0 = F$ and $F_{i+1}\subset f(F_i)$ for each $i$.  
Let
$$
D^-_+ \eqdef f^{-N-m}(F_{2m}) \, .
$$
Notice that $f^N(D^-_+) = f^{-m}(F_{2m})$ belongs to the family $\fD_\Gamma^m$.
Indeed the associated sequence of discs $E_{-m}$, \dots, $E_m$
in Definition~\ref{d.new_family} is given by $E_i = F_{m+i}$.
The inclusion properties and the upper bound for the sum $\phi_N$ in \eqref{i.T2} follow straightforwardly by construction.
The proof of the lemma is now complete.
\end{proof}

\begin{lemm}[Transitions of discs of $\fD_\Ga$] \label{l.D+} 
Every disc $D^+ \in \fD_\Gamma^{m}$ contains   
subdiscs $D^+_+$, $D^+_-$ such that:
\begin{enumerate}[label=$\mathrm{TG}\arabic*)$,ref=TG\arabic*,leftmargin=*,widest=Tq2]
\item\label{i.TG1} 
	$f^N(D^+_+)\in \fD_\Gamma^{m}$ and $f^i(D^+_+) \subset U$ 
	for every $i\in\{0,1,\dots, N-1\}$. 
	Moreover,  
	$$
	\phi_N(x) > N \alpha_\Gamma \ge \alpha, \quad \mbox{for every} \quad  x\in D^+_+ \, .
	$$
	\item \label{i.TG2}
	$f^N(D^+_-)\in\fD_q$ and $f^i(D^+_-) \subset U$ 
	for every $i\in\{0,1,\dots, N-1\}$. 
	Moreover,
	$$
	\phi_N(x) > ( N- N_\Gamma - m ) \alpha_\Gamma -  (N_\Gamma + m) C \ge \alpha,
	\quad \mbox{for every} \quad x\in D^+_- \, .
	$$
\end{enumerate}
\end{lemm}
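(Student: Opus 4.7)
The plan is to mirror the proof of Lemma~\ref{l.D-} almost verbatim, but substituting the refined family $\fD_\Gamma^m$ (in the role previously played by $\fD_q$) and using the transit Lemma~\ref{l.N_\Ga} (in the role of Lemma~\ref{l.N_q}). The positivity of $\phi$ will come from Remark~\ref{r.containedDm} together with the fact (Remark~\ref{r.delta}) that $\phi>\alpha_\Ga$ on the $\delta$-neighborhood of $\Ga$.

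For (\ref{i.TG1}), let $E_{-m},\dots,E_m\in\fD_\Ga$ be the witnesses of $D^+\in\fD_\Ga^m$, so $E_m=f^m(D^+)$. Using the strict $f$-invariance of $\fD_\Ga$, I extend the sequence by choosing $E_{m+1}\subset f(E_m),\;E_{m+2}\subset f(E_{m+1}),\dots,E_{N+m}\subset f(E_{N+m-1})$, all in $\fD_\Ga$. Define $D^+_+\subset D^+$ inductively as the preimage of $E_{N+m}$ under $f^{N+m}|_{D^+}$, with the intermediate thinning arranged so that $f^j(D^+_+)\subset E_j$ for every $j\in\{-m,\dots,N+m\}$. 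Then $f^N(D^+_+)\in\fD_\Ga^m$ with shifted witnesses $E_{N-m},\dots,E_{N+m}$; every iterate $f^j(D^+_+)\subset E_j\subset V\subset U$; and for each $i\in\{0,\dots,N-1\}$ and $x\in D^+_+$ the orbit segment $f^{i-m}(x),\dots,f^{i+m}(x)$ stays in $\overline V$, hence $f^i(x)\in\bigcap_{k=-m}^m f^k(\overline V)$, which lies in the $\delta$-neighborhood of $\Ga$. Thus $\phi(f^i(x))>\alpha_\Ga$ for all such $i$, giving $\phi_N(x)>N\alpha_\Ga\ge\alpha$.

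For (\ref{i.TG2}), I extend the witnesses only up to $E_{N-N_\Ga}\in\fD_\Ga$ (using $N-N_\Ga\ge m$). Applying Lemma~\ref{l.N_\Ga} to $E_{N-N_\Ga}\in\fD_\Ga$, I obtain a disc $D_1\in\fD_q$ with $D_1\subset f^{N_\Ga}(E_{N-N_\Ga})$ and $f^{-i}(D_1)\subset U$ for $i\in\{0,\dots,N_\Ga\}$. Let $D^+_-\subset D^+$ be the preimage of $D_1$ under $f^N|_{D^+}$, again thinned so that $f^j(D^+_-)\subset E_j$ for $j\in\{-m,\dots,N-N_\Ga\}$. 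By construction $f^N(D^+_-)=D_1\in\fD_q$; the iterates $f^j(D^+_-)\subset E_j\subset U$ for $j\le N-N_\Ga$, and $f^j(D^+_-)\subset f^{-(N-j)}(D_1)\subset U$ for the remaining $j\in\{N-N_\Ga,\dots,N-1\}$. The $m$-window argument of the previous paragraph now works for $i\in\{0,\dots,N-N_\Ga-m-1\}$ (so that $i+m\le N-N_\Ga$), contributing at least $\alpha_\Ga$ per iterate; on the last $N_\Ga+m$ iterates only the crude bound $|\phi|\le C$ is used. Summing,
\[
\phi_N(x)>(N-N_\Ga-m)\,\alpha_\Ga-(N_\Ga+m)\,C\ge\alpha,
\]
where the last inequality is the definition of $\alpha$.

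The main obstacle is purely bookkeeping: one must carry out the backward thinning carefully enough that the intermediate iterates of $D^+_+$ (resp.\ $D^+_-$) land inside the prescribed discs $E_j\in\fD_\Ga$, not merely inside the, a priori larger, forward iterates $f^j(D^+)$. This is what guarantees the $m$-window hypothesis needed to place the relevant iterates in $\bigcap_{k=-m}^m f^k(\overline V)$, and hence in the region where $\phi>\alpha_\Ga$. No new dynamical input beyond the strict $f$-invariance of $\fD_\Ga$ and Lemma~\ref{l.N_\Ga} is needed.
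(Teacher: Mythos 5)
Your proof is correct and follows essentially the same route as the paper's: define $D^+_+ = f^{-N-m}(E_{N+m})$ and $D^+_- = f^{-N}(F)$ for a disc $F\in\fD_q$ produced by Lemma~\ref{l.N_Ga}, then verify the inclusions via the nested chain $E_j$ and estimate $\phi_N$ from the $m$-window property. The only cosmetic difference is that you unfold the $m$-window argument directly rather than invoking Remark~\ref{r.containedDm}, and you take the slightly more conservative range $i\le N-N_\Ga-m-1$ for the good iterates (the paper allows $i\le N-N_\Ga-m$), which still gives the stated bound; also, the "intermediate thinning" you worry about is automatic once $D^+_\pm$ are taken as the indicated preimages, since the inclusions $f^j(D^+_\pm)\subset E_j$ follow from the nesting $E_{j+1}\subset f(E_j)$.
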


\begin{proof}
Given a disc $D^+$ in the family $\fD_\Gamma^{m}$, 
let $E_{-m}$, $E_{-m+1}$, \dots, $E_m$ be the discs associated to $D^+$   in $\fD_\Gamma$ 
given by Definition~\ref{d.new_family} with $E_m=f^m(D^+)$.
Applying recursively the $f$-invariance property of family $\fD_\Gamma$, 
we find new discs $E_{m+1}$, $E_{m+2}, \dots \in \fD_\Gamma$
such that $E_i \subset f(E_{i-1})$ for each $i > m$.
Notice that by construction the disc  $f^{-m}(E_j)$ is a member of $\fD_\Gamma^m$
for every $j \ge m$.

Let
$$
D^+_+ \eqdef f^{-N-m}(E_{N+m}) \, .
$$
Note that 
$$
D^+_+ \subset  
f^{-N-m}(f^N(E_m))= f^{-m}(E_m)=D^+
$$
and 
that
$$
f^N(D^+_+) = f^{-m} (E_{N+m}) \in \fD_\Gamma^m
$$
as required. To see that the sets $f^i(D^+_+)$ are contained in $U$ for all
 $i\in\{0,1,\dots, N\}$ note that
$f^i(D^+_+)$ is contained in $f^{-m}(E_{m+i}) \in \fD_\Gamma^m$ thus contained in $U$.
By Remark~\ref{r.containedDm} this implies that 
$\phi(f^i(x))>\alpha_\Gamma$ for every $x\in D^+_+$ and therefore 
$\phi_N(x)> N\alpha_\Ga$ for every $x\in D^+_+$. This ends the proof of \eqref{i.TG1}.

As $E_{N-N_\Ga}\in D_\Ga$,
Lemma~\ref{l.N_Ga} implies that
the disc $f^{N_\Gamma}(E_{N-N_\Gamma})$
contains a disc $F \in \fD_q$ such that $f^{-j}(F) \subset U$
for every $j \in \{0,\dots,N_\Gamma\}$.
Let
$$
D^+_- \eqdef f^{-N}(F) \, .
$$
Notice that $D^+_- \subset f^{-N+N_\Gamma}(E_{N-N_\Gamma})\subset D^+$
(since  $E_m=f^m(D^+)$ and $N-N_\Gamma \ge m$).
It is clear that $f^i(D^+_-) \subset U$ for every $i\in\{0,1,\dots, N\}$. This completes the proof of
the inclusion properties.

To get the estimate for the Birkhoff sum note that
for each $i\in\{0,1,\dots, N-N_\Gamma-m\}$,
the set
$f^i(D^+_-)$ %\subset f^{-N+N_\Gamma+i}(E_{N-N_\Gamma}) 
is contained in $f^{-m}(E_{m+i}) \in \fD_\Gamma^m$ and in particular, by Remark~\ref{r.containedDm}, 
in the part of $U$ where $\phi>\alpha_\Gamma$.
So the lower bound for the Birkhoff sum $\phi_N$ on $D^+_-$ follows. The proof of \eqref{i.TG2} is now 
complete.
\end{proof}

\subsubsection{End of the proof of Proposition~\ref{p.flipflopconf}}
Let $N$  and $\alpha$ be as above.
The flip-flop family 
$\fF = \fF^+ \cup \fF^-$ is defined as follows:
\begin{itemize}
\item 
$\fF^+$ is the family of unions 
$\{D^+_+ \cup D^+_-\}_{D^+ \in \fD_\Gamma^m}$, where $D^+_+$ and $D^+_-$ 
are associated to $D^+ \in \fD_\Gamma^m$ and given by Lemma~\ref{l.D+};
\item 
$\fF^-$ is the family of unions 
$\{D^-_+ \cup D^-_-\}_{D^-\in \fD_q}$, where $D^-_+$ and $D^-_-$ 
are associated to $D^- \in \fD_q$ and given by Lemma~\ref{l.D-}.
\end{itemize}
Lemmas~\ref{l.D+} and \ref{l.D-} 
provide properties (\ref{i.FF1}) and (\ref{i.FF2}) of a flip-flop family
with respect to $f^N$ and the function $\phi_N$.
Property  (\ref{i.FF3}) follows from the fact that
the discs in $\fF$ are tangent to $\cC^\mathrm{uu}$. 

Finally, the construction implies that 
the discs of
$\fF^+$  are contained  in the $\de$-neighborhood of $\Gamma$
and the discs of 
$\fF^-$ are contained  in the $\de$-neighborhood of $\cO(q)$. 
The proof of the proposition is now complete.
\hfill \qed

%!TEX root = final_MAIN.tex

\section{From robust cycles to
 split flip-flop configurations via spawners}\label{s.spawners}

In this section we see how robust cycles generate flip-flop configurations. This generation is done
throughout a special class of partially hyperbolic sets called 
\emph{spawners} that we will introduce in the next subsection.
The advantage of spawners for us
is that they spawn split flip-flop configurations.
The organization of this section is the following:
$$
\mbox{Robust cycles} \,\, \xRightarrow{{\rm{Prop.~\ref{p.caixapreta}}}} \,\, \mbox{Spawners} \,\,
 \xRightarrow{{\rm{Prop.~\ref{p.spawn}}}} \, \mbox{Split flip-flop configurations}.
$$
The corresponding steps are done in Sections~\ref{ss.cyclestospawners} and \ref{ss.spawnersflipflop}.

\subsection{From robust cycles to spawners}
\label{ss.cyclestospawners}
Given a natural number $i$,  an {\emph{$i$-box}} is  a product of 
$i$ non-degenerate compact intervals.

\begin{defi}[Spawner] %\margem{duas pernas deveriam ser suficientes pra gerar qualquer coisa}
Let $f \in \Diff^1(M)$ and $u$ and $s$ be positive integers with $u+1+s=d=\dim M$.	
Suppose $C \subset M$ is an embedded $d$-dimensional cube.
For  notational simplicity, let us identify $C$ with $[-1,1]^d$.
Suppose there are disjoint subsets $L_1, L_2, L_3 \subset C$ of the form 
$$
L_i = I_i^{u} \times [-1,1] \times  [-1,1]^s \, , 
$$ 
where each $I_i^{u} \subset (-1,1)^u$ is a $u$-box, 
and positive integers $n_1, n_2, n_3$ such that:
\begin{itemize}
\item 
$f^j(L_i) \cap C = \emptyset$ 
for $0<j<n_i$ and 
$f^{n_i}(L_i)\subset C$;
\item
$f^{n_i}(L_i) = [-1,1]^u \times [-1,1] \times I_i^{s}$, where $I_i^{s}$ is
an $s$-box;
\item 
the restriction of $f^{n_i}$ to $L_i$ is of the form
$$
f^{n_i}(x^\mathrm{u},x,x^\mathrm{s})=(A^\mathrm{u}_i(x^\mathrm{u}),x,A^\mathrm{s}_i(x^\mathrm{s})),
$$
where $A^\mathrm{u}_i$ is an expanding affine map of $\RR^u$ and $A^\mathrm{s}_i$ is a contracting affine map of $\RR^s$. 
\end{itemize}
Let $\Sigma$ be the maximal invariant set of $f$ in the set
\begin{equation} \label{e.neighV}
Q_{123} \eqdef \bigcup_{i=1,2,3}  \Big( \bigcup_{k=0}^{n_i-1} f^k(L_i) \Big) \, .
\end{equation}
The set $\Sigma$ is called a \emph{spawner of $\mathrm{u}$-index $u$},
the set $C$ is its \emph{reference cube}, and 
the sets $L_1$, $L_2$ and $L_3$ are its \emph{legs}. 
The numbers $n_1,n_2,n_3$ are the \emph{first return times of the legs}.
(See the first part of Figure~\ref{fig.spawner} in Section~\ref{sss.perturbationsinduced}).
\end{defi}

Note that each set $f^{n_i}(L_i)$ intersects the legs $L_1, L_2,L_3$ in a Markovian way
and that the set $\Sigma$ is partially hyperbolic with a splitting of the form
$T_\Sigma M = E^\mathrm{uu} \oplus E^\mathrm{c} \oplus E^\mathrm{ss}$
with bundles of respective dimensions $u$, $1$, and $s$, where 
$E^\mathrm{uu}$ is uniformly  expanding and $E^\mathrm{ss}$ is uniformly contracting.
In the cube $C$ these three bundles are  of the form
$\RR^u \times\{0\}^{s+1}$,  $\{0\}^u \times \RR \times\{0\}^s$, 
and $\{0\}^{u+1} \times \RR^s$, respectively. 
Note that the strong stable manifold $W^\mathrm{ss}(\Sigma)$ 
and the strong unstable manifold $W^\mathrm{uu}(\Sigma) $ of $\Sigma$ are well defined.

The spawners that we consider are generated  by means of the following result:

\begin{prop}[From robust cycles to spawners]\label{p.caixapreta}
Consider a diffeomorphism $f$ with a pair of hyperbolic basic sets $\Lambda_f$ and $\Theta_f$
of respective $\mathrm{u}$-indices $i$ and $i-1$ forming a robust cycle.
Then there exists a diffeomorphism $g$ arbitrarily $C^1$-close to $f$ 
 having  a spawner $\Sigma$ of $\mathrm{u}$-index $i-1$
such that
\begin{equation}\label{e.transv}
W^\mathrm{u}(\Lambda_g) \mathrel{\pitchfork} W^\mathrm{ss}(\Sigma) \neq \emptyset
\quad \text{and} \quad
W^\mathrm{s}(\Theta_g)  \mathrel{\pitchfork} W^\mathrm{uu}(\Sigma) \neq \emptyset,
\end{equation}
i.e.\ there exist points of transverse intersection between these manifolds.
% where $\pitchfork$  means existence of points of transverse intersection.
\end{prop}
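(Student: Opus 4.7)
My plan is to upgrade the robust cycle between $\Lambda_f$ and $\Theta_f$ into a spawner via a finite sequence of Franks-type $C^1$-perturbations. The spawner's central direction will be the ``extra'' expanding direction that distinguishes $\Lambda_f$ (of $\mathrm{u}$-index $i$) from $\Theta_f$ (of $\mathrm{u}$-index $i-1$); the task is to force the dynamics along this direction to be the identity on three disjoint Markov legs while leaving the strong expansion/contraction of the other directions intact.

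First, I would fix periodic saddles $p\in\Theta_f$ and $q\in\Lambda_f$ and exploit the robust cycle to produce a transverse heteroclinic point $r^+\in W^\mathrm{u}(p)\mathrel{\pitchfork} W^\mathrm{s}(q)$ and a (possibly non-transverse) heteroclinic point $r^-\in W^\mathrm{u}(q)\cap W^\mathrm{s}(p)$ furnished by the blender mechanism inside the robust cycle. Using the connecting lemma together with Franks' lemma, I would build new periodic orbits that shadow long segments of the cycle: each such orbit follows $p$ for some time, transits through $r^+$ into a segment along $q$'s orbit, and closes up via $r^-$. Tuning the ratio of times spent near $p$ and $q$, the Lyapunov exponent along the weak unstable direction of $\Lambda_f$ (which extends to a well-defined central direction along the shadowing orbit by domination inherited from the cycle) can be forced to be exactly zero for suitable integer return times. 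A further local Franks perturbation diagonalizes the differential along that orbit in a basis adapted to a splitting $E^\mathrm{uu}\oplus E^\mathrm{c}\oplus E^\mathrm{ss}$ of dimensions $i-1,1,d-i$ with central multiplier equal to $1$.

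In a small cube $C$ transverse to this periodic orbit I would then apply Franks' lemma on a neighborhood of the entire orbit to linearize the first return to the required affine form $(x^\mathrm{u},x,x^\mathrm{s})\mapsto(A_1^\mathrm{u} x^\mathrm{u},x,A_1^\mathrm{s} x^\mathrm{s})$, producing the leg $L_1$ with return time $n_1$. For the legs $L_2$ and $L_3$ I would repeat the scheme with two further shadowing loops through $\Lambda_f$ having distinct itineraries, yielding return times $n_2,n_3$ and legs satisfying $f^{n_i}(L_i)=[-1,1]^u\times[-1,1]\times I_i^\mathrm{s}$. Such loops are available thanks to the $C^1$-openness of the robust cycle and the abundance of periodic orbits with prescribed itineraries; the $u$-boxes $I_i^\mathrm{u}$ arise as pre-return images of the $u$-face of $C$ shrunk by $(A_i^\mathrm{u})^{-1}$, and they can be kept disjoint by separating the return times sufficiently.

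The main obstacle is performing these three linearizations simultaneously while preserving both the normal forms already set up on the other legs and the transverse heteroclinic intersections required by \eqref{e.transv}. To handle this I would carry out each Franks-type perturbation inside pairwise disjoint tubular neighborhoods of the three orbit segments, so that the supports of the perturbations never meet $C$ outside its prescribed entry and exit windows. The transverse intersections $W^\mathrm{u}(\Lambda_g)\mathrel{\pitchfork} W^{\mathrm{ss}}(\Sigma)$ and $W^\mathrm{s}(\Theta_g)\mathrel{\pitchfork} W^{\mathrm{uu}}(\Sigma)$ would then follow from the original transverse intersection $W^\mathrm{u}(\Lambda_f)\mathrel{\pitchfork} W^\mathrm{s}(\Theta_f)$ (which survives the small perturbations, being itself $C^1$-open) together with the fact that by construction $W^{\mathrm{ss}}(\Sigma)$ is the codimension-one strong stable subfoliation inside $W^\mathrm{s}(p_g)$ that is transverse to the continuation of the heteroclinic point, and symmetrically for $W^{\mathrm{uu}}(\Sigma)$.
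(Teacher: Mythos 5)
Your general strategy is the right one in spirit---perturb the cycle to manufacture a periodic orbit whose central multiplier can be pushed to $1$, then linearize near that orbit so that the first return to a small cube has the required affine-Markov form---and it is indeed close to what the paper does, since the paper's proof is essentially a citation chain to \cite{BD-cycles} and \cite{BD-tang}, where such a construction is carried out. That said, there is a genuine gap at the heart of your argument.

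The crux of the proposition is to obtain \emph{three} legs $L_1,L_2,L_3$ in the \emph{same} reference cube $C$, all of whose first returns to $C$ are simultaneously affine with identity center direction and with the pairwise-disjoint Markov boxes $I_i^{\mathrm{u}}$ and $I_i^{\mathrm{s}}$. Your plan to ``repeat the scheme with two further shadowing loops through $\Lambda_f$ having distinct itineraries'' treats the three legs as if they came from three independent periodic orbits that one could linearize one at a time. That cannot work as stated: the spawner is the maximal invariant set of a \emph{single} induced map in the single set $Q_{123}$, so the three returns must be cycle-coherent pieces of a horseshoe-like structure in one box. What actually forces this structure is a \emph{strong homoclinic connection} of the saddle-node orbit---a quasi-transverse intersection $W^{\mathrm{ss}}(r)\cap W^{\mathrm{uu}}(r)\neq\emptyset$, obtained in the paper from \cite[Theorem 2.3]{BD-cycles}. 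That intersection provides the additional returns to $C$ beyond the saddle-node leg $L_3$, and unfolding it (see \cite[pp.\ 501--502]{BD-cycles}) produces $L_1$ and $L_2$. Your proposal never establishes this quasi-transverse strong intersection; ``two further shadowing loops'' is not a substitute, because arbitrary periodic orbits with distinct itineraries will not enter and exit your fixed cube in the coordinated Markov fashion the definition of a spawner requires.

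A second, smaller gap concerns \eqref{e.transv}. You derive the transverse intersections from persistence of the original transverse heteroclinic point $W^\mathrm{u}(\Lambda_f)\pitchfork W^\mathrm{s}(\Theta_f)$ together with a claim about $W^{\mathrm{ss}}(\Sigma)$ sitting as a codimension-one subfoliation inside $W^\mathrm{s}(p_g)$. But the spawner $\Sigma$ is a \emph{new} invariant set created by the perturbation; its strong invariant manifolds are not a priori subfoliations of the stable/unstable manifolds of the original saddles. What is actually needed is the ``strong intermediate'' property of the saddle-node/flip relative to $p_g$ and $q_g$, i.e.\ the specific transversality assertions in \cite[Proposition 5.9]{BD-tang}, and your argument does not recover those. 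Finally, you do not address the case where the nonhyperbolic orbit produced by \cite[Theorem 2.3]{BD-cycles} is a flip rather than a saddle-node; the paper reduces this to the saddle-node case via \cite[Remark 4.5]{BD-cycles}.
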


This proposition follows from a sequence of previous results and indeed is a reformulation of results in 
\cite{BD-cycles}. For completeness, 
let us briefly explain the steps involved in this construction.

\begin{proof}[Sketch of the proof of Proposition~\ref{p.caixapreta}]
First, the  existence of a robust cycle implies that there is a $C^1$-neighborhood $\cV$ of $f$ such that 
for every $g\in \cV$ there are
saddles $p_g\in \Lambda_g$ and $q_g\in \Theta_g$ 
depending continuously on $g$ such that
their 
chain recurrence classes  are equal and non trivial.

By \cite{BC}
$C^1$-generically the chain recurrence class of a hyperbolic periodic point is its homoclinic class.
This fact allows us to get an open and dense subset of the neighborhood $\cV$ of $f$ consisting of diffeomorphisms $g$
such that homoclinic classes of $p_g$ and $q_g$ are both 
non trivial. Next, after a new perturbation, if necessary,
 we can replace these saddles  by a pair of saddles homoclinically related to them
whose eigenvalues are all real and have multiplicity $1$ and different moduli (i.e., the linear map $Df^\pi (a)$, $\pi$ the period of $a$, satisfies  such 
a property). For this standard  property see for instance \cite{ABCDW}. Using the terminology in 
\cite{BD-cycles,BD-tang} we say that these new periodic points have real center eigenvalues. As these new saddles are homoclinically related to the initial ones, they are still in the same  chain recurrence class and contained in a pair of hyperbolic sets with a robust cycle.
For simplicity, we continue to denote these  new saddles by $p_g$ and $q_g$. 

In the above setting, \cite[Theorem 2.3]{BD-cycles} claims  that by an arbitrarily small $C^1$-perturbation one can get a
saddle-node or a flip periodic point   with a 
strong homoclinic intersection (called \emph{strong connection}):
the strong stable and strong unstable manifolds of the saddle-node/flip point meet quasi-transversely, meaning that the sum of the tangent
spaces at the intersection point is $\dim M -1$. Furthermore,  \cite[Proposition 5.9]{BD-tang} asserts that  the strong stable (resp.\  unstable) manifold of such  nonhyperbolic periodic point intersects transversely the unstable manifold of $p_g$ (resp.\ the stable manifold of $q_g$). 
With the terminology in  \cite{BD-tang}, this nonhyperbolic periodic  point is
 \emph{strong  intermediate}. As $p_g$ and $q_g$ are robustly in the same chain recurrence class, the 
 strong intermediate point $r$ also belongs to this class. 
 
Finally, \cite[Theorem 2.4]{BD-cycles} shows that these strong homoclinic intersections and intermediate points  yield geometric blenders and robust cycles. To get such a property in 
\cite[pages 501, 502]{BD-cycles}  it is shown that an arbitrarily small $C^1$-perturbation of a strong 
connection associated to a saddle node generates a dynamical configuration that is
exactly what we call here a spawner. A key point is that
 this spawner is by construction strong intermediate with relation to $p_g$ and $q_g$,  thus it is contained in the
 chain recurrence  class of $p_g$. The case of flip periodic points is solved using 
\cite[Remark 4.5]{BD-cycles} that asserts that the strong connection associated 
to a flip periodic point generates a strong connection associated to a saddle node satisfying the strong intermediate properties.
This concludes our sketch of proof.
\end{proof}

\subsection{From spawners to split flip-flop configurations: proof of Theorem~\ref{t.main}}
\label{ss.spawnersflipflop}

Recall the definitions of the sets $L_i$ and the numbers $n_i$, and
define the sets
\begin{equation}
\label{i.Q}
Q_{12} \eqdef  \bigcup_{i=1,2} \bigcup_{k=0}^{n_i-1} f^k(L_i) \quad \mbox{and} \quad
Q_{3}   \eqdef \bigcup_{k=0}^{n_i-1} f^k(L_3)\,.
\end{equation}

\begin{prop}[From spawners to split flip-flop configurations]\label{p.spawn}
Suppose $f$ has a spawner $\Sigma$ of $\mathrm{u}$-index $u$.
Then every neighborhood $\cV$ of $f$ contains a nonempty open set $\cU \subset \cV$ 
such that  every $g \in \cU$ has
\begin{itemize}
\item  
a dynamical blender $\Ga_g$ of $\mathrm{uu}$-index $u$  whose  domain is contained in $Q_{12}$,
\item  
 a unique hyperbolic periodic orbit $\cO(r_g)$ of $\mathrm{u}$-index $u$ contained in $Q_3$,
\end{itemize}
that form a split flip-flop configuration.

Moreover, this split flip-flop configuration has a strict partially hyperbolic neighborhood $U$
contained in the domain of the spawner.  
\end{prop}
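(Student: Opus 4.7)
The plan is to construct an explicit $C^1$-small perturbation $g$ of $f$, supported in an arbitrarily small neighborhood of the reference cube $C = [-1,1]^d$, that modifies only the central component of the return maps $f^{n_i}|_{L_i}$. Concretely I replace $f^{n_i}|_{L_i}$ by an affine map of the form $(x^{\mathrm{u}}, x, x^{\mathrm{s}}) \mapsto (A_i^{\mathrm{u}}(x^{\mathrm{u}}), \psi_i(x), A_i^{\mathrm{s}}(x^{\mathrm{s}}))$, where $\psi_1(x) = \lambda x - c$, $\psi_2(x) = \lambda x + c$ with $\lambda > 1$ slightly above $1$ and $c>0$ small enough that $g$ is $C^1$-close to $f$ while the overlap condition $\psi_1^{-1}([-1,1]) \cup \psi_2^{-1}([-1,1]) \supset [-1,1]$ holds, and $\psi_3(x) = \mu x$ with $\mu \in (0,1)$ close to $1$. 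Since the conditions on $(\lambda, c, \mu)$ are open, the perturbation places $g$ in a nonempty open subset $\cU \subset \cV$.

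For the blender, I consider the induced map $G_{12} = [g, C, (L_1, L_2), (n_1, n_2)]$. Its action on the central fibres is the expanding IFS $\{\psi_1, \psi_2\}$, and the overlap condition above is the classical blender-IFS covering condition. I define the family $\fD$ to consist of the $u$-dimensional $C^1$-graphs over the horizontal $\RR^u$-direction that lie in the strip $[-1,1]^u \times [-\delta, \delta] \times [-1,1]^s$ inside $L_1 \cup L_2$ and are tangent to the strong unstable cone field $\cC^{\mathrm{uu}}$ of index $u$ around $\RR^u \times \{0\}^{1+s}$. The overlap translates directly into strict $G_{12}$-invariance of $\fD$ with a positive strength, and uniform hyperbolicity of the Markov system gives a transitive hyperbolic maximal invariant set of $\mathrm{uu}$-index $u$. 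Applying Proposition~\ref{p.l.blenderinduced} then produces a dynamical blender $(\Gamma_g, V, \cC^{\mathrm{uu}}, \widetilde{\fD})$ for $g$ with safety domain $V \subset Q_{12}$. Independently, $g^{n_3}|_{L_3}$ has all three directions hyperbolic (expanding along $\RR^u$, contracting along $\RR$ and $\RR^s$), so it has a unique fixed point $r_g$ whose orbit is a hyperbolic periodic orbit of $\mathrm{u}$-index $u$ contained in $Q_3$.

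To assemble the split flip-flop configuration I use the Markovian geometry. The local unstable manifold $W^{\mathrm{u}}_{\mathrm{loc}}(r_g) \subset L_3$ is horizontal at central height $0$; its image $g^{n_3}(W^{\mathrm{u}}_{\mathrm{loc}}(r_g))$ spans the full $[-1,1]^u$-factor at central height $0$, and its intersection $\Delta^{\mathrm{u}}$ with $L_1$ is a horizontal $u$-disc in the interior of $\widetilde{\fD}$, yielding condition~(\ref{i.FFC0}). Dually, $W^{\mathrm{s}}_{\mathrm{loc}}(r_g)$ is a $(1+s)$-dimensional box at $\mathrm{u}$-coordinate $r_g^{\mathrm{u}}$ in $L_3$, and pulling it back by $g^{n_1}$ yields $\Delta^{\mathrm{s}} \subset L_1 \cap W^{\mathrm{s}}(r_g) \subset V$ transverse to $\cC^{\mathrm{uu}}$, giving~(\ref{i.FFC3}). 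Conditions~(\ref{i.FFC1}) and~(\ref{i.FFC2}) follow because both the past iterates of $\Delta^{\mathrm{u}}$ and the future iterates of $\Delta^{\mathrm{s}}$ are trapped in the $L_3$-excursion $Q_3$, which is disjoint from $V \subset Q_{12}$. For~(\ref{i.FFC4}) I take the $\varepsilon$-safe stable connecting set $K$ to be a compact sub-box of $\Delta^{\mathrm{s}}$ around the fibre $\{r_g^{\mathrm{u}}\}$; every disc of $\widetilde{\fD}$ meets $K$ uniformly away from its boundary since each such disc is a graph over the full $\RR^u$-factor. The required partially hyperbolic splitting $\widetilde E^{\mathrm{uu}} \oplus \widetilde E^{\mathrm{c}} \oplus \widetilde E^{\mathrm{ss}}$ of dimensions $u, 1, s$ is inherited on the maximal invariant set in a small neighborhood $U$ of the configuration from the product form of the return maps, and is dominated by construction, which yields the split property. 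The main obstacle I expect is the geometric lemma that the IFS overlap translates into strict invariance of $\fD$ with positive strength in the sense of Section~\ref{ss.invariantfamilies}: this requires a careful joint choice of $\lambda$, $c$, $\delta$, and of the opening of $\cC^{\mathrm{uu}}$, so that the cone field is strictly invariant under all three affine return maps simultaneously and discs tangent to it remain graphs after iteration. Once this geometric step is settled, the remaining verifications reduce to bookkeeping with the Markov partition.
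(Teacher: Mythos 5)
Your overall route is the same as the paper's (Section~\ref{s.spawners}): perturb the spawner only in the central coordinate of the return maps so that the first two legs become central expansions and the third a central contraction, build a dynamical blender for the induced map on $L_1\cup L_2$ from the covering property of the one-dimensional IFS, identify the saddle $r_g$ in $L_3$, and verify the flip-flop conditions from the explicit Markov product structure. The paper uses precisely such a one-parameter family $f_\lambda$ with affine central maps $g_{\lambda,1},g_{\lambda,2},g_{\lambda,3}$, and then Propositions~\ref{p.l.dynamicalinducedblender}, \ref{p.unicamp} and Corollary~\ref{c.p.c.splitflipflop}.

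There is, however, a concrete gap at exactly the place you flag as the main obstacle. With $\psi_1(x)=\lambda x - c$ and $\psi_2(x)=\lambda x + c$, the covering condition you impose, $\psi_1^{-1}([-1,1])\cup\psi_2^{-1}([-1,1])\supset[-1,1]$, is equivalent (for $c\le 1$) to $(1+c)/\lambda\ge 1$, i.e.\ $c\ge\lambda-1$; this forces the central fixed points $\pm c/(\lambda-1)$ to lie outside the open interval $(-1,1)$. Under that condition, every $x\in[-1,1]$ admits a forward orbit under the IFS that never leaves $[-1,1]$, so the maximal invariant set of the induced map $G_{12}$ in $L_1\cup L_2$ meets the boundary faces $\{x^{\mathrm{c}}=\pm1\}$. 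That is incompatible with the definition of a dynamical blender and with the hypotheses of Proposition~\ref{p.l.blenderinduced}, which require the hyperbolic set to be the maximal invariant set in the closure of an open set contained in the interiors of the rectangles; and shrinking $\lambda-1$ and $c$ jointly does not help, since the bound $c\ge\lambda-1$ keeps the fixed points from entering $(-1,1)$. What is actually needed is a covering only of the smaller central strip $[-\delta,\delta]$ over which your discs live, together with central fixed points strictly inside $(-1,1)$ but outside $[-\delta,\delta]$; in your parametrization this means choosing $(\lambda-1)\delta\le c<\lambda-1$ for some small $\delta<1$. The paper sidesteps all of this by anchoring $g_{\lambda,1},g_{\lambda,2}$ at the fixed points $\mp 1/2$, restricting the discs to the strip $|x^{\mathrm{c}}|\le 1/4$, and introducing the sub-families $\fD_1,\fD_2$ on $[-1/4,1/8]$ and $[-1/8,1/4]$ together with the Lipschitz bound $\alpha_0<1/(8\sqrt{u})$, so that both the covering and the strict invariance reduce to explicit inclusion computations.

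A secondary point: for condition~(\ref{i.FFC4}) you take $\Delta^{\mathrm{s}}$ to be the pullback of $W^{\mathrm{s}}_{\mathrm{loc}}(r_g)$ through $L_1$ only, but the invariant family produced by Proposition~\ref{p.l.blenderinduced} also contains the discs of $\fD_2$ lying in $L_2$ and their forward images in the tower, none of which would meet such a $\Delta^{\mathrm{s}}$. The paper takes $\Delta^{\mathrm{s}}$ to be the union of the $F_\lambda$-preimages of $W^{\mathrm{s}}_{\mathrm{loc}}(q)$ through both legs together with their iterates up to the respective return times.
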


We postpone the proof of this proposition to the next subsection and prove now Theorem~\ref{t.main}.

\subsubsection{Proof of Theorem~\ref{t.main}}
Let $\cU \subset \Diff^1(M)$ be an open set of diffeomorphisms  
such that every $f \in \cU$ has a pair of hyperbolic periodic points
$p_f$ and $q_f$ that depend continuously on $f$, have respective $\mathrm{u}$-indices
$i_p > i_q$, and are in the same chain recurrence class $C(q_f,f)$.

In the introduction 
% \margem{devemos fazer um remark com estas coisas? J: Eu estou satisfeito assim.}
we recalled that there is a $C^1$-dense open subset $\cU_0$ of  $\cU$ such that for every $f\in \cU_0$ and every number
$i_q\leq i\leq i_p$ there is a family of hyperbolic transitive sets $\La_{i,f}$ of $\mathrm{u}$-index $i$ depending continuously on
$f$, contained in $C(q_f,f)$, and such that  that for every $i<i_p$  the sets $\La_{i,f}$ and $\La_{i+1,f}$ form a robust cycle. 
Thus, without loss of generality, we can assume that $i_q=i$, $i_p=i+1$, and that $p_f$ and $q_f$ belong to hyperbolic transitive sets 
$\La_{i+1,f}$ and  $\La_{i,f}$, respectively,
forming a robust cycle. 

By Proposition~\ref{p.caixapreta} there is a $C^1$-dense subset  $\cD\subset \cU$ such that every $f\in \cD$
has  a spawner of $\mathrm{u}$-index $i$ such that the strong unstable  (resp.\ stable) manifold of any point of the spawner
interesects transversally the stable manifold of $\La_{i,f}$ (resp.\ unstable manifold of $\La_{i+1,f}$). 

By Proposition~\ref{p.spawn}, given any diffeomorphism $f\in \cD$ (with a spawner) 
there is an arbitrarily small $C^1$-perturbation $g$ of 
it $f$  with  
 dynamical blender $\Ga_g$ 
of $\mathrm{uu}$-index $i$ and a saddle $r_g$ of  $\mathrm{u}$-index $i$ 
forming  a split flip flop configuration. Moreover, this split flip-flop configuration
has a strict partially hyperbolic neighborhood $U$
contained in the domain of the spawner. 
Recall  that to have a split flip-flop configuration is a robust property, see Remark~\ref{r.splitflipflop}, thus such diffeomorphisms $g$
form a dense open subset $\cU_1$
of $\cU_0$ hence of $\cU$. 

This implies, in particular, that the maximal invariant set in $U$ is contained in the chain recurrence class $C(q_g,g)$. Theorem~\ref{t.practical} now
implies that the maximal $g$-invariant set in $U$ contains a partially hyperbolic set
$K_g\subset C(q_g,f)$ with a partially hyperbolic splitting  of the form
	$$
	T_K M  = E^\mathrm{uu} \oplus E^\mathrm{c} \oplus E^\mathrm{ss},
	$$
	where $E^\mathrm{uu}$ is uniformly expanding and has dimension $i-1 > 0$, 
		$E^\mathrm{c}$ has dimension~$1$, and
		$E^\mathrm{ss}$ is uniformly contracting, such that 
the Lyapunov exponent of any point of $K_g$ along $E^\mathrm{c}$ is zero. Moreover, 
the topological entropy of the restriction of $g$ to $K_g$ is positive.
This concludes the proof Theorem~\ref{t.main}.
\hfill \qed

\subsection{Proof of Proposition~\ref{p.spawn}}

The aim of the rest of the section is the proof of Proposition~\ref{p.spawn}.

\subsubsection{A family of perturbations of a spawner and their induced maps}
\label{sss.perturbationsinduced}
We now assume that $f$ has a spawner with reference cube $C$, legs $L_1,L_2,L_3$ 
and  first return times to  $C$ $n_1,n_2, n_3$,
respectively. By definition, the restriction of $f^{n_i}$ to $L_i$ is of the form
$$
f^{n_i}(x^\mathrm{u},x,x^\mathrm{s})=(A^\mathrm{u}_i(x^\mathrm{u}),x,A^\mathrm{s}_i(x^\mathrm{s}))\,.
$$
For every $i\in\{1,2,3\}$ and $k\in\{1,\dots, n_i-1\}$, we fix  coordinates in $f^k(L_i)$ such  that the expression of the restriction  
$f^k|_{L_i}$ in these coordinates is the identity map, $f^k(x^\mathrm{u},x,x^\mathrm{s})=(x^\mathrm{u},x,x^\mathrm{s})$.

For $\lambda>1$, define one-dimensional maps
$$
g_{\lambda,1}(x) \eqdef \lambda x +(-1+\lambda)/2; \qquad
g_{\lambda,2}(x) \eqdef \lambda x +(1-\lambda)/2; \qquad
g_{\lambda,3}(x) \eqdef \lambda^{-1} x \,.
$$
Note that the maps $g_{\lambda,1}, g_{\lambda,2}, g_{\lambda,3}$
have fixed points $-1/2, 1/2, 0$, respectively.  

Consider a neighborhood $\cV$ of $f$. 
For $\lambda>1$ sufficiently close to $1$ define a
diffeomorphism $f_\lambda$ as follows,
$$
f_\lambda(x^\mathrm{u},x,x^\mathrm{s}) =
\begin{cases} 
&f (x^\mathrm{u},x,x^\mathrm{s}), \,\, \mbox{if $(x^\mathrm{u},x,x^\mathrm{s})\in
\bigcup_{i=1}^3\, \big( \bigcup_{k=1}^{n_i-2} f^k(L_i) \big)$}, 
\\
&(A_i^\mathrm{u}(x^\mathrm{u}),g_{\lambda,i}(x), A_i^\mathrm{s}(x^\mathrm{s})),
\,\,
 \mbox{if }(x^\mathrm{u},x,x^\mathrm{s}) \in f^{n_i-1}(L_i), \ i \in \{1,2,3\}\,.
\end{cases}
$$
We 
take $\lambda>1$ sufficiently close to $1$
such  that $f_\la \in \cV$.
See Figure~\ref{fig.spawner}.

\begin{figure}[hbt]
\begin{minipage}[c]{\linewidth}
\centering
\vspace{0.5cm}
\begin{overpic}[scale=.45, %grid,tics=5
  ]{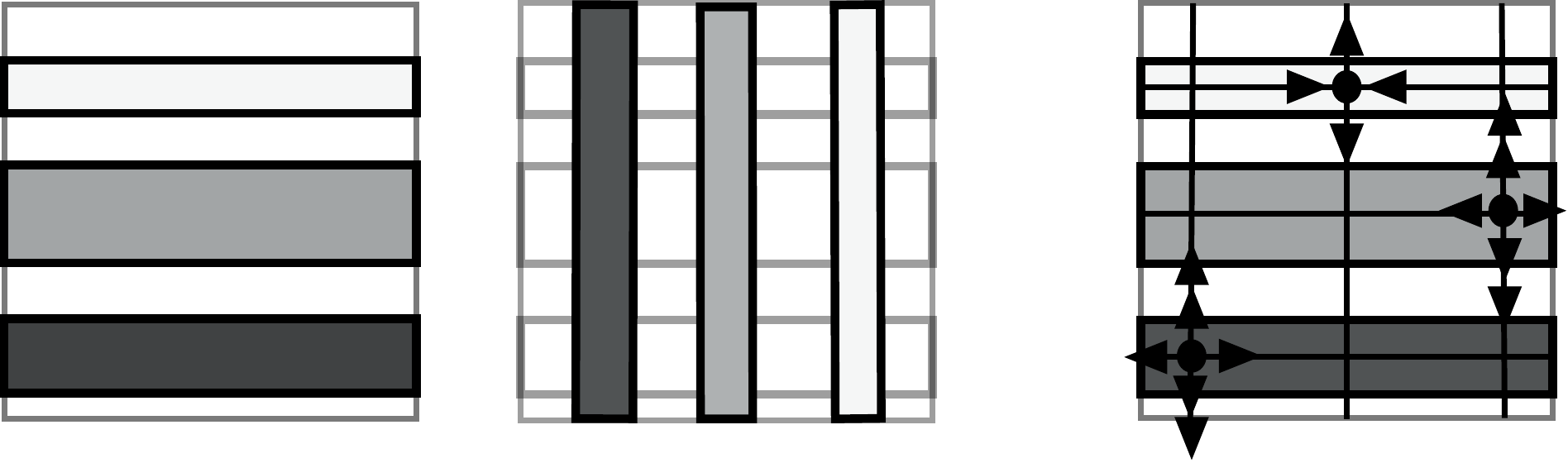}
      
        \put(-5,23){\small  $L_3$}
         \put(-5,15){\small  $L_2$}
 \put(-5,6){\small  $L_1$}
 \put(30,32){\tiny  $f^{n_1}(L_1)$}
 \put(42,31){\tiny  $f^{n_2}(L_2)$}
 \put(54,32){\tiny  $f^{n_3}(L_3)$}
 \put(22,-3){\small  $us$-projection}
\put(76,-3){\small  $uc$-projection}
  \end{overpic}

\medskip
\caption{The maps $f$ and $f_\la$}
\label{fig.spawner}
\end{minipage}
\end{figure}

\begin{rema}\label{r.q}
The map $f_\lambda$ has a periodic point $q=(q^\mathrm{u},0,q^\mathrm{s})$ (independent of $\lambda$) in $L_3$, of period $n_3$ 
and $\mathrm{u}$-index $u$.
The local invariant manifolds of $q$ are $W^\mathrm{s}_\mathrm{loc}(q)=\{q^\mathrm{u}\}\times [-1,1]^{s+1}$ and $W^\mathrm{u}_\mathrm{loc}(q)=[-1,1]^u\times \{(0,q^\mathrm{s})\}$. 
\end{rema}

\subsubsection{Dynamical blenders for induced maps}
\label{sss.blendersforinduced}

We fix a small open neighborhood $U$ of $C$ such that, for $\lambda>1$ sufficiently close to $1$,  
$f_\lambda^j(L_i)$ is disjoint from $\overline U$ for every
$0<j<n_i$, and $f_\lambda^{n_i}(L_i)$ is contained in $U$. 
Define the map  $F_\lambda$  by
$$
F_\lambda\colon L_1\cup L_2\to U, \quad F_\lambda(x) \eqdef f_\lambda^{n_i}(x),
\quad \mbox{if $x\in L_i$, $i=1,2$.}
$$ 
In this way we get an induced map  
$F_\la =[f_\la, U, (L_i)_{i=1}^2, (n_i)_{i=1}^2]$
(or for short simply $F_\lambda$) of $f_\la$.
The next step is to get a dynamical blender for this induced map (see Proposition~\ref{p.l.dynamicalinducedblender}). 
For that we need to introduce some ingredients
as domains, cone fields, and families of discs.

Let  $\Ga_\lambda$ be the maximal invariant set of $F_\lambda$ in $L_1\cup L_2$. Note that this set 
is hyperbolic and contained in the interior of $L_1\cup L_2$. 
For every $0<\alpha<1$ consider the cone field
$$
\cC^{\mathrm{uu}}_\alpha \eqdef  \{ (v^\mathrm{u},v^\mathrm{c},v^\mathrm{s}) ;\; \|(v^\mathrm{u},v^\mathrm{s})\| \le \alpha \|v^\mathrm{u}\| \}.
$$
As the maps $A_i^\mathrm{s}$ are affine contractions,
the maps $A_i^\mathrm{u}$ are affine expansions, and $\lambda$ is close to $1$, 
any  cone field $\cC^{\mathrm{uu}}_\alpha$ is strictly $DF_\la$-invariant.  We fix constants $0<\alpha_0<\alpha_1<\frac 1{8\sqrt{u}}$ such
that $DF (\cC^{\mathrm{uu}}_{\alpha_1})$ is 
strictly contained
in $\cC_{\alpha_0}^{\mathrm{uu}}$. 

Recall that $I_i^s=A_i^\mathrm{s}([-1,1]^s)$ and $I_i^{u}=(A_i^\mathrm{u})^{-1}([-1,1]^u)$, for $i=1,2,3$. 
We fix compact discs $J^u, J^u_0\subset (-1,1)^u$  whose interiors contain 
$I_1^u\cup I_2^u\cup I_3^u$ 
and such that $J^u$ is contained in the interior of $J^u_0$. 
Similarly, we fix  compact discs $J^s, J^s_0\subset (-1,1)^s$ whose interiors contain 
$I_1^{s}\cup I_2^{s}\cup I_3^{s}$
and such that $J^s$ is contained in the interior of $J^s_0$.

We consider the following set of graphs of $C^1$-maps with Lipschitz constant less than $\alpha_0$:
\begin{itemize}
\item
$\fD$ is the set of graphs of $C^1$-maps $J^u \to  [-\frac14,\frac 14] \times J^s$;
\item
$\fD_1$ is the set of graphs of $C^1$-maps
$(A_1^\mathrm{u})^{-1}(J^\mathrm{u}_0) \to  [-\frac14,\frac 18] \times J^s$;
\item 
$\fD_2$ is the set of graphs of $C^1$-maps
$
(A_2^\mathrm{u})^{-1}(J^\mathrm{u}_0) \to  [-\frac18,\frac 14] \times J^s. $
\end{itemize}

Finally,  choose open discs $U^u$ and  $U^s$ such that 
$$
 J_0^{u} \subset U^u\subset \overline{U^u}\subset (-1,1)^u
 \quad
 \mbox{and}
 \quad
 J_0^{s} \subset U^s\subset \overline{U^s}\subset (-1,1)^s\,
 $$
and consider the sets
\[
\begin{split}
U_1\eqdef  & (A_1^\mathrm{u})^{-1}(U^{u})\times g_{\lambda,1}^{-1}\left(\Big( -\frac34,\frac34  \Big) \right)\times U^{s}\subset L_1, \\ 
U_2\eqdef  &
(A_2^\mathrm{u})^{-1}(U^{u})\times g_{\lambda,2}^{-1}\left( \Big(-\frac34,\frac34 \Big) \right)\times U^{s} \subset L_2\,.
\end{split}
\]
As $U_i\subset L_i$, $i=1,2$,  the iterates $f_\lambda^j(\overline U_i)$,  $j\in\{1,\dots, n_i-1\}$, are disjoint from $\overline U$ and 
$f_\lambda^{n_i}(\overline U_i)\subset U$. Thus the induced map $F_\la$ of $f_\la$ can be extended to $U_1\cup U_2$.

\begin{prop}[Dynamical blenders for $F_\la$]\label{p.l.dynamicalinducedblender}
Let $\Ga_\la$ be the maximal invariant set of $F_\lambda$ in $L_1\cup L_2$.
Then $(\Ga_\la, U_1\cup U_2, \cC^{\mathrm{uu}}_\alpha, \fD_1\cup \fD_2)$ is a dynamical blender of $F_\la$.
\end{prop}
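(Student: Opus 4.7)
The plan is to verify the five conditions of Definition~\ref{d.dynamicalblender} in turn, exploiting the affine model of $F_\la$ on $L_1 \cup L_2$. Conditions (1)--(3) are essentially built into the construction: on each leg $L_i$ ($i=1,2$), the induced map $F_\la = f_\la^{n_i}$ has the form $(x^\mathrm{u}, x, x^\mathrm{s}) \mapsto (A_i^\mathrm{u}(x^\mathrm{u}), g_{\lambda,i}(x), A_i^\mathrm{s}(x^\mathrm{s}))$ with $A_i^\mathrm{u}$ and $g_{\lambda,i}$ affine expansions (in dimensions $u$ and $1$ respectively) and $A_i^\mathrm{s}$ an affine contraction (in dimension $s$). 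The Markov geometry $F_\la(L_i) = [-1,1]^u \times [-1,1] \times I_i^\mathrm{s}$ produces a standard horseshoe for $F_\la$ whose maximal invariant set $\Gamma_\la$ is topologically conjugate to the full $2$-shift, hence hyperbolic basic and transitive. The unstable bundle has dimension $u+1 > u$. To identify $\Gamma_\la$ with the maximal invariant set in $\overline{U_1 \cup U_2}$ one argues that along orbits in $\Gamma_\la$ the central coordinate lies in the invariant Cantor set of $g_{\lambda,1} \cup g_{\lambda,2}$, which for $\lambda$ close to $1$ sits strictly inside $g_{\lambda,i}^{-1}((-3/4,3/4))$, while the $u$- and $s$-coordinates lie in $I_i^\mathrm{u} \times I_i^\mathrm{s} \subset (A_i^\mathrm{u})^{-1}(U^\mathrm{u}) \times U^\mathrm{s}$.

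Condition (4), strict $DF_\la$-invariance of $\cC^\mathrm{uu}_{\alpha_0}$ on $U_1 \cup U_2$, is immediate from the choice made just before the proposition: $\alpha_0 < \alpha_1$ was fixed so that $DF(\cC^\mathrm{uu}_{\alpha_1})$ is strictly contained in $\cC^\mathrm{uu}_{\alpha_0}$, hence a fortiori $DF_\la$ maps $\cC^\mathrm{uu}_{\alpha_0}$ strictly inside itself (using that $\lambda$ is close to $1$ and the only difference between $F$ and $F_\la$ is a small central rescaling).

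The heart of the proof is condition (5), the strict $F_\la$-invariance of $\fD_1 \cup \fD_2$. The key direct computation is to track the image of a graph $D \in \fD_i$ under $F_\la|_{L_i}$: the $u$-domain $(A_i^\mathrm{u})^{-1}(J_0^\mathrm{u})$ is expanded onto $J_0^\mathrm{u}$, which strictly contains both $(A_1^\mathrm{u})^{-1}(J_0^\mathrm{u})$ and $(A_2^\mathrm{u})^{-1}(J_0^\mathrm{u})$ (since $J_0^\mathrm{u}$ compactly contains $I_1^\mathrm{u} \cup I_2^\mathrm{u}$ and each $A_j^\mathrm{u}$ is expanding); the $s$-coordinate is contracted into $A_i^\mathrm{s}(J^\mathrm{s}) \subset I_i^\mathrm{s} \subset \mathrm{int}(J^\mathrm{s})$; and the central coordinate is mapped by $g_{\lambda,i}$. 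For $\lambda > 1$ sufficiently close to $1$, $g_{\lambda,1}([-1/4,1/8])$ remains inside $[-1/4,1/4]$ with its right endpoint still not exceeding $1/8$ after a mild enlargement, and symmetrically for $g_{\lambda,2}([-1/8,1/4])$, because the fixed points $-1/2$ and $1/2$ of these maps lie outside the biased strips, so expansion pushes toward the asymmetric end that is already admissible. The image of $D$ is still a graph of a map with Lipschitz constant below $\alpha_0$ thanks to strict cone invariance, and restricting it over $(A_j^\mathrm{u})^{-1}(J_0^\mathrm{u})$ for either $j \in \{1,2\}$ yields a disc in $\fD_j$.

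Upgrading this to strict invariance with positive strength $\varepsilon > 0$ uses that every inclusion and inequality above is strict and all sets compact: a sufficiently $\delta$-small perturbation of a graph in $\fD_i$ is still a graph over $(A_i^\mathrm{u})^{-1}(J_0^\mathrm{u})$ with Lipschitz constant below $\alpha_0$ and central range inside the admissible strip, so the same argument supplies a subdisc in $\fD_j$ inside its image. Containment in $U_1 \cup U_2$ and tangency to $\cC^\mathrm{uu}_{\alpha_0}$ of every disc in $\cV^\delta_\varepsilon(\fD_1 \cup \fD_2)$ then follow directly from the choices of $U^\mathrm{u}$, $U^\mathrm{s}$, the central interval $g_{\lambda,i}^{-1}((-3/4,3/4))$, and $\alpha_0$. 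The main obstacle is the bookkeeping in condition (5): one must track which asymmetric central strip is associated to which leg and verify that the slight expansion caused by $\lambda > 1$ does not push out of the strips $[-1/4,1/8]$ and $[-1/8,1/4]$; everything else is routine verification from the explicit affine form of $F_\la$.
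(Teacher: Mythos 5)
There is a genuine gap in the treatment of the central coordinate, which is where the main work of the proposition lives. You claim that ``$g_{\lambda,1}([-1/4,1/8])$ remains inside $[-1/4,1/4]$ with its right endpoint still not exceeding $1/8$'', arguing that ``expansion pushes toward the asymmetric end that is already admissible''. This is backwards: $g_{\lambda,1}$ is an expansion with repelling fixed point at $-1/2$, so every point to the right of $-1/2$ is pushed further right, and in particular $g_{\lambda,1}(1/8)>1/8$ for $\lambda>1$. Consequently $F_\lambda(D)$ for $D\in\fD_1$ is a graph over $J^u_0$ whose central range is contained in $[-1/4,1/4]$ but generally \emph{not} in $[-1/4,1/8]$, so its restriction over $(A_1^\mathrm{u})^{-1}(J^u_0)$ need not lie in $\fD_1$, and the direct strict invariance of $\fD_1\cup\fD_2$ that you assert does not follow.

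The missing ingredient is the intermediate family $\fD$ (graphs over $J^u$ into $[-1/4,1/4]\times J^s$) together with the ``either-or'' dichotomy: for any $\alpha_0$-Lipschitz graph over $J^u$ with central range in $[-1/4,1/4]$, since $\alpha_0<1/(8\sqrt{u})$ and $\diam J^u<2\sqrt{u}$ the central coordinate varies by less than $1/4$ over the whole domain; hence if some value over $I_1^u$ exceeds $1/8$ then all values exceed $-1/8$, and the restriction over $I_2^u$ lies in $\fD_2$; otherwise the restriction over $I_1^u$ lies in $\fD_1$. Only \emph{at least one} of the two restrictions is guaranteed to land in the corresponding $\fD_j$, not ``either'' as you write. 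This is exactly item~(\ref{i.bi_b}) of Proposition~\ref{p.l.blenderinduced}, and the paper's proof passes through $\fD$ precisely so that this Lipschitz-versus-diameter argument can do the selection of the leg; your argument, by contrast, tries to stay in $\fD_1\cup\fD_2$ throughout and therefore cannot close. (A secondary remark: the paper's notion of ``dynamical blender of the induced map'' means verifying the hypotheses (a)--(c) of Proposition~\ref{p.l.blenderinduced}, which then feed into the construction of a genuine blender for $f_\lambda$ on a safety domain; verifying Definition~\ref{d.dynamicalblender} literally for $F_\lambda$ on $U_1\cup U_2$ is close in spirit but not quite what is needed downstream.)
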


\begin{proof}
The main step of the proof is the following lemma.
\begin{lemm}\label{l.blenderinduced}
Consider  $\la>1$ close enough to $1$ and
the  induced map $F_\la =[f_\la, U, (L_i)_{i=1}^2, (n_i)_{i=1}^2]$.
Then 
the cone field $\cC^{\mathrm{uu}}_{\alpha_1}$ and
the families of discs
$\fD,\fD_1,\fD_2$ satisfies the hypotheses of Proposition~\ref{p.l.blenderinduced}.
\end{lemm}

\begin{proof}
The invariance of the cone field was obtained above. It remains to check that the families of discs satisfy
items \eqref{i.bi_a}, \eqref{i.bi_b}, and \eqref{i.bi_c} in Proposition~\ref{p.l.blenderinduced}.

For item \eqref{i.bi_a}, just note that there  are neighborhoods of  the families $\fD,\fD_1,\fD_2$ formed by discs 
contained in $U$ and tangent to $\cC^{uu}_{\alpha_1}$.

To prove item \eqref{i.bi_b}, note that a disc $D\in \fD$ is a graph over $J^u\supset I_1^{u}\cup I_2^{u}$. We see that 
either the restriction of this graph to
 $I_1^{u}$ belongs to $\fD_1$ or the restriction to
  $I_2^{u}$ is in $\fD_2$. Suppose, by contradiction, that the first case does not hold.
  This means that the center coordinate
of some point is larger than $\frac 18$.  As the graph is $\alpha_0$-Lipschitz 
with $\alpha_0<\frac1{8\sqrt{u}}$ and the diameter of $J^u$ is strictly 
less than $2\sqrt{u}$ one gets that the central coordinates  of this graph are larger than $-\frac 18$. Thus the central part is 
contained in $[-\frac 18,\frac 14]$ and
 thus 
the restriction to  $I_2^{u}$ belongs to $\fD_2$.

It remains to check item \eqref{i.bi_c}, which is an immediate consequence of the following claim:

\begin{claim} 
Fix $\lambda>1$ close to $1$. Then there is $\varepsilon=\varepsilon(\lambda)>0$ such that for 
every disc $D\in \cV^\delta_\varepsilon (\fD_i)$, $i=1,2$,  the set $F_\lambda(D)$
contains a disc in  $\fD$.
\end{claim}

\begin{proof}
We prove the claim for the family $\fD_1$; the proof for the family $\fD_2$ is identical and hence omitted.

We first prove the claim for  discs in $\fD_1$ (next we extend the proof for discs in a neighborhood).
Recall that 
 $F_\lambda= (A_1^\mathrm{u},g_{\lambda,1},A_1^\mathrm{s})$ and that
a disc $D$ in $\fD_1$ is the graph of  map 
$\varphi=(\varphi^\mathrm{c},\varphi^\mathrm{s})
\colon (A_1^\mathrm{u})^{-1}(J^{u}_0) \to [-\frac14,\frac 18] \times J^s$.
 Recalling  the definition of $g_{\lambda,1}$ we get that
$F_\lambda(D)$ is the graph over 
$J^{u}_0$ of a  map $\varphi_*$ given by
$$
x^\mathrm{u}\mapsto 
\Big( g_{\lambda,1} \big(\varphi^\mathrm{c}( A_1^\mathrm{u})^{-1}(x^\mathrm{u}) \big),
A_1^\mathrm{s}\big(\varphi^\mathrm{s}( A_1^\mathrm{u})^{-1}(x^\mathrm{u})\big)
\Big)\,.
$$
Thus
$$
\varphi_* \colon J^u_0 \to
\left[-\frac14+\frac{\lambda-1}2,\frac14-\,\frac{6-3\lambda}8 \right]\times A_1^\mathrm{s}(J^{s}). 
$$
As $\lambda>1$ is close to $1$ and $J^{s}\subset (-1,1)^{s}$, we get that 
$$
\left[-\frac14+\frac{\lambda-1}2,\frac14-\frac{6-3\lambda}8\right]\times A_1^\mathrm{s}(J^{s})
\subset 
\mbox{int} \left( \left[-\frac14,\frac14\right]\times I_1^{s} \right)
\subset
\mbox{int} \left( \left[-\frac14,\frac14\right]\times J^{s}\right)\,.
$$
Furthermore, the disc $D$ is tangent to the cone field 
$\cC^{\mathrm{uu}}_{\alpha_0}$ which is strictly $DF_\lambda$-invariant. Thus the disc 
$F_\lambda(D)$ is the graph 
 over $J^{u}_0$ of the map $\varphi_*$ whose Lipschitz constant is strictly less than $\alpha_0$. Therefore the graph 
 of the restriction of  $\varphi_*$ to $J^u$ is a disc in $\fD$  and contained in $F_\lambda(D)$.
 This completes the proof for discs in $\fD_1$. 

To extend the result to a small neighborhood of $\fD_1$ note that
there is  $\varepsilon>0$ such that  any disc $\widetilde D\in\cV^\delta_\varepsilon(\cD_1)$ is a graph of a map $\varphi$ whose definition domain contains 
$(A_1^\mathrm{u})^{-1}(J^u)$, with Lipschitz constant $\alpha_1$, and image 
in $g_{\lambda,1}^{-1}([-\frac 14,\frac 14])\times J_0^\mathrm{s}.$
The image by $F_\lambda$ of $\widetilde D$ contains a graph over $J^{u}$ of a
$C^1$-map with Lipschitz constant $\alpha_0$ whose
image is contained in $[-\frac 14,\frac 14]\times J^\mathrm{s}$, thus $F_\la (\widetilde D)$ contains a disc in $\fD$. 
This ends the proof of the claim for the family $\fD_1$.
\end{proof}
The proof of Lemma~\ref{l.blenderinduced} is now complete.
\end{proof}

We are now ready to finish the proof of the proposition.
By construction, the maximal invariant set $\Ga_{F_\lambda}$ of $F_\lambda$ in $L_1\cup L_2$ is  
transitive, hyperbolic, and contained in the interior $U_1\cup U_2$. 
Note that the discs of the family $\fD_i$ are contained in the interior of the sets $U_i$, $i=1,2$,
and satisfies the $F_\la$-invariance properties.
The  cone field $\cC^{\mathrm{uu}}_{\alpha_1}$ is also invariant.
The proof is now complete.
\end{proof}

\subsubsection{Dynamical blenders for $f_\lambda$}

By Proposition~\ref{p.l.blenderinduced}, every neighborhood of the closure of  
$\bigcup_{i=1,2} \bigcup_{j=0}^{n_i-1}f^j_\lambda(U_i)$ contains a safety 
neighborhood $V$ which is the domain of a dynamical blender $\La_\lambda$  of $f_\lambda$ 
whose strictly invariant family of discs $\fD_\lambda$ is such
that the discs $D\in\fD_\lambda$ contained in $U$ are precisely $\cD_1\cup\cD_2$. We can in particular assume that
$\overline V$ is contained in $Q_{1,2}$. 

\begin{coro}[Dynamical blender for $f_\la$]
The diffeomorphism $f_\la$ has a 
dynamical blender
$(\La_\lambda, V, \cC^{\mathrm{uu}}_{\alpha_1},\fD_\lambda)$
such that
the discs of  $\fD_\lambda$ contained in $U$ belong to the family $\cD_1\cup\cD_2$ and $V$ is contained in $Q_{1,2}$.
\end{coro}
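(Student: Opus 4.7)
The plan is to apply Proposition~\ref{p.l.blenderinduced} (Induced blenders induce blenders) directly to the induced map $F_\lambda=[f_\lambda, U,(L_i)_{i=1}^2,(n_i)_{i=1}^2]$, using the dynamical blender of $F_\lambda$ provided by Proposition~\ref{p.l.dynamicalinducedblender}. The corollary is essentially a bookkeeping translation between the induced and ambient dynamics, with a single nontrivial point: choosing the safety domain small enough so that it is contained in $Q_{12}$.

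First I would verify the hypotheses of Proposition~\ref{p.l.blenderinduced}. Proposition~\ref{p.l.dynamicalinducedblender} asserts that the maximal invariant set $\Gamma_\lambda$ of $F_\lambda$ in $L_1\cup L_2$ is hyperbolic, transitive and contained in the interior of $U_1\cup U_2$, with $\mathrm{u}$-index strictly larger than $u$ (after refining by the central expansion of the induced return map). The cone field $\cC^{\mathrm{uu}}_{\alpha_1}$ is strictly $DF_\lambda$-invariant, as noted in Section~\ref{sss.blendersforinduced}. The disc families $\fD$, $\fD_1$, $\fD_2$ satisfy hypotheses \eqref{i.bi_a}--\eqref{i.bi_c} of Proposition~\ref{p.l.blenderinduced} by Lemma~\ref{l.blenderinduced}.

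Next I would invoke Proposition~\ref{p.l.blenderinduced} to produce a safety domain $V$ of $F_\lambda$, an extension $\widetilde{\cC}^{\mathrm{uu}}$ of $\cC^{\mathrm{uu}}_{\alpha_1}$ that is strictly $Df_\lambda$-invariant on $\overline V$, and a strictly $f_\lambda$-invariant family $\widetilde{\fD}$ of discs contained in $V$ satisfying
$$
\{D\in\widetilde{\fD}\colon D\subset \overline U\}=\fD_1\cup\fD_2,
$$
so that $(\Lambda_\lambda,V,\widetilde{\cC}^{\mathrm{uu}},\widetilde{\fD})$ is a dynamical blender of $f_\lambda$, where $\Lambda_\lambda$ is the maximal invariant set of $f_\lambda$ in $\overline V$. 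Renaming $\widetilde{\cC}^{\mathrm{uu}}$ as $\cC^{\mathrm{uu}}_{\alpha_1}$ and $\widetilde{\fD}$ as $\fD_\lambda$ yields the data of the corollary.

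The only point to check is that $V$ can be taken inside $Q_{12}$. This uses the freedom in Lemma~\ref{l.safety}: given any family of neighborhoods $\widetilde U_i\supset \overline U_i$ ($i=1,2$), the safety domain produced satisfies $\overline V_{i,j}\subset f^j(\widetilde U_i)$ for $0\le j\le n_i-1$. Since the compact set $\bigcup_{i=1,2}\bigcup_{j=0}^{n_i-1} f^j(\overline U_i)$ lies in the interior of $Q_{12}$ (because $\overline U_i\subset L_i$ and $f^j(L_i)\subset Q_{12}$), choosing $\widetilde U_i$ as sufficiently small neighborhoods of $\overline U_i$ forces $V\subset Q_{12}$, as desired. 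No step of the argument is genuinely difficult; the main work was already done in Lemma~\ref{l.blenderinduced} and Proposition~\ref{p.l.dynamicalinducedblender}.
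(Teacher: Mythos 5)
Your proposal is correct and follows essentially the same route as the paper: apply Proposition~\ref{p.l.blenderinduced} to the induced map $F_\lambda$ (whose hypotheses are supplied by Proposition~\ref{p.l.dynamicalinducedblender} and Lemma~\ref{l.blenderinduced}), and then use the freedom in the choice of safety domain afforded by Lemma~\ref{l.safety} to squeeze $V$ inside $Q_{12}$. The only minor caveat is that ``renaming $\widetilde{\cC}^{\mathrm{uu}}$ as $\cC^{\mathrm{uu}}_{\alpha_1}$'' is an abuse of notation (the extended cone field cannot literally equal $\cC^{\mathrm{uu}}_{\alpha_1}$ along the intermediate iterates, where $f_\lambda$ acts as the identity in the chosen coordinates and hence cannot strictly contract any cone), but the paper commits the same abuse in the corollary's statement, so this does not affect the substance.
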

%In what follows we fix that dynamical blender $(V,\Ga_\lambda, \cC^{\mathrm{uu}},\fD_\lambda)$.
%

\subsubsection{Generation of split flip-flop configurations}
Recall the definition of the saddle $q$ of $\mathrm{u}$-index $u$ of $f_\la$, see Remark~\ref{r.q}.

\begin{prop}\label{p.unicamp}
For every $\la>1$ close enough to $1$,
 the saddle 
$q$  and the dynamical blender $(\Ga_\lambda,V, \cC^{\mathrm{uu}}_{\alpha_1},\fD_\lambda)$ satisfy  conditions \eqref{i.FFC0p}, \eqref{i.FFC1}, \eqref{i.FFC2}, \eqref{i.FFC3}, and  \eqref{i.FFC4}.
\end{prop}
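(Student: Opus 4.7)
The plan is to exhibit the connecting sets $\Delta^{\mathrm{u}}$ and $\Delta^{\mathrm{s}}$ explicitly from the affine product structure of the legs and verify each condition in turn. Recall from Remark~\ref{r.q} that $q=(q^{\mathrm{u}},0,q^{\mathrm{s}})$ is the unique $f_\lambda$-periodic point of period $n_3$ in $L_3$, so $q^{\mathrm{u}}\in I_3^{\mathrm{u}}\subset\mathrm{int}(J^u)$ and $q^{\mathrm{s}}\in I_3^{\mathrm{s}}\subset\mathrm{int}(J^s)$. For $i=1,2$ set $\hat q_i\eqdef(A_i^{\mathrm{u}})^{-1}(q^{\mathrm{u}})\in I_i^{\mathrm{u}}\subset\mathrm{int}\bigl((A_i^{\mathrm{u}})^{-1}(J_0^u)\bigr)$. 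Take as unstable connecting set the horizontal constant graph
$$
\Delta^{\mathrm{u}}\eqdef(A_1^{\mathrm{u}})^{-1}(J_0^u)\times\{(0,q^{\mathrm{s}})\}\subset L_1,
$$
which sits in $W^{\mathrm{u}}(q)$ (being a subset of $W^{\mathrm{u}}_\mathrm{loc}(q)=[-1,1]^u\times\{(0,q^{\mathrm{s}})\}$) and is a member of $\fD_1\subset\fD_\lambda$ since $(0,q^{\mathrm{s}})\in[-\tfrac14,\tfrac18]\times J^s$. Take as stable connecting set the disjoint union
$$
\Delta^{\mathrm{s}}\eqdef\bigsqcup_{i=1,2}\bigsqcup_{j=0}^{n_i-1}f_\lambda^j(\widehat\Delta^{\mathrm{s}}_i),\qquad \widehat\Delta^{\mathrm{s}}_i\eqdef\{\hat q_i\}\times[-\tfrac14,\tfrac14]\times J_0^s\subset U_i.
$$
For $\lambda$ close enough to $1$, each $\widehat\Delta^{\mathrm{s}}_i$ lies in $W^{\mathrm{s}}(q)$ because $f_\lambda^{n_i}(\widehat\Delta^{\mathrm{s}}_i)\subset\{q^{\mathrm{u}}\}\times[-1,1]\times I_i^{\mathrm{s}}\subset W^{\mathrm{s}}_\mathrm{loc}(q)$, and the safety-domain construction forces $f_\lambda^j(\widehat\Delta^{\mathrm{s}}_i)\subset V_{i,j}\subset V$.

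With these choices, the algebraic conditions \eqref{i.FFC0p}, \eqref{i.FFC3}, and \eqref{i.FFC4} will reduce to elementary affine geometry. Condition \eqref{i.FFC0p} is built in. For \eqref{i.FFC3}, in the product coordinates, both on $L_i$ and on the intermediate iterates $f^k(L_i)$ (where $f^k$ acts as the identity by construction), the tangent space to every component of $\Delta^{\mathrm{s}}$ is $\{0\}^u\times\RR\times\RR^s$, which intersects the $\mathrm{u}$-dominated cone field $\cC^{\mathrm{uu}}_{\alpha_1}$ and its extension to the safety domain (provided by Lemma~\ref{l.coneinduced}) only at the origin. For \eqref{i.FFC4}, every disc $D\in\fD_\lambda$ lying in $U$ is a graph in some $\fD_i$ over $(A_i^{\mathrm{u}})^{-1}(J_0^u)$, and meets $\widehat\Delta^{\mathrm{s}}_i$ at the unique point $(\hat q_i,\phi^{\mathrm{c}}(\hat q_i),\phi^{\mathrm{s}}(\hat q_i))$; since $\hat q_i$ sits uniformly in the interior of the $\mathrm{u}$-domain and the target lies in the interior of $[-\tfrac14,\tfrac14]\times J_0^s$, one extracts a uniform $\epsilon>0$ bounding the distance to $\partial D$ from below, and $K$ is a suitable compact interior subset of $\Delta^{\mathrm{s}}$ containing all such intersection points. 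For the iterated discs $D=f_\lambda^j(\tilde D)\in\fD_\lambda$ the same estimate transports through the diffeomorphism $f_\lambda^j$.

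The dynamical conditions \eqref{i.FFC1} and \eqref{i.FFC2} will follow from one key observation: both $\Delta^{\mathrm{u}}$ and the forward tail of $\Delta^{\mathrm{s}}$ (after $n_i$ iterates) live in the orbit cycle $Q_3$ of $L_3$. Indeed $\Delta^{\mathrm{u}}\subset L_1\cap f_\lambda^{n_3}(L_3)$ (since $q^{\mathrm{s}}\in I_3^{\mathrm{s}}$), so the unique $f_\lambda$-preimage—determined by injectivity through the affine return map on $f^{n_3-1}(L_3)$—lies in $f^{n_3-1}(L_3)$, and iteratively $f_\lambda^{-n}(\Delta^{\mathrm{u}})\subset Q_3$ for every $n>0$, converging to the orbit of $q$. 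Symmetrically $f_\lambda^{n_i+m}(\widehat\Delta^{\mathrm{s}}_i)\subset Q_3$ for every $m\ge 0$. Both conditions, including the no-return clause of \eqref{i.FFC2} with $N\eqdef\max(n_1,n_2)$, then reduce to the single disjointness statement $\overline V\cap Q_3=\emptyset$; this is the main technical point of the argument. Inside the reference cube $C$ disjointness is immediate from $L_3\cap(L_1\cup L_2)=\emptyset$, but outside $C$ the intermediate iterates $f^j(L_3)$ with $0<j<n_3$ and $f^k(L_i)$ with $0<k<n_i$, $i=1,2$, may a priori come arbitrarily close. The remedy is to exploit the freedom in Lemma~\ref{l.safety}: choose neighborhoods $\widetilde U_i$ of $\overline{U_i}$ so small that each $f^j(\widetilde U_i)$ avoids the compact set $Q_3$, which is possible since $\overline{U_i}$ itself is disjoint from $Q_3$ and there are only finitely many intermediate iterates. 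With this choice the safety domain $V\subset Q_{12}$ satisfies $\overline V\cap Q_3=\emptyset$, completing the verification of all five conditions.
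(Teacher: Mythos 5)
Your overall strategy and choice of connecting sets closely mirror the paper's: the same disc $\De^{\mathrm{u}}=(A_1^{\mathrm{u}})^{-1}(J_0^u)\times\{(0,q^{\mathrm{s}})\}$, and a stable connecting set built from vertical slabs over $\hat q_i=(A_i^\mathrm{u})^{-1}(q^\mathrm{u})$ together with their first $n_i-1$ iterates. But there is a genuine (if easily fixed) flaw in the choice $\widehat\De^{\mathrm{s}}_i=\{\hat q_i\}\times[-\tfrac14,\tfrac14]\times J_0^s$. A disc $D\in\fD_1$ is the graph of a map $\phi=(\phi^\mathrm{c},\phi^\mathrm{s})$ with image in $[-\tfrac14,\tfrac18]\times J^s$, so the unique intersection point $(\hat q_1,\phi^{\mathrm{c}}(\hat q_1),\phi^{\mathrm{s}}(\hat q_1))$ can have central coordinate exactly $-\tfrac14$ and thus lie on $\partial\widehat\De^{\mathrm{s}}_1$; a compact $K$ contained in the relative interior of $\De^{\mathrm{s}}$ then cannot contain it, and since $D\cap\De^{\mathrm{s}}$ is just that one point, (FFC4) fails. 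The paper avoids this by taking the strictly larger central range $g_{\lambda,i}^{-1}((-\tfrac34,\tfrac34))$ in $\widetilde\De_i=\Si_i\cap U_i$. Replacing $[-\tfrac14,\tfrac14]$ with, say, $[-\tfrac12,\tfrac12]$ (still small enough that $g_{\lambda,i}([-\tfrac12,\tfrac12])\subset[-1,1]$ and $[-\tfrac12,\tfrac12]\subset g_{\lambda,i}^{-1}((-\tfrac34,\tfrac34))$ for $\lambda$ near $1$) repairs your argument.

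Two smaller remarks. First, your concern about intermediate iterates $f^j(L_i)$ and $f^k(L_3)$ coming close is unfounded: the spawner axioms already force $Q_{12}\cap Q_3=\emptyset$. Indeed if $x\in f^j(L_i)\cap f^k(L_3)$ with $j>k$ then $f^{-k}(x)\in f^{j-k}(L_i)\cap L_3$, contradicting $f^{j-k}(L_i)\cap C=\emptyset$ (since $0<j-k<n_i$); the case $j<k$ is symmetric, and $j=k$ is immediate from injectivity of $f^j$ and $L_i\cap L_3=\emptyset$. In particular the proposed shrinking of $\widetilde U_i$ is unnecessary, and your justification for it ($\overline{U_i}$ disjoint from $Q_3$) does not by itself deliver what you need, namely $f^j(\overline{U_i})\cap Q_3=\emptyset$, although that does follow from the disjointness $Q_{12}\cap Q_3=\emptyset$ just shown. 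Second, your $\De^{\mathrm{s}}$ is a disjoint union of rectangles and so has corners when $s>1$; as the paper notes, one should pass to a slightly smaller compact submanifold with boundary to match the definition.
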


In view of Proposition~\ref{p.fff} we get the following corollary of Proposition~\ref{p.unicamp}  that ends the proof of Proposition~\ref{p.spawn}.

\begin{coro}\label{c.p.c.splitflipflop}
 For every sufficiently small $\eta>0$,  
  the saddle $q$ and 
 the dynamical blender $(\Ga_\lambda, V, \cC^{\mathrm{uu}}_{\alpha_1},
 \cV^\delta_\eta(\fD_\lambda))$ are in a split flip-flop configuration. 
\end{coro}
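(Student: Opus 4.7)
I will derive the Corollary by combining Proposition~\ref{p.fff} with the hypotheses established in Proposition~\ref{p.unicamp}, and then verifying the additional ``split'' condition of Definition~\ref{d.splitflipflop}. Proposition~\ref{p.unicamp} produces connecting discs $\Delta^\mathrm{u} \subset W^\mathrm{u}(q)$ and $\Delta^\mathrm{s} \subset V \cap W^\mathrm{s}(q)$ satisfying \eqref{i.FFC0p}, \eqref{i.FFC1}, \eqref{i.FFC2}, \eqref{i.FFC3}, and \eqref{i.FFC4}, so invoking Proposition~\ref{p.fff} will immediately yield some $\eta_0 > 0$ such that for every $0 < \eta \le \eta_0$ the dynamical blender $(\Gamma_\lambda, V, \cC^{\mathrm{uu}}_{\alpha_1}, \cV^\delta_\eta(\fD_\lambda))$ and the saddle $q$ form a flip-flop configuration, with $\Delta^\mathrm{u}$ and $\Delta^\mathrm{s}$ as its connecting sets. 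The bound $\eta_0$ only needs to be small enough that $\Delta^\mathrm{u}$ lies in the interior of $\cV^\delta_\eta(\fD_\lambda)$ and Lemma~\ref{l.enlarge} applies to \eqref{i.FFC4}; both properties persist after further shrinking $\eta$.

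To upgrade this to a \emph{split} configuration, the plan is to exhibit a strict partially hyperbolic neighborhood $U$ in the sense of Definition~\ref{d.splitflipflop} using the explicit product structure of the perturbation $f_\lambda$. In the coordinates fixed on $C$ and on each iterate $f^k(L_i)$ (where $f^k$ acts as the identity for $0 \le k \le n_i - 1$), $f_\lambda$ decomposes as the product of the expanding affine map $A_i^\mathrm{u}$ on the $\RR^u$-factor, the one-dimensional affine map $g_{\lambda,i}$ on the central $\RR$-factor (with derivative $\lambda$ on legs $1$, $2$ and $\lambda^{-1}$ on leg $3$), and the contracting affine map $A_i^\mathrm{s}$ on the $\RR^s$-factor. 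Consequently $f_\lambda$ will preserve three coordinate cone fields on $Q_{123}$: besides the strong unstable cone field $\cC^{\mathrm{uu}}_{\alpha_1}$ already fixed, I will define a strong stable cone field $\cC^{\mathrm{ss}}$ of index $s$ around the $\RR^s$-factor whose vectors are strictly $Df_\lambda^{-1}$-invariant and uniformly $Df_\lambda$-contracted.

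Choosing $U$ to be a sufficiently small neighborhood of the set $\cO(q) \cup \overline V \cup \bigcup_{k \ge 0} f_\lambda^k(\Delta^\mathrm{s}) \cup \bigcup_{k \le 0} f_\lambda^k(\Delta^\mathrm{u})$ inside the interior of $Q_{123}$, Lemma~\ref{l.Laranjeiras} together with the symmetric construction using $\cC^{\mathrm{ss}}$ will produce subbundles $\widetilde E^{\mathrm{uu}}$ of dimension $u$ and $\widetilde E^{\mathrm{ss}}$ of dimension $s$ over the maximal invariant set $\Lambda(U)$, with a one-dimensional complement $\widetilde E^\mathrm{c}$ squeezed in between. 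Uniqueness of dominated splittings will force $\widetilde E^{\mathrm{ss}}$ to restrict to $E^{\mathrm{ss}}_{\Gamma_\lambda}$ on $\Gamma_\lambda$ and, at $q$, to a proper subbundle of the full stable direction (since $q$ has $\mathrm{u}$-index $u$). The main technical point, which I expect to be the principal obstacle, is the uniform-domination estimate across leg transitions: one must check that the expansion rate of $A_i^\mathrm{u}$ strictly dominates $\lambda$, and that $\lambda$ strictly dominates the contraction rate of $A_j^\mathrm{s}$, for all legs $i$, $j$, so that $\widetilde E^{\mathrm{uu}} \oplus \widetilde E^\mathrm{c}$ and $\widetilde E^{\mathrm{ss}}$ are genuinely separated in the dominated-splitting sense. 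These inequalities follow from the construction of the spawner together with the choice of $\lambda > 1$ close enough to $1$, and will complete the verification that $U$ is a strict partially hyperbolic neighborhood of the configuration, proving the Corollary.
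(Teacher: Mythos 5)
Your proposal is correct and follows the paper's route: Proposition~\ref{p.fff} applied to the conclusions of Proposition~\ref{p.unicamp} yields the flip-flop configuration for all small $\eta$, and the split property comes, as the paper notes immediately after the corollary, from the spawner's partial hyperbolicity with one-dimensional center for the unperturbed $f$, which persists for the nearby $f_\lambda$. Your explicit verification via the product coordinates, the strong-stable cone field, and the leg-by-leg domination estimates with $\lambda$ close to $1$ simply unpacks the paper's one-line remark into a full argument.
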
 

Let us observe  that the property of the configuration being split follows from  the partial hyperbolicity (with one dimensional center direction)
of the initial diffeomorphism $f$. 

\begin{proof}[Proof of Proposition~\ref{p.unicamp}] 
To define the connecting sets $\De^\mathrm{u}$ and $\De^\mathrm{s}$ of the flip-flop configuration
recall the definitions of the 
local invariant manifolds $W^\mathrm{s}_\mathrm{loc}(q)$ and $W^\mathrm{u}_\mathrm{loc}(q)$ in Remark~\ref{r.q}.
Let $\De^\mathrm{u}\eqdef  (A_1^\mathrm{u})^{-1}(J^{u}_0)\times\{(0,q^\mathrm{s})\}$. This set is a disc contained in 
$W^\mathrm{u}_\mathrm{loc}(q)$ that belongs to $\fD_1$ 
and whose negative iterates are contained in $Q_3$. Hence it is disjoint from $V$ (as $V$ is contained in $Q_{1,2}$). 
This proves properties \eqref{i.FFC0p} and \eqref{i.FFC1}.

Consider $F_\lambda^{-1}(W^\mathrm{s}_\mathrm{loc}(q))$. Note that there are points 
$x^\mathrm{u}_1\in  (A_1^\mathrm{u})^{-1}(J^{u}_0)$ and 
$x^\mathrm{u}_1\in  (A_2^\mathrm{u})^{-1}(J^{u}_0)$  such that 
$F_\lambda^{-1}(W^\mathrm{s}_\mathrm{loc}(q))= \Si_1\cup \Si_2$ where 
$$
\Si_1\eqdef \{x^\mathrm{u}_1\}\times g_{\lambda,1}^{-1}([-1,1])\times [-1,1]^s
\quad  \mbox{and} \quad
\Si_2\eqdef \{x^\mathrm{u}_2\}\times g_{\lambda,2}^{-1}([-1,1])\times [-1,1]^s\,.
$$

Consider the intersections $\widetilde \De_1\eqdef \Si_1\cap U_1$ and $\widetilde \De_2\eqdef  \Si_2\cup U_2$ (recall that $U_1\cap U_2$ is 
the domain of the dynamical blender of $F_\lambda$, see Proposition~\ref{p.l.dynamicalinducedblender}).
By definition of a dynamical blender (existence of a safe stable connecting set)
there are a compact set $K$ contained in the interior of  $\widetilde \De_1\cup \widetilde\De_2$ and
 $\varepsilon >0$ so  that $K$
intersects any disc $D\in\fD_1\cup\fD_2$ at point a distance larger that $\varepsilon$ from the boundary $\partial D$. 
Recall the definitions of the return times $n_1,n_2$ and consider the set
$$
\tilde \De^\mathrm{s}\eqdef \bigcup_{i=1,2} \, \Big( \bigcup_{j=0}^{n_i -1} f_\lambda^j( \tilde \De_i )\Big).
$$
The set $\widetilde \De^\mathrm{s}$ is a manifold with boundary and corners contained in the interior of the safety neighborhood $V$, 
transverse to $\cC^{\mathrm{uu}}$, and 
intersects any disc of the strictly $f_\la$-invariant family $\fD_\lambda$  at distance uniformly bounded away from $0$ from its
boundary. 

Furthermore, $f_\lambda^i(\widetilde \De^\mathrm{s})$ is disjoint form $\overline V$ for every $i>n_1+n_2$ and any point of
 $\widetilde \De^\mathrm{s}$ that exits 
from $V$ does not return to $V$. 
In other words, as a set $\tilde \De^\mathrm{s}$ satisfies  conditions \eqref{i.FFC2}, \eqref{i.FFC3}, and \eqref{i.FFC4}. 
We need to remove the corners of $\widetilde \De^\mathrm{s}$. For that it is enough to
consider a sufficiently large submanifold with boundary $\De^\mathrm{s}\subset\tilde \De^\mathrm{s}$, 
in this way we get a stable connecting set 
$\De^\mathrm{s}$ satisfying
conditions \eqref{i.FFC2}, \eqref{i.FFC3}, and \eqref{i.FFC4}. 
This ends
the proof of the proposition.
\end{proof}

%!TEX root = final_MAIN.tex

%%%%%%%%%%%%%%%%%%%%%%%%%%%%%%%%%%%%%%%%%%%%%%%%%%%%%%%%%%%%%%%%%%%%%%%
\section{An interval of Lyapunov exponents:
Proof of Theorem~\ref{t.interval}}\label{s.interval}
%%%%%%%%%%%%%%%%%%%%%%%%%%%%%%%%%%%%%%%%%%%%%%%%%%%%%%%%%%%%%%%%%%%%%%%

In this section we prove Theorem~\ref{t.interval}.
Consider an open set $\cU \subset \Diff^1(M)$ 
of diffeomorphisms  
with periodic points $p_f, q_f$
(depending continuously on $f$) 
with $\mathrm{u}$-indices $i$ and $i-1$, respectively, which are
in the same chain recurrence class. %. $C(p_f,f)=C(q_f,f)$.
We will prove  the existence of a $C^1$-open and dense subset $\cV$ of $\cU$ 
consisting of diffeomorphisms $f$
such that for
every $\chi \in (\chi_i(q_f),\chi_i(p_f))$ there is
a partially hyperbolic compact set 
$K_{f,\chi}\subset C(p_f,f)$, with $1$-dimensional center direction and positive entropy such that the
center Lyapunov exponent of any point in $K_{f,\chi}$ is $\chi$.

The proof of this result is different for  $\chi$ close to zero and for large $|\chi|$. The case of $\chi$ close to zero is the more interesting one, and requires split flip-flop families. The case when $\chi$ is away from zero follows from simpler and essentially hyperbolic arguments.

\subsection{An interval of small Lyapunov exponents}
The constructions in Section~\ref{s.spawners}
give an open and dense subset $\cU_0$ of $\cU$ such that every $f\in \cU_0$ has 
a saddle $r$ of $\mathrm{u}$-index $i-1$ and a dynamical blender
$\Ga$ of $\mathrm{u}$-index $i$ which are in a split flip-flop configuration.
The following conditions hold:
\begin{itemize}
 \item 
The saddle $r$ and the blender $\Ga$
are contained in a partially hyperbolic set  with 
$(i-1)$-dimensional strong unstable bundle and one-dimensional center.
This set is contained in the chain recurrence class of
$p_f$.
 \item
 There is $\alpha_f>0$ such that the  logarithm of the  center Jacobian is  
less than $-\alpha_f$ on the orbit of $r$ and larger than $\alpha_f$ on the blender.
Moreover, since the flip-flop configuration is robust (Proposition~\ref{p.ffrobust}), the constant $\alpha_f>0$ can be chosen locally constant with $f$.
\end{itemize}

Recall that in the proof of Theorem~\ref{t.practical} (see Section~\ref{s.flipflopyield}) 
we consider a continuous extension $\phi$ of the logarithm of the central Jacobian and
apply Proposition~\ref{p.flipflopconf} to get a flip-flop family with respect to a power $f^N$ and the corresponding 
$N$-th Birkhoff sum $\phi_N$ of $\phi$.
Thereafter using Theorem~\ref{t.FlipFlop_weak} we obtain a compact set $K_f$ with
positive entropy whose $i$-th exponent is zero.

Now for any given $\chi\in [-\alpha_f,\alpha_f]$, we apply Proposition~\ref{p.flipflopconf} to the function $\phi+\chi$ instead, thus obtaining a flip-flop family with respect to a Birkhoff sum of this function. So Theorem~\ref{t.FlipFlop_weak} provides a compact set $K_{f,\chi}$ with positive entropy  whose $i$-th exponent is  $\chi$.
This completes the proof of the part the theorem about Lyapunov exponents close to zero.

\subsection{Intervals of large Lyapunov exponents}
We now fix $f$ as above and the constant $\alpha_f=\alpha$. The proof below follows exactly as the one of 
\cite[Theorem~1]{ABCDW}, thus we just explain the main steps.

By hypotheses, the saddles
$p_f$ and $q_f$ are involved in a robust cycle. 
Recalling the sketch of the proof of Proposition~\ref{p.caixapreta} in Section~\ref{ss.cyclestospawners},
after an arbitrarily small $C^1$-perturbation 
we can assume that the homoclinic classes
of these saddles are both non-trivial
and, after replacing $p_f$ and $q_f$ by some point of the class, 
that the eigenvalues corresponding to $p_f$ and $q_f$
are all positive and different.

The occurrence of a robust cycle  implies that after
a new arbitrarily small $C^1$-perturbation we can obtain a heterodimensional cycle 
associated to $p_g$ and $q_g$. The unfolding of this cycle generates a saddle $\bar p_g$ 
homoclinically related to $p_g$ (thus of $\mathrm{u}$-index $i$) and such that
$\chi_i(\bar p_g)$ is close to $0$,  in particular,
$\chi_i(\bar p_g)\in (0,\frac{\alpha}2)$. This construction  implies that $\chi_i(\bar p_g)>\chi_{i+1}(\bar p_g)$.
Note that this configuration is robust.

Since the initial cycle associated to $p_f$ and $q_f$ is robust
and $g$ is close to $f$, the saddles
$p_g$ and $q_g$ also belong to hyperbolic sets involved
in a robust cycle. Arguing as above, a perturbation generates a new heterodimensional cycle 
that yields a saddle $\bar q_h$ homoclinically related to $q_h$ (thus of $\mathrm{u}$-index $i-1$) 
with $\chi_i(\bar q_h)\in (-\frac{\alpha}2,0)$.
Moreover, we have that $\chi_i(\bar q_g)>
\chi_{i+1}(\bar q_g)$. 

As $p_h$ and $\bar p_h$ are homoclinically related,
there is a hyperbolic basic set $\Lambda_h$ containing 
$p_h$ and $\bar p_h$.  
Since $\chi_i(p_h)> 
\chi_{i+1}(p_h)$ and $\chi_i(\bar p_h)>
\chi_{i+1}(\bar p_h)$.
Taking care that the intersection of the parts of the invariant manifolds
of $p_h$ and $\bar p_h$ involved in the construction of the set $\Lambda_h$ to be in general position, 
we can assume that the unstable bundle $E^\mathrm{u}$ of $\Lambda_h$ has a dominated splitting
$E^\mathrm{u}=E^\mathrm{uu} \oplus E^{\mathrm{cu}}$, where $E^{\mathrm{cu}}$ is one-dimensional.
This implies  that the logarithm of the center Jacobian is well defined and continuous along the 
strong unstable manifold of $\Lambda_h$. %recall Lemma~\ref{l.bundles} and the comments afterwards.

A standard  argument involving Markov partitions  implies that,
for every $\chi\in (\chi_i(\bar p_h),\chi_i(p_h))$
there is an invariant compact set $ K_{h,\chi}$ contained in $\Lambda_h$ having  positive entropy
and such  that the $i$-th Lyapunov exponent of every point in $ K_{h,\chi}$  is $\chi$. 
Here the use of  Markov partitions substitutes the flip-flop-like arguments.

Fixed now
$\chi\in (\chi_i (q_h),\chi_i(\bar q_h))$, 
the same construction as above with $q_h$ and $\bar q_h$ provides invariant compact subset $K_{h,\chi}$
of the chain recurrence class of $q_h$ with
positive entropy  that
consists of points whose $i$-th Lyapunov exponent
is  $\chi$. 

Since by construction 
$$
\big(\chi_i(q_h),\chi_i(\bar q_h)\big)\cup \big[-\alpha,\alpha\big] \cup \big(\chi(\bar p_h),
\chi_i(p_h)\big)= \big(\chi_i(q_h),\chi_i(p_h)\big),
$$
we get the announced family of compact sets $K_{h,\chi}$, $\chi\in \big( \chi_i(q_h), \chi_i(p_h) \big)$. 
This ends the proof of Theorem~\ref{t.interval}.

%%%%%%%%%%%%%%%%%%%%%%%%%%%%%%%%%%%%%%%%%%%%%%%%%%%%%%%%%%%%%%%%%%%%%%%

\end{document}